\date{\today}
 \newif\ifdraft
\newcommand{\mnote}[1]{{\ifdraft \marginnote{#1} \fi}}
\newtheorem{prop}{Proposition}[chapter]
\newtheorem{theorem}{Theorem}[chapter]
\newtheorem{corollary}{Corollary}[chapter]
\newtheorem{lemma}{Lemma}[chapter]
\theoremstyle{definition}
\newtheorem{definition}{Definition}[chapter]
\newtheorem{ass}{Assumption}[chapter]
\newtheorem{exmp}{Example}[chapter]
\newtheorem{remark}{Remark}[chapter]
\newcommand{\HIDE}[1]{} %{} % change to {#1} to show
\newcommand{\R}{\mathbb{R}}
\newcommand{\N}{\mathbb{N}}
\newcommand{\f}{\ensuremath{f^{n+1}}}
\newcommand{\ft}{\ensuremath{\Tilde{f}}}
\newcommand{\p}{^{p+1}}
\newcommand{\pp}{^{p-1}}
\newcommand{\ut}{\ensuremath{\Tilde{u}}}
\newcommand{\un}{\ensuremath{u^{n+1}}}
\newcommand{\unn}{\ensuremath{u^{n-1}}}
\newcommand{\dg}[1]{\backslash #1\backslash} 
\newcommand{\kl}[2]{\int \log\frac{#1}{#2} {#1} dx}
\newcommand{\klr}{\rm KL(\rho|\pi)}
\DeclareMathOperator{\gd}{\rm grad\!}
\title
{Convergence problems in Nonlocal dynamics with nonlinearity}
\author{Won E Hong}
\begin{document}
\begin{center}
\thispagestyle{empty}

{\Large \mdseries 
CONVERGENCE PROBLEMS IN NONLOCAL DYNAMICS\\ \vspace{3mm}
WITH NONLINEARITY
}

\vspace{7mm}
by

\vspace{7mm}

\begingroup
{\large \sc Won Eui Hong}
\endgroup

% \begingroup
% \selectlanguage{vietnamese}
% {\large \sc Truong-Son Van \\ (Vietnamese: Văn Phụng Trường Sơn)}
% \endgroup

%{\small advised by}

%{\sc Steve Awodey}

\vfill

\textit{A dissertation submitted in partial fulfillment of the requirements for\\
the degree of Doctor of Philosophy in Mathematical Sciences}

\vfill

% Last update: \today
Last update: February 22, 2022

\vfill

\def\svgwidth{0.3\columnwidth}
%% Creator: Inkscape inkscape 0.91, www.inkscape.org
%% PDF/EPS/PS + LaTeX output extension by Johan Engelen, 2010
%% Accompanies image file 'cmu-seal.pdf' (pdf, eps, ps)
%%
%% To include the image in your LaTeX document, write
%%   \input{<filename>.pdf_tex}
%%  instead of
%%   \includegraphics{<filename>.pdf}
%% To scale the image, write
%%   \def\svgwidth{<desired width>}
%%   \input{<filename>.pdf_tex}
%%  instead of
%%   \includegraphics[width=<desired width>]{<filename>.pdf}
%%
%% Images with a different path to the parent latex file can
%% be accessed with the `import' package (which may need to be
%% installed) using
%%   \usepackage{import}
%% in the preamble, and then including the image with
%%   \import{<path to file>}{<filename>.pdf_tex}
%% Alternatively, one can specify
%%   \graphicspath{{<path to file>/}}
%% 
%% For more information, please see info/svg-inkscape on CTAN:
%%   http://tug.ctan.org/tex-archive/info/svg-inkscape
%%
\begingroup%
  \makeatletter%
  \providecommand\color[2][]{%
    \errmessage{(Inkscape) Color is used for the text in Inkscape, but the package 'color.sty' is not loaded}%
    \renewcommand\color[2][]{}%
  }%
  \providecommand\transparent[1]{%
    \errmessage{(Inkscape) Transparency is used (non-zero) for the text in Inkscape, but the package 'transparent.sty' is not loaded}%
    \renewcommand\transparent[1]{}%
  }%
  \providecommand\rotatebox[2]{#2}%
  \ifx\svgwidth\undefined%
    \setlength{\unitlength}{240.0000071bp}%
    \ifx\svgscale\undefined%
      \relax%
    \else%
      \setlength{\unitlength}{\unitlength * \real{\svgscale}}%
    \fi%
  \else%
    \setlength{\unitlength}{\svgwidth}%
  \fi%
  \global\let\svgwidth\undefined%
  \global\let\svgscale\undefined%
  \makeatother%
  \begin{picture}(1,1)%
    \put(0,0){\includegraphics[width=\unitlength,page=1]{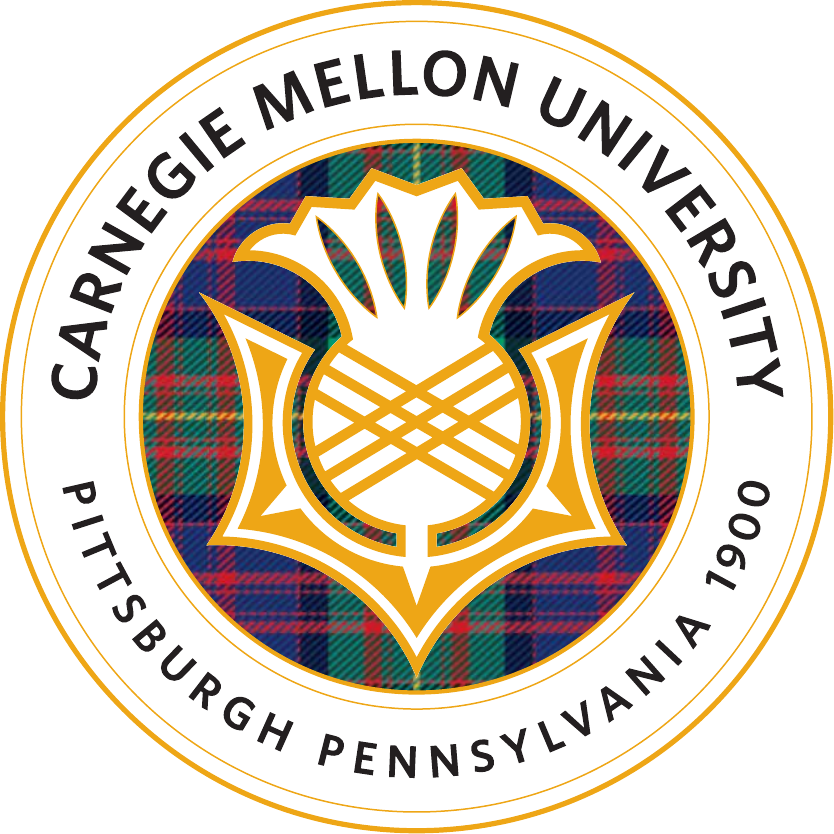}}%
  \end{picture}%
\endgroup%

\vfill

Department of Mathematical Sciences\\
Carnegie Mellon University\\
Pittsburgh, PA

\vfill

\textbf{Doctoral thesis committee}

\begin{tabular}{ccccc}
\textsc{Robert L. Pego (CMU, chair)} \\ 
\textsc{David Kinderlehrer (CMU)}  \\
\textsc{Dejan Slep\v cev (CMU)} \\
\textsc{Hailiang Liu (ISU)} 
\end{tabular}
\end{center}
% \newpage

% \maketitle
\chapter*{Acknowledgments}
Looking back on my life as a Ph.D. student at Carnegie Mellon University, in Pittsburgh, I have received so much love and support from many people.

First and foremost, I appreciate my parents, Joseph Jichul Hong and Yu Soon Kim, who have been sacrificing themselves to raise me well. They have always been loving, encouraging, and supportive. As much as they are of me, I am immensely proud to be their son.

I cannot stress enough the support and care that my advisor Professor Robert Pego has provided me. Despite his extraordinary intelligence and academic success, he has always been humble and curious-minded. He taught me not only mathematical skills but also how to think intuitively and, at the same time, thoroughly. He has been my role model, and I am very much honored to have learned so closely from him. I also wish to express my sincere gratitude to the thesis committee, Professor David Kinderlehrer, Professor Dejan Slep\v cev, Professor Hailiang Liu. They have provided an incredible amount of support, kindness, and patience. They are such exceptional mathematicians, and I am very grateful to have had a chance to interact with each one of them.

I want to thank my family. My grandfather, Sung Yoon Hong, is one of the most diligent and energetic people I know. He has survived two wars and founded my family from the bare ground. I acknowledge my privilege built upon the trouble that he had undergone. He did not have a chance to go to school, and it is meaningful to hear acclaim from him that I am the first generation Ph.D.. I hope this will help my family survive as immigrants in the United States. My family moved to the United States in December 2014, and I cannot emphasize enough the love and support from my uncle, Kyu Hong Chang, and my aunt, Young Chang, my cousins Charles K. Chang and Brian K. Chang, who helped me settle down in the USA. I also thank my brothers, Taeho Hong and Jung Eui Hong, for not causing too much trouble and doing relatively well. I also want to express my appreciation and love towards my family members who live far apart, and I miss much: Jisoon Hong, Jisook Hong, Chanyoung Park, Haeun Park, Minkyoung Gam, Youngseo Kim, Youngwhan Kim, Wonhee Noh, Sangbum Noh, Hyunjeong Kim, Junyeob Kim.

Friendship is another trophy that I acquired at Carnegie Mellon University. I have been surrounded by a fun, loving, and supportive group of people. They have been helping me reach the finish line, and I hope we keep encouraging each other in the race of life. I thank Ananya Uppal (WH7110 forever), Senthil Purushwalkam Shiva Prakash, Giovanni Gravina, Lily ChinWen Yu, Marcos Mazari-Armida, Ana Paula Vizcaya, Truong-Son Van, Han Nguyen, Antoine Rémond-Tiedrez, Kayla Bollinger, Landon Settle, Kerrek Stinson, Mihir Hasabnis, Da Qi Chen, Junichi Koganemaru, Sittinon Jirattikansakul, David Itkin, Likhit Ganedi, Michael Anastos, Debsoumya Chakraborti, Ilqar Ramazanli, Charles Joseph Argue, Greg Kehne, Weicheng Wilson Ye, Mochong Duan, Xiaofei Shi, Linan Zhang, Forrest Shetley Miller, Andrew Warren, Oleksandr Rudenko, Tony Johansson, Sam Cohn, Adrian Hagerty, Clive Newstead, Joseph Briggs, Xiao Chang, Andy Zucker, Sangmin Park, Wesley Caldwell, Anish Sevekari, Aditya Raut, Jongwha Park, Seungjae Son, Zoe Wellner, Benjamin Weber, Pedro Marun, Yue Pu, Ryan Xu, Kevin Ou, David Huckleberry Gutman, Daniel Rodríguez, Slav Kirov, Matteo Rinaldi, Riccardo Cristoferi, Janusz McBroom, Jisu Kim, Dongnam Ko.

I also need to point out the generous support of the faculty and staff in the Department of Mathematical Sciences. I want to express my gratitude to William Hrusa, Jack Schaeffer, Noel Walkington, Hayden Schaeffer, Giovanni Leoni, Ian Tice, Martin Larsson, Franziska Weber, Clinton Conley, Gautam Iyer, Dmitry Kramkov, Deborah Brandon, Tom Bohman, Stella Andreoletti, Jeff Moreci, Charles Harper, Christine Gilchrist, Nuno Chagas. I also thank the visiting faculty members who has been encouraging me: KiHyun Yun, David Shirokoff.

I have been lucky enough to meet great people outside of the math realm. I want to shout out the name of people who have been keeping me sane and balanced. I sincerely thank Emanuel Rackard, Derek Terrell, Richard Teaster, Thomas Douglas, Charles Wu, Joshua Bow, Aria Yuan Wang.

Finally, I would like to thank the people who have wished me the best since I moved to the USA. Not in any particular but rather chronological order, here is the list of their name:
(Yonsei Univ:  Honggu Im, Woo Young Jung, Hyun Kwang Lim, Jungwoo Park, Seungwon Kim, Hyojin Jeong, Ikjoo Im, Seungchan Ko, Hyuk An, Yongnam Kim, Yongseok Choi, Haseo Ki, Joonil Kim, June Bok Lee),
(Sangsan high: Haewon Lee, Doeon Kim, Seungwoo Hong, Dongjae Lee, Jaegon Ye),
(Baejae mid: Eunjin Yoon, Sungho Park, Dong Gun Kim, Seungho Ha), Hosub James Jang.
\\

This material is based upon work supported by the National Science Foundation under Grants DMS-1812609 and DMS-2106534.
\\
\begin{FlushRight}
February, 2022
\\
Pittsburgh, Pennsylvania
\end{FlushRight}

\chapter*{Abstract}
We study nonlocal nonlinear dynamical systems and uncover the gradient structure to investigate the convergence of solutions. Mainly but not exclusively, we use the \L ojasiewicz inequality to prove convergence results in various spaces with continuous, or discrete temporal domain, and  finite, or infinite dimensional spatial domain. To be more specific, we analyze Lotka-Volterra type dynamics and concentration-dispersion dynamics.

Lotka-Volterra equations describe the population dynamics of a group of species, in which individuals interact either competitively or cooperatively with each other. It is well-known that Lotka-Volterra equations form a gradient system with respect to the Shahshahani metric. The Shahshahani metric, unfortunately, becomes singular in the scenario where some species become extinct. This singular nature of the Shahshahani metric is an obstacle to the usual convergence analysis. Under the assumption that the interaction between species is symmetric, we present two different methods to derive the convergence result. One, the entropy trapping method, is to adapt the idea of Akin and Hofbauer (Math. Biosci. 61 (1982) 51–62) of using monotonicity of the energy to bound the entropy, which provides the proximal distance of the solution from the desired equilibrium. Another method, inspired by Jabin and Liu's observation in (Nonlinearity 30 (2017) 4220) is to change variables to resolve the singular nature of gradient structure. We apply this idea to show the convergence result in generalized Lokta-Volterra systems, such as regularized Lotka-Volterra systems and nonlocal semi-linear heat equations, which can be seen as an infinite dimensional Lotka-Volterra equations with mutation.

Concentration-dispersion dynamics is a new type of equation that is inspired by fixed point formulations for solitary wave shapes. The equations are designed in a way that the solution evolves to match the shape of a concentrated and dispersed version of the solution, which is an outcome of power nonlinearity and convolution.
As a continuous time analogue of Petviashvili iteration, we aim to dynamically calculate the nontrivial solitary wave profile.
% Concentration-dispersion dynamics has a gradient structure projected on a sphere-like manifold.
We show the well-posedness and compactness of the solutions. Moreover, we suggest specific initial data which stay away from the trivial equilibrium. Using the gradient structure we deduce the existence of nontrivial solitary and periodic wave profiles. Unfortunately, the convergence result remains open. Instead, we regularize the concentration dispersion equations by adding a nonlinear diffusive term, and prove the convergence of the solutions by using the \L ojasiewicz convergence framework. In this way, we can dynamically approximate nontrivial wave profiles with arbitrarily small error.
% Table of contents
\tableofcontents
\addtocontents{toc}{\protect\setcounter{tocdepth}{1}}
\chapter*{Overview}
\paragraph{\textbf{Introduction}} A gradient flow is a dynamical system, where the particle follows the steepest direction to decrease a given objective function, which is often called energy. Once the trajectory of the particle is bounded, it is commonly expected that the particle will stop at a critical point of the energy. Contrary to this popular belief, however, a bounded solution of gradient flow can fail to converge, even if the associated energy is smooth. 

The idea for a such nonconvergence example was suggested by Haskell B. Curry (1944), who proposed an energy resembling an infinite spiral staircase with decreasing step size. In this way, the particle can move around a circle slowly and ceaselessly. A concrete example involving so called the ``Mexican hat" function, was constructed later by Palis and DeMelo (1982) ~\cite{palis2012geometric} and Absil, Mahony and Andrews (2005)~\cite{absil2005convergence}. This behavior is mainly enabled by lack of any relationship between the speed of the particle and the energy level where the particle is at. 

In 1965, \L ojasiewicz proved an inequality for analytic energy functions, that regulates the energy level with the speed of the particle, or the slope of the energy. Using the \L ojasiewicz inequality, energy dissipation can be transformed to bound the length of the trajectory, and consequently any precompact solution of a gradient system with analytic energy converges. This convergence analysis can be, in fact, applied to gradient-``like" systems where the velocity of the particle loosely follow the gradient of the associated energy, satisfying a certain angle condition.

This convergence property comes in handy in optimization, but practically what we need is appropriate discrete-time version of gradient descent in order to calculate the local minimizer.
In 2005, Absil, Mahony and Andrews~\cite{absil2005convergence}, as an extension of \L ojasiewicz convergence theorem, showed that a discrete iteration, which satisfies the angle condition with an analytic cost function, produces a convergent sequence as long as it is bounded. 
The flexibility of the angle condition presents innumerable optimization algorithms so one can investigate how to design the efficient iteration method for a given cost function. The most basic and simple method, Euler's method, performs well when the time step is small enough.

Generally in time-discrete schemes, extra treatment, such as choosing the optimal time step, is necessary to achieve the monotonicity of the energy. Considering it is the energy that drives the dynamics, it would be more reasonable to cook up a time-discrete equation so that the energy is monotonic along the sequence, i.e., energetically stable.
Pham Dinh Tao, in 1985, introduced an energetically stable optimizing algorithm and named it the Difference of Convex functions algorithm (DCA). It is an iterative algorithm for finding a fixed point of the gradient of two convex functions of which the difference is the objective function. In fact, the family of difference of convex functions is quite extensive that it has been applied to numerous nonconvex optimization problems. It was later in 2018~\cite{le2009convergence} when the convergence of the DCA was proven using \L ojasiewicz inequality. 
Due to the flexibility in convex splitting, DCA can in fact be comprehensive, including the classical methods in convex analysis and proximal methods~\cite{tao1997convex}.

The idea of using convex inequality to formulate energetically stable time-discrete equation was also developed independently in the area of partial differential equations.
In 1998, David J. Eyre came up with an idea of representing the energy as a difference of two convex functions, to form a time-discrete Cahn-Hilliard equation and showed the produced sequence well-approximates the continuous-time solution. By treating the convex part implicitly and the concave part explicitly, this semi-implicit Euler's method becomes energetically stable. In fact the semi-implicit Euler's method can be seen as the special case of DCA.

A few derivations of DCA can be discussed.
Since the implicit part of DCA is a convex optimization problem, the convex dual of DCA naturally arises. It is known that the dual DCA, or the DCA on the difference of dual of convex functions, is equivalent to the primal DCA. In order to boost the speed of convergence, we can group momentum methods into two types, whether the momentum is added externally or internally. The main and original examples of external, or internal addition of momentum on gradient descent are Polyak's momentum method, or Nesterov's acceleration method, respectively. As an extension of those, we present the DCA with momentum, which has slightly more flexibility on the definition of momentum. From this point of view, Polyak's momentum method and Nesterov's acceleration method can be regarded as the same algorithm on primal and dual energy, respectively.
% Moreover, in the perspective of DCA, Polyak's momentum method and Nesterov's acceleration algorithm can be regarded as the same algorithm on primal and dual energy, respectively.[We will discuss later....]Using the idea of convex splitting, or DCA, we form energetically stable time-discrete Lotka-Volterra equations and prove convergence. 

\paragraph{\textbf{Subject of the thesis}}
Of course, not every dynamical system is a gradient system. However, it is known that if a system has a strict Lyapunov function, there exists a metric to represent the system as a gradient system~\cite{barta2012every}. So finding an appropriate Lyapunov function and a metric often provides break-through for analyzing a dynamical system. One of the revolutionary examples is Jordan-Kinderlehrer-Otto's characterization of the Fokker-Planck equation as Wasserstein gradient flow of KL-divergence. 

In this thesis we study convergence problems in nonlocal nonlinear dynamical systems of two main types: Lotka-Volterra type dynamics and concentration-dispersion dynamics.

\paragraph{\textbf{Lotka-Volterra type dynamics}}
Proposed, originally, as a model for population dynamics, Lotka-Volterra represents a fundamental model of nonlinear nonlocal system. It is well known that when the interaction matrix is symmetric, a Lotka-Voltera system is a gradient flow of a quadratic energy with respect to the so-called Shahshahani metric, which provides the inverse of population as weights on each species. Along the evolution it is possible that some species die out to make the Shahshahani metric to blow up. This singular structure is an obstacle for performing gradient analysis on Lotka-Volterra systems. 

When the quadratic energy is convex, it is known that the Kullback–Leibler divergence (KL divergence), relative to the unique equilibrium, is decreasing. Since KL divergence provides proximal distance from the equilibrium, the monotonicity of the KL-divergence has been used to deduce global convergence result in Lotka-Voltera type systems ~\cite{hofbauer2003evolutionary}~\cite{attouch2004regularized}~\cite{liu2015entropy}~\cite{jabin2017non}.

Our goal is, without assuming convexity of the energy, to prove convergence of bounded solutions. 
We provide two different ways to prove convergence of solutions as $t\to\infty$. First approach is to adapt the entropy trapping method of Akin and Hofbauer when they proved the convergence of solutions in replicator dynamics~\cite{akin1982recurrence}. Although the KL-divergence, or the entropy, is no longer monotonic without convexity assumption, it still plays a role of distance-like function from the equilibrium. The idea is to use the monotonicity of the energy to capture the possible oscillation of KL-divergence.

Another is to de-singularize the metric. This idea was suggested by Jabin and Liu for studying Lotka-Volterra equations with mutation~\cite{jabin2017non},
\begin{equation*}
    \partial_tu(t,x) =\Delta u(t,x) +\frac{1}{2}u(t,x)\left( a(x)-\int_\Omega b(x,y)u^2(t,y)dy\right).
\end{equation*}
They studied infinite dimensional Lotka-Volterra systems with a diffusion term, which represents mutation. Especially, the suggested equation has a quadratic competition term, rather than a linear, which makes the equation as a $L^2$ gradient flow of quartic energy in weak sense. Furthermore, they showed that one can change variables to recover the original form of Lotka-Voltera equations with linear competition.

The method of de-singularizing Shahshahani metric can be used not only in the original Lotka-Volterra dynamics, but also in regularized Lotka-Volterra type systems,
\begin{equation*}
     f'_i=-\frac{f_i}{\mu+\nu f_i}\nabla H(f)_i.
\end{equation*}
As a generalization of Lotka-Volterra equation, Attouch and Teboulle suggested regularized Lotka-Volterra dynamics, which is a gradient system of a smooth energy $H$ with respect to Shahshahani-type metric~\cite{attouch2004regularized}. 

In both variations of Lotka-Volterra~\cite{jabin2017non},~\cite{attouch2004regularized}, the authors proved convergence of the solution, provided the energy is convex, using suitable KL-divergences as Lyapunov functions. We assume only that the energy satisfies the \L ojasiewicz inequality to show convergence of bounded solutions by applying \L ojasiewicz convergence analysis on the de-singularized structure.

We also formulate energetically stable time-discrete Lotka-Volterra type equations using convex splitting and prove convergence of iterative algorithms by applying the entropy trapping method, and by de-singularizng the metric. This relates to practical uses of the Lotka-Volterra systems. For example, Hovsepian and Anslemo~\cite{hovsepian2011supervised} suggested supervised learning algorithms based on Lotka-Volterra systems, where they postulate the convergence of the solution, or make rather strong assumptions on the interaction matrix such as diagonal dominance.

\paragraph{\textbf{Concentration-dispersion dynamics}}
Solitary waves, physical phenomena of waves that travels at constant speed and shape, were inexplicable by the theories of hydrodynamics at the time when they were discovered by J. Scott Russell in 1834.
The first mathematical treatment of solitary waves, the KdV equation, was founded in 1877 by Joseph Valentin Boussinesq and actively rediscovered by Diederik Korteweg and Gustav de Vries in 1895. Since then the phenomena have been noticed in various nonlinear dispersive physical models, e.g., the sine-Gordon equation, the nonlinear Schr\"odinger equation etc.,~\cite{friesecke1994existence}, \cite{Herrmann2010UnimodalWA}, \cite{Pego2018ExistenceOS}. Now it is understood that solitary waves is created when the nonlinear and the dispersive effects are balanced, but exact analytic solutions of solitary wave are known to be challenging to compute.

In 1976, Petviashvili successfully created an algorithm that is empirically known to converge, but the mathematical verification on global convergence is still unknown.
Motivated by fixed point formulations of several kinds of solitary wave equations, we study integro-differential equations of $u(x,t):t\mapsto u(x)\in H^1(\R)$ as follows,
% \begin{equation}
%     \frac{du}{dt}=K\ast u^p -\left(\int u^p K*u^pdx\right)u,
% \end{equation}
\begin{equation}\label{e: cdd0}
    \left\{
    \begin{aligned}
        \;\;\frac{d}{dt}u(t,x)&=K\ast u^p  - c(t) u\\
        c(t)&=\int u^p K\ast u^p dx.
    \end{aligned}
     \right.
    \end{equation}
where $*$ indicates a convolution operator and $p>1$.
The equation~\eqref{e: cdd0} is designed in a way for $u$ to chase after normalized $K\ast u^p$. The nonlinearity $u^p$ has an effect of concentration at peaks and the convolution by $K$ has an effect of dispersion. We can expect $u$ to stop evolving when the dispersive effect cancels the nonlinear concentration, just as how a solitary wave is created.

The equation~\eqref{e: cdd0}, in fact, can be seen as a gradient flow on an invariant manifold of $(p+1)$-th roots of probability densities with respect to the metric weighted by $u^{p-1}$, and the associated energy is a distance between $K\ast u^p$ and $u$ given by the metric.
On the other hand, if we initiate the evolution outside of the manifold, the associated energy can lose the monotonicity. It turns out we can stabilize the energy by multiplying it by an exponential factor related to the manifold. With the new energy, the dynamics is a gradient flow in the flat whole space, which makes analysis far easier.

% In infinite dimensional spaces, in order to prove convergence, we need to check pre-compactness of the solution. 
% Assuming $K$ is a ``bell-shaped", or unimodal, we show that the solution initiated with a bell-shaped function stays bell-shaped. This particular restriction on shape implies pre-compactness of the solution.
Unlike usual dispersive effects in PDE does, the convolution does not provide smoothing in time. For example, if the initial data has a singularity or a spike, the solution at any time will have the same. However, we can show that the difference between the solution and scaled initial data is smoother than the initial data. So the solution is precompact in $H^1$ on compact domains. On the real line, we show that the solution initiated from ``bell-shaped" initial data is precompact in $L^2$.

Recall that we aim to approximate nontrivial wave profiles. Even if we prove the dynamics converges, it will be meaningless if every solution converges to trivial solutions, i.e., constant solutions. We show that the constant equilibria are unstable. Moreover, we can show that the peak of the solution stays strictly away from $0.$
Thus, by LaSalle's invariance principle type argument, we deduce the existence of nontrivial equilibria of ~\eqref{e: cdd0}.

Despite the $H^1$-analyticity of the energy, unfortunately, \L ojasiewicz convergence framework cannot be applied to ~\eqref{e: cdd0}. To resolve the issue, we regularize the concentration-dispersion dynamics as follows.
\begin{equation}\label{e: rcdd0}
    \left\{
    \begin{aligned}
        \;\;\frac{d}{dt}u(t,x)&=\epsilon (u^p)_{xx} +K\ast u^p  - c(t) u\\
        c(t)&=\int u^p K\ast u^p- \epsilon[(u^p)_x]^2 dx.
    \end{aligned}
     \right.
    % u'=K\ast u^p - c(t)u,
    \end{equation}
Indeed, the equation~\eqref{e: rcdd0} also has nice $L^2$-gradient structure, and furthermore, we prove that every solution of \eqref{e: rcdd0} on compact domains converges. Also by showing the instability of the constant solution, we can approximate nontrivial periodic waves with arbitrarily small error $\epsilon>0$.
\\
\paragraph{\textbf{Plan of the thesis}}
This thesis comprises three parts. Part 1 is an overview for the \L ojaisewicz convergence theorem on time-continuous (Chapter 1) and time-discrete (Chapter 2) gradient system. We discuss adaptation of DC algorithm, originated from optimization algorithm, to a time-discrete differential equation in Chapter 2.2. Additionally we provide novel perspectives on the \L ojaisiewicz framework on infinite dimensional space (Chapter 1.2) and DC algorithms with momentum (Chapter 2.3).

Part 2 is dedicated to convergence analysis on Lotka-Volterra type systems. We provide two different approaches to prove convergence for the continuous time equations (Chapter 3) as well as the discrete time equations (Chapter 4). Entropy trapping method can be found in Chapter 3.1 and Chapter 4.1. The convergence analysis using de-singularized gradient structure can be found in Chapter 3.2 and Chapter 4.2. As derivations of Lotka-Volterra systems, Lotka-Volterra systems with mutation are treated in Chapter 3.3 and time-discrete regularized Lotka-Volterra equations are treated in Chapter 4.3.

In Part 3, we introduce a new type of integro-differential equations, namely concentra\-tion-dispersion dynamics. We discuss the motivation and the gradient structure of the concentration-dispersion equations in Chapter 5. In Chapter 6, we perform analysis to show well-posedness, compactness of solutions, and existence of nontrivial wave profiles. In Chapter 7, we study the regularized concentration-dispersion dynamics and prove convergence of the solutions to nontrivial wave profiles.

% \part*{Prologue}
\part{\L ojasiewicz inequality and gradient descent}
\chapter{Introduction to gradient flows}\label{ch: gf}
\mnote{need to mention nature of models}
In this thesis, we reveal gradient structures for various types of nonlinear and nonlocal dynamics to establish convergence of solutions in continuous-time and discrete-time, without assuming convexity of the associated energy. Our main tool will be the \L ojasiewicz inequality for analytic cost functions. In this chapter, together with a brief history, we will go over how the \L ojasiewicz inequality is used to prove convergence of solutions of gradient-like systems on finite and infinite dimensional domains.

    We say $u(t,x):\R_+\times\R^N\to\R^N$ is a gradient flow for an energy $E \in C^1(\mathbb{R}^N)$, if $u$ satisfies the following differential equation,
	\begin{equation}\label{e: gs}
		\frac{du}{dt}+\nabla E(u) = 0.
	\end{equation}
	For brevity, we let the time differentiation $d/dt$ be denoted by $'$.

	First of all, along the evolution of the solution $u(t)$ of \eqref{e: gs}, we notice the energy is monotonically nonincreasing.
	For $t \geq 0$, 
	\begin{equation}\label{e: e diss}
	\frac{d}{dt} E(u(t)) = \langle \nabla E(u), u'\rangle = -\Vert u' \Vert ^2 \leq 0,
	\end{equation}
	in other words, the energy $E$ is a Lyapunov function for the gradient system \eqref{e: gs}.
	
	Since $u$ moves to minimize $E,$ one can naturally hope for $u$ to approach to a critical point of $E$. Indeed, we can check this as follows. Let $\mathcal{E} := \{a \in \mathbb{R}^N: \nabla E(a) = 0\}$ be the set of critical points of the energy $E$ and assume that $u$ is bounded. Then, from the energy dissipation~\eqref{e: e diss}
	\begin{equation*}
	    \int_{0}^{\infty}\Vert u'(s)\Vert^2ds= E(u(0))-\lim_{t\to\infty} E(u(t))<\infty,
	\end{equation*}
	i.e.,  $u'\in L^2(\R;\R^N)$.
	Note that by continuity of $\nabla E$ and \eqref{e: gs}, every bounded trajectory has uniformly continuous derivative $u'$, which implies for any $\epsilon>0$, we can choose $\delta>0$ such that $\forall h\in(0,\delta),$
	\[
	\big\vert\Vert u'\Vert(t+h) - \Vert u'\Vert(t) \big\vert < \epsilon.
	\]
	By averaging on $(0,\delta)$,
	\begin{align*}
	   % \Vert u'\Vert^2(t) - \epsilon &< \Vert u'\Vert^2(t+h)\\
	    \Vert u'\Vert(t) - \epsilon &< \frac{1}{\delta}\int_{t}^{t+\delta} \Vert u'(s)\Vert ds\\ 
	    &\leq \frac{1}{\sqrt{\delta}}\int_{t}^{t+\delta}\Vert u'(s)\Vert^2 ds.
	\end{align*}
	Since $u'\in L^2(\R;\R^N)$,
	$$\limsup_{t \rightarrow \infty}\Vert u'(t)\Vert \leq \epsilon$$
	$$\lim_{t\rightarrow\infty}\Vert u'(t)\Vert = 0.$$ 
	Consequently,
	$$ \lim_{t\rightarrow\infty} \text{dist}(u(t),\mathcal{E})=0.$$ 
	The question naturally arises whether $u(t)$ attains a limit point. On $\R$ this holds true since all solutions are monotonic. However when the spatial dimension $N$ is larger, convergence does not hold in general, even if the energy is $C^\infty$. One can imagine a gradient solution that circles around the unit sphere, not allowing the solution to settle down to a limit point. This idea was suggested by Haskell B. Curry in 1944 and the following concrete example was constructed by Palis and de Melo much later in 1982.
	\begin{exmp}[$C^\infty$ non-convergence example~\cite{palis2012geometric}, p14]
	\begin{equation*}
    E(r\cos\theta,r\sin\theta) = 
    \left\{\begin{array}{ll}
        e^{1/(r^2-1)}, & \text{if} \;\; r<1;  \\
        0 & \text{if}\;\; r=1; \\
        e^{-1/(r^2-1)}\sin(1/(r-1)-\theta) & \text{if}\;\; r>1.
    \end{array}\right.
    \end{equation*}
	\end{exmp}
% 	if there is $a\in \mathcal{E}$ such that  
% 	$$\lim_{t\rightarrow\infty}u(t) = a.$$
% 	The answer is
% 	\begin{center}
% 	\boxed{
% 	$\begin{array}{ll}
% 	     \;\;\;Yes & \text{if} \;\; N=1, \text{since all solutions are monotonic} \\
% 	     \;\;\;No  & \text{in general if } N\geq2\;\; \text{even for} \;\;F\in C^\infty \\
% 	     \;\;\;Yes & \text{if}\;\; F\;\; \text{is \textbf{ANALYTIC}}
% 	\end{array}$
% 	}
% 	\end{center}
\section{\L ojasiewicz convergence theorem}
We review the \L ojasiewicz inequality, which regulates ``flatness" of analytic energies around critical points by the energy level. It provides quantitative convergence results for analytic gradient flows without any convexity assumption. In fact, there is a much larger class of functions that satisfies the \L ojasiewicz inequality, according to Bolte et al.~\cite{bolte2007lojasiewicz}.
Despite its impact in the study of dynamical systems, the importance of the \L ojasiewicz inequality was discovered much later, in the early nineties. It is known that the \L ojasiewicz inequality is a key to solve several conjectures in algebraic geometry~\cite{colding2014lojasiewicz},\cite{colding2017arnold},~\cite{lojasiewicz1963propriete},~\cite{law1965ensembles}.

%     \begin{theorem}(\L ojasiewicz Theorem) Assume that $F$ satisfies (\ref{loja ineq}) at any equilibrium point a and let $u\in L^\infty(\mathbb{R}^+,\mathbb{R}^N)$ be a solution of (\ref{eq1}). Then there exists $a\in\mathcal{E}$ such that
%     $$\lim_{t\rightarrow\infty}\Vert u(t) - a\Vert = 0$$
%     Moreover, let $\theta$ be any \L ojasiewicz exponent of $F$ at point a. Then we have
%     \begin{equation}
%         \Vert u(t) - a\Vert = 
%         \left\{\begin{array}{ll}
%              O(e^{-\delta t}) & \text{if}\;\; \theta=\frac{1}{2}, \;\;\;\;\text{for some}\;\; \delta>0 \\
%              O(t^{-\theta/(1-2\theta)})& \text{if}\;\; 0<\theta<\frac{1}{2} 
%         \end{array}\right.
%     \end{equation}
%     In particular if $F$ is analytic, all bounded solutions of (\ref{eq1}) are convergent.
%     \end{theorem}
% [Gradient like system]
 \begin{theorem}[\L ojasiewicz, 1963]\label{t: Loj in}
    Let E : $\mathbb{R}^N \longrightarrow \mathbb{R}$ be an analytic function. Then for all $a\in \mathcal{E}:=\{a\in \mathbb{R}^N: \nabla E(a) = 0\}$, there exists $c_a>0,\;\; r_a>0$ and $0<\theta_a \leq \frac{1}{2}$ such that,
    \begin{equation}\label{loja ineq}
        c_a|E(u)-E(a)|^{1-\theta_a}\leq \Vert\nabla E(u)\Vert, \qquad \forall u\in\mathbb{R}^N \;\; \Vert u-a\Vert<r_a.
    \end{equation}
\end{theorem}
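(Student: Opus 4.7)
The plan is to follow the classical route via real-analytic geometry, using as the main tool the curve selection lemma for semi-analytic sets: if $S\subset\R^N$ is semi-analytic and $0\in\overline{S}\setminus S$, then there is a real-analytic arc $\gamma:[0,\epsilon)\to\R^N$ with $\gamma(0)=0$ and $\gamma((0,\epsilon))\subset S$. After a translation I may assume $a=0$ and $E(a)=0$, and I would argue by contradiction, supposing no admissible pair $(c_a,\theta_a)$ exists in any neighborhood of $0$.

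For a candidate exponent $\theta\in(0,1/2]$ and $n\in\N$, the set
\[
S_{n,\theta} := \bigl\{u\in B_{1/n}(0): \|\nabla E(u)\|^2 < n^{-2}|E(u)|^{2(1-\theta)},\; E(u)\neq 0\bigr\}
\]
is semi-analytic, and by the contradiction hypothesis $0\in\overline{S_{n,\theta}}$. Curve selection then produces a real-analytic arc $\gamma=\gamma_{n,\theta}$ with $\gamma(0)=0$ landing in $S_{n,\theta}$ for $t>0$. On this arc set $h(t)=E(\gamma(t))$; since $E$ and $\gamma$ are real-analytic and $h\not\equiv 0$, expand the leading orders as $\gamma(t)=bt^\ell+O(t^{\ell+1})$ and $h(t)=ct^k+O(t^{k+1})$ with $b,c\neq 0$. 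Differentiating and applying Cauchy--Schwarz gives $|h'(t)|\leq\|\nabla E(\gamma(t))\|\,\|\gamma'(t)\|$, which combined with the defining inequality of $S_{n,\theta}$ yields
\[
k|c|\,t^{k-1}(1+o(1)) \leq n^{-1}|c|^{1-\theta}\ell\|b\|\,t^{k(1-\theta)+\ell-1}(1+o(1))
\]
as $t\to 0^+$. Comparing leading exponents forces $k-1\geq k(1-\theta)+\ell-1$, i.e.\ $k\theta\geq\ell$, so any choice $\theta<\ell/k$ produces a contradiction along this particular arc.

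The main obstacle is turning this curve-by-curve conclusion into a uniform statement, because the ratio $\ell/k$ depends on the arc and a priori on $(n,\theta)$. To close the loop I would invoke the deeper structural content of \L ojasiewicz's theory: for a fixed real-analytic germ at $0$, the collection of exponents $\ell/k$ arising from real-analytic arcs is a finite set of positive rationals bounded in terms of the germ of $E$ alone, extractable either from the classical stratification-and-triangulation proof for semi-analytic sets, or obtained via Hironaka resolution applied to $E$ and $\|\nabla E\|^2$ simultaneously (after which the inequality reduces to a direct monomial computation). Taking $\theta_a\in(0,1/2]$ strictly below the minimum of this finite set, and absorbing the factors $|c|^{\theta}$, $\ell\|b\|$ into the constant $c_a$ by restricting to a compact neighborhood $\overline{B_{r_a}(0)}$, then yields the inequality in the required local form.
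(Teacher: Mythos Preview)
The paper does not give its own proof of this theorem: it merely states the result, remarks that the case $N=1$ is elementary via power series, and refers the general case to the literature (\L ojasiewicz 1963/1965, Kurdyka 1994, \L ojasiewicz 1999). So there is no ``paper's proof'' to compare against; your sketch is attempting something the paper deliberately does not do.

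Your outline via the curve selection lemma is a legitimate classical strategy, and the computation along an individual arc is correct. However, the step you yourself flag as ``the main obstacle'' is not a technical detail to be patched --- it is essentially the whole theorem. The circularity is this: the arc $\gamma_{n,\theta}$ depends on $\theta$, so the inequality $\theta\ge \ell/k$ you derive only tells you that each chosen $\theta$ is consistent with its own arc; it does not let you pick $\theta$ below an a priori bound. To get a uniform lower bound on $\ell/k$ over all analytic arcs you need precisely a \L ojasiewicz-type inequality (e.g.\ the classical one comparing $|E|$ with distance to its zero set), or the full stratification machinery, or Hironaka resolution. Once you invoke resolution of singularities, the gradient inequality follows directly by a monomial computation in local charts, and the curve-selection argument becomes superfluous. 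So as written, your proposal reduces the theorem to results of at least the same depth rather than proving it.

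A minor point: for $S_{n,\theta}$ to be semi-analytic you need $2(1-\theta)$ rational so that the inequality can be cleared to a polynomial one; this is harmless since rational $\theta$ suffice, but it should be said.
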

It is worth mentioning that the \L ojasiewicz exponent $\theta_a$ is related to the order of vanishing of $(E(u)-E(a))$ at $a$. Roughly, if $E(u)-E(a)\approx O(\Vert u-a\Vert^n)$ for some $n>0$, then $\Vert\nabla E(u)\Vert\approx O(\Vert u-a\Vert^{n-1}) $, so that $(E(u)-E(a))^{1-1/n}\approx \Vert\nabla E(u)\Vert$. It is quite easy to see when $N=1$, since one can use power series expansion to prove the theorem. The general proof relies on theorems of analytic geometry~\cite{lojasiewicz1963propriete},~\cite{law1965ensembles},~\cite{kurdyka1994wf},~\cite{lojasiewicz1999gradient}.
%  \begin{proof}{Proof of Theorem~\ref{t: Loj in} on $\R$}
 
%  By analyticity of $E$ we can find a smallest integer $n$ such that $E^{(n)} \neq 0$ so that we have the power series expansion of $E$ as follows,
%  \begin{equation}
% 	    E(y) -E(x) = \sum_{k\geq n} \frac{E^{(k)}(x)}{k!}(y-x)^k.
% 	\end{equation}
%  We can choose  $\delta>0$ such that for $y \in (x-\delta, x+\delta)$,
% 	\begin{equation}
% 	    2\left\vert \frac{E^{(n)}(x)}{n!}\right\vert \vert y-x\vert^n \geq \vert E(y) - E(x)\vert.
% 	   % \geq \frac{1}{2}\left\vert \frac{E^{(n)}(x)}{n!}\right\vert \vert  y-x\vert^n.
% 	\end{equation}
% 	Similarly, we can expand $E'(y)$ in a small neighborhood of critical point $\mathcal{V},
% 	\begin{equation}
% 	    \vert E'(y)\vert \geq \frac{1}{2}\left\vert \frac{E^{(n)}(x)}{(n-1)!}\right\vert \vert  y-x\vert^{n-1}.
% 	\end{equation}
% 	Altogether,
% 	\begin{equation}
% 	    c_x \vert E(y) - E(x)\vert^{1-\frac{1}{n}}\leq|E'(y)|.
% 	\end{equation}
% 	Note that $\theta_x = \frac{1}{n}$, which reflects the slope of the energy $E.$
%  \end{proof}
  Note that the analyticity assumption is necessary. Let's consider
    \begin{equation*}
        f(x) = \left\{
        \begin{array}{ll} 
        e^{-\frac{1}{|x|}} & \;\; \text{for}\;\; x\neq0\\
        0& \;\; \text{for}\;\; x=0
        \end{array}\right.
    \end{equation*}
    Then for any $y$ in the zero neighborhood $|f'(y)|=\frac{1}{y^2}e^{-\frac{1}{|y|}}$ 
    \begin{equation*}
    \begin{aligned}
        \limsup_{y\rightarrow0}\frac{|f(y)|^\theta}{f'(y)} &= \limsup_{y\rightarrow0}y^2\frac{|f(y)|^\theta}{f(y)}\\ 
        &=\limsup_{y\rightarrow0}y^2e^{\frac{1-\theta}{y}}\longrightarrow\infty  
    \end{aligned}    
    \end{equation*}
    which fails to make (\ref{loja ineq}).
    
    Now we present the \L ojasiewicz convergence theorem. We will take a look at the proof, in the next section regarding the infinite dimensional case, to see how the \L ojasiewicz inequality is used.
    \begin{theorem}[\L ojasiewicz Theorem] Assume that $E$ satisfies (\ref{loja ineq}) at any equilibrium point and let $u\in L^\infty(\mathbb{R}^+,\mathbb{R}^N)$ be a solution of the gradient system of $E$~\eqref{e: gs}. Then there exists $\Tilde{u}\in\mathcal{E}$ such that
    $$\lim_{t\rightarrow\infty}\Vert u(t) - \Tilde{u} \Vert = 0$$
    Moreover, let $\theta$ be any \L ojasiewicz exponent of $E$ at point $\Tilde{u}$. Then we have
    \begin{equation}\label{e: L conv rate}
        \Vert u(t) - \Tilde{u}\Vert = 
        \left\{\begin{array}{ll}
             O(e^{-\delta t}) & \text{if}\;\; \theta=\frac{1}{2}, \;\;\;\;\text{for some}\;\; \delta>0 \\
             O(t^{-\theta/(1-2\theta)})& \text{if}\;\; 0<\theta<\frac{1}{2} 
        \end{array}\right.
    \end{equation}
    In particular if $E$ is analytic, all bounded solutions of gradient systems are convergent.
    \end{theorem}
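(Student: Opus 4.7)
The plan is to combine the \L ojasiewicz inequality with the energy dissipation identity \eqref{e: e diss} to show that the trajectory has finite length, and then identify its limit. First I would fix an accumulation point $\tilde u$ of $u(t)$ as $t\to\infty$, which exists by boundedness of $u$; the preliminary analysis just above the theorem shows $\tilde u\in\mathcal E$. By monotonicity of $E(u(t))$ in \eqref{e: e diss}, $E(u(t))$ converges to some $E_\infty$, and continuity forces $E(\tilde u)=E_\infty$. After subtracting a constant from $E$, I may assume $E_\infty=0$; if $E(u(t_0))=0$ for some finite $t_0$, then \eqref{e: e diss} forces $u'\equiv 0$ afterwards and $u(t_0)=\tilde u$, so from now on assume $E(u(t))>0$ for every $t\geq 0$.

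The core calculation is to differentiate $E(u)^\theta$. Choose $c,r,\theta$ as in Theorem~\ref{t: Loj in} for the point $\tilde u$. So long as $u(t)$ lies in the open ball $B(\tilde u,r)$, using $u'=-\nabla E(u)$ and \eqref{loja ineq},
$$-\frac{d}{dt}E(u(t))^\theta \;=\; \theta\, E(u(t))^{\theta-1}\|\nabla E(u(t))\|^2 \;\geq\; c\theta\,\|u'(t)\|.$$
Integrating on any interval $[t_0,t_1]$ on which the trajectory stays in $B(\tilde u,r)$ yields the arc-length bound $\int_{t_0}^{t_1}\|u'(s)\|\,ds\leq (c\theta)^{-1}E(u(t_0))^\theta$.

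The main obstacle, and really the heart of the argument, is the trapping step: I need to guarantee the trajectory, once close enough to $\tilde u$, never escapes. Since $\tilde u$ is an accumulation point and $E(u(t))^\theta\to 0$, I can pick $t_0$ so large that
$$\|u(t_0)-\tilde u\|+(c\theta)^{-1}E(u(t_0))^\theta<r.$$
A continuity argument then confines $u(t)$ to $B(\tilde u,r)$ for all $t\geq t_0$: if $t^\ast:=\sup\{t\geq t_0:u([t_0,t])\subset B(\tilde u,r)\}$ were finite, applying the arc-length bound on $[t_0,t^\ast)$ together with the triangle inequality would give $\|u(t^\ast)-\tilde u\|<r$, contradicting maximality. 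Consequently $\int_{t_0}^\infty\|u'\|\,dt<\infty$, so $u(t)$ admits a limit; since $\tilde u$ is already an accumulation point, this limit must equal $\tilde u$.

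For the rates, combine \eqref{e: e diss} with \eqref{loja ineq} to obtain, for $t\geq t_0$,
$$-\frac{d}{dt}E(u(t))\;=\;\|\nabla E(u(t))\|^2\;\geq\; c^2\,E(u(t))^{2(1-\theta)}.$$
When $\theta=\tfrac12$ this is a linear differential inequality, yielding $E(u(t))\leq E(u(t_0))e^{-c^2(t-t_0)}$; combined with the arc-length estimate $\|u(t)-\tilde u\|\leq(c\theta)^{-1}E(u(t))^\theta$ integrated from $t$ to $\infty$, it gives $\|u(t)-\tilde u\|=O(e^{-\delta t})$ with $\delta=\tfrac12 c^2$. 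When $0<\theta<\tfrac12$, separation of variables gives $E(u(t))=O(t^{-1/(1-2\theta)})$, and the same arc-length bound transfers this decay to $\|u(t)-\tilde u\|=O(t^{-\theta/(1-2\theta)})$, matching \eqref{e: L conv rate}.
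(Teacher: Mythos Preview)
Your proof is correct and follows essentially the same approach as the paper, which defers the argument to the infinite-dimensional version (Theorem~\ref{t: conv}): differentiate $E(u)^\theta$, apply the \L ojasiewicz inequality to bound the arc-length, and extract the rates from the differential inequality $-\frac{d}{dt}E\geq c^2 E^{2(1-\theta)}$. The only cosmetic difference is that the paper handles the confinement step by unifying the \L ojasiewicz constants over the whole (compact, connected) $\omega$-limit set and using $\operatorname{dist}(u(t),\omega(u))\to 0$, whereas you use the classical single-point trapping argument; both are standard and equivalent here.
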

\mnote{comment about angle cond}
    \section{Infinite-dimensional setting}\label{c: inf dim}
    In 1983, Leon Simon successfully extended the \L ojasiewicz inequality into Hilbert spaces in order to treat semi-linear parabolic equations~\cite{simon1983asymptotics}. The main idea is to represent the Hilbert space as a direct product of the kernel and the range of the Hessian operator of the analytic energy. Since the nullity of the Hessian operator represents the degree of degeneracy of the gradient flow, the energy needs further restriction to satisfy \L ojasiewicz inequality: namely, the Hessian is semi-Fredholm. 
    
    In this section, we look over Simon's work on the infinite dimensional \L ojasiewicz inequality.
    We will use the result in the later chapters to deduce convergence results.
    \mnote{describe more?}
    \paragraph{\textbf{Analytic functions on Banach spaces}} The following information about analytic functions on Banach space can be found on \cite{haraux2015convergence}.
\begin{definition}[Analyticity]
       Let $X,Y$ be two real Banach space and $a\in X$. Let $U$ be an open neighborhood of $a$ in $\mathcal{V}$. A map $f:U\to Y$ is called \emph{analytic} at $a$ if there exists $r>0$ and a sequence of $n$-linear, continuous, symmetric maps $(M_n)_{n\in\N}$ fulfilling the following conditions:
\begin{enumerate}
    \item $\sum_{n\in\N}\Vert M_n\Vert_{\mathcal{L}_n(X,Y)}r^n<\infty$ where 
    $$\Vert M_n\Vert_{\mathcal{L}_n(X,Y)}:=\sup\{\Vert M_n(x_1,x_2,\dots,x_n)\Vert_Y, \sup_i\Vert x_i\Vert_X\leq 1\},$$
    \item $\overline{B}(a,r)\subset U$,
    \item $\forall h\in \overline{B}(0,r)$, $f(a+h)=f(a) + \sum_{n\geq1}M_n(h^{(n)})$ where $h^{(n)}=\underbrace{(h,\dots,h)}_{n \text{\;times}}$.
\end{enumerate}
Moreover, f is \emph{analytic} on the open set $U$ if $f$ is analytic at every point of $U$.
\end{definition}

The following properties will be useful to check analyticity of functionals.
\begin{theorem}[Composition of Analytic Maps]
    Let $Z$ be a Banach space, $V$ be an open neighborhood of $f(a)$ and $g:V\to Z$ be analytic at $f(a)$. Then the map $g\circ f$ is analytic at $a$ with values in $Z$.
\end{theorem}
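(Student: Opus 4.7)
The plan is to construct the Taylor series of $g\circ f$ at $a$ by formal substitution of the series for $f$ into the series for $g$, and then to justify absolute convergence in a sufficiently small ball. Write the given expansions
\begin{align*}
f(a+h) &= f(a) + \sum_{n\geq 1} M_n(h^{(n)}), \quad \|h\|_X \leq r, \\
g(f(a)+k) &= g(f(a)) + \sum_{m\geq 1} N_m(k^{(m)}), \quad \|k\|_Y \leq s,
\end{align*}
with $\sum_n \|M_n\|_{\mathcal{L}_n(X,Y)} r^n < \infty$ and $\sum_m \|N_m\|_{\mathcal{L}_m(Y,Z)} s^m < \infty$. Set $k(h) := f(a+h) - f(a) = \sum_{n\geq 1} M_n(h^{(n)})$ and formally substitute into the $g$-expansion.

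Using the multilinearity of each $N_m$, I expand $N_m(k(h)^{(m)})$ as a multi-indexed sum over $(n_1,\dots,n_m)\in\N_{\geq 1}^m$ and regroup by total degree $N=n_1+\cdots+n_m$. This produces candidate continuous symmetric $N$-linear maps $P_N:X^N\to Z$ whose values on the diagonal are
$$P_N(h^{(N)}) \;=\; \sum_{m=1}^{N}\ \sum_{\substack{n_1+\cdots+n_m=N \\ n_i\geq 1}} N_m\bigl(M_{n_1}(h^{(n_1)}),\ldots,M_{n_m}(h^{(n_m)})\bigr),$$
with symmetry in the $N$ slots secured by taking the symmetric part (which leaves the diagonal unchanged).

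The decisive step is the majorant estimate. Since $\rho\mapsto\sum_{n\geq 1}\|M_n\|\rho^n$ is continuous and vanishes at $\rho=0$, I may fix $\rho\in(0,r]$ small enough that $\sum_{n\geq 1}\|M_n\|\rho^n\leq s$ and $f(\overline{B}(a,\rho))\subset V$ (the latter using continuity of $f$ at $a$). Then a termwise norm bound gives
$$\sum_{N\geq 1}\|P_N\|_{\mathcal{L}_N(X,Z)}\,\rho^N \;\leq\; \sum_{m\geq 1}\|N_m\|_{\mathcal{L}_m(Y,Z)}\Bigl(\sum_{n\geq 1}\|M_n\|_{\mathcal{L}_n(X,Y)}\rho^n\Bigr)^{\!m} \;\leq\; \sum_{m\geq 1}\|N_m\|\,s^m \;<\;\infty.$$
This confirms condition (1) of the analyticity definition and, just as importantly, legitimizes the Fubini-type rearrangement needed to pass from the nested double series to the single series $\sum_N P_N(h^{(N)})$. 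Reading off the regrouping then yields condition (3), the identity $g(f(a+h)) = g(f(a)) + \sum_{N\geq 1}P_N(h^{(N)})$ for $\|h\|_X\leq\rho$, while condition (2) is built into the choice of $\rho$.

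The main obstacle is purely organizational: verifying that the $P_N$ are well-defined continuous symmetric multilinear maps with norms obeying the stated bound, and justifying the reindexing of the triple sum over $(m,n_1,\ldots,n_m)$ as an absolutely convergent sum over $N$. Both points collapse once the majorant estimate above is in hand, so beyond the definition of analyticity and the basic theory of absolutely summable series in Banach spaces, no further analytic input is needed.
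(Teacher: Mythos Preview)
The paper does not supply a proof of this theorem; it is quoted as a standard fact from \cite{haraux2015convergence} and used as a black box. Your argument is the classical one---formal substitution of the $f$-series into the $g$-series, with convergence justified by the majorant $\sum_m \|N_m\|\bigl(\sum_n \|M_n\|\rho^n\bigr)^m$---and it is correct as written. The only point worth making explicit is that the norm bound on $P_N$ survives symmetrization because averaging over permutations does not increase the operator norm, but you have already flagged this.
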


\begin{prop}\label{p: fdf anal}
    Let $f\in C^1(U,Y)$. The following properties are equivalent.
    \begin{enumerate}
        \item $f:U\to Y$ is analytic.
        \item $Df:U\to \mathcal{L}(X,Y)$ is analytic.
    \end{enumerate}
    Moreover if
        \[
        f(a+h)=f(a)+\sum_{n\ge 1} M_n(h^{(n)})
        \]
        is the expansion of $f(a+h)$ for all $h$ in the closed ball $\Bar{B}(0,r)\subset U-a$ then 
        \[
        Df(a+h) =M_1 +\sum_{n\ge 2}nM_n(h^{(n-1)},\cdot)
        \]
        is the expansion of $Df(a+h)$ for all $h$ in the open ball $B(0,r)$.
\end{prop}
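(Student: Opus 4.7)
The plan is to prove the equivalence by establishing each direction separately, with the explicit expansion of $Df$ falling out of the forward direction.

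For $(1)\Rightarrow(2)$: starting from the expansion $f(a+h)=f(a)+\sum_{n\ge 1}M_n(h^{(n)})$ on $\bar B(0,r)$, the candidate for $Df(a+h)$ is the formal termwise derivative $M_1+\sum_{n\ge 2}nM_n(h^{(n-1)},\cdot)$. Each term $h\mapsto M_n(h^{(n)})$ has Fréchet derivative $k\mapsto nM_n(h^{(n-1)},k)$: expanding $M_n((h+k)^{(n)})$ multilinearly and using symmetry of $M_n$, the $n$ linear-in-$k$ contributions collapse into this single symmetric term, with operator-norm bound $n\Vert M_n\Vert\Vert h\Vert^{n-1}$. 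To pass derivatives through the infinite sum I would verify uniform convergence of the differentiated series on each closed subball $\bar B(0,\rho)$, $\rho<r$: summability of $\sum_n\Vert M_n\Vert r^n$ forces $\limsup_n\Vert M_n\Vert^{1/n}\le 1/r$, and then the root test gives $\sum_n n\Vert M_n\Vert\rho^{n-1}<\infty$. This controls the error in the usual telescoping estimate for Fréchet differentiability and identifies the sum as the actual derivative on $B(0,r)$, yielding both the stated expansion and analyticity of $Df$. The restriction from the closed ball $\bar B(0,r)$ for $f$ to the open ball $B(0,r)$ for $Df$ reflects precisely the factor of $n$ absorbed by shrinking $\rho$ slightly below $r$.

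For $(2)\Rightarrow(1)$: suppose $Df(a+h)=L_0+\sum_{n\ge 1}N_n(h^{(n)})$ on $\bar B(0,r)$ with $N_n$ symmetric $n$-linear into $\mathcal{L}(X,Y)$. Since $f\in C^1$, the fundamental theorem of calculus along the segment $[a,a+h]$ gives $f(a+h)-f(a)=\int_0^1 Df(a+th)(h)\,dt$. I would substitute the expansion, interchange sum and integral using absolute convergence of $\sum_n\Vert N_n\Vert\Vert h\Vert^n$, and compute $\int_0^1 t^n\,dt=1/(n+1)$, producing $f(a+h)=f(a)+L_0(h)+\sum_{n\ge 1}\frac{1}{n+1}\widetilde N_n(h^{(n+1)})$, where the symmetric $(n+1)$-linear map is the polarization $\widetilde N_n(h_1,\dots,h_{n+1})=\frac{1}{n+1}\sum_{i=1}^{n+1}N_n(h_1,\dots,\widehat{h_i},\dots,h_{n+1})(h_i)$, with $\Vert\widetilde N_n\Vert\le\Vert N_n\Vert$. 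The bound $\sum_n\frac{1}{n+1}\Vert N_n\Vert\Vert h\Vert^{n+1}\le \Vert h\Vert\sum_n\Vert N_n\Vert\Vert h\Vert^n$ then delivers the required summability on $\bar B(0,r)$.

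The main obstacle I anticipate lies in the forward direction: legitimizing termwise differentiation of an infinite series of Banach-valued maps. Passing from pointwise identification of the derivative of each monomial to identifying the full sum as $Df(a+h)$ requires the uniform error estimate $\Vert f(a+h+k)-f(a+h)-S(h)(k)\Vert = o(\Vert k\Vert)$, where $S(h)$ denotes the candidate series. I would handle this by splitting the sum at some $N$, applying finite-sum differentiability to the first $N$ terms directly, and controlling the remainder via the $\rho$-uniform bound $\sum_{n>N}n\Vert M_n\Vert\rho^{n-1}$ furnished by the root test above. Once this step is in place, the converse is routine bookkeeping around the symmetrization $\widetilde N_n$.
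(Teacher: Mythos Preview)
The paper does not supply its own proof of this proposition; it is quoted as a background result from Haraux--Jendoubi (the paragraph introducing analytic functions on Banach spaces explicitly refers the reader to \cite{haraux2015convergence}). There is therefore nothing in the paper to compare your argument against.

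That said, your outline is the standard proof and is correct. The forward direction is handled exactly as you describe: the root test converts $\sum_n\Vert M_n\Vert r^n<\infty$ into $\sum_n n\Vert M_n\Vert\rho^{n-1}<\infty$ for every $\rho<r$, and this uniform bound is precisely what legitimizes termwise differentiation and explains the passage from the closed ball for $f$ to the open ball for $Df$. For the converse, integrating the expansion of $Df$ along $t\mapsto a+th$ and symmetrizing is again the textbook route; your polarization formula for $\widetilde N_n$ and the norm estimate $\Vert\widetilde N_n\Vert\le\Vert N_n\Vert$ are both right.
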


\subsection{\L ojasiewicz inequality in infinite dimensional spaces}
We aim to study the idea how the \L ojasiewicz convergence theorem is introduced to infinite dimensional systems.
Throughout this chapter, we consider
two real Hilbert spaces $\mathcal{V}$, $\mathcal{H}$ with the following hierarchy
\begin{equation*}
    \mathcal{V}\subset \mathcal{H}=\mathcal{H}'\subset \mathcal{V}'
\end{equation*}
with continuous dense embedding and $\mathcal{H}'$, the topological dual of $\mathcal{H}$, is identified with $\mathcal{H}$.

    Chapter 11 of~\cite{haraux2015convergence} starts with two examples of analytic functions which do not admit a \L ojasiewicz inequality. Each case points out different issues on the Hessian operators of the energies: 1. the range of the Hessian is not closed, 2. the kernel of the Hessian is infinite dimensional. Thus it is natural to assume the Hessian to be \textit{semi-Fredholm}.
    \begin{definition}
       Let $W$, $Z$ be Banach spaces and $A\in \mathcal{L}(W;Z)$ a bounded linear operator. We say $A$ is \emph{semi-Fredholm} if 
       \begin{enumerate}
           \item $N(A)$ is finite dimensional, and
           \item $R(A)$ is closed.
       \end{enumerate}
    \end{definition}
    The following theorem will come in handy.
    \begin{theorem}[Theorem 2.2.5 of~\cite{haraux2015convergence}]\label{t: fredcom}
        Let $A:W\to Z$ be a semi-Fredholm and $G:W\to Z$ be a compact operator. Then $A+G:W\to Z$ is a semi-Fredholm operator.
    \end{theorem}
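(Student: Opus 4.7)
The plan is to verify directly the two defining properties of $T := A+G$ required for semi-Fredholmness: that $N(T)$ is finite-dimensional and that $R(T)$ is closed. The organizing observation is that on $N(T)$ one has $Ax = -Gx$, so images under $A$ of bounded sequences lying in $N(T)$ are precompact thanks to compactness of $G$. Combining this with the semi-Fredholm structure of $A$ — in particular, that the open mapping theorem makes $A|_{W_1}\colon W_1\to R(A)$ a topological isomorphism for any closed complement $W_1$ of $N(A)$ (which exists because $\dim N(A)<\infty$ gives a topological complement) — will let me extract convergent subsequences that the raw hypotheses on $A$ alone would not supply.

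\textbf{Finite kernel.} I would argue by contradiction: if $\dim N(T)=\infty$, Riesz's lemma produces a sequence $(x_n)\subset N(T)$ with $\|x_n\|=1$ and $\|x_n-x_m\|\geq 1/2$ for $n\neq m$. Compactness of $G$ furnishes a subsequence with $Gx_{n_k}\to y$, so $Ax_{n_k}=-Gx_{n_k}\to -y$. Fixing the decomposition $W=N(A)\oplus W_1$ and writing $x_{n_k}=\alpha_k+\beta_k$, the isomorphism $A|_{W_1}\colon W_1\to R(A)$ forces $\beta_k\to \beta\in W_1$, and then $\alpha_k = x_{n_k}-\beta_k$ is bounded in the finite-dimensional space $N(A)$, so a sub-subsequence converges to some $\alpha\in N(A)$. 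Hence $x_{n_k}$ converges along a sub-subsequence, contradicting the $1/2$-separation.

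\textbf{Closed range.} Let $Tx_n\to y$. Working in the (well-defined, as $N(T)$ is closed) quotient $W/N(T)$, I would first establish that $(\|[x_n]\|_{W/N(T)})$ is bounded; once this is known, choosing representatives $\tilde x_n$ with $T\tilde x_n=Tx_n$ and $\|\tilde x_n\|\leq 2\|[x_n]\|_{W/N(T)}$ lets me rerun the extraction used in the kernel step — compactness of $G$ along $(\tilde x_n)$, then the semi-Fredholm structure of $A$ — to produce $\tilde x_{n_k}\to x$ with $Tx=y$. The main obstacle, and where I expect the argument to need the most care, is ruling out the unbounded case: if $\|[x_n]\|_{W/N(T)}\to \infty$, the above choice of representatives gives $\mathrm{dist}(u_n, N(T))\geq 1/2$ for $u_n := \tilde x_n/\|\tilde x_n\|$, while $Tu_n\to 0$ because $(Tx_n)$ is bounded; the same extraction then yields a unit-norm limit $u$ with $Tu=0$, so $u\in N(T)$, contradicting the uniform lower bound on $\mathrm{dist}(u_n, N(T))$. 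The delicate bookkeeping — tracking which sequences are bounded, selecting representatives in the quotient, and threading between the decompositions relative to $N(A)$ and $N(T)$ — is what will consume most of the care in the full write-up.
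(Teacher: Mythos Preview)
The paper does not supply its own proof of this statement: it is quoted verbatim as Theorem~2.2.5 of \cite{haraux2015convergence} and used as a black box (for instance in Proposition~\ref{p: asfn}), so there is nothing in the paper to compare your argument against.

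That said, your outline is a correct and standard proof. The two ingredients you isolate --- Riesz's lemma to manufacture a separated unit sequence in a putatively infinite-dimensional kernel, and the extraction mechanism ``$G$ compact $\Rightarrow Gx_{n_k}$ converges $\Rightarrow Ax_{n_k}$ converges $\Rightarrow$ invert $A|_{W_1}$ and use $\dim N(A)<\infty$'' --- are exactly the right ones, and you reuse the same extraction cleanly in both the unbounded-quotient contradiction and the bounded-quotient convergence step for the closed-range part. One small point worth making explicit in the write-up: when you assert $\mathrm{dist}(u_n,N(T))\geq 1/2$, this follows from $\|[u_n]\|_{W/N(T)} = \|[x_n]\|/\|\tilde x_n\| \geq \|[x_n]\|/(2\|[x_n]\|)$, and you should pass to a subsequence along which $\|[x_n]\|\to\infty$ before normalizing. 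Otherwise the argument is complete.
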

    Conceptually, the kernel of the Hessian contains degenerate directions of the gradient flow. With the semi-Fredholm condition, the Hilbert space can be split into the direct product of two spaces: one where the Hessian is invertible and another which contains finitely many independent degenerate directions so that we can invoke the \L ojaisewicz inequality.
    
    The simplest case is when the Hessian is an isomorphism, in which case we have a linear convergence rate. Without loss of generality, assume that $0$ is a critical point of $E.$
     \begin{prop}[Proposition 11.2.1 of~\cite{haraux2015convergence}]
        Assume that $D^2E(0)\in L(\mathcal{V},\mathcal{V}')$ is an isomorphism. Then the \L ojasiewicz inequality is satisfied with the exponent $\theta=\frac{1}{2}.$ In other words, there exists $c>0$ and $r >0$ such that 
        \begin{equation*}
            \Vert u\Vert_V<r \Longrightarrow \Vert \nabla E(u)\Vert_{V'}\ge c|E(u)|^{1/2}.
        \end{equation*}
    \end{prop}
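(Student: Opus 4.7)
The plan is to bootstrap from the isomorphism hypothesis on $D^2E(0)$ via a second order Taylor expansion of $E$ about the critical point $0$. Without loss of generality we assume $E(0)=0$, and since $E\in C^2$ (in fact analytic by the ambient setting of the chapter), Proposition~\ref{p: fdf anal} gives an expansion of $\nabla E$ near $0$ in $\mathcal V'$. Because $\nabla E(0)=0$, this yields
\begin{equation*}
\nabla E(u) \;=\; D^2E(0)\,u \;+\; R_1(u), \qquad \|R_1(u)\|_{\mathcal V'}=o(\|u\|_{\mathcal V}) \text{ as } \|u\|_{\mathcal V}\to 0,
\end{equation*}
and, correspondingly,
\begin{equation*}
E(u) \;=\; \tfrac12\,\langle D^2E(0)\,u,\,u\rangle_{\mathcal V',\mathcal V} \;+\; R_2(u), \qquad |R_2(u)|=o(\|u\|_{\mathcal V}^{\,2}).
\end{equation*}

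Next I would use the isomorphism hypothesis to extract a coercive lower bound. Since $D^2E(0)\in \mathcal L(\mathcal V,\mathcal V')$ is a bijection between Banach spaces, the open mapping theorem gives a constant $\alpha>0$ such that $\|D^2E(0)\,u\|_{\mathcal V'}\ge \alpha\,\|u\|_{\mathcal V}$ for all $u\in\mathcal V$. Inserting this into the expansion for $\nabla E$, we get
\begin{equation*}
\|\nabla E(u)\|_{\mathcal V'} \;\ge\; \alpha\,\|u\|_{\mathcal V} - \|R_1(u)\|_{\mathcal V'} \;\ge\; \tfrac{\alpha}{2}\,\|u\|_{\mathcal V}
\end{equation*}
whenever $\|u\|_{\mathcal V}<r_1$, for a sufficiently small $r_1>0$.

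For the upper bound on the energy, the duality pairing estimate
$|\langle D^2E(0)\,u,u\rangle_{\mathcal V',\mathcal V}|\le \|D^2E(0)\|\,\|u\|_{\mathcal V}^{\,2}$, combined with the remainder bound on $R_2$, produces a constant $\beta>0$ and $r_2>0$ such that $|E(u)|\le \beta\,\|u\|_{\mathcal V}^{\,2}$ for $\|u\|_{\mathcal V}<r_2$. Taking square roots and chaining with the previous lower bound yields
\begin{equation*}
|E(u)|^{1/2} \;\le\; \sqrt{\beta}\,\|u\|_{\mathcal V} \;\le\; \tfrac{2\sqrt\beta}{\alpha}\,\|\nabla E(u)\|_{\mathcal V'}
\end{equation*}
on $\|u\|_{\mathcal V}<r:=\min(r_1,r_2)$, which is the desired \L ojasiewicz inequality with exponent $\theta=\tfrac12$ and constant $c=\alpha/(2\sqrt\beta)$.

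I do not anticipate a serious obstacle: the argument reduces to a standard perturbation of a continuously invertible linear operator. The only subtlety worth flagging is book-keeping across the Gelfand triple $\mathcal V\subset\mathcal H\subset \mathcal V'$, since $\nabla E$ is valued in $\mathcal V'$ while $u\in\mathcal V$, so one must use the duality pairing (rather than the $\mathcal H$ inner product) when expanding $E(u)$ in terms of $D^2E(0)u$, and invoke the open mapping theorem rather than any inner-product coercivity to obtain the constant $\alpha$.
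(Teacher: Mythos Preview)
Your argument is correct and is essentially the standard proof of this result. Note, however, that the paper does not actually supply its own proof of this proposition: it is stated as a quotation of Proposition 11.2.1 from \cite{haraux2015convergence} and left unproved in the text. So there is nothing in the paper to compare against beyond observing that your Taylor-expansion-plus-open-mapping-theorem argument is exactly the kind of elementary perturbation proof one expects for this statement, and it would be a faithful reconstruction of the cited source.
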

    
    Now we discuss the case when the Hessian of the energy is semi-Fredholm. The following theorem can be found on Chapter 11 of Haraux and Jendoubi's book~\cite{haraux2015convergence}, but the original idea is attributed to Simon~\cite{simon1983asymptotics}. 
    
    Often, it can be tricky to check analyticity of energy in infinite dimensional spaces. What we actually need is analyticity of the finite dimensional copy of the energy on the kernel of the Hessian of the energy. 
    So Haraux and Jendoubi constructed an abstract framework using a Banach space $Z$ such that $\ker A\subset Z\subset \mathcal{H}$ to achieve this.
    So it can be helpful to restrict the space further, possibly the one with Banach algebra, to make it simpler. The following proposition gives us an analogue of orthogonal decomposition of $H=\ker A\otimes  R(A)$, with respect to the Hilbert structure of $H$, also known as Lyapunov–Schmidt reduction.
    \begin{prop}[Proposition 11.2.6 of~\cite{haraux2015convergence}]
        Assume that $A:=D^2E(0):\mathcal{V}\to\mathcal{V}'$ is a semi-Fredholm operator. Let $\Pi$ be the orthogonal projection mapping to $\ker A$ in $H$.
        Let $Z$ be a Banach space such that $\ker A\subset Z\subset H$ with continuous and dense imbedding. Then $W:=(\Pi+A)^{-1}Z$ is a Banach space isomorphic to $Z$ with respect to $\Vert w\Vert_W=\Vert (\Pi+A)w\Vert_Z.$
    \end{prop}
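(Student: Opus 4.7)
The strategy is to reduce the entire claim to showing that $\Pi+A:\mathcal{V}\to\mathcal{V}'$ is a Banach space isomorphism. Once this is known, $W:=(\Pi+A)^{-1}(Z)$ is a well-defined linear subspace of $\mathcal{V}$, the restriction $\Pi+A:W\to Z$ is a bijection, and the prescription $\Vert w\Vert_W:=\Vert(\Pi+A)w\Vert_Z$ is a norm on $W$ for which $\Pi+A$ becomes an isometric surjection onto $Z$; completeness of $Z$ then transports to $W$, giving the Banach-space isomorphism claimed.

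To establish that $\Pi+A$ is an isomorphism I would argue in three stages. First, the image $\Pi(\mathcal{V})$ lies in the finite-dimensional subspace $\ker A\subset \mathcal{V}\subset \mathcal{V}'$, so $\Pi:\mathcal{V}\to\mathcal{V}'$ has finite rank and is compact; combined with the hypothesis that $A$ is semi-Fredholm, Theorem~\ref{t: fredcom} gives that $\Pi+A$ is semi-Fredholm. Second, I would promote this to Fredholm of index zero by invoking the symmetry of the Hessian: since $A=D^2E(0)$, one has $\langle Au,v\rangle_{\mathcal{V}',\mathcal{V}}=\langle Av,u\rangle_{\mathcal{V}',\mathcal{V}}$ for all $u,v\in\mathcal{V}$, and reflexivity of the Hilbert space $\mathcal{V}$ identifies $A^{*}$ with $A$. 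The closed range theorem then yields $R(A)=(\ker A)^{\perp}$ in the $\langle\mathcal{V}',\mathcal{V}\rangle$ duality, and consequently $\mathrm{coker}(A)\cong (\ker A)^{*}$ has the same finite dimension as $\ker A$. Thus $A$, and therefore $\Pi+A$, is Fredholm of index zero.

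Third, I would verify injectivity of $\Pi+A$. Writing $v=v_{N}+v_{\perp}$ with $v_{N}=\Pi v\in\ker A$ and $v_{\perp}\in\mathcal{V}\cap(\ker A)^{\perp}$ (orthogonality in $\mathcal{H}$), the equation $(\Pi+A)v=0$ reduces to $v_{N}+Av_{\perp}=0$ in $\mathcal{V}'$. Pairing with $v_{N}\in\ker A\subset \mathcal{V}$ and using symmetry,
\begin{equation*}
0=\langle v_{N}+Av_{\perp},v_{N}\rangle_{\mathcal{V}',\mathcal{V}}=(v_{N},v_{N})_{\mathcal{H}}+\langle v_{\perp},Av_{N}\rangle_{\mathcal{V},\mathcal{V}'}=\Vert v_{N}\Vert_{\mathcal{H}}^{2},
\end{equation*}
so $v_{N}=0$, which forces $Av_{\perp}=0$ and hence $v_{\perp}\in\ker A\cap(\ker A)^{\perp}=\{0\}$. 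Combined with index zero this gives surjectivity, and the open mapping theorem upgrades the bijection to a topological isomorphism $\mathcal{V}\to\mathcal{V}'$, which closes the plan. The delicate step is the second one, where the symmetry of the Hessian is essential: for a generic semi-Fredholm operator the cokernel can be infinite-dimensional, so without Fredholm index zero we could not deduce surjectivity from the kernel computation alone.
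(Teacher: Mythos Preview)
The paper does not prove this proposition; it is quoted verbatim from Haraux--Jendoubi as part of the toolkit for the \L ojasiewicz framework. Your argument is correct and is essentially the standard one: the heart of the matter is that $\Pi+A:\mathcal{V}\to\mathcal{V}'$ is a topological isomorphism, after which the statement about $W$ and its norm is a formal transport of structure. Indeed, the paper's own ``Simplified framework'' immediately following this proposition invokes exactly this isomorphism (in the line ``$D\mathcal{N}(0)=\Pi+\nabla^2 E(0)$ is an isomorphism from $\mathcal{V}$ to $\mathcal{V}'$'') without justification, so your three-step proof---compact perturbation to retain semi-Fredholm, symmetry of the Hessian to get index zero, and the kernel computation via $H$-orthogonal splitting---fills precisely the gap the thesis leaves to the reference.
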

    This structure provides the key to transfer the \L ojasiewicz inequality from the finite dimensional space to the infinite dimensional space. In this setting, we can invoke the \L ojasiewicz inequality in the finite dimensional kernel of the Hessian, which, in fact, contains all the degeneracy. For the details of the proof, we refer Chapter 11 of Haraux and Jendoubi's book~\cite{haraux2015convergence}. Without loss of generality, we let $0$ be a critical point of $E.$
    \begin{theorem}[Theorem 11.2.7 of~\cite{haraux2015convergence}]\label{t: zloj}
        Assume $A:=D^2E(0)$ is a semi-Fredholm operator and that $N:=\ker A\subset Z$. Assume moreover that: $E:U\to\R$ is analytic where $U\subset W$ is an open neighborhood of $0$, in addition $\nabla E(U)\subset Z$ and $\nabla E: U\to Z$ is analytic. Then there exists $\theta\in(0,1/2]$, $r>0$ and $c>0$ such that
        \begin{equation*}
            \Vert u\Vert_V<r \Longrightarrow \Vert \nabla E(u)\Vert_{V'}\ge c|E(u)|^{1-\theta}.
        \end{equation*}
    \end{theorem}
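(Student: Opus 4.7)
My plan is to prove the inequality by Lyapunov--Schmidt reduction followed by the finite-dimensional Lojasiewicz inequality of Theorem~\ref{t: Loj in}. The semi-Fredholm hypothesis on $A := D^2E(0)$ gives the orthogonal splitting $\mathcal{H} = N \oplus N^\perp$ with $N = \ker A$ finite-dimensional, and $A$ restricted to $N^\perp$ is an isomorphism onto its closed range $R(A)$. Writing any small $u$ as $u = v + w$ with $v = \Pi u \in N$ and $w \in N^\perp$, I would apply the analytic implicit function theorem in the Banach space $W \cong Z$ of Proposition~11.2.6 to solve $(I-\Pi)\nabla E(v + w) = 0$ for $w = \phi(v)$ on a finite-dimensional neighborhood of $0$ in $N$; the invertibility of $A$ on $N^\perp$ together with the analyticity hypothesis on $\nabla E : U \to Z$ supplies exactly the data required, and the analytic implicit function theorem then gives $\phi$ analytic.

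Setting $\Gamma(v) := E(v + \phi(v))$ produces a real analytic function on a neighborhood of $0 \in N$, so the classical finite-dimensional Lojasiewicz inequality furnishes $\theta \in (0, 1/2]$ and constants $C_0, \rho > 0$ with $\|\nabla \Gamma(v)\| \geq C_0 |\Gamma(v)|^{1-\theta}$ for $\|v\| \leq \rho$. The chain rule combined with the defining relation $(I-\Pi)\nabla E(v + \phi(v)) = 0$ yields $\nabla \Gamma(v) = \Pi \nabla E(v + \phi(v))$, hence $\|\nabla \Gamma(v)\|_{\mathcal{V}'} \leq \|\nabla E(v + \phi(v))\|_{\mathcal{V}'}$. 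To relate $E(u)$ to $\Gamma(v)$, I would Taylor-expand $E$ about $v + \phi(v)$ in the direction $w - \phi(v) \in N^\perp$: the first-order term vanishes by the definition of $\phi$, and the second-order term is controlled by $C\|w - \phi(v)\|^2$ using the uniform bound on $D^2E$. Since $A$ is boundedly invertible on $N^\perp$, one also has $\|w - \phi(v)\|_{\mathcal{V}} \leq C \|(I-\Pi)\nabla E(u)\|_{\mathcal{V}'}$, so
\[
|E(u) - \Gamma(v)| \leq C \|\nabla E(u)\|_{\mathcal{V}'}^2.
\]

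Combining these estimates via the elementary inequality $(a+b)^{1-\theta}\le a^{1-\theta}+b^{1-\theta}$ (valid since $1-\theta \leq 1$) gives
\[
|E(u)|^{1-\theta} \leq |\Gamma(v)|^{1-\theta} + |E(u)-\Gamma(v)|^{1-\theta} \leq C_1 \|\nabla E(u)\|_{\mathcal{V}'} + C_2 \|\nabla E(u)\|_{\mathcal{V}'}^{2(1-\theta)},
\]
and since $2(1-\theta) \geq 1$ with $\nabla E$ bounded near $0$, the second term is absorbed into the first after shrinking $r$. The principal technical obstacle is the bookkeeping between the various spaces: the implicit function theorem for $\phi$ has to be performed in $W \cong Z$ rather than in $\mathcal{V}$, so one must verify that $\phi$ lands in $N^\perp \cap W$ with analytic dependence and then use the continuous embeddings $W\hookrightarrow\mathcal{V}\hookrightarrow\mathcal{H}$ to transfer every estimate to the $\mathcal{V}'$ norm appearing in the conclusion. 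A related delicate point is checking that the Taylor remainder estimate and the lower bound $\|w-\phi(v)\|\lesssim \|(I-\Pi)\nabla E(u)\|_{\mathcal{V}'}$ are compatible with the asymmetric $\mathcal{V}/\mathcal{V}'$ duality; this requires careful use of the bounded inverse of $A$ from $R(A)\subset \mathcal{V}'$ back to $N^\perp\subset \mathcal{V}$.
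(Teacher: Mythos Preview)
Your proposal is correct and follows essentially the same Lyapunov--Schmidt reduction strategy that the paper describes (citing Haraux--Jendoubi) and reproduces explicitly for the simplified variant Theorem~\ref{t: vloja}: reduce to a finite-dimensional analytic function $\Gamma$ on $\ker A$, invoke the classical \L ojasiewicz inequality there, and transfer back via the two estimates $\Vert\nabla\Gamma\Vert\le C\Vert\nabla E(u)\Vert_{\mathcal{V}'}$ and $|E(u)-\Gamma|\le K\Vert\nabla E(u)\Vert_{\mathcal{V}'}^2$ (Proposition~\ref{p: gam}). The only cosmetic difference is that the paper, following Haraux--Jendoubi, builds $\Gamma$ by inverting the full map $\mathcal{N}(u)=\Pi u+\nabla E(u)$ via the inverse function theorem, whereas you solve $(I-\Pi)\nabla E(v+w)=0$ for $w=\phi(v)$ via the implicit function theorem; these are equivalent parametrizations of the same reduced manifold, and your careful remarks about working in $W\cong Z$ to secure analyticity of $\phi$ are exactly the bookkeeping the $Z$-framework of Proposition~11.2.6 is designed to handle.
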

    
\paragraph{\textbf{Simplified framework}}
Luckily, in the models we study, the energy is analytic in $\mathcal{V}$. Even though this is a simpler case, it is not covered by Theorem~\ref{t: zloj}. We briefly go over how this works.
Assume that $E:\mathcal{V}\to\R$ is analytic.
\begin{prop}
    Assume that $A=D^2E(0)$ is a semi-Fredholm operator and let
    \begin{align*}
        \mathcal{N}\;:\;&\mathcal{V}\longrightarrow \mathcal{V}'\\
        &u\longmapsto \Pi u+\nabla E(u).
    \end{align*}
    Then there exist a neighborhood of $0$, $W_1(0)$ in $\mathcal{V}$, a neighborhood of $0$, $W_2(0)$ in $\mathcal{V}'$ and an analytic map $\Psi:W_2(0)\to W_1(0)$ which satisfies
    \begin{align*}
        \mathcal{N}(\Psi(f))=f &\quad\forall f\in W_2(0),\\
        \Psi(\mathcal{N}(u))=u &\quad\forall u\in W_1(0),\\
        \Vert \Psi(f)-\Psi(g)\Vert_V \leq C_1\Vert f-g\Vert_{V'}& \quad\forall f,g\in W_2(0), \;\; C_1>0.
    \end{align*}
\end{prop}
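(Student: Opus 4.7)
The plan is to recognize this proposition as an instance of the analytic inverse function theorem in Banach spaces applied to $\mathcal{N}:\mathcal{V}\to\mathcal{V}'$. Two ingredients are required: analyticity of $\mathcal{N}$ near $0$, and the fact that $D\mathcal{N}(0)=\Pi+A$ is a topological isomorphism from $\mathcal{V}$ onto $\mathcal{V}'$.

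First I would verify analyticity. Since $E:\mathcal{V}\to\mathbb{R}$ is analytic, Proposition~\ref{p: fdf anal} yields that $\nabla E:\mathcal{V}\to\mathcal{V}'$ is analytic. The orthogonal projection $\Pi$ onto the finite-dimensional subspace $\ker A$ is bounded on $\mathcal{H}$, and composition with the continuous embeddings $\mathcal{V}\subset\mathcal{H}$ and $\ker A\subset\mathcal{V}\subset\mathcal{V}'$ realizes $\Pi$ as a bounded linear, hence analytic, operator from $\mathcal{V}$ into $\mathcal{V}'$. Thus $\mathcal{N}$ is analytic at $0$ with differential $\Pi+A$.

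Next I would show $\Pi+A$ is a topological isomorphism using the splitting $\mathcal{V}=N\oplus M$, where $N:=\ker A$ (finite-dimensional by semi-Fredholmness) and $M:=(I-\Pi)\mathcal{V}$. On $N$ the operator acts as the identity, while on $M$ it reduces to $A$, which is injective (since $\ker A\cap M=\{0\}$) and has closed range (semi-Fredholm), and is hence an isomorphism onto $R(A)$. To align this with the target side I would establish the topological direct sum $\mathcal{V}'=N\oplus R(A)$: symmetry of the Hessian, $\langle Au,v\rangle=\langle Av,u\rangle$, gives $R(A)\subset N^{\circ}$ (the annihilator of $N$ in $\mathcal{V}'$); the closed range theorem applied to the self-adjoint operator $A$ upgrades this inclusion to the equality $R(A)=N^{\circ}$, and $N\cap N^{\circ}=\{0\}$ because for $n\in N$ the Gelfand triple pairing $\langle n,n\rangle_{\mathcal{V}',\mathcal{V}}$ equals $\|n\|_{\mathcal{H}}^{2}$. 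These pieces assemble into the bijectivity and continuous invertibility of $\Pi+A:\mathcal{V}\to\mathcal{V}'$.

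With the isomorphism established, the analytic inverse function theorem produces open neighborhoods $W_1(0)\subset\mathcal{V}$ and $W_2(0)\subset\mathcal{V}'$ together with an analytic bijection $\Psi:W_2(0)\to W_1(0)$ satisfying both inversion identities. The Lipschitz estimate follows from $D\Psi(0)=(\Pi+A)^{-1}$ and the mean value inequality, after shrinking $W_2(0)$ so that $\|D\Psi(f)\|_{\mathcal{L}(\mathcal{V}',\mathcal{V})}\le 2\|(\Pi+A)^{-1}\|$, giving $C_1=2\|(\Pi+A)^{-1}\|$. The main technical subtlety is the identification $R(A)=N^{\circ}$: without the Hessian's symmetry, the ranges of $A$ and of $\Pi$ need not be transversal in $\mathcal{V}'$, so the inverse function theorem hypothesis could fail.
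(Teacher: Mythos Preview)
Your proposal is correct and follows the same strategy as the paper: show $\mathcal{N}$ is analytic via Proposition~\ref{p: fdf anal}, verify that $D\mathcal{N}(0)=\Pi+A$ is an isomorphism $\mathcal{V}\to\mathcal{V}'$, and invoke the analytic inverse function theorem. The paper's own proof is a three-line sketch that simply asserts the isomorphism (implicitly relying on the Haraux--Jendoubi framework cited just before), whereas you supply the splitting argument $\mathcal{V}'=N\oplus R(A)$ and the mean-value bound for the Lipschitz constant; these are the details the paper omits, but the route is identical.
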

\begin{proof}
     $\mathcal{N}$ is analytic since by Proposition~\ref{p: fdf anal}, $\nabla E:\mathcal{V}\to\mathcal{V}'$ is analytic. $D\mathcal{N}(0)=\Pi+\nabla^2 E(0)$ is an isomorphism from $\mathcal{V}$ to $\mathcal{V}'$. So by the inverse function theorem we have the result.
\end{proof}
[Finite dimensional copy of the energy]
We identify $\ker(A)$ as a subspace in $\mathcal{V}'$ by the structure of $\mathcal{H}$. Let $(\varphi_1,\varphi_2,\cdots,\varphi_d)$ denote an orthonormal basis of $\ker(A)$ relatively to the inner product of $H.$ Let's define the coordinate chart $\varphi(\xi):= \sum_{j=1}^d\xi_j\varphi_j$. Then in a small enough neighborhood of $0$ we achieve $\varphi(\xi)\in W_2(0)$. Now we define the map $\Gamma$ by
\begin{equation*}
    \Gamma(\xi)=E(\Psi(\varphi(\xi))).
\end{equation*}

The following proposition suggests that the finite dimensional copy of the energy $\Gamma$ is a good enough approximation of the energy $E$. We refer~\cite{haraux2015convergence} for the proof.
\begin{prop}[Proposition 11.2.4 of~\cite{haraux2015convergence}]\label{p: gam}
    Let $u\in W_1(0)$ be such that $\Pi(u)=\sum_{j=1}^d\xi_j\varphi_j\in W_2(0)$. Then for some constants $C,K>0$ independent of $u$, we have
    \begin{align*}
        \Vert \nabla \Gamma(\xi)\Vert_{\R^d}&\leq C\Vert \nabla E(u)\Vert_{\mathcal{V}'}\\
        |E(u)-\Gamma(\xi)|&\leq K\Vert \nabla E(u)\Vert_{\mathcal{V}'}^2.
    \end{align*}
\end{prop}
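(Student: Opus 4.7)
The plan is to introduce the comparison point $v := \Psi(\varphi(\xi))$, which by construction satisfies $\mathcal{N}(v) = \varphi(\xi) = \Pi u$ (the hypothesis). Subtracting, $\mathcal{N}(u) - \mathcal{N}(v) = \nabla E(u)$, and applying the Lipschitz estimate for $\Psi$ from the previous proposition gives $\Vert u - v\Vert_\mathcal{V} \leq C_1 \Vert\nabla E(u)\Vert_{\mathcal{V}'}$. Next, projecting the identity $\Pi v + \nabla E(v) = \Pi u$ by $\Pi$ and using $\Pi v, \Pi u \in \ker A$ forces $\nabla E(v) \in \ker A$, which yields the crucial identity $\nabla E(v) = \Pi(u - v)$. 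Because $\ker A$ is finite-dimensional the $\mathcal{H}$ and $\mathcal{V}'$ norms are equivalent on it, and combining this with local Lipschitz continuity of $\nabla E$ produces $\Vert \nabla E(v)\Vert_\mathcal{H} \leq C\Vert \nabla E(u)\Vert_{\mathcal{V}'}$.

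For the gradient estimate I would differentiate directly, obtaining $\partial_j \Gamma(\xi) = \langle \nabla E(v), D\Psi(\varphi(\xi))[\varphi_j] \rangle_{\mathcal{V}', \mathcal{V}}$. By Proposition~\ref{p: fdf anal} the analyticity of $\Psi$ gives that $\Vert D\Psi(\varphi(\xi))\Vert_{\mathcal{L}(\mathcal{V}', \mathcal{V})}$ is uniformly bounded on a small enough neighborhood of $0$, and the $\mathcal{V}'$-norms of the fixed basis vectors $\varphi_j$ are constants. Thus $|\partial_j \Gamma(\xi)| \leq C\Vert \nabla E(v)\Vert_{\mathcal{V}'} \leq C'\Vert \nabla E(u)\Vert_{\mathcal{V}'}$, which sums to the desired bound on $\Vert \nabla \Gamma(\xi)\Vert_{\mathbb{R}^d}$.

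For the energy estimate I would Taylor-expand along the segment from $v$ to $u$:
\[
E(u) - E(v) = \langle \nabla E(v), u - v\rangle + \int_0^1 \langle \nabla E(v + t(u-v)) - \nabla E(v), u - v\rangle\, dt.
\]
The integral remainder is controlled by $\tfrac{L}{2}\Vert u-v\Vert_\mathcal{V}^2 \leq \tfrac{LC_1^2}{2}\Vert \nabla E(u)\Vert_{\mathcal{V}'}^2$ via local Lipschitz continuity of $\nabla E$. For the linear term, substituting $\nabla E(v) = \Pi(u-v) \in \ker A \subset \mathcal{H}$ collapses the duality pairing to the $\mathcal{H}$-inner product, giving $\langle \nabla E(v), u-v\rangle = \langle \Pi(u-v), u-v\rangle_\mathcal{H} = \Vert \Pi(u-v)\Vert_\mathcal{H}^2$, which is again $O(\Vert \nabla E(u)\Vert_{\mathcal{V}'}^2)$ by norm-equivalence on $\ker A$ and the bound established in paragraph one. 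Adding the two pieces yields the claimed $K\Vert \nabla E(u)\Vert_{\mathcal{V}'}^2$.

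The step I expect to require the most care is the reduction of $\langle \nabla E(v), u - v\rangle_{\mathcal{V}', \mathcal{V}}$ to $\Vert \Pi(u-v)\Vert_\mathcal{H}^2$: it exploits the accidental membership $\nabla E(v) \in \ker A \subset \mathcal{H}$ in an essential way and relies on the compatibility of the $\mathcal{H}$-inner product with the Gelfand triple $\mathcal{V} \subset \mathcal{H} \subset \mathcal{V}'$. Once this identification is secure, the whole argument hinges on the single Lipschitz inversion estimate $\Vert u - v\Vert_\mathcal{V} \leq C_1 \Vert \nabla E(u)\Vert_{\mathcal{V}'}$ supplied by the preceding proposition.
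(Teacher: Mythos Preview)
The paper does not provide its own proof of this proposition; it simply states the result and writes ``We refer~\cite{haraux2015convergence} for the proof.'' Your outline is correct and is essentially the standard Lyapunov--Schmidt argument found in that reference: introduce the comparison point $v=\Psi(\varphi(\xi))$, use the Lipschitz inversion estimate to bound $\Vert u-v\Vert_{\mathcal V}$ by $\Vert\nabla E(u)\Vert_{\mathcal V'}$, exploit the identity $\nabla E(v)=\Pi(u-v)\in\ker A$, and then combine a chain-rule computation for $\nabla\Gamma$ with a first-order Taylor expansion of $E$ between $v$ and $u$.
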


With a slight modification of the proof of Theorem 11.2.7 from~\cite{haraux2015convergence}, we have the \L ojasiewicz inequality when $E$ is analytic in the sense of $\mathcal{V}$.
\begin{theorem}\label{t: vloja}
        Assume $A:=D^2E(0):\mathcal{V}\to\mathcal{V}'$ is a semi-Fredholm operator and  $E:U\to\R$ is analytic where $U\subset\mathcal{V}$ is an open neighborhood of $0$. Then there exists $\theta\in(0,1/2]$, $r>0$ and $c>0$ such that
        \begin{equation*}
            \Vert u\Vert_V<r \Longrightarrow \Vert \nabla E(u)\Vert_{V'}\ge c|E(u)|^{1-\theta}.
        \end{equation*}
    \end{theorem}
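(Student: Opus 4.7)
The plan is to mirror the proof of Theorem~\ref{t: zloj} in the simplified setting where the auxiliary space $Z$ collapses to $\mathcal{V}$ itself. All the preliminary machinery has already been assembled in the ``Simplified framework'' paragraph above: the local analytic straightening $\Psi:W_2(0)\to W_1(0)$ obtained by inverting $\mathcal{N}=\Pi+\nabla E$, the coordinate chart $\varphi(\xi)=\sum_{j=1}^d \xi_j\varphi_j$ on the finite-dimensional kernel $\ker A$, the finite-dimensional reduction $\Gamma(\xi)=E(\Psi(\varphi(\xi)))$, and the two comparison estimates of Proposition~\ref{p: gam}. So my proof amounts to stitching these together with the classical finite-dimensional \L ojasiewicz inequality (Theorem~\ref{t: Loj in}) applied to $\Gamma$.

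First I would fix $r_0>0$ small enough that $\|u\|_\mathcal{V}<r_0$ forces $u\in W_1(0)$, the projection $\Pi u=\sum_j\xi_j\varphi_j$ yields $\xi\in\R^d$ with $\varphi(\xi)\in W_2(0)$, and $|\xi|$ is as small as we like. Since $\varphi$ is a bounded linear map from $\R^d$ into $\mathcal{V}'$ and $\Psi$ is analytic on $W_2(0)$ by the inverse function theorem invoked in the preceding proposition, the composition $\Gamma=E\circ\Psi\circ\varphi$ is analytic from a neighborhood of $0\in\R^d$ to $\R$. Applying Theorem~\ref{t: Loj in} at $\xi=0$ produces constants $\theta\in(0,1/2]$, $\rho>0$, $c_0>0$ such that $|\Gamma(\xi)-\Gamma(0)|^{1-\theta}\le c_0^{-1}\|\nabla\Gamma(\xi)\|_{\R^d}$ for $|\xi|<\rho$; after subtracting a constant we may assume $E(0)=0=\Gamma(0)$.

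Combining this with Proposition~\ref{p: gam} gives, for $u$ in a sufficiently small $\mathcal{V}$-neighborhood of $0$,
\[
|\Gamma(\xi)|^{1-\theta}\le c_0^{-1}\|\nabla\Gamma(\xi)\|_{\R^d}\le C_1\|\nabla E(u)\|_{\mathcal{V}'},
\qquad
|E(u)-\Gamma(\xi)|\le K\|\nabla E(u)\|_{\mathcal{V}'}^2.
\]
Shrinking $r$ once more so that $\|\nabla E(u)\|_{\mathcal{V}'}\le 1$ on the ball $\|u\|_\mathcal{V}<r$, and using $2(1-\theta)\ge 1$ (since $\theta\le 1/2$), the second estimate upgrades to $|E(u)-\Gamma(\xi)|^{1-\theta}\le K^{1-\theta}\|\nabla E(u)\|_{\mathcal{V}'}$. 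The subadditivity bound $(a+b)^{\alpha}\le a^\alpha+b^\alpha$ for $a,b\ge 0$ and $\alpha=1-\theta\in[1/2,1)$ then yields
\[
|E(u)|^{1-\theta}\le |\Gamma(\xi)|^{1-\theta}+|E(u)-\Gamma(\xi)|^{1-\theta}\le (C_1+K^{1-\theta})\|\nabla E(u)\|_{\mathcal{V}'},
\]
which is exactly the desired \L ojasiewicz inequality, with $c=(C_1+K^{1-\theta})^{-1}$.

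The only delicate step is organizing the nested neighborhoods so that everything is defined simultaneously: for $u$ in a single $\mathcal{V}$-ball of radius $r>0$ I need $u\in W_1(0)$, $\varphi(\Pi u)\in W_2(0)$, $|\Pi u|<\rho$, and $\|\nabla E(u)\|_{\mathcal{V}'}\le 1$. Continuity of $\Pi:\mathcal{V}\to\ker A$, continuity of $\nabla E$, and the fact that each of $\Psi$, $\varphi$, $\Gamma$ is defined on a nontrivial ball around the origin together guarantee that such an $r$ exists; this is where the semi-Fredholm hypothesis quietly does all the work, since it is what makes $d=\dim\ker A$ finite and $D\mathcal{N}(0)=\Pi+A$ an isomorphism, so that $\Psi$ is genuinely available. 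That bookkeeping, rather than any substantial new estimate, is the only thing I expect to require real care.
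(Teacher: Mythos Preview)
Your proof is correct and follows the paper's own argument essentially line for line: reduce to the finite-dimensional analytic function $\Gamma=E\circ\Psi\circ\varphi$, invoke the classical \L ojasiewicz inequality (Theorem~\ref{t: Loj in}) there, then lift the bound back via the two estimates of Proposition~\ref{p: gam} using $2(1-\theta)\ge 1$. The paper's proof is terser about the neighborhood bookkeeping and writes the final bound as $\tfrac{C}{C_0}\|\nabla E(u)\|_{\mathcal V'}+K^{1-\theta}\|\nabla E(u)\|_{\mathcal V'}^{2(1-\theta)}$ before absorbing the second term, whereas you make the step $\|\nabla E(u)\|_{\mathcal V'}\le 1$ explicit; the substance is identical.
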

    \mnote{explain more}
\begin{proof}
        Since $\Gamma=E\circ\Psi\circ\varphi$ is analytic in a neighborhood of $0\in\R^d$ as compositions of analytic functions, we can apply the classical \L ojasiewicz inequality.
        \begin{equation*}
            |E(u)|^{1-\theta}\leq |\Gamma(\xi)|^{1-\theta}+|\Gamma(\xi)-E(u)|^{1-\theta} \leq \frac{1}{C_0}\Vert \nabla\Gamma(\xi)\Vert_{\R^d}+|\Gamma(\xi)-E(u)|^{1-\theta}.
        \end{equation*}
        By Proposition~\ref{p: gam}, we have
        \begin{equation*}
            |E(u)|^{1-\theta}\leq \frac{C}{C_0}\Vert \nabla{E}(u)\Vert_{\mathcal{V}'}+K^{1-\theta}\Vert\nabla{E}(u)\Vert_{\mathcal{V}'}^{2(1-\theta)}.
        \end{equation*}
Since $2(1-\theta)\ge 1$, there exists $r,c>0$ such that
\begin{equation*}
    \Vert \nabla E(u)\Vert_{\mathcal{V}'}\ge c|E(u)|^{1-\theta},
\end{equation*}
for all $u\in\mathcal{V}$ such that $\Vert u\Vert_\mathcal{V}<r$.
\end{proof}
    
\subsection{\L ojasiewicz convergence scheme for gradient-like systems}
    We present the \L ojasiewicz convergence theorem in a generalized framework. More specifically, let $\mathcal{V}$, $\mathcal{H}$ be two real Hilbert spaces such that
\begin{equation*}
    \mathcal{V}\subset \mathcal{H}=\mathcal{H}'\subset \mathcal{V}'
\end{equation*}
with continuous dense embedding and $\mathcal{H}'$, the topological dual of $\mathcal{H}$ is identified with $\mathcal{H}$.  
\newline
\mnote{We need u' to 0, gradient like system p68 ch7}
% [Assumption?]
% Let $\mathcal{F}\in C(\mathcal{V},\mathcal{V}')$. A differential equation,
%     \begin{equation}\label{e: gsi}
%         \frac{d}{dt}u(t)=-\mathcal{F}(u(t)),
%     \end{equation}
% is a \textit{gradient-like system}, 
% \begin{ass}
% Assume that $E\in C^1(\mathcal{V};\R)$ is \textit{dissipative} for $u\in C^1(\R_+;\mathcal{V})$, i.e. there exists $c>0$ such that
% \begin{equation}\label{e: ediss}
%     -\frac{d}{dt}E(u(t))=\langle -\nabla{E}(u), u'\rangle_{\mathcal{V}'\times\mathcal{V}} \ge c\Vert u'\Vert_\mathcal{H}^2
% \end{equation}
% \end{ass}
\begin{definition}\label{d: gsi}
Let $E\in C^1(\mathcal{V};\R)$. We say that $u\in C^1(\R_+;\mathcal{V})$ satisfies the \textit{angle condition} with the energy $E$ if there exists $\sigma>0$ such that
    \begin{equation}\label{e: rcon}
        \langle -\nabla{E}(u), u'\rangle_{\mathcal{V}'\times\mathcal{V}}\geq \sigma\Vert\nabla{E}(u)\Vert_{\mathcal{V}'} \Vert u'\Vert_{\mathcal{H}}.
    \end{equation}
\end{definition}
Note that the angle condition~\eqref{e: rcon} is, in fact, purely geometric, which is invariant under a change of positive time scale. So the angle condition is not sufficient to determine an explicit rate of convergence. 

\begin{definition}
Let $E\in C^1(\mathcal{V};\R)$. We say that $u\in C^1(\R_+;\mathcal{V})$ satisfies the \textit{rate condition} with the energy $E$ if there exists $\gamma>0$ such that
    \begin{equation}\label{e: irate con}
      \Vert u'\Vert_\mathcal{H}\ge \gamma\Vert \nabla{E}(u)\Vert_{\mathcal{V}'}.
    \end{equation}
\end{definition}
In usual case of gradient-like systems, we have the \textit{dissipation inequality},
\begin{equation}\label{e: ediss}
    -\frac{d}{dt}E(u(t))=\langle -\nabla{E}(u), u'\rangle_{\mathcal{V}'\times\mathcal{V}} \ge c\Vert u'\Vert_\mathcal{H}^2.
\end{equation}
Evidently, the rate condition~\eqref{e: irate con} together with the dissipation inequality~\eqref{e: ediss} provides the angle condition.
\begin{remark}
    Note that Definition~\ref{d: gsi} is a generalization of the gradient system of $E\in C^1(\mathcal{V},\R)$
    \begin{equation*}
         \frac{d}{dt}u(t)=-\nabla{E}(u(t)).
    \end{equation*}
    Assume that $u\in C^1(\R_+;\mathcal{V})$ is a solution. This means that for any $\phi\in\mathcal{V}\subset\mathcal{H}$, the solution satisfies the following,
    \begin{equation*}
        \langle u',\phi\rangle_\mathcal{H}=-\langle \nabla{E}(u), \phi\rangle_{\mathcal{V}'\times\mathcal{V}}.
    \end{equation*}
    The rate condition is naturally satisfied and from that we can check the angle condition as well.\newline
    [Rate condition]
    \begin{equation*}
        \Vert \nabla{E}(u)\Vert_{\mathcal{V}'}=\sup_{\Vert \phi\Vert_\mathcal{V}=1} \langle \nabla{E}(u), \phi\rangle_{\mathcal{V}'\times\mathcal{V}}
        =\sup_{\Vert \phi\Vert_\mathcal{V}=1} -\langle u',\phi\rangle_\mathcal{H} \le \Vert u'\Vert_\mathcal{H}.
    \end{equation*}
    [Angle condition]
    \begin{equation*}
        \langle -\nabla{E}(u),u'\rangle_{\mathcal{V}'\times\mathcal{V}}=\Vert u'\Vert_\mathcal{H}^2 \ge \Vert \nabla{E}(u)\Vert_{\mathcal{V}'}\Vert u'\Vert_\mathcal{H}.
    \end{equation*}
\end{remark}
\mnote{dissipative+rate=angle}
\mnote{more description}
We present the \L ojasiewicz convergence theorem in infinite dimensional gradient-like systems. This can be seen as an extension of Theorem 10.3.1 and Theorem 11.3.1 in ~\cite{haraux2015convergence} combined.
    \begin{theorem}\label{t: conv} 
    Let $u\in C^1(\R_+;\mathcal{V})$ satisfy the angle condition~\eqref{e: rcon} with the energy $E\in C^1(\mathcal{V};\R)$, and assume that
    \begin{enumerate}
        \item $\cup_{t\ge0}\{u(t)\}$ is precompact in $\mathcal{V},$ and
        \item $E$ satisfies \L ojasiewicz inequality near every point in $\omega(u)$, the $\omega$-limit set of $u$.
    \end{enumerate}
    Then there exists $\ut\in\mathcal{V}$ such that 
    $$\lim_{t\rightarrow\infty}\Vert u(t) - \ut\Vert_\mathcal{V} = 0.$$
    Moreover, if $u$ satisfies the rate condition~\eqref{e: irate con}, then $\ut\in\mathcal{E}:=\{u\in \mathcal{V}: \nabla E(u) = 0\}$ and
    \begin{equation*}
        \Vert u(t) - \ut\Vert_\mathcal{H} = 
        \left\{\begin{array}{ll}
             O(e^{-\delta t}) & \text{if}\;\; \theta=\frac{1}{2}, \;\;\;\;\text{for some}\;\; \delta>0 \\
             O(t^{-\theta/(1-2\theta)})& \text{if}\;\; 0<\theta<\frac{1}{2},
        \end{array}\right.
    \end{equation*}
    where $\theta$ is the \L ojasiewicz exponent of $E$ at $\ut$.
    \end{theorem}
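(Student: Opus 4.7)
The plan is to run the classical Łojasiewicz convergence argument in three stages: establish an integrable control of $\|u'\|_\mathcal{H}$ via the angle condition together with a uniform Łojasiewicz inequality near $\omega(u)$; upgrade the resulting $\mathcal{H}$-Cauchy property to $\mathcal{V}$-convergence using precompactness; and finally, under the extra rate condition, identify the limit as a critical point and extract explicit decay rates.

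First I would observe that $t\mapsto E(u(t))$ is nonincreasing, since the angle condition gives $-\frac{d}{dt}E(u)=\langle -\nabla E(u),u'\rangle\ge \sigma\|\nabla E(u)\|_{\mathcal{V}'}\|u'\|_\mathcal{H}\ge 0$, and the $\mathcal{V}$-precompactness of the trajectory forces boundedness from below. Hence $E(u(t))\searrow E_\infty$ as $t\to\infty$, and $E\equiv E_\infty$ on the (nonempty, $\mathcal{V}$-compact) set $\omega(u)$. Using the Łojasiewicz hypothesis at each point of $\omega(u)$ together with a finite-cover argument, I would extract a common exponent $\theta\in(0,1/2]$, a constant $c>0$, and an open $\mathcal{V}$-neighborhood $\mathcal{U}\supset\omega(u)$ on which $c\,|E(v)-E_\infty|^{1-\theta}\le \|\nabla E(v)\|_{\mathcal{V}'}$ holds uniformly; by definition of $\omega$-limit set, $u(t)\in\mathcal{U}$ for all $t$ sufficiently large.

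The central computation is to differentiate $(E(u)-E_\infty)^\theta$ (one may assume $E(u(t))>E_\infty$, otherwise monotonicity trivializes the claim). Combining the angle condition and the uniform Łojasiewicz bound yields
\begin{equation*}
-\frac{d}{dt}\bigl(E(u)-E_\infty\bigr)^\theta \ge \theta\sigma (E(u)-E_\infty)^{\theta-1}\|\nabla E(u)\|_{\mathcal{V}'}\|u'\|_\mathcal{H} \ge \theta\sigma c\,\|u'\|_\mathcal{H}.
\end{equation*}
Integrating in $t$ puts $u'\in L^1(T,\infty;\mathcal{H})$, so $u(\cdot)$ is Cauchy in $\mathcal{H}$ and converges to some $\tilde u\in\mathcal{H}$. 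Then $\mathcal{V}$-precompactness of the trajectory, together with the continuous embedding $\mathcal{V}\hookrightarrow\mathcal{H}$, forces every $\mathcal{V}$-subsequential limit of $u(\cdot)$ to coincide with $\tilde u$, and hence $u(t)\to\tilde u$ in $\mathcal{V}$.

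For the rate-condition half, combining \eqref{e: rcon} and \eqref{e: irate con} gives $-\frac{d}{dt}E(u)\ge \sigma\gamma\|\nabla E(u)\|_{\mathcal{V}'}^2$, which puts $\|\nabla E(u(\cdot))\|_{\mathcal{V}'}$ in $L^2(0,\infty)$; continuity of $\nabla E:\mathcal{V}\to\mathcal{V}'$ with the just-proved $\mathcal{V}$-convergence $u(t)\to\tilde u$ then forces $\nabla E(\tilde u)=0$, i.e., $\tilde u\in\mathcal{E}$. For the explicit decay I would analyze the scalar differential inequality $\phi'\le -\sigma\gamma c^2\phi^{2(1-\theta)}$ for $\phi(t):=E(u(t))-E_\infty$, which by standard ODE comparison yields exponential decay when $\theta=1/2$ and $\phi(t)=O(t^{-1/(1-2\theta)})$ when $\theta\in(0,1/2)$; feeding this into the bound $\|u(t)-\tilde u\|_\mathcal{H}\le (\theta\sigma c)^{-1}\phi(t)^\theta$ obtained from the central estimate gives the advertised decay rates. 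The principal obstacle is the passage from $\mathcal{H}$- to $\mathcal{V}$-convergence: the length estimate is inherently in the weaker norm, and the $\mathcal{V}$-precompactness hypothesis is exactly the ingredient needed to make the uniqueness-of-limit-point argument work at the $\mathcal{V}$ level; in applications, verifying this precompactness (typically via parabolic smoothing or compactness of the nonlinearity) will be the nontrivial analytic input.
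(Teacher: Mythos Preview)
Your proposal is correct and follows essentially the same three-stage argument as the paper: uniform \L ojasiewicz constants on a neighborhood of $\omega(u)$ via compactness and finite cover, the $(E(u)-E_\infty)^\theta$ differentiation combined with the angle condition to get $u'\in L^1(\mathcal{H})$ and then precompactness to upgrade $\mathcal{H}$-Cauchy to $\mathcal{V}$-convergence, and finally the rate condition to obtain the scalar differential inequality $\phi'\le -\sigma\gamma c^2\phi^{2(1-\theta)}$ yielding the explicit rates. The only cosmetic difference is that the paper verifies $\nabla E(\tilde u)=0$ via $\liminf\|u'\|_\mathcal{H}=0$ and the rate condition, whereas you route through $L^2$-integrability of $\|\nabla E(u)\|_{\mathcal{V}'}$ and continuity; both are equivalent.
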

\begin{proof}
Let's fix $\ut\in\omega(u)$ and without loss of generality let's assume $E(\ut)=0$.\newline
Step 1.\textbf{ Unifying constants.} [Lemma 2.1.6 of~\cite{haraux2015convergence}] 
Since $\Gamma:=\overline{\{u(t)\}_{t\ge0}}$ is compact in $\mathcal{V}$, we can show that $\omega(u)$ is compact and connected subset of $\mathcal{V}$ (Theorem 5.1.8 of~\cite{haraux2015convergence}). Therefore by finite covering argument, we can find positive constants $C$, $\theta$ and $r$ such that,
\[
 C|E(u)|^{1-\theta}\leq \Vert \nabla E(u)\Vert_{\mathcal{V}'}, \quad \forall u,\; \text{dist}(u,\omega(u))<r,
\]
where the distance is defined with respect to $\Vert\cdot\Vert_{\mathcal{V}}$.
\newline
Step 2. \textbf{Convergence and Cauchyness.}
\newline
Since the trajectory of $u$ is precompact, by Theorem 5.1.8 of~\cite{haraux2015convergence}, we have 
$$\lim_{t\to\infty} \text{dist}(u(t),\omega(u))=0.$$
Therefore we can find $t_0>0$ such that for all $t\ge t_0$, $\text{dist}(u(t),\omega(u))<r$. Now we can achieve $u
\in L^1([t_0,\infty);\mathcal{H)}$ by the following calculation,
\begin{align}
    -\frac{d}{dt}E(u)^\theta &= -E(u)^{\theta-1}\frac{d}{dt}E(u)\notag\\
    &=E(u)^{\theta-1}\langle\nabla{E}(u),u'\rangle\notag\\
    &\ge \sigma E(u)^{\theta-1}\Vert \nabla E(u)\Vert_{\mathcal{V}'}\Vert u'\Vert_{\mathcal{H}}\label{e: acc}\\
    % &\ge- C(E(u)-\overline{E})^{\theta-1}(\overline{E}-E(u))^{1-\theta}\Vert u'\Vert_u\\
    &\ge \sigma C\Vert u'\Vert_{\mathcal{H}}\notag
\end{align}
Therefore, by
\begin{align}\label{e: ul1}
    \int_{t_0}^\infty \Vert u'(t)\Vert_{\mathcal{H}} dt \le CE(u(t_0))^\theta<\infty,
\end{align}
we can see that the trajectory has finite length in $\mathcal{H}$ which implies that $\{u(t)\}_{t\ge0}$ is Cauchy in $\mathcal{H}$. Thus
\begin{equation*}
    \lim_{t\to\infty}\Vert u(t)-\ut\Vert_{\mathcal{H}}=0.
\end{equation*}
Moreover, since the trajectory is precompact in $\mathcal{V}$, we have 
\begin{equation*}
    \lim_{t\to\infty}\Vert u(t)-\ut\Vert_{\mathcal{V}}=0.
\end{equation*}
Step 3. \textbf{Convergence rate.}
From \eqref{e: ul1}, we notice that the convergence rate of $u$ is determined by the decay rate of the energy $E$.
By applying the rate condition~\eqref{e: irate con}, on \eqref{e: acc} and the \L ojasiewicz inequality, we can derive the following inequality of $E$,
\begin{align*}
    -\frac{d}{dt}E(u)^\theta &\ge \sigma\gamma E(u)^{\theta-1}\Vert \nabla E(u)\Vert_{\mathcal{V}'}^2\\
     &\ge \sigma\gamma C^2 E(u)^{1-\theta}.
\end{align*}
So we achieve the rate of convergence by using the Gr\"onwall type estimate.

Furthermore, due to~\eqref{e: ul1}, we know that $\liminf_{t\to\infty}\Vert u'(t)\Vert=0$. Again by the rate condition~\eqref{e: irate con},
\begin{equation}
      0=\liminf_{t\to\infty}\Vert u'\Vert_\mathcal{H}\ge \gamma\lim_{t\to\infty}\Vert \nabla{E}(u)\Vert_{\mathcal{V}'},
    \end{equation}
which implies $\Vert \nabla{E}(\ut)\Vert_{\mathcal{V}'}=0$.
\end{proof}

\chapter{Discrete-time gradient descent by convex splitting}\label{c: disc time cs}
% {Time discrete formulation using convex splitting method}
% Gradient is a direction which points the steepest direction to change the energy monotonically within an infinitesimal neighborhood. Gradient flow is largely implemented in optimization, since the solution tends to optimize the objective function, or the energy.
Gradient descent is a widely used technique in optimization, since it steers the solution to optimize the objective function, or the energy. 

In~\cite{absil2005convergence}, Absil, Mahony and Andrews successfully extended the \L ojasewicz convergence theorem to sequences that satisfies \textit{the strong descent condition}, which assures the produced sequence to behave as similar to a solution of gradient flow. Needless to say, the energy must be monotone along the sequence, however, in general, time discretization scheme does not provide the monotonicity of the energy for free without proper choice of time step, or localization, which might require expensive calculation. 

Mainly, we discuss an optimization algorithm using the class of functions that have a representation as a \textit{difference of convex functions} (DC). Widely used in non-convex optimization, DC algorithms, suggested by Tao~\cite{tao1986algorithms} in 1986, treat the gradient of concave part explicitly and the gradient of convex part implicitly. Combining basic convex inequalities on each convex and concave part provides monotonicity of the energy. Independently, in the area of PDE, Eyre~\cite{eyre1998unconditionally} used the same idea to formalize energetically stable time-discretization scheme for partial differential equations, such as Allen-Cahn equations, Cahn-Hilliard equations, etc. Indeed, Eyre's semi-implicit Euler's method can be regarded as a special case of DCA.
When the methods are applied to analytic cost functions, by Absil \textit{et al}.'s convergence theorem, every bounded sequence converges~\cite{dinh2014recent}.

Lastly, we provide a simple, but novel, observation relating to methods of acceleration for DC algorithms. We describe a class of DC algorithms with momentum, which adds a (possibly non-convex) Bregman divergence centered at the previous point to the concave part of the DC algorithm. In this framework, Polyak's heavy ball method and Nesterov's acceleration method can be regarded as the same momentum method, but based on objective functions that are ``dual" to each other.
% Mainly there are two ways, line search method and trust region method. Line search method basically follows the gradient direction and searches the small enough line segment that allows the monotonic energy. In contrast, trust region method firstly chooses the size of the neighborhood where we have a good approximation of the energy and then follow the gradient of the approximated energy. This trade-off between the direction and the size of the leap is inevitable since the monotonicity of energy is derived by Taylor's theorem.

\section{Convergence analysis of discrete schemes}
In \cite{absil2005convergence}, Absil \textit{et al}. introduced the  \L ojasiewicz inequality to prove convergence of any bounded sequence driven by an analytic energy under the following \textit{strong descent condition}.
\begin{definition}
A sequence $\{u^n\}_{n\in\N_0}\subset\R^N$ satisfies \textit{strong descent condition} if the following conditions are satisfied for $k>K$ with some $K>0$ and $\sigma>0$.
\paragraph{[Primary descent condition]}
\begin{equation*}
    H(u^k)-H(u^{k+1})\ge \sigma\Vert \nabla H(u^k)\Vert\Vert u^{k+1}-u^k\Vert.
\end{equation*}
\paragraph{[Complementary descent condition]}
\begin{equation*}
    [H(u^{k+1})=H(u^k)] \Rightarrow [u^{k+1}=u^k].
\end{equation*}
\end{definition}
The result is abstract in order to be applicable to various iterations, without specifying the connection between the sequence and the gradient. As a quantitative result, they showed the $l^1$-tail of the sequence is bounded by the decay of the energy.
\begin{theorem}[Absil \textit{et al}. \cite{absil2005convergence}]\label{t: absil}
    Let $H:\R^N\to \R$ be analytic. Then any bounded sequence $\{u_n\}_{n\in\N_0}$ satisfying the strong descent condition converges to an equilibrium $\Tilde{u}$.
    Moreover, $l^1$- tail of $\{u^n\}_{n\in\N_0}$ can be estimated by the decay of $H_n:=H(u^n)-H(\Tilde{u})$,
    \begin{equation*}
    \sum_{k\ge n}\Vert u^{k+1}-u^k\Vert\leq \frac{1}{c\sigma\theta}H^\theta_n.
    \end{equation*}
where $c,\theta>0$ are \L ojasiewicz constant and exponent at $\Tilde{u}$, respectively. 
\end{theorem}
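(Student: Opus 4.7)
The plan is to mimic in discrete time the continuous-time \L{}ojasiewicz convergence argument presented above, with the primary descent condition playing the role of the energy dissipation identity and the \L{}ojasiewicz inequality providing a lower bound on $\|\nabla H(u^k)\|$ in terms of $H(u^k)$. The ultimate goal is to prove bounded variation, $\sum_k \|u^{k+1}-u^k\| < \infty$, so that $\{u^k\}$ is Cauchy in $\R^N$, and moreover to control the tail explicitly by $H(u^n)^\theta$.

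First I would normalize: primary descent forces $H(u^k)$ to be monotonically nonincreasing, and boundedness of the sequence makes $\{u^k\}$ precompact, so $H(u^k) \to H_\infty$ for some $H_\infty \in \R$; shift so that $H_\infty = 0$. The $\omega$-limit set $\omega(u)$ is then nonempty, compact and connected, and $H \equiv 0$ on $\omega(u)$ by continuity. By a finite covering argument analogous to Step~1 of the continuous-time proof (combining the \L{}ojasiewicz inequality at critical points of $\omega(u)$ with the trivial bound $\|\nabla H\| \geq c_1 > 0$ near noncritical points, where \L{}ojasiewicz may be arranged with any exponent), there exist uniform constants $c, r > 0$ and $\theta \in (0, 1/2]$ such that
\[
  c\, H(u)^{1-\theta} \,\leq\, \|\nabla H(u)\| \quad \text{whenever } \mathrm{dist}(u,\omega(u)) < r.
\]
Since $\mathrm{dist}(u^k,\omega(u)) \to 0$, this inequality applies at every $u^k$ with $k \geq K_0$ large enough.

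The central step is the discrete counterpart of the identity $-(d/dt)E(u)^\theta \geq \sigma c \|u'\|$ used in the continuous proof. Concavity of $x\mapsto x^\theta$ on $(0,\infty)$ gives
\[
  H(u^k)^\theta - H(u^{k+1})^\theta \,\geq\, \theta\, H(u^k)^{\theta-1}\bigl(H(u^k) - H(u^{k+1})\bigr),
\]
and plugging in primary descent followed by the \L{}ojasiewicz bound yields
\[
  H(u^k)^\theta - H(u^{k+1})^\theta \,\geq\, \theta\, H(u^k)^{\theta-1} \sigma \|\nabla H(u^k)\|\, \|u^{k+1}-u^k\| \,\geq\, \theta \sigma c\, \|u^{k+1}-u^k\|.
\]
Telescoping from $n$ to infinity produces the claimed bound $\sum_{k\geq n}\|u^{k+1}-u^k\|\leq H(u^n)^\theta/(c\sigma\theta)$, so $\{u^k\}$ is Cauchy and converges to some $\tilde u \in \omega(u)$.

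The subtle point is handling the degenerate case where $H(u^k)=0$ at some stage, at which $H(u^k)^{\theta-1}$ would blow up and the telescoping step is not justified. This is precisely where the complementary descent condition earns its keep: once $H(u^k)=0$, monotonicity forces $H(u^j)=0$ for all $j\geq k$, hence $H(u^{j+1})=H(u^j)$, and complementary descent implies $u^{j+1}=u^j$, so the sequence is eventually constant and convergence is trivial. The main obstacle I anticipate is organizing the covering argument so that the uniform \L{}ojasiewicz constants are valid near both critical and noncritical points of $\omega(u)$, and threading the complementary descent condition through the argument to absorb the degenerate regime without case-splitting.
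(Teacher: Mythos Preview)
Your proposal is correct and follows the standard discrete \L{}ojasiewicz argument. Note, however, that the paper does not actually supply a proof of this theorem: it is quoted from Absil, Mahony and Andrews~\cite{absil2005convergence} and used as a black box (only the subsequent Corollary, which adds the rate condition, is proved). That said, your argument is precisely the discrete-time analogue of the paper's continuous-time proof of Theorem~\ref{t: conv}: the concavity bound $H(u^k)^\theta - H(u^{k+1})^\theta \ge \theta H(u^k)^{\theta-1}(H(u^k)-H(u^{k+1}))$ replaces the chain-rule identity for $-\frac{d}{dt}E(u)^\theta$, and the telescoping sum replaces the integral in~\eqref{e: ul1}. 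Your treatment of the degenerate case $H(u^k)=0$ via the complementary descent condition is exactly how Absil \emph{et al.} handle it as well, so there is no substantive difference between your approach and the original.
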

\begin{remark}
Theorem~\ref{t: absil} does not tell the explicit rate of convergence. Note that primary descent condition is a discrete-time analogue of the angle condition between the gradient and the velocity, i.e.,
\begin{equation*}
    \frac{d}{dt}H(u(t))=\langle \nabla H(u(t)), u'(t)\rangle \ge \sigma \Vert\nabla H(u(t))\Vert\Vert u'(t)\Vert.
\end{equation*}
The angle condition is, in fact, purely geometric, which is invariant under the choice of time scale. Therefore, the strong descent condition is not sufficient to produce explicit rate of convergence. For any $\{u^n\}_{n\in\N_0}$ satisfying strong descent condition, one can cook up super-, or subsequences that satisfy the same strong descent condition but have different rate of convergence.
\end{remark}
In order to calculate quantitatively the rate of convergence, we add a condition which an ordinary ``gradient-motivated" scheme likely satisfies.
\begin{definition}
A sequence $\{u^n\}_{n\in\N_0}\subset\R^N$ satisfies the \textit{rate condition} if for $k>K$ with some $K>0$ and $\gamma>0$.
\begin{equation*}
    \Vert u^{k+1}-u^k\Vert\ge \gamma\Vert \nabla H(u^k)\Vert.
\end{equation*}
\end{definition}
Note that the rate condition prevents the repetition of a sequence except at a critical point.
\begin{corollary}
In addition to the assumption in Theorem~\ref{t: absil}, suppose $\{u^n\}_{n\in\N_0}$ satisfies the {rate condition}. Then the decay of  $H_n$ is given by $\theta$ and $a:=\sigma\gamma c^2$,
   \begin{equation}\label{e: h conv rate}
        H_n \leq
        \left\{\begin{array}{ll}
             H_0e^{-a n}, & \text{if}\;\; \theta=\frac{1}{2},\\
             (H_0^{2\theta-1}+(1-2\theta)a n)^\frac{1}{2\theta -1},& \text{if}\;\; 0<\theta<\frac{1}{2}.
        \end{array}\right.
\end{equation}
\end{corollary}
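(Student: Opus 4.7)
The plan is to chain the three available inequalities (primary descent, rate condition, and the \L ojasiewicz inequality at the limit point $\ut$ furnished by Theorem~\ref{t: absil}) to produce a one-step recursion for $H_n$, and then integrate that recursion by a discrete Gr\"onwall argument tailored to the exponent $\theta$.

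First, since Theorem~\ref{t: absil} already gives $u^n\to\ut$, for $n$ large enough $u^n$ lies in the \L ojasiewicz neighborhood of $\ut$, so
\[
 \Vert\nabla H(u^k)\Vert \ge c\, H_k^{1-\theta}.
\]
Combining the primary descent condition with the rate condition yields
\[
 H_k-H_{k+1} \ge \sigma\Vert\nabla H(u^k)\Vert\,\Vert u^{k+1}-u^k\Vert \ge \sigma\gamma\Vert\nabla H(u^k)\Vert^2 \ge \sigma\gamma c^2 H_k^{2(1-\theta)} = a\,H_k^{2(1-\theta)},
\]
the fundamental one-step recursion.

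For $\theta=\tfrac12$ the recursion reduces to $H_{k+1}\le(1-a)H_k$; iterating and using $\log(1-a)\le -a$ (reducing to the trivial case $H_n=0$ if $a\ge 1$) gives $H_n\le H_0 e^{-an}$. For $0<\theta<\tfrac12$, so that $p:=2\theta-1<0$, I would apply convexity of $x\mapsto x^p$ on $(0,\infty)$ to write
\[
 H_{k+1}^{p}-H_k^{p} \ge p H_k^{p-1}(H_{k+1}-H_k) = -p H_k^{p-1}(H_k-H_{k+1}) \ge -p a\,H_k^{p-1+2(1-\theta)} = (1-2\theta)a,
\]
since $p-1+2(1-\theta)=0$. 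Telescoping yields $H_n^{2\theta-1}\ge H_0^{2\theta-1}+(1-2\theta)an$, and raising to the negative power $1/(2\theta-1)$ flips the inequality to the stated bound.

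The only delicate point is the convexity step in the second regime: one has to verify that $H_k>0$ for all $k$ (otherwise we have already converged in finitely many steps and the bound is trivial) and that the tangent-line inequality is applied in the correct direction given that $p<0$. I expect this to be the main bookkeeping obstacle, but it is routine once the signs are tracked carefully; the rest of the argument is a direct discrete analogue of the Gr\"onwall computation used in the continuous \L ojasiewicz convergence theorem (equation~\eqref{e: L conv rate}).
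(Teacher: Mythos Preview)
Your proposal is correct and follows essentially the same route as the paper: both derive the one-step recursion $H_k-H_{k+1}\ge a\,H_k^{2(1-\theta)}$ from primary descent, rate condition, and the \L ojasiewicz inequality, and then telescope. The only cosmetic difference is in how the telescoping is carried out: the paper rewrites $H_k^{2\theta-2}(H_k-H_{k+1})$ as $\int_{H_{k+1}}^{H_k}H_k^{2\theta-2}\,dh$ and bounds it by $\int_{H_{k+1}}^{H_k}h^{2\theta-2}\,dh$ using monotonicity of $h\mapsto h^{2\theta-2}$, whereas you use the equivalent tangent-line inequality for the convex function $x\mapsto x^{2\theta-1}$; evaluating the paper's integral gives exactly your inequality $H_{k+1}^{2\theta-1}-H_k^{2\theta-1}\ge(1-2\theta)a$.
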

\begin{proof}
Note that the convergence rate of $\{u^n\}_{n\in\N_0}$ depends on the decay rate of $H_n$. Combining primary descent condition and the ordinary descent condition gives estimate on the energy gap,
\begin{align*}
     H_{n+1}-H_n &\leq -{\sigma}{\gamma}\Vert \nabla H_n\Vert^2\\
    &\leq -{\sigma}{\gamma}c^2 H_n^{2-2\theta}.
\end{align*}
% We continue with the same argument from the convergence part.
Let $k\leq n$, since $H_k$ is positive and monotonically decreasing and $\theta<1$, for any $h\in[H_{k+1},H_k]$ 
$$H_k^{2\theta-2}\leq h^{2\theta-2}.$$
So,
\begin{align*}
    {\sigma}{\gamma}c^2&\leq H^{2\theta-2}_k(H_k-H_{k+1})\\
    &=\int_{H_{k+1}}^{H_k} H^{2\theta-2}_kdh\\
    &\leq\int_{H_{k+1}}^{H_k}h^{2\theta-2}dh.
\end{align*}
If $\theta=\frac{1}{2}$,
\begin{align*}
    {\sigma}{\gamma}c^2&\leq \log H_k-\log H_{k+1}\\
    H_n&\leq H_0e^{-{\sigma}{\gamma}c^2n}.
\end{align*}
If $\theta\in (0,\frac{1}{2})$,
\begin{align*}
    {\sigma}{\gamma}c^2 &\leq \frac{1}{2\theta-1}(H^{2\theta-1}_{k}-H^{2\theta-1}_{k+1})\\
    H_n&\leq (H_0^{2\theta-1}+{\sigma}{\gamma}c^2(1-2\theta)n)^\frac{1}{2\theta -1}.
\end{align*}
\end{proof}
\section{Convergence of difference of convex functions algorithm}\label{s: dca}
Our focus is to show convergence of gradient flows without assuming convexity of the energy. As the backbone of nonconvex programming and global optimization, the difference of convex algorithm, or DCA, uses the following class of functions.
\begin{definition}
    We say $f$ is a \textit{difference of convex functions} (DC) if there are convex functions $g$ and $h$ such that $f=g-h$, and the pair $(g,h)$, is a \textit{convex splitting} of $f$. 
\end{definition}
The family of DC functions is quite general. It is known that the set of DC functions is dense in the set of continuous functions on a compact convex domain in $\R^N.$ 
Moreover, for a fixed $f$, choice of a convex splitting pair is flexible. The set of convex splitting is convex, since a convex combination of any two $(g_1,h_1)$ and $(g_2,h_2)$ convex splitting is again convex splitting for $f$. Also for any convex function $g'$, $(g+g',h+g')$ is again a convex splitting of $f$.

In this context, finding a critical point of $f$ can be written as solving the following fixed point problem using convex splitting of $f$,
\begin{equation*}
    \nabla g(x) =\nabla h(x).
\end{equation*}

The analogous optimization scheme was introduced by Pham Dinh Tao~\cite{tao1986algorithms} in 1986, and the iteration is named \textit{DC algorithm}. The history of DCA is well reviewed in~\cite{le2018dc}.
Let $H=H_+-H_-$ be a cost function with the convex splitting $H_+$ and $H_-.$
\begin{equation}\label{e: dc al}
    \nabla H_+(\un)=\nabla H_-(u^n)
\end{equation}
\eqref{e: dc al} is called DC Algorithm (Difference of Convex functions). Due to the implicit part of the algorithm, we need to make sure $\nabla H_+$ is bijective.
 \begin{definition}
 Let $\kappa\in\R$, we say $f:\R^N\to\R$ is $\kappa$-convex if $f(x)-\frac{\kappa}{2}\Vert x\Vert^2$ is convex. Moreover, if $\kappa>0$ we say $f$ is strongly convex.
 \end{definition}
 \begin{definition}
We say $f:\R^N\to\R$  is $L$-smooth if $f$ has Lipschitz derivative with constant $L>0.$
 \end{definition}
Throughout the thesis, we assume $H_+$ is strongly convex, so that the iteration~\eqref{e: dc al} is well-defined. Indeed, if $H_+$ is strongly convex, there exists unique $u^*$ for any $v$ such that
\begin{equation*}
    \nabla H_+(u^*)=v,
\end{equation*}
since $u\mapsto H_+(u)-v^Tu$ has a unique minimizer due to the strong convexity.

The main benefit of DCA is that DCA is energetically stable, in other words, the energy $H$ is monotonic along the sequence. It is directly deduced from the following basic convex inequality.
\begin{lemma}[Convex inequality]\label{l: conv ineq}
        For any convex $g\in C^1(\R)$ and for any $a,b\in \mathbb{R}$,
        \begin{equation*}
            g'(a)(b-a) \leq g(b) - g(a) \leq g'(b)(b-a).
        \end{equation*}
\end{lemma}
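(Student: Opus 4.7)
The plan is to reduce this standard convex-analysis fact to the well-known supporting tangent inequality for $C^1$ convex functions, namely that for $g\in C^1(\R)$ convex and any points $x,y\in\R$,
\[
g(y) \;\geq\; g(x) + g'(x)(y-x).
\]
Once this is in hand, both of the desired bounds follow instantly: apply it with $(x,y)=(a,b)$ to get $g(b)-g(a)\geq g'(a)(b-a)$, which is the left inequality; then apply it again with $(x,y)=(b,a)$ to get $g(a)-g(b)\geq g'(b)(a-b)$, which rearranges to $g(b)-g(a)\leq g'(b)(b-a)$, the right inequality.

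The only real work is justifying the tangent inequality itself. The approach I would take is the one-variable calculus argument: define $\varphi(t):=g(x+t(y-x))$ on $[0,1]$, note that $\varphi$ is convex by composition with an affine map and is $C^1$ with $\varphi'(t)=g'(x+t(y-x))(y-x)$, so convexity of $\varphi$ forces $\varphi'$ to be nondecreasing on $[0,1]$. Then the fundamental theorem of calculus gives
\[
g(y)-g(x) \;=\; \varphi(1)-\varphi(0) \;=\; \int_0^1 \varphi'(t)\,dt \;\geq\; \varphi'(0) \;=\; g'(x)(y-x),
\]
with the analogous inequality $\int_0^1\varphi'(t)\,dt\le \varphi'(1)$ producing the upper bound directly without needing the symmetry trick.

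Alternatively, one can avoid the auxiliary function entirely by invoking the fact that a $C^1$ function on $\R$ is convex if and only if $g'$ is monotone nondecreasing, and then writing $g(b)-g(a)=\int_a^b g'(s)\,ds$ and bounding the integrand below by $g'(a)$ and above by $g'(b)$ when $a\le b$ (the case $a>b$ being identical after flipping the limits). Either route makes the statement essentially immediate.

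There is no genuine obstacle here; the lemma is a textbook characterization of $C^1$ convex functions. The only minor care needed is to make sure the argument is presented in a form that applies symmetrically regardless of whether $a<b$ or $a>b$, which both presentations above handle automatically because the final inequalities are linear in $b-a$ and the monotonicity of $g'$ reverses consistently with the reversal of the integration limits.
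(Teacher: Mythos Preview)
Your proof is correct and entirely standard; both routes you describe (the tangent inequality applied twice, or the fundamental theorem of calculus with monotonicity of $g'$) work without issue. The paper itself states this lemma without proof, treating it as a basic fact from convex analysis, so there is nothing to compare against.
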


\begin{prop}
    Let $H=H_+-H_-$ such that $H_+$ is $\kappa$-convex and $H_-$ is $\mu$-convex, and $\{u^n\}_{n\in\N_0}$ be a sequence from DCA~\eqref{e: dc al}. If $\kappa+\mu>0$. then for any $n\in\N_0$
    \begin{equation*}
        H(\un)\leq H(u^n).
    \end{equation*}
\end{prop}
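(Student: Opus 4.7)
The plan is to apply the convex inequality from Lemma~\ref{l: conv ineq} separately to the shifted convex functions $\tilde H_+(u) := H_+(u) - \tfrac{\kappa}{2}\|u\|^2$ and $\tilde H_-(u) := H_-(u) - \tfrac{\mu}{2}\|u\|^2$, which are convex by the strong convexity assumptions. Then combine the two resulting inequalities using the DCA relation $\nabla H_+(u^{n+1}) = \nabla H_-(u^n)$, and watch the quadratic remainder terms conspire to yield a clean $-\tfrac{\kappa+\mu}{2}\|u^{n+1}-u^n\|^2$ bound.

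Concretely, I would first apply the upper bound of Lemma~\ref{l: conv ineq} to $\tilde H_+$ at the pair $(u^n, u^{n+1})$, obtaining
\[
\tilde H_+(u^{n+1}) - \tilde H_+(u^n) \leq \nabla \tilde H_+(u^{n+1})^T(u^{n+1}-u^n).
\]
Substituting back $\tilde H_+ = H_+ - \tfrac{\kappa}{2}\|\cdot\|^2$ and using the algebraic identity $\|u^{n+1}\|^2 - \|u^n\|^2 = (u^{n+1}+u^n)\cdot(u^{n+1}-u^n)$ to simplify the cross terms, I expect this to reduce to
\[
H_+(u^{n+1}) - H_+(u^n) \leq \nabla H_+(u^{n+1})^T(u^{n+1}-u^n) - \tfrac{\kappa}{2}\|u^{n+1}-u^n\|^2.
\]
Symmetrically, applying the lower bound of Lemma~\ref{l: conv ineq} to $\tilde H_-$ based at $u^n$ produces
\[
H_-(u^{n+1}) - H_-(u^n) \geq \nabla H_-(u^n)^T(u^{n+1}-u^n) + \tfrac{\mu}{2}\|u^{n+1}-u^n\|^2.
\]

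Subtracting these two inequalities and invoking the DCA identity $\nabla H_+(u^{n+1}) = \nabla H_-(u^n)$ from \eqref{e: dc al} cancels the linear terms exactly, leaving
\[
H(u^{n+1}) - H(u^n) \leq -\tfrac{\kappa+\mu}{2}\|u^{n+1}-u^n\|^2 \leq 0,
\]
where the last step uses the hypothesis $\kappa+\mu > 0$. There is no real obstacle here: the proof is essentially a two-line application of the convex inequality once the strong-convexity shifts are made, and the only thing to be careful about is tracking signs and the correct direction of the inequality (upper bound on $H_+$, lower bound on $H_-$) so that the gradients match up for the cancellation via the DCA update rule.
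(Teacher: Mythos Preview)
Your proposal is correct and follows essentially the same approach as the paper: apply Lemma~\ref{l: conv ineq} to the shifted functions $H_+-\tfrac{\kappa}{2}\|\cdot\|^2$ and $H_--\tfrac{\mu}{2}\|\cdot\|^2$, obtain the two one-sided inequalities, subtract, and use the DCA relation $\nabla H_+(u^{n+1})=\nabla H_-(u^n)$ to cancel the linear terms, arriving at $H(u^{n+1})-H(u^n)\le -\tfrac{\kappa+\mu}{2}\|u^{n+1}-u^n\|^2$.
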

\begin{proof}
We apply Lemma~\ref{l: conv ineq} to
$H_+(u)-\frac{\kappa}{2}\Vert u\Vert^2$ and $H_-(u)-\frac{\mu}{2}\Vert u\Vert^2$ so that
\begin{align*}
    H_+(\un)-H_+(u^n) &\leq \langle \nabla H_+(\un),\un-u^n\rangle -\frac{\kappa}{2}\Vert \un-u^n\Vert^2,\\
    H_-(\un)-H_-(u^n) &\geq \langle \nabla H_-(u^n),\un-u^n\rangle +\frac{\mu}{2}\Vert \un-u^n\Vert^2.
\end{align*}
By subtracting those we check the monotonicity of $H$ along the sequence.
\begin{equation}\label{e: h mon}
     H(\un)-H(u^n)\leq -\frac{\kappa+\mu}{2}\Vert \un-u^n\Vert^2.
\end{equation} 
\end{proof}

Although it had been widely used in optimization, the convergence of DCA was proved only rather recently in 2014 by Dinh \textit{et al}. \cite{dinh2014recent}. They applied the \L ojasiewicz convergence theorem constructed by Absil \textit{et al}.~\cite{absil2005convergence} to DCA with subanalytic data.
\begin{theorem}\label{t: nlmg conv}
        Let $H:\R^N\to\R$ satisfies \L ojasiewicz inequality and $H=H_+-H_-$, where $H_+$ is $\kappa$-convex and $L$-smooth and $H_-$ is $\mu$-convex with with $\kappa+\mu>0$ and $L>0$ . Then every bounded solution $\{u^n\}_{n\in\N_0}$ of \textit{DC Algorithm} \eqref{e: dc al}
    converges.
    
    Moreover, the $l^1$- tail of $\{u^n\}_{n\in\N_0}$ can be estimated by the decay of $H_n:=H(u^n)-H(\Tilde{u})$,
    \begin{equation*}
    \sum_{k\ge n}\Vert u^{k+1}-u^k\Vert\leq \frac{2L}{c\theta(\kappa+\mu)}H^\theta_n,
    \end{equation*}
where $L$ is Lipschitz constant of $\nabla H_+$ and $c,\theta>0$ are the \L ojasiewicz constant and exponent at $\Tilde{u}$, respectively. 
Let $\beta:=(\kappa+\mu)c^2/{2L^2}$ then the decay of $H_n$ is as follow,
   \begin{equation*}
        H_n \leq
        \left\{\begin{array}{ll}
             H_0e^{-\beta n}, & \text{if}\;\; \theta=\frac{1}{2},\\
             (H_0^{2\theta-1}+(1-2\theta)\beta n)^\frac{1}{2\theta -1},& \text{if}\;\; 0<\theta<\frac{1}{2}.
        \end{array}\right.
    \end{equation*}
\end{theorem}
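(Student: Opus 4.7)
The plan is to reduce the theorem to the discrete \L ojasiewicz convergence framework of Absil, Mahony and Andrews (Theorem~\ref{t: absil} and the corollary that follows it) by verifying the strong descent condition and the rate condition for the DCA sequence. The key algebraic observation is that the iteration $\nabla H_+(u^{n+1}) = \nabla H_-(u^n)$ lets us rewrite
\[
\nabla H(u^n) = \nabla H_+(u^n) - \nabla H_-(u^n) = \nabla H_+(u^n) - \nabla H_+(u^{n+1}),
\]
so $L$-smoothness of $H_+$ immediately gives the one-step gradient bound $\Vert \nabla H(u^n)\Vert \leq L\Vert u^{n+1} - u^n\Vert$. This is the single identity that drives the whole proof.

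First I would combine this gradient bound with the energy-drop inequality~\eqref{e: h mon}, $H(u^n) - H(u^{n+1}) \geq \tfrac{\kappa+\mu}{2}\Vert u^{n+1}-u^n\Vert^2$, to obtain the primary descent condition
\[
H(u^n) - H(u^{n+1}) \;\geq\; \frac{\kappa+\mu}{2L}\,\Vert \nabla H(u^n)\Vert\,\Vert u^{n+1}-u^n\Vert,
\]
with $\sigma = (\kappa+\mu)/(2L)$. The complementary descent condition then drops out of~\eqref{e: h mon}: if $H(u^{n+1}) = H(u^n)$ then $\Vert u^{n+1}-u^n\Vert = 0$. The rate condition also follows from the gradient bound, with $\gamma = 1/L$. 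Since the sequence is assumed bounded and $H$ satisfies the \L ojasiewicz inequality, Theorem~\ref{t: absil} gives convergence to some equilibrium $\tilde{u}$ and the tail estimate
\[
\sum_{k\ge n}\Vert u^{k+1}-u^k\Vert \;\leq\; \frac{1}{c\sigma\theta}H_n^\theta \;=\; \frac{2L}{c\theta(\kappa+\mu)}H_n^\theta,
\]
which matches the stated bound. The decay of $H_n$ then follows verbatim from the corollary of Theorem~\ref{t: absil} applied with the constant $\sigma\gamma c^2 = \tfrac{\kappa+\mu}{2L}\cdot\tfrac{1}{L}\cdot c^2 = \beta$, giving exponential decay when $\theta=1/2$ and the polynomial rate when $\theta\in(0,1/2)$.

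There is no substantive obstacle: all the ingredients are in place once the one-step identity $\nabla H(u^n) = \nabla H_+(u^n) - \nabla H_+(u^{n+1})$ is observed. The only delicate point is that $L$-smoothness of $H_+$ (rather than mere strong convexity) is essential, since without it one cannot control $\Vert \nabla H(u^n)\Vert$ by $\Vert u^{n+1}-u^n\Vert$ and the reduction to the Absil--Mahony--Andrews framework collapses. This is also precisely where the Lipschitz constant $L$ enters the final constants $\sigma$, $\gamma$ and $\beta$.
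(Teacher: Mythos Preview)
Your proposal is correct and follows essentially the same approach as the paper: you identify the key identity $\nabla H(u^n)=\nabla H_+(u^n)-\nabla H_+(u^{n+1})$, use $L$-smoothness of $H_+$ to get the rate condition, combine it with the energy-drop inequality~\eqref{e: h mon} for the primary descent condition, and then invoke Theorem~\ref{t: absil} and its corollary with the constants $\sigma=(\kappa+\mu)/(2L)$ and $\gamma=1/L$. The paper's proof is nothing more than this.
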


\begin{proof}[Proof of Theorem~\ref{t: nlmg conv}]
We check the strong descent condition and the rate condition. The complementary descent condition follows directly from the monotonicity of $H$~\eqref{e: h mon}. Now from~\eqref{e: dc al} we split $H_-=H_+-H$ to see
\begin{align}\label{e: hp p}
    \nabla H_+ (\un)-\nabla H_+ (u^n)&= -\nabla H(u^n).
\end{align}
So we have the rate condition,
\begin{align*}
    \Vert \nabla H(u^n)\Vert = \Vert\nabla H_+ (\un)-\nabla H_+ (u^n)\Vert\leq L \Vert \un -u^n\Vert,
\end{align*}
and the primary descent condition follows from~\eqref{e: h mon}
\begin{align*}
     H(\un)-H(u^n)&\leq -\frac{\kappa+\mu}{2L}\Vert \nabla H(u^n)\Vert \Vert \un-u^n\Vert.
    %  &\leq -\frac{\kappa+\mu}{2L^2}\Vert \nabla H(u^n)\Vert^2.
\end{align*} 
\end{proof}

\begin{remark}
As in \eqref{e: hp p}, DCA algorithm can be seen as a gradient descent with $H_+$ preconditioning. Considering the flexibility of convex splitting, it is important to choose proper convex splitting of the energy in order to make the algorithm efficient. 
A question arises: how can we split $H$ in order to achieve optimal convergence rate? For given convex splitting $H_\pm$, let's perturb then with quadratic function $H_\pm+t\Vert\cdot\Vert^2/2$ and optimize $\beta$ from~Theorem~\ref{t: nlmg conv}. Adding quadratic function makes $(\kappa,\mu,L)\mapsto (\kappa+t,\mu+t,L+t)$, so
\begin{align*}
   \beta(t)=\frac{\kappa+\mu+2t}{2(L+t)^2}c^2,
\end{align*}
which is maximized when $t=L-\kappa-\mu$. This implies when $H_+$ is well rounded in a sense that $L-\kappa \ll 1$, making $H_-$ flatter might provide faster convergence.
\end{remark}

Due to the flexibility of the convex splitting, \eqref{e: dc al} is in fact a large class of optimization algorithms. 
\begin{exmp}
The simplest example is Euler's method, 
\begin{equation*}
    \un-u^n=\tau\nabla H(u^n).
\end{equation*}
Here, the convex splitting of the energy $H$ is given as follows,
\begin{equation*}
    H(u)=\frac{1}{2\tau}\Vert u\Vert^2-\big(\frac{1}{2\tau}\Vert u\Vert^2-H(u)\big),
\end{equation*}
where $\tau>0$ is small enough to assure the convexity of $H_-$.
\end{exmp}

It is worth mentioning that the idea of convex splitting to stabilize the energy also arises independently in the area of partial differential equations.
\begin{exmp}
[Eyre's semi-implicit Euler's method]
Adapting Elliott and Stuart's idea ~\cite{elliott1993global} to stabilize the energy using convex splitting, Eyre~\cite{eyre1998unconditionally}, in 1997, formalized a semi-implicit time discretization of gradient flows,
\begin{equation}\label{e: disc cs}
        \un-u^n=-\tau(\nabla H_+(\un)-\nabla H_-(u^n)),
\end{equation}
where $(H_+,H_-)$ is a convex splitting of the energy $H$, in order to formulate time-discretized partial differential equations such as the Cahn-Hilliard equations.
Followed from the difference of convex inequalities, Lemma~\ref{l: conv ineq}, Eyre's semi-implicit Euler's method provides unconditionally stable energy. The scheme was proposed and has been applied to solve discretized partial differential equations, such as Allen-Cahn or bistable reaction diffusion equations~\cite{allen1979microscopic}~\cite{christlieb2014high}, the Cahn-Hilliard equations~\cite{cahn1961spinodal}~\cite{glasner2016improving}, the Ginzburg-Landau equations~\cite{chapman1992macroscopic}, the Runge–Kutta scheme~\cite{shin2017unconditionally}, and the phase field crystal (PFC) model~\cite{wise2009energy}.

To the best of our knowledge convergence of the semi-implicit Euler's method, or the connection to DCA has not been discussed. Note that we recover the semi-implicit Euler's method from DCA by adding extra quadratic energy to the given convex splitting $(H_+,H_-)$ as follows,
\begin{equation*}
    H(u)=\big(\frac{1}{2\tau}\Vert u\Vert^2+H_+(u)\big)-\big(\frac{1}{2\tau}\Vert u\Vert^2+H_-(u)\big).
\end{equation*}
Thus, by Theorem~\ref{t: nlmg conv}, as long as $H$ satisfies \L ojasiewicz inequality, any precompact sequence produced from the semi implicit Euler's method~\eqref{e: disc cs} converges with the explicit convergence rate.
%  \eqref{e: g precon} can be seen as a fixed point iteration using a convex splitting of $H$, i.e., $H= (g/\tau+ \Tilde{H}_+)-(g/\tau+\Tilde{H}_-)= H_+-H_-$,
% \begin{align}
%     \frac{1}{\tau}\nabla g(\un)+\nabla \Tilde{H}_+(\un)&= \frac{1}{\tau}\nabla g(u^n) +\nabla \Tilde{H}_-(u^n),\\
%     \nabla {H}_+(\un)&=\nabla {H}_-(u^n).
% \end{align}
\end{exmp}
\section{Duality of Polyak's and Nesterov's momentum method}
In this section, we discuss a few variants of DC algorithms. Firstly,
due to the implicit part of the algorithm, each step of DC algorithm requires to solve a convex problem, inheriting the dual DC algorithm. In this section we will go over the equivalence of primal and dual DC algorithms. 

In order to accelerate the rate of convergence, Polyak firstly developed the Heavy ball method (1964) which boosts iterates in the direction of the previous increment, like adding a ``momentum". Another successful and celebrated example, Nesterov's acceleration method (1983), rather takes different approach. Comparing to external addition of momentum in the Heavy ball method, Nesterov's acceleration applies momentum internally. Since then, numerous acceleration methods have been adapting either Polyak's approach or Nesterov's approach.
With the perspective of the primal and dual DC in mind, however, we can interpret Polyak's Heavy ball method and Nesterov's acceleration method as two examples of a general class of DC algorithms with momentum, applied to energies that are dual to each other. In this way, we provide a convergence proof that covers both types of method.

\paragraph{\textbf{Dual DC algorithms}}
Inverting a gradient, when enough regularity is presumed, can be interpreted as a maximization problem,
\begin{equation*}
    \nabla f(x)=p \Longleftrightarrow \arg\!\max_x\{ p^Tx -f(x)\} \Longleftrightarrow \nabla f^*(p)=x,
\end{equation*}
where $f^*(p):=\sup_{x} \{p^Tx-f(x)\}$ is the \textit{Legendre transform} of $f(x)$.
So by solving the implicit part of the DC algorithm, the dual DC algorithm naturally arise. Let $p^n:=\nabla H_+(u^n)=\nabla H_-(u^{n-1})$, then we can write down the iteration of $p^n$
\begin{align*}
    p^{n+1}&=\nabla H_-(u^n)\\
    &=\nabla H_-(\nabla H_+^*(p^n)).
\end{align*}
This leads us to the dual DC algorithm.

[Dual DC Algorithm]
\begin{equation*}
    \nabla H^*_- (p^{n+1})=\nabla H^*_+(p^n).
\end{equation*}
% \begin{align*}
%     \nabla H_+ (\un)&=\nabla H_- (u^n)\\
%     \nabla H^*_- (p^{n+1})&=\nabla H^*_+(p^n).
% \end{align*}
$\nabla H_+$ has to be easily invertible.

The dual DC algorithm aims to optimize the dual energy, $H_-^*-H_+^*$, not to be confused with the convex dual of the energy.
Indeed, we can check the equivalence between the primal and dual DC algorithms~\cite{tao1997convex},
\begin{align*}
  \inf_x\{ H_+(x)-H_-(x)\}&= \inf_x\{ H_+(x) -\sup_p \{p^Tx -H^*_-(p)\}\}\\
  &=\inf_{x,p}\{ H_+(x) -p^Tx+H^*_-(p)\}\\
  &=\inf_p\{ H^*_-(p)-H^*_+(p)\}.
\end{align*}
It is known that assuming $f$ is strongly convex and analytic is enough to assure the \L ojasiewicz inequality of $f^*$ (Proposition 2.1~\cite{le2018convergence}). So when $H_+$ and $H_-$ are strongly convex and analytic, we can apply \L ojasiewicz convergence theorem to the dual DC. Later, we will use this to prove convergence of DC algorithm with momentum which  includes Polyak's momentum method and Nesterov's acceleration algorithm.

\begin{remark}
A funny thing happens when $H_+=H_-^*$, i.e., if there exists $\Phi$ such that $H=\Phi ^*-\Phi$. Then the DC algorithm is given as follows,
\begin{align*}
   &\nabla \Phi^*(\un)=\nabla \Phi(u^n),\\
    &\un=\nabla \Phi\circ \nabla \Phi(u^n).
\end{align*}
In this particular case, minimizing $H$ is equivalent to finding a fixed point of $\nabla \Phi^{(2)}$, in other word finding $x,y$ such that $(\nabla\Phi(x)=y) \wedge (\nabla\Phi(y)=x)$.
Suppose that $H\in C^2$, then the Hessian of $\Phi$ satisfies the following,
\begin{align*}
    \nabla^2 H&=\nabla^2 \Phi^* -\nabla^2 \Phi\\
    &=(\nabla^2 \Phi)^{-1} -\nabla^2 \Phi.
\end{align*}
The solution is uniquely determined by the Hessian of $H$, i.e., $\nabla^2\Phi=\frac{1}{2}(S-\nabla^2 H\big)>0$, where $S$ is a positive definite matrix satisfying $S^2=(\nabla^2 H)^2 +4I$. 
\end{remark}

\paragraph{\textbf{DC algorithms with momentum}}
Now consider DC algorithms for the energy $H$ modified by $\beta$-smooth ``momentum" $b:\R^N\to \R$ centered at a point $v\in \R^N$,
\begin{equation}~\label{e: dcm pot}
   \Tilde{H}(u):= H(u)-b(u)+b(v)+\langle \nabla b(v),u\rangle.
\end{equation}
 Here we do not necessarily assume $b$ is convex. When $b$ is convex, however, one can notice that \eqref{e: dcm pot} is exactly
\begin{align*}
    \Tilde{H}(u)=H(u)-D_b(u,v),
\end{align*}
where $D_b(u,v)$ is so-called Bregman divergence, which can be considered as adding repulsive potential pushing away from $v$. Treating the added momentum as a concave part of $H$, we have a convex splitting of $\Tilde{H}$ as follows,
\begin{align*}
    \Tilde{H}(u)&=H_+(u)-(H_-(u)+D_b(u,v))\\
    &=\Tilde{H}_+(u)-\Tilde{H}_-(u).
\end{align*}
At each step, let's fix $v$ as the previous point, $u^{n-1}$. Then the corresponding DC algorithm on $\Tilde{H}$ motivates the following class of DC algorithms with momentum on $H$,

[DC algorithms with momentum]
\begin{equation}\label{e: dc mom}
   \nabla H_+(\un) =\nabla H_-(u^n) + \nabla b(u^n)-\nabla b(u^{n-1}).
\end{equation}
Due to the flexibility of convex splitting and choosing the momentum $b$, the DC algorithm with momentum can be quite comprehensive.
\begin{exmp}
[Polyak's heavy ball method]
\begin{equation*}
    \un= u^n -\tau\nabla H(u^n) +\beta(u^n-u^{n-1}),
\end{equation*}
which is choosing $H_+(u)= \frac{1}{2}\Vert u\Vert^2$, $H_-(u)=\frac{1}{2}\Vert u\Vert^2-\tau H(u)$ and $b(u)=\beta\Vert u\Vert^2/2$ with $\tau <1/L$ where $L$ is Lipschtiz constant of $\nabla H$.
\begin{equation*}
    \nabla H_+(\un) =\nabla H_-(u^n)+\beta(u^n-u^{n-1}).
\end{equation*}
\end{exmp}
\begin{exmp}
[Adaptive momentum algorithm from Bianchi \& Barakat~\cite{barakat2020convergence}]
\begin{align*}
    x^{n+1}&=x^n - a_{n+1}p^{n+1}\\
    p^{n+1}&=p^n+b(\nabla f(x^n)-p^n).
\end{align*}
Rearranging the equation with respect to $x$ provides the following algorithm,
\begin{equation*}
    x^{n+1}=x^n -a_{n+1}b\nabla f(x^n) +\frac{a_{n+1}(1-b)}{a_n}(x^n-x^{n-1}).
\end{equation*}
This is equivalent to letting $\tau$ and $\beta$ be adaptive in Polyak's heavy ball method.
\end{exmp}
\begin{exmp}
[Nesterov's acceleration method]
Note that for given convex splitting $(H_+,H_-)$ of $H$, the dual energy of $H$ is $H_-^*-H_+^*$. Let $H_+(u)= \frac{1}{2}\Vert u\Vert^2$, $H_-(u)=\frac{1}{2}\Vert u\Vert^2-\tau H(u)$ with small enough $\tau <1/L$ and consider adding $b(u)=\beta\Vert u\Vert^2/2$ at the dual dc problem as follows,
\begin{align*}
     \nabla H^*_-(p^{n+1}) &=\nabla H^*_+(p^n)+\beta(p^n-p^{n-1}).
    %  &=u^n+\beta(u^n-u^{n-1})
\end{align*}
Since $p^n=\nabla H_+(u^n)=u^n$,
\begin{equation*}
    p^{n+1}=\nabla H_-\big(u^n+\beta(u^n-u^{n-1})\big).
\end{equation*}
% \begin{equation}
%      \nabla H_+(\un) =\nabla H_-(u^n+\beta(u^n-u^{n-1}))
% \end{equation}
This corresponds to the Nesterov's acceleration method,
\begin{equation*}
    \un= u^n+\beta(u^n-u^{n-1}) -\tau\nabla H(u^n+\beta(u^n-u^{n-1})).
\end{equation*}
Thus, we can consider the Nesterov's acceleration method, as the DC algorithm with momentum on the dual energy of $H$.
\end{exmp}
\begin{exmp}
[Boosted DC algorithm]
In~\cite{artacho2018accelerating}~\cite{artacho2019boosted}, Artacho \textit{et al}. came up with the following acceleration algorithm,
\begin{equation*}
    \nabla H_+(\un)=\nabla H_-(u^n+\beta_n(u^n-\unn)),
\end{equation*}
where $\beta_n$ is chosen through a line search procedure, so that we have the monotonicity of energy, uniformly for some $\lambda>0.$
\begin{equation*}
    H(u^n+\beta_n(u^n-\unn))\leq H(u^n)-\lambda \Vert u^n-\unn\Vert^2.
\end{equation*}
They applied Absil's \L ojasiewicz convergence theorem~\cite{absil2005convergence} to prove convergence.
\end{exmp}
\begin{exmp}
[iPiano]
In~\cite{ochs2014ipiano}~\cite{ochs2018local}, Ochs \textit{et al}. studies the following non-convex optimization, 
\begin{equation*}
    \min_{x\in\R^N} f(x)+g(x),
\end{equation*}
where basically assumes $g$ is convex and $f$ is $L$-smooth (those conditions can be relaxed).
They came up with inertial proximal algorithm for non-convex optimization (iPiano) of the following form,
\begin{equation*}
    \un+\alpha\nabla g(\un)=u^n-\alpha\nabla f(u^n) +\beta (u^n-\unn).
\end{equation*}
This can be considered as the DC algorithm with momentum with the convex splitting,
\begin{align*}
    f(x)+g(x)&=(g(x)+\frac{1}{2\alpha}\Vert x\Vert^2)-(\frac{1}{2\alpha}\Vert x\Vert^2-f(x))\\
    &=H_+(x)-H_-(x).
\end{align*}
Note that $H_-$ is $(\frac{1}{\alpha}-L)$-convex, so the condition for $\alpha$ is given to make sure the monotonicity of energy, constructed by the convex inequality~\ref{l: conv ineq}. The convergence result by \L ojasiewicz scheme can be found in~\cite{ochs2018local}.

Moreover, in 2019~\cite{wu2019general}, Wu and Li proved the convergence of generalization of the iPiano,
\begin{equation*}
    \un+\alpha\nabla g(\un)=u^n-\alpha\nabla f\big(u^n-\gamma (u^n-\unn)\big) +\beta (u^n-\unn).
\end{equation*}
This can be seen as adding momentum on both primal and dual step.
\end{exmp}

\mnote{strong convexity is only for inverting the implicit part}
\begin{theorem}[Convergence of DCA with momentum]\label{t: dc mom}
Let $H:\R^N\to\R$ satisfy \L ojasiewicz inequality and $H=H_+-H_-$, where $H_+$ is $\kappa$-convex, $L$-smooth and $H_-$ is $\mu$-convex. If $\beta\in(0,\frac{\kappa+\mu}{2})$, then every bounded solution $\{u^n\}_{n\in\N_0}$ of \textit{DC algorithm with momentum} \eqref{e: dc mom}
    converges.
    
%     Moreover, $l^1$- tail of $\{u^n\}_{n\in\N_0}$ can be estimated by the decay of $H_n:=H(u^n)-H(\Tilde{u})$,
%     \begin{equation}
%     \sum_{k\ge n}\Vert u^{k+1}-u^k\Vert\leq \frac{2L}{c\theta(\kappa+\mu)}H^\theta_n,
%     \end{equation}
% where $L$ is Lipschitz constant of $\nabla H_+$ and $c,\theta>0$ are \L ojasiewicz constant and exponent at $\Tilde{u}$, respectively. 
% Let $\beta:=(\kappa+\mu)c^2/{2L^2}$ then the decay of $H_n$ is as follow,
%   \begin{equation}
%         H_n \leq
%         \left\{\begin{array}{ll}
%              H_0e^{-\beta n}, & \text{if}\;\; \theta=\frac{1}{2},\\
%              (H_0^{2\theta-1}+(1-2\theta)\beta n)^\frac{1}{2\theta -1},& \text{if}\;\; 0<\theta<\frac{1}{2}.
%         \end{array}\right.
%     \end{equation}
\end{theorem}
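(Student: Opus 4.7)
Since the energy $H$ is not monotone along \eqref{e: dc mom}, the plan is to lift the problem to the product space $\R^N\times\R^N$ and work with the Lyapunov energy
\[
\tilde{H}(u,v) := H(u) + \alpha\|u-v\|^2,
\]
evaluated along the enlarged sequence $z^n:=(u^n,u^{n-1})$, where $\alpha>0$ is a free parameter to be chosen. The overall goal is to verify the strong descent condition for $\{z^n\}$ with respect to $\tilde{H}$ and the \L ojasiewicz inequality for $\tilde{H}$, and then invoke Theorem~\ref{t: absil} on $\R^{2N}$.

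For sufficient decrease, I would combine the $\kappa$-convexity of $H_+$ and $\mu$-convexity of $H_-$ (via Lemma~\ref{l: conv ineq}) with the iteration \eqref{e: dc mom} to obtain
\[
H(u^{n+1})-H(u^n) \leq \langle \nabla b(u^n)-\nabla b(u^{n-1}),\,u^{n+1}-u^n\rangle - \tfrac{\kappa+\mu}{2}\|u^{n+1}-u^n\|^2.
\]
The $\beta$-smoothness of $b$ together with Young's inequality handles the cross term, and telescoping the $\alpha\|\cdot\|^2$ pieces yields
\[
\tilde{H}(z^{n+1})-\tilde{H}(z^n)\leq (\tfrac{\beta}{2}-\alpha)\|u^n-u^{n-1}\|^2 + (\alpha-\tfrac{\kappa+\mu-\beta}{2})\|u^{n+1}-u^n\|^2.
\]
The window $\alpha\in(\beta/2,(\kappa+\mu-\beta)/2)$ is nonempty precisely because $\beta<(\kappa+\mu)/2$, and any such $\alpha$ produces the uniform bound $\tilde{H}(z^n)-\tilde{H}(z^{n+1})\geq c\|z^{n+1}-z^n\|^2$ with $c>0$.

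Next I would establish the relative-error bound. Rearranging \eqref{e: dc mom} gives $\nabla H(u^n)=-(\nabla H_+(u^{n+1})-\nabla H_+(u^n))+(\nabla b(u^n)-\nabla b(u^{n-1}))$, so the $L$-smoothness of $H_+$ and the $\beta$-smoothness of $b$ produce $\|\nabla H(u^n)\|\leq L\|u^{n+1}-u^n\|+\beta\|u^n-u^{n-1}\|$. Since $\nabla\tilde{H}(u,v)=(\nabla H(u)+2\alpha(u-v),\,-2\alpha(u-v))$, this refines to $\|\nabla\tilde{H}(z^n)\|\leq C\|z^{n+1}-z^n\|$ for a constant $C=C(L,\alpha,\beta)$. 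Combining with the sufficient decrease then yields
\[
\tilde{H}(z^n)-\tilde{H}(z^{n+1})\geq (c/C)\,\|\nabla\tilde{H}(z^n)\|\,\|z^{n+1}-z^n\|,
\]
which is the primary descent condition; the complementary descent condition follows because equality in the sufficient decrease forces $z^{n+1}=z^n$.

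The remaining piece is transferring the \L ojasiewicz property to $\tilde{H}$. The critical points of $\tilde{H}$ are exactly the diagonal points $(u^*,u^*)$ with $u^*\in\{u:\nabla H(u)=0\}$, and the elementary bound $\|\nabla\tilde{H}(u,v)\|^2\geq \tfrac12\|\nabla H(u)\|^2 + 2\alpha^2\|u-v\|^2$, combined with the subadditivity $(a+b)^{1-\theta}\leq a^{1-\theta}+b^{1-\theta}$, promotes the \L ojasiewicz inequality from $H$ at $u^*$ to one for $\tilde{H}$ at $(u^*,u^*)$ with the same exponent $\theta\in(0,\tfrac12]$. Since boundedness of $\{u^n\}$ implies boundedness of $\{z^n\}$, Theorem~\ref{t: absil} applied on $\R^{2N}$ delivers $z^n\to(u^*,u^*)$, and hence $u^n\to u^*$. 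The main subtle point I expect is the \L ojasiewicz lift: because $\tilde{H}$ is genuinely degenerate along the diagonal, the quadratic penalty in $\|u-v\|$ must be paired with the gradient of $H$ in the correct way, and one must verify that the subadditive splitting of $|\tilde{H}(u,v)-\tilde{H}(u^*,u^*)|^{1-\theta}$ does not worsen the Łojasiewicz exponent.
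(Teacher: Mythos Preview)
Your proposal is correct and matches the paper's argument essentially step for step: the paper also doubles variables, works with the Lyapunov function $M(u,v)=H(u)+\tfrac{\kappa+\mu}{4}\|u-v\|^2$ along $w^n=(u^n,u^{n-1})$, derives the same descent and gradient estimates, and then applies Theorem~\ref{t: absil}. The only cosmetic differences are that the paper fixes $\alpha=\tfrac{\kappa+\mu}{4}$ (the midpoint of your admissible interval $(\beta/2,(\kappa+\mu-\beta)/2)$), and for the \L ojasiewicz lift it cites a calculus rule (Theorem~3.6 of \cite{li2018calculus}) rather than carrying out the hands-on splitting you sketch.
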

\begin{remark}[Nesterov-type algorithm]
Let $G=G_+-G_-$ with $G_\pm$ being strongly convex and (sub)analytic. Then it is known that $G_-^*-G_+^*$ satisfies \L ojasiewicz inequality~(Proposition 2.1~\cite{le2018convergence}), so that we can apply Theorem~\ref{t: dc mom} with $H_\pm=G_\mp^*$.

For example, the Nesterov's algorithm for an (sub)analytic $G:\R^N\to\R$ starts with the following convex splitting 
\begin{align*}
    G_+(u)&=\frac{1}{2}\Vert u\Vert^2\\
    G_-(u)&=\frac{1}{2}\Vert u\Vert^2-\tau G(u).
\end{align*}
Since $G_-$ is strongly convex for small enough $\tau>0$, not only $H_-$ but also $H_+$ satisfies \L ojasiewicz inequality and has Lipschitz derivative. Thus Theorem~\ref{t: dc mom} is applied to the Nesterov's acceleration method.
\end{remark}

The idea of doubling variables in the proof can also be found in~\cite{diakonikolas2021generalized}~\cite{ochs2018local}~\cite{wu2019general}.
\begin{proof}
Let $u,v\in\R^N$ and define $M:\R^{2N}\to \R$ as follows,
\begin{equation*}
    M(u,v):=H(u)+\frac{\kappa+\mu}{4}\Vert u-v\Vert^2.
\end{equation*}
Note that,
\begin{equation}\label{e: nab f}
    \begin{aligned}
     \nabla_u M(u,v) &=\nabla H(u) +\frac{\kappa+\mu}{2}(u-v)\\
    \nabla_v M(u,v) &=\frac{\kappa+\mu}{2}(v-u), 
    \end{aligned}
\end{equation}
% \begin{align}
%     \nabla_u M(u,v) &=\nabla H(u) +\frac{\kappa+\mu}{2}(u-v)\\
%     \nabla_v M(u,v) &=\frac{\kappa+\mu}{2}(v-u), \label{e: nab f}
% \end{align}
$\nabla M(u,v)=0$ if and only if $\nabla H(u)=0$ and $u=v$.

We will use \L ojasiewicz inequality of $M.$ According to Theorem 3.6 of~\cite{li2018calculus} $M$ satisfies the \L ojasiewicz inequality with the same exponent of $H$.

Let $w^n:=(u^n,u^{n-1})^T$ and $u^{-1}:=u^0$.

[Gradient estimate]
First, by changing $H_-=H_+-H$ from~\eqref{e: dc mom}, we have
\begin{equation*}
   \nabla H(u^n)=-\nabla H_+(\un) +\nabla H_+(u^n)+\nabla b(u^n)-\nabla b(u^{n-1}).
\end{equation*}
We use this to estimate $\nabla F$ as follows,
\begin{align*}
    \Vert \nabla M(w^n)\Vert &\leq \Vert \nabla H(u^n)+\frac{\kappa+\mu}{2}(u^n-\unn)\Vert+\Vert \frac{\kappa+\mu}{2}(u^n-\unn)\Vert\\
    &\leq \Vert \nabla H(u^n)\Vert +(\kappa+\mu)\Vert u^n-\unn\Vert\\
    &\leq \Vert \nabla H_+(\un)-\nabla H_+(u^n)-\nabla b(u^n)+\nabla b(\unn)\Vert\\
    & \;\; +(\kappa+\mu)\Vert u^n-\unn\Vert\\
    &\leq L\Vert \un-u^n\Vert +(\kappa+\mu+\beta) \Vert u^n-\unn\Vert\\
    &\leq \sqrt{2} \max\{L,\kappa+\mu+\beta\}\Vert w^{n+1}-w^n\Vert
\end{align*}
\begin{align*}
     H(\un)-H(u^n)&\leq \langle \nabla H_+(\un)-\nabla H_-(u^n),\un-u^n\rangle-\frac{\kappa+\mu}{2}\Vert \un-u^n\Vert^2\\
     &\leq \langle \nabla b(u^n)-\nabla b(\unn),\un-u^n\rangle-\frac{\kappa+\mu}{2}\Vert \un-u^n\Vert^2
\end{align*}

[Primary descent condition]
    \begin{align*}
     M(w^{n+1})-M(w^n)&\leq \langle \nabla b(u^n)-\nabla b(\unn),\un-u^n\rangle\\
     &\;\;-\frac{\kappa+\mu}{4}(\Vert \un-u^n\Vert^2+\Vert u^n -\unn\Vert^2)\\
     &\leq -\frac{\kappa+\mu-2\beta}{4}(\Vert \un-u^n\Vert^2+\Vert u^n -\unn\Vert^2)\\
     &= -\frac{\kappa+\mu-2\beta}{4}\Vert w^{n+1}-w^n\Vert^2\\
     &\leq -\frac{\kappa+\mu-2\beta}{4\sqrt{2}\max\{L,\kappa+\mu+\beta\}}\Vert \nabla M(w^n)\Vert\Vert w^{n+1}-w^n\Vert
    \end{align*}
    
[Complementary descent condition]
Due to the primary descent condition, it suffices to check if $\nabla M(u,v)=0$ implies $u=v$. This follows from the observation on $\nabla M$,~\eqref{e: nab f}.

[Rate of convergence]
Using $(a+b)^2\leq 2a^2+2b^2,$
\begin{equation*}
    \sum_{k\ge n}\Vert u^{k+1}-u^k\Vert\leq \frac{1}{\sqrt{2}}\sum_{k\ge n}\Vert w^{k+1}-w^k\Vert\leq \frac{c(\kappa+\mu-2\beta)}{4\max\{L,\kappa+\mu+\beta\}^2}M^\theta_n,
    \end{equation*}
where $L$ is Lipschitz constant of $\nabla H_+$ and $c,\theta>0$ are \L ojasiewicz constant and exponent at $\Tilde{u}$, respectively. 
Let $\beta:=\frac{c^2(\kappa +\mu-2\beta)}{8\max\{L,\kappa+\mu+\beta\}^2}$ then the decay of $H_n$ is as follow,
   \begin{equation}\label{e: conv rate m}
        M_n \leq
        \left\{\begin{array}{ll}
             M_0e^{-\beta n}, & \text{if}\;\; \theta=\frac{1}{2},\\
             (M_0^{2\theta-1}+(1-2\theta)\beta n)^\frac{1}{2\theta -1},& \text{if}\;\; 0<\theta<\frac{1}{2}.
        \end{array}\right.
\end{equation}
\end{proof}
\begin{remark}
   The convergence rate~\eqref{e: conv rate m} depends only on the geometry of $H$, or the \L ojasiewicz of $H$, since $M$ satisfies the \L ojasiewicz inequality with the same exponent to the $H$'s (Theorem 3.6 of~\cite{li2018calculus}). So, without local analysis, the boosting effect of the momentum method is not reflected in the similar \L ojasiewicz convergence approach,~\cite{diakonikolas2021generalized}~\cite{ochs2018local}~\cite{wu2019general}. 
   
   Note that in this framework, we did not impose any geometric assumption on the momentum function $b$. 
   Heuristically, the effect of the momentum method depends on the local geometry of the momentum function $b$.  If $b$ is convex, the added momentum adds a kick away from the previous point, and if $b$ is concave, it provides the opposite affect. It is possible that the geometry of $b$ needs to be used to deduce the better quantitative convergence result.
   
   Additionally, it is not always that the momentum method is faster than the mere gradient descent. Generally the momentum $b$ is chosen to be a strongly convex function, $\frac{\beta}{2}\Vert\cdot\Vert^2$, and this might be too strong for the energies that have flatter geometry, i.e., when $\theta<1/2$.
   So it seems reasonable to try add momentum that has the local geometry similar to the $H$'s.
   \end{remark}
\mnote{Comment about convergence rate}
\part{Lotka-Volterra type dynamics}
\chapter{Convergence of Lotka-Volterra dynamics with symmetric interaction matrix}\label{ch: lv cont}
Comprising a family of classic and prototypical models
in population ecology, Lotka-Volterra systems have nonlocal nonlinear structure in a fairly amenable way to mathematical analysis. 
Let $\mathbb{R}^N_+:=\{f\in \mathbb{R}^N : f_i>0,\; \forall i\in [N]\}$, where $N\in\mathbb{N}$ and $[N]:=\{1,2,...,N\}$. For $a, d\in\mathbb{R}^N_+$ and $B\in \R^{N\times N}$ the differential equations of the \textit{Lotka-Volterra system} are given by
    \begin{equation}\label{eq1}
        f'_{i} = d_if_{i}(a_i - \sum_{j\in[N]} B_{ij}f_j), \quad \forall i\in [N].
    \end{equation}
In the ecosystem of $N$-many species, the population of the $i$-th species, $f_i$, is determined by its own  growth rate $a_i$ and the influence, whether inhibiting or cooperating, of the $j$-th species over the $i$-th species, $B_{ij}$.
    
There are two important functions, we name them energy and entropy, that are widely used to analyze Lotka-Voltera system.
\paragraph{\textbf{Monotonic energy}}
When the interaction $B$ is symmetric, the energy $E:\mathbb{R}^N \rightarrow \mathbb{R}$,
    \begin{equation*}
        E(f) := -H := \sum_{i\in[N]}f_i a_i - \frac{1}{2}\sum_{i,j\in[N]}B_{ij}f_i f_j,
    \end{equation*}
is a Lyapunov functional of \eqref{eq1},
\begin{equation*}
        \frac{dE}{dt} = \sum_{i\in[N]} d_if_i(a_i-\sum_{j\in[N]} B_{ij}f_j)^2
                    = \sum_{i\in[N]} \frac{f'_i(t)^2}{d_if_i} \ge0\,.
\end{equation*}
Indeed, the Lotka-Volterra system is a gradient flow of $E$ with respect to the metric which is named after Shahshahani,
\begin{equation*}
    \langle g, h\rangle_f:=\sum_{i\in[N]} g_ih_if_i^{-1}.
\end{equation*}
To see this let's take the variation of $E,$
\begin{align*}
    DE(f)(g)&= \sum_{i\in[N]} g_ia_i -g_i(Bf)_i\\
    &=\sum_{i\in[N]} f_i(a_i -(Bf)_i)g_if^{-1}_i.\\
    &=:\langle \text{grad} E(f),g\rangle_f.
\end{align*}
As we have seen in the previous chapter, \L ojasiewicz convergence analysis requires an ``angle condition" between $\nabla E(f)$ and $f'$, i.e., there is $\sigma>0$ such that
\begin{equation*}
    \langle \nabla E(f), f'\rangle \geq \sigma \Vert \nabla E(f)\Vert \Vert f'\Vert.
\end{equation*}
In general, since there can be a species that is vanishing along the evolution, the Shahshahani metric can blow up. Due to this singular structure, it is difficult use the Shahshahani gradient structure to prove convergence of the solutions.
% \begin{itemize}
% \item Energy $E:\mathbb{R}^N \rightarrow \mathbb{R}$, given by
%     \begin{equation}
%         E(f) := -H := \sum_{i\in[N]}f_i a_i - \frac{1}{2}\sum_{i,j\in[N]}B_{ij}f_i f_j \,.
%     \end{equation}
%     $E$ is monotonic along the solution of \eqref{eq1}, with %$dE/dt\ge0$.
%     \begin{equation}
%         \frac{dE}{dt} = \sum_{i\in[N]} d_if_i(a_i-\sum_{j\in[N]} B_{ij}f_j)^2
%                     = \sum_{i\in[N]} \frac{f'_i(t)^2}{d_if_i} \ge0\,.
%     \end{equation}
% \item Entropy $F:\mathbb{R}^N_+ \rightarrow \mathbb{R}$ relative to a given state $\ft \in \overline{\mathbb{R}^N_+}$ with a weight $w_i>0$, given by
% %which will be later chosen to be an equilibrium point.
%     \begin{equation}
%         F(f) := \sum_{i\in[N]} w_i(\Tilde{f}_i\log\frac{\Tilde{f_i}}{f_i} + f_i - \Tilde{f}_i) \,.
%     \end{equation}
% \end{itemize}
\paragraph{\textbf{Proximal entropy}}
The entropy $F:\mathbb{R}^N_+ \rightarrow \mathbb{R}$ relative to a given state $\ft \in \overline{\mathbb{R}^N_+}$ with a weight $w_i>0$, given by
%which will be later chosen to be an equilibrium point.
    \begin{equation*}
        F(f) := \sum_{i\in[N]} w_i(\Tilde{f}_i\log\frac{\Tilde{f_i}}{f_i} + f_i - \Tilde{f}_i) \,.
    \end{equation*}
    The relative entropy $F$ is also known as the Kullback–Leibler divergence, providing a proximal distance from $\ft$.
    We point out that the relative entropy $F$ is well-defined on $\overline{\R^N_+}$, where it has the unique minimizer $\ft$:
    \begin{lemma} Let $\ft\in\overline{\R^N_+}$. Then
        $F:\overline{\mathbb{R}_+^{N}} \rightarrow [0,\infty]$ is convex, and  $F(f)=0$ if and only if $f=\Tilde{f}$.
    \end{lemma}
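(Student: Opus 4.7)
The plan is to decompose $F$ as a sum of single-variable functions and analyze each summand separately. Write
\[
F(f) = \sum_{i \in [N]} w_i\, \phi_i(f_i), \qquad \phi_i(s) := \Tilde{f}_i \log\frac{\Tilde{f}_i}{s} + s - \Tilde{f}_i,
\]
with the standard convention $0\log 0 = 0$ and $\Tilde{f}_i \log(\Tilde{f}_i/0) = +\infty$ when $\Tilde{f}_i > 0$. Since nonnegative combinations and sums of convex functions are convex, and since each $\phi_i$ depends on only one coordinate, it suffices to show that every $\phi_i : [0,\infty) \to [0,\infty]$ is convex with unique zero at $\Tilde{f}_i$.

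For the convexity step, I would split into two cases. If $\Tilde{f}_i = 0$, then $\phi_i(s) = s$, which is linear hence convex, and obviously vanishes only at $s = 0 = \Tilde{f}_i$. If $\Tilde{f}_i > 0$, then on $(0,\infty)$ we have $\phi_i'(s) = 1 - \Tilde{f}_i/s$ and $\phi_i''(s) = \Tilde{f}_i/s^2 > 0$, so $\phi_i$ is strictly convex on $(0,\infty)$; extending by $\phi_i(0) = +\infty$ preserves convexity on $[0,\infty)$ (the epigraph remains convex since it just adds the vertical ray at $s = 0$).

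For nonnegativity and the characterization of the zero set, I would use the elementary inequality $\log t \leq t - 1$ for $t > 0$, with equality iff $t = 1$. Applied with $t = s/\Tilde{f}_i$ (when $\Tilde{f}_i > 0$) this yields
\[
\Tilde{f}_i \log\frac{s}{\Tilde{f}_i} \leq s - \Tilde{f}_i, \qquad \text{i.e.,} \qquad \phi_i(s) = \Tilde{f}_i \log\frac{\Tilde{f}_i}{s} + s - \Tilde{f}_i \geq 0,
\]
with equality iff $s = \Tilde{f}_i$. Equivalently, the computation of $\phi_i'$ above shows $s = \Tilde{f}_i$ is the unique critical point of the strictly convex $\phi_i$ and $\phi_i(\Tilde{f}_i) = 0$. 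The case $\Tilde{f}_i = 0$ was already handled.

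Finally, since $w_i > 0$ and each $\phi_i \geq 0$, the sum $F(f) = \sum_i w_i \phi_i(f_i)$ is nonnegative, and it vanishes iff each $\phi_i(f_i) = 0$, iff $f_i = \Tilde{f}_i$ for every $i$, iff $f = \Tilde{f}$. There is no real obstacle here; the only mildly delicate point is verifying that the convention $\phi_i(0) = +\infty$ (when $\Tilde{f}_i > 0$) is consistent with convexity on the closed half-line, which follows immediately from the epigraph description.
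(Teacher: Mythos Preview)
Your proof is correct and follows essentially the same approach as the paper: decompose $F$ coordinatewise, split on whether $\Tilde{f}_i=0$ or $\Tilde{f}_i>0$, and compute the same first and second derivatives to obtain convexity and the unique minimizer. Your version is in fact more complete than the paper's, which simply asserts nonnegativity of each term, whereas you supply the inequality $\log t \le t-1$ and treat the boundary value $\phi_i(0)=+\infty$ explicitly.
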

    
    \begin{proof}
    Define $J = \{i\in[N] : \Tilde{f}_i>0\}$. Then for any $f\in\R^N_+$,
%    Note that because $\Tilde{f}$ is an equilibrium point,
%    \begin{equation}
%        i \in J  \longrightarrow a_i - \sum_{j}B_{ij}\Tilde{f}_j = 0.
%    \end{equation}
 %   Define $K = \{i\in[N] : a_i - \sum_{j}B_{ij}\Tilde{f}_j \neq 0\}$
            \begin{align*}
            \text{for all} \;\; j\in J, \qquad &
            \frac{\partial F}{\partial f_j} = w_j(1-\frac{\Tilde{f}_j}{f_j}), \qquad
            \frac{\partial^2 F}{\partial f_j^2} = w_j\frac{\Tilde{f}_j}{f_j^2} > 0, \\
            \text{for all} \;\; j\notin J, \qquad &
            \frac{\partial F}{\partial f_j} = w_j.
        \end{align*}
        Each term in $F$ is non-negative and vanishes only when $f_j = \Tilde{f}_j$. Hence we have the result.
    \end{proof}
    
    %Since some components of $\ft$ might be zero, the logarithm term is defined from the right limit of $f\log f$ at 0; in other words,
   % \begin{equation}
        %F(f) = \sum_{i : \Tilde{f_i}>0} w_i(\Tilde{f}_i\log\frac{\Tilde{f_i}}{f_i} + f_i - \Tilde{f}_i) + \sum_{i : \Tilde{f_i}=0}w_if_i \,.
    %\end{equation}

   It is known that when $B$ is VL-stable, i.e., $x^T DBx>0$ for some positive diagonal matrix $D$ for every $x\in\R^N$, there exists $\ft$ such that ${dF}/{dt} < 0$, resulting a global attraction to $\ft$ (Chapter 15 of~\cite{hofbauer2003evolutionary}). Likewise, in Lokta-Volterra type systems, convexity of $-E$ is assumed to make the entropy, or KL-divergence, be a Lyapunov functional to show the existence and the uniqueness of globally attracting equilibrium state~\cite{attouch2004regularized}~\cite{liu2015entropy}~\cite{jabin2017non}.
%It is known that ${dF}/{dt} < 0$ under the conditions that $\ft$ is an equilibrium state and $(B_{ij})$ is positive definite. See \cite{LiuCaiSu2015}, where Liu {\em et al.} use this property to establish the existence and uniqueness of a globally attracting equilibrium state in $\overline{\mathbb{R}^N_+}$. [Is this true, correct paper?]
%which gives us a hint of the trajectory being attracted to the minimizer $\ft$, which can be proven to be unique.\\

The goal in this chapter is to prove convergence of every bounded solution, without assuming convexity of $-E$, where the relative entropy is no longer monotone. We provide two strategies. One is to control possible oscillation of entropy using the monotonicity of $E$, or the entropy traping method. Another is to de-singularize the Shahshahani metric by changing the variables.
% In this chapter we will provide two different ways to achieve convergence in continuous-time Lotka-Volterra systems, which will also be applied to the convergence of discrete-time Lotka-Volterra. One is to change variables to de-singularize the metric, which will be described in the section~\ref{}. Another is to make use of a \textit{relative entropy} $F$ which is a proximal distance from an equilibrium point in the $\omega$-limit set. 
% We will show that two distinct approach to prove convergence of the solution under the assumption of the interaction $B$ being symmetric. [Mention otherwise it exhibits all kinds of asymptotic behavior.]
% One is using \L ojaisewicz convergenc scheme and another is using entropy bootstrapping method inspired by Akin and Hofbauer~\cite{AkinHofbauer1982}.
\section{Entropy trapping method}
Our goal here is establishing the convergence of every trajectory of \eqref{eq1} without assuming positive definiteness of $(B_{ij})$. In this case the entropy $F$ is no longer necessarily monotone in $t$. The key  is to adapt an idea from work of Akin \& Hofbauer on replicator equations \cite{akin1982recurrence}, and use the monotonicity of $E$ to control the oscillation of $F$. 
In particular, the key is to show that for some $C >0$,
    \begin{equation*}
        \frac{dF}{dt} < C\frac{dE}{dt}.
    \end{equation*}
Our main result on convergence is as follows. 
Recall that the $\omega$-limit set of a trajectory $f$ is the set
\[\omega(f):=\{\ft\in\mathbb{R}^N : \exists \text{ a sequence }  t_k \rightarrow\infty \text{ with } f(t_k) \rightarrow \ft\}\,.
\]
    \begin{theorem}\label{thm1}
        Assume $(B_{ij})$ is a symmetric matrix. If $f(t)$ is a bounded solution for (\ref{eq1}), then the $\omega$-limit set $\omega(f)$ consists of a single point, i.e., $\lim_{t\to\infty} f(t)$ exists. 
    \end{theorem}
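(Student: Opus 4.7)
The plan is to apply the Akin--Hofbauer entropy trapping strategy hinted at in the preceding text. Since $f$ is bounded and $E$ is monotone nondecreasing with $dE/dt = \sum_i d_i f_i (a_i - (Bf)_i)^2 \ge 0$, a standard LaSalle argument gives that $\omega(f)$ is a nonempty, compact, connected set of equilibria on which $E \equiv E^* := \lim_{t\to\infty} E(f(t))$. I would fix an arbitrary $\ft \in \omega(f)$ and prove $f(t) \to \ft$; since the relative entropy $F$ with respect to $\ft$ (with weights $w_i = 1/d_i$ chosen to match the Shahshahani structure) vanishes only at $\ft$, this reduces to showing $F(f(t)) \to 0$.

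A direct computation using $f_i' = d_i f_i(a_i - (Bf)_i)$, the equilibrium relation $\ft_i(a_i - (B\ft)_i) = 0$, and symmetry of $B$ gives, with $J := \{i : \ft_i > 0\}$,
\[
\frac{dF}{dt} = \sum_i (f_i - \ft_i)(a_i - (Bf)_i) = -(f-\ft)^T B(f-\ft) + \sum_{i \notin J} f_i(a_i - (B\ft)_i).
\]
The central step is to establish, in some neighborhood of $\ft$, a pointwise trapping inequality of the form $dF/dt \le C\, dE/dt$. Once in hand, the conclusion is routine: choose $t_n \to \infty$ with $f(t_n) \to \ft$ so that $F(f(t_n)) \to 0$ and $E(f(t)) - E(f(t_n)) \to 0$; integrating the trapping inequality from $t_n$ to a later $t$ (with the trajectory inside the neighborhood) yields $F(f(t)) \le F(f(t_n)) + C(E(f(t)) - E(f(t_n)))$, and sending $n \to \infty$ gives $\limsup_{t\to\infty} F(f(t)) = 0$, whence $f(t) \to \ft$.

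The main obstacle is the Shahshahani singularity at indices $i \notin J$, where $\ft_i = 0$: the trajectory $f_i(t)$ need not decay monotonically, and the sign of $a_i - (B\ft)_i$ is not pinned down by the equilibrium condition. For $i \in J$, both sides of the trapping inequality are $O(\|f - \ft\|^2)$ thanks to the lower bound on $f_i$ near $\ft$, and the estimate reduces to a matrix comparison. For $i \notin J$, a naive Cauchy--Schwarz split gives only $dF/dt \le M(f)\sqrt{dE/dt}$, which is insufficient since $\sqrt{dE/dt}$ is only $L^2$ in time. I expect to resolve this by restricting attention to a \emph{saturated} $\ft \in \omega(f)$ (for which $a_i - (B\ft)_i \le 0$ whenever $i \notin J$), whose existence inside $\omega$-limit sets of bounded orbits is standard; this gives the boundary term $\sum_{i \notin J} f_i(a_i - (B\ft)_i)$ a favorable sign, and a refined bookkeeping separating the $J$ and $J^c$ contributions should then yield the linear bound, mirroring Akin and Hofbauer's treatment in the replicator setting.
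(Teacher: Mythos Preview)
Your overall architecture matches the paper's: fix $\tilde f\in\omega(f)$, take the relative entropy $F$ with weights $w_i=1/d_i$, establish a local trapping inequality $dF/dt\le C\,dE/dt$, and integrate. The endgame you describe is exactly right. The gap is in the derivation of the trapping inequality itself.

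You decompose
\[
\frac{dF}{dt}=-(f-\tilde f)^TB(f-\tilde f)+\sum_{i\notin J}f_i\bigl(a_i-(B\tilde f)_i\bigr),
\]
and then propose to (i) kill the boundary sum by choosing a \emph{saturated} $\tilde f$, and (ii) dominate the quadratic form by $C\,dE/dt$ via a ``matrix comparison''. Neither step is secured. First, the existence of a saturated equilibrium in $\omega(f)$ for a general symmetric Lotka--Volterra system is not standard; you would have to prove it. Second, and more seriously, even granting saturation, the remaining inequality $-(f-\tilde f)^TB(f-\tilde f)\le C\sum_i d_if_i(a_i-(Bf)_i)^2$ does not reduce to any evident matrix inequality: without positive semidefiniteness of $B$ the left side can be positive, while on the right the contributions from $i\notin J$ carry a weight $f_i$ that vanishes at $\tilde f$, so the $J^c$-components of $f-\tilde f$ are essentially uncontrolled. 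Your ``refined bookkeeping'' is where the real difficulty lies, and you have not indicated how to close it.

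The paper avoids this entirely with one algebraic move you are missing: write $\frac{dF}{dt}=2E(f)-a^Tf-a^T\tilde f+\tilde f^TBf$ and use the monotonicity $E(f)\le E(\tilde f)$ to replace $2E(f)$ by $2E(\tilde f)$. This absorbs the indefinite quadratic form and yields directly
\[
\frac{dF}{dt}\le \sum_i(\tilde f_i-f_i)\bigl(a_i-(B\tilde f)_i\bigr)=-\sum_{i\in K}f_i\bigl(a_i-(B\tilde f)_i\bigr)\le M\,Q(f),
\]
with $K=\{i:a_i-(B\tilde f)_i\ne 0\}$, $Q(f)=\sum_{i\in K}f_i$, and $M=\max_{i\in K}|a_i-(B\tilde f)_i|$. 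Meanwhile $dE/dt\ge Z(f)Q(f)$ where $Z(f)=\min_{i\in K}d_i(a_i-(Bf)_i)^2$ is bounded below by $z/2>0$ on an $F$-neighborhood of $\tilde f$. This gives $dF/dt\le (2M/z)\,dE/dt$ for \emph{any} $\tilde f\in\omega(f)$, with no saturation hypothesis and no matrix comparison needed. That use of $E(f)\le E(\tilde f)$ is the missing idea in your proposal.
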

%    Let's assume that $\Tilde{f} \in \omega(f)$, where the positive limit set 
    \begin{proof}
    % [Proof of Theorem~\ref{thm1}]
%        Note that the entropy, $F$, has a unique minimizer so our goal is to trap the entropy by the monotone energy, E. In order to do that we need
%        \begin{equation*}
%            \frac{dF}{dt} < C\frac{dE}{dt}
%        \end{equation*}
% [Need to say why $f(t)$ is bounded (B matrix suffices); global existence; $\omega(f)$ nonempty.]
        Fix $\Tilde{f} \in \omega(f)$ and the weight on the entropy $w_i=1/d_i$. Define, 
        
        \begin{flalign*}
        K &:= \{i\in[N] \; :\; a_i - \sum_j B_{ij}\Tilde{f}_j\neq 0 \},\\
            Q(f) &:= \sum_{i\in K} f_i \\
            Z(f) &:= \min_{i\in K}\{ d_i(a_i - \sum_j B_{ij}f_j)^2\}.
        \end{flalign*}
        If $K$ is empty, define $Q=0$ and $Z=1$. Then
        
        \begin{equation}\label{energy}
            \frac{dE}{dt} = \sum_i d_if_i(a_i - \sum_j B_{ij}f_j)^2 \geq Z(f)Q(f).
        \end{equation}
Because $Z(\Tilde{f}) = z >0$, $\{f \;:\; Z(f) > \frac{z}{2}\}$ is a neighborhood of $\Tilde{f} = \bigcap_{\epsilon>0}\{f \;:\; F(f) \leq \epsilon \}$. So we can find $\epsilon^* > 0$ such that 
        
        \begin{equation}\label{Z}
            F(f) \leq \epsilon^* \;\; \Longrightarrow \;\;Z(f) > \frac{z}{2}.
        \end{equation}
        
        On the other hand,
        
        \begin{align*}
            \frac{dF}{dt} &= \sum_i (f_i - \Tilde{f}_i)(a_i - \sum_j B_{ij}f_j) \\
            % &= \sum_i f_i a_i - \sum_{i,j}f_iB_{ij}f_j - \sum_i \Tilde{f}_i a_i +\sum_{i,j}\Tilde{f}_i B_{ij} f_j \\
            &=2(\sum_i f_i a_i - \frac{1}{2}\sum_{i,j}f_iB_{ij}f_j) - \sum_i f_i a_i - \sum_i \Tilde{f}_ia_i + \sum_{i,j}\Tilde{f}_i B_{ij} f_j\\
            &\leq 2(\sum_i \Tilde{f}_i a_i - \frac{1}{2}\sum_{i,j}\Tilde{f}_iB_{ij}\Tilde{f}_j) - \sum_i f_i a_i - \sum_i \Tilde{f}_ia_i + \sum_{i,j}\Tilde{f}_i B_{ij} f_j \\
            % &= \sum_i(\Tilde{f}_i -f_i)a_i - \sum_{i,j}\Tilde{f}_i B_{ij}(\Tilde{f}_j-f_j) \\
            &= \sum_i (\Tilde{f}_i - f_i)(a_i - \sum_j B_{ij}\Tilde{f}_j)\\
            &= -\sum_{i\in K}f_i(a_i - \sum_j B_{ij}\Tilde{f}_j)\\
            &\leq Q(f) \max_{i\in K}|a_i - \sum_j B_{ij}\Tilde{f}_j|
        \end{align*}
        Here we used monotonicity of E and that $\Tilde{f}_i = 0$ when $i\in K$. Let $M = \max_{i\in K}|a_i - \sum_j B_{ij}\Tilde{f}_j|$. Then
        \begin{equation}\label{entrop}
            \frac{dF}{dt} \leq MQ(f).
        \end{equation}
        Now choose a sequence of times $\{t_n\}$ approaching infinity so that $f(t_n)$ approaches $\Tilde{f}$ and 
        \begin{equation*}
            \lim_{n\to\infty}F(f(t_n)) = 0.
        \end{equation*}
        For $\epsilon < \epsilon^*$ pick $N=N(\epsilon)$ such that
        \begin{equation}
            \begin{aligned}\label{cauchy}
             F(f(t_N)) < \frac{\epsilon}{2} \\
            \frac{M}{z}[E(\Tilde{f})-E(f(t_N))] < \frac{\epsilon}{4}.
            \end{aligned}
        \end{equation}
        
        Note that as long as a solution path stays inside the $\epsilon^*$-neighborhood defined by F, we can compare the rate of entropy with that of energy by combining (\ref{energy}), (\ref{Z}) and (\ref{entrop}), and obtain
        
        \begin{equation*}
            \frac{dF}{dt} < \frac{2M}{z}\frac{dE}{dt}.
        \end{equation*}
        So let $t>t_N$ so that $\forall s\in[t_N,t]$
        \begin{equation*}
            F(f(s)) \leq \epsilon^*.
        \end{equation*}
        Then by integration and (\ref{cauchy})
        \begin{equation}\label{boots}
            0 \leq F(f(t)) < F(f(t_N)) + \frac{2M}{z}[E(f(t)) - E(f(t_N))]<\epsilon.
        \end{equation}
        Because $\epsilon<\epsilon^*$, $f(t)$ remains in the $\epsilon^*$-neighborhood for all $t>t_N$ so (\ref{boots}) holds for all such t. As we picked $\epsilon$ arbitrarily, (\ref{boots}) implies
        \begin{equation*}
            F(f(t)) \longrightarrow 0 \;\;\; \text{as} \;\;\; t\longrightarrow \infty,
        \end{equation*}
        i.e., $f(t)$ approaches $\Tilde{f}$ as $t\to\infty$, and so $\Tilde{f}$ is the unique limit point.\qedhere
    \end{proof}
\section{De-singularized gradient structure of Lokta-Volterra systems}\label{s: desing}
It is quite intuitive to find the energy $E$ that drives Lotka-Volterra system. For convergence analysis, however, the underlying metric for the gradient structure, the Shahshahani metric, is difficult to work with, due to the singular structure. In~\cite{jabin2017non} Jabin and Liu studies slightly different form of infinite dimensional Lotka-Volterra equations, which is a gradient flow of the energy with respect to $L^2$ inner product. In fact, after the change of variable, the equation is recovered to the original Lotka-Volterra equation. We adapt this change of variables to de-singularize the Shahshahani metric to prove convergence of the bounded solutions of Lokta-Volterra systems, and, furthermore, the regularized Lotka-Volterra systems~\cite{attouch2004regularized}. 

We will study convergence problems of the infinite dimensional Lotka-Volterra system~\cite{jabin2017non} that Jabin and Liu studied in the next section.

\paragraph{\textbf{De-singularized gradient structure}}
Since every solution $f$ of~\eqref{eq1} will stay positive, the system of its square root will correspond to that of $f$ \cite{jabin2017non}. Let $u_i:=\sqrt{f_i}$ then \eqref{eq1} is equivalent to the following equation,
\begin{align}\label{e: lot u}
    % (u_i^2)'&=d_iu_i^2(a_i-\sum_{j\in[N]}B_iju_j^2)\\
    u_i'&=\frac{1}{2}d_iu_i(a_i-\sum_{j\in[N]}B_{ij}u_j^2),
\end{align}
 We can see the equation is the gradient flow with respect to the Euclidean inner product of the following energy,
\begin{align*}
    H(u)&:=-\frac{1}{4} \sum_{i\in[N]}u_i^2 a_i + \frac{1}{8}\sum_{i,j\in[N]}B_{ij}u^2_i u^2_j\\
    % DE(u)(v)&=\frac{1}{2}(a_iu_iv_i-B_{ij}u_iv_iu^2_j)\\
   DH(u)(v) &=-\sum_{i\in[N]}\frac{1}{2}u_i(a_i-\sum_{j\in[N]}B_{ij}u^2_j)v_i.
\end{align*}
Therefore,
$$u'=-\dg{d}\nabla H(u),$$
where $\dg{d}$ is the diagonal matrix constructed by $d$. Simply, one can check the angle condition. So we can apply \L ojasiewicz convergence theorem.
\begin{theorem}\label{t: lv conv}
    Every bounded solution $u(t)$ of \eqref{e: lot u} converges and the convergence rate depends on the \L ojasiewicz exponent of $E$ at the limit point $\Tilde{u}$,
    \begin{equation}
        \Vert u(t) - \Tilde{u}\Vert = 
        \left\{\begin{array}{ll}
             O(e^{-\delta t}) & \text{if}\;\; \theta=\frac{1}{2}, \;\;\;\;\text{for some}\;\; \delta>0 \\
             O(t^{-\theta/(1-2\theta)})& \text{if}\;\; 0<\theta<\frac{1}{2} 
        \end{array}\right.
    \end{equation}
\end{theorem}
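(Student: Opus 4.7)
The plan is to reduce the claim to the finite-dimensional specialization of the \L ojasiewicz convergence theorem. More precisely, I would take $\mathcal{V}=\mathcal{H}=\R^N$ in the framework of Theorem~\ref{t: conv} and verify, in order, the gradient-like form $u'=-D\,\nabla H(u)$ with $D$ constant and positive definite, analyticity of the energy, the angle condition~\eqref{e: rcon}, the rate condition~\eqref{e: irate con}, and precompactness of the orbit.

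For the algebraic ingredients, write $D:=\diag(d_1,\dots,d_N)$ and set $d_{\min}:=\min_i d_i>0$, $d_{\max}:=\max_i d_i$. The gradient-like form is exactly what was derived just above the theorem statement, and $H$ is a quartic polynomial on $\R^N$, hence real-analytic on all of $\R^N$; so Theorem~\ref{t: Loj in} supplies a \L ojasiewicz inequality at every critical point with an exponent $\theta\in(0,1/2]$. Since $D$ is symmetric and positive definite,
\begin{equation*}
\langle -\nabla H(u),\, u'\rangle \;=\; \langle \nabla H(u),\, D\,\nabla H(u)\rangle \;\ge\; d_{\min}\,\Vert\nabla H(u)\Vert^{2},
\end{equation*}
while simultaneously $d_{\min}\Vert\nabla H(u)\Vert \le \Vert u'\Vert \le d_{\max}\Vert\nabla H(u)\Vert$. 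These together yield~\eqref{e: rcon} with $\sigma = d_{\min}/d_{\max}$ and~\eqref{e: irate con} with $\gamma = d_{\min}$.

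Precompactness of the trajectory is automatic in finite dimensions once $u$ is bounded, so the $\omega$-limit set $\omega(u)$ is nonempty, compact, and connected. Covering $\omega(u)$ by finitely many \L ojasiewicz neighborhoods and unifying constants, exactly as in Step~1 of the proof of Theorem~\ref{t: conv}, produces a single triple $(c,r,\theta)$ valid along $\omega(u)$. Plugging all of this into Theorem~\ref{t: conv} then yields $\tilde u\in\omega(u)$ with $u(t)\to\tilde u$ and the dichotomy~\eqref{e: L conv rate}; since $E=-H$ and the \L ojasiewicz inequality uses $|E(u)-E(a)|$, the exponent is indeed the one attached to $E$ at $\tilde u$.

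I do not anticipate a serious obstacle. The whole purpose of the change of variables $f_i\mapsto u_i=\sqrt{f_i}$ is to replace the singular Shahshahani metric by the flat Euclidean one while keeping the energy polynomial, so the problem lands squarely inside the \L ojasiewicz framework already built in Chapter~1. The only step requiring mild care is the uniform-constants argument along $\omega(u)$, which is essentially inherited from Theorem~\ref{t: conv} and adds no new difficulty here.
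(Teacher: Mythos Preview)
Your proposal is correct and follows essentially the same approach as the paper. The paper's argument amounts to the single sentence ``Simply, one can check the angle condition. So we can apply the \L ojasiewicz convergence theorem,'' and you have done exactly that with the details filled in: $u'=-\dg{d}\nabla H(u)$ with $H$ polynomial, hence analytic, and the diagonal weight $\dg{d}$ yields the angle and rate conditions with $\sigma=d_{\min}/d_{\max}$ and $\gamma=d_{\min}$, after which Theorem~\ref{t: conv} (finite-dimensional case) applies directly.
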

\paragraph{\textbf{Regularized Lokta-Volterra systems}}
In fact the idea of scaling can be applied to a general class of ODE, suggested by Attouch and Teboulle~\cite{attouch2004regularized}. For a given energy $H:\R^N\to\R$ and $\mu,\nu>0$, the solution of regularized Lotka-Volterra equations $f\in\R^N$ obeys the following differential equations,

[Regularized Lotka-Volterra equations]
\begin{equation}\label{e: reg lv}
    f'_i=-\frac{f_i}{\mu+\nu f_i}\nabla H(f)_i.
\end{equation}
The regularized Lotka-Volterra equations~\eqref{e: reg lv} arise from the proximal-like iteration scheme as follows,
\begin{equation*}
    x_k=\arg\min_{x\in\R^N}\{H(x)+\tau_k F(x|x_{k-1})\},
\end{equation*}
where $F(x|y)$ is a distance-like function, or a relative entropy that gives proximal distance to x from y. In this case, $F(x|y)$ is a logarithmic-quadratic function, or the regularized Kullback-Leibler divergence,
\begin{equation*}
    F(x|y):=\frac{\nu}{2} \Vert x-y\Vert^2+\mu\sum_i y_i\log\frac{y_i}{x_i}+x_i-y_i.
\end{equation*}
Similar to the original Lokta-Volterra system, when $x\mapsto H(x)$ proper convex and $y=\arg\min_x H(x)$, $F(x|y)$ becomes a Lyapunov function of the system. Attouch and Teboulle used this idea, under the convexity assumption, to prove convergence of a bounded solution.

\paragraph{\textbf{De-singularization}}
According to Theorem 3.1~\cite{attouch2004regularized}, if $H\in C^2(\R^N)$ then the equation~\eqref{e: reg lv} is well-posed and any solution initiated in $\R^N_+$ stays in $\R^N_+$. 
By letting $u_i:=\sqrt{f_i}$ for all $i$, $(\hat{\mu},\hat{\nu}):=(4\mu,4\nu)$ and $\hat{H}(u):=H(u^2)$, we have
% and rewriting $(4\mu,4\nu) \mapsto (\mu, \nu)$, $H(u^2)\mapsto H(u)$ we have
\begin{align}\label{e: reg lv u}
    % (u_i^2)'&=d_iu_i^2(a_i-\sum_{j\in[N]}B_iju_j^2)\\
    u_i'&=-\frac{1}{\hat{\mu}+\hat{\nu} u^2_i}\nabla \hat{H}(u)_i
\end{align}
Thus, the equation~\eqref{e: reg lv u} is a gradient flow of $\hat{H}$ with respect to a metric $\langle v,w\rangle_u=\sum_iv_iw_i(\hat{\mu}+\hat{\nu} u_i^2)$.
Since the metric is stable when the solution $u(t)$ is bounded, we can perform \L ojasiewicz convergence theorem.
\begin{theorem}
    Assume that $\hat{H}:\R^N_+\to\R$ satisfies \L ojasiewciz inequality. Then every bounded solution $u(t)$ of \eqref{e: reg lv u}, or equivalently every bounded solution $f(t)$ of~\eqref{e: reg lv}, converges as $t\to\infty$ and the convergence rate depends on the \L ojasiewicz exponent of $\hat{H}$ at the limit point $\Tilde{u}$,
    \begin{equation}
        \Vert u(t) - \Tilde{u}\Vert = 
        \left\{\begin{array}{ll}
             O(e^{-\delta t}) & \text{if}\;\; \theta=\frac{1}{2}, \;\;\;\;\text{for some}\;\; \delta>0 \\
             O(t^{-\theta/(1-2\theta)})& \text{if}\;\; 0<\theta<\frac{1}{2} 
        \end{array}\right.
    \end{equation}
\end{theorem}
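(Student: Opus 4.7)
The plan is to recast \eqref{e: reg lv u} as a gradient-like system for the energy $\hat{H}$ with respect to the Euclidean inner product, paying a bounded metric factor, and then apply the abstract \L ojasiewicz convergence scheme of Theorem~\ref{t: conv} (in its finite dimensional incarnation). The crucial observation, already implicit in the previous theorem, is that after the change of variables $u_i=\sqrt{f_i}$ the coefficient $\hat{\mu}+\hat{\nu}u_i^2$ never vanishes (because $\hat{\mu}>0$) and is uniformly bounded above whenever $u$ stays bounded, so the metric is non-singular along the trajectory and completely amenable to the angle/rate condition framework.

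First I would fix a bounded trajectory $u(t)$ and record the constants $m:=\hat{\mu}$ and $M:=\hat{\mu}+\hat{\nu}\sup_{t\ge 0}\|u(t)\|_\infty^2<\infty$, so that $m\le \hat{\mu}+\hat{\nu}u_i(t)^2\le M$ for all $i$ and all $t$. A direct computation using \eqref{e: reg lv u} then gives the energy dissipation
\begin{equation*}
-\frac{d}{dt}\hat{H}(u)=\sum_i\frac{(\nabla\hat{H}(u)_i)^2}{\hat{\mu}+\hat{\nu}u_i^2}
\ge\frac{1}{M}\|\nabla\hat{H}(u)\|^2,
\end{equation*}
together with
\begin{equation*}
\|u'\|^2=\sum_i\frac{(\nabla\hat{H}(u)_i)^2}{(\hat{\mu}+\hat{\nu}u_i^2)^2}
\ge\frac{1}{M^2}\|\nabla\hat{H}(u)\|^2,
\qquad
\|u'\|\le\frac{1}{m}\|\nabla\hat{H}(u)\|.
\end{equation*}
The second display is exactly the rate condition with $\gamma=1/M$. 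Combining the first display with the upper bound on $\|u'\|$ yields
\begin{equation*}
-\frac{d}{dt}\hat{H}(u)\ge\frac{1}{M}\|\nabla\hat{H}(u)\|^2
\ge\frac{m}{M}\|\nabla\hat{H}(u)\|\,\|u'\|,
\end{equation*}
which is the angle condition with $\sigma=m/M$.

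Since $u$ is bounded in the finite dimensional space $\R^N$, its trajectory is automatically precompact and hence $\omega(u)$ is nonempty, compact and connected; by hypothesis $\hat{H}$ satisfies the \L ojasiewicz inequality at every point of $\omega(u)$. The finite dimensional version of Theorem~\ref{t: conv} then delivers a unique limit $\Tilde{u}\in\mathcal{E}=\{u:\nabla\hat{H}(u)=0\}$, together with the stated exponential (if $\theta=1/2$) or polynomial (if $0<\theta<1/2$) rate determined by the \L ojasiewicz exponent of $\hat{H}$ at $\Tilde{u}$. Convergence of $u(t)$ immediately transfers to convergence of $f(t)=u(t)^2$ by continuity of the squaring map.

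The only point requiring any care is verifying that the a priori assumption ``$u$ bounded'' is consistent: $f$ bounded in $\R^N_+$ gives $u_i=\sqrt{f_i}$ bounded, and the well-posedness and invariance of $\R^N_+$ recorded after \eqref{e: reg lv} guarantees that the trajectory remains in the regime where the argument applies. There is no genuine obstacle here, because $\hat{\mu}>0$ means that unlike the Shahshahani metric in Section~\ref{s: desing}, the present metric never degenerates even when some $u_i\to 0$; the regularization parameter $\mu$ has already done the desingularization for free, and the change of variables is needed only to turn the potentially singular Shahshahani-type factor $\hat{\nu}u_i^2$ into a harmless bounded perturbation of the Euclidean metric.
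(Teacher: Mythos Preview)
Your proposal is correct and follows exactly the approach the paper indicates: rewrite \eqref{e: reg lv u} as a gradient-like system for $\hat{H}$ with respect to the non-degenerate metric $\langle v,w\rangle_u=\sum_i v_iw_i(\hat{\mu}+\hat{\nu}u_i^2)$, use boundedness of $u$ to get uniform bounds $m\le\hat{\mu}+\hat{\nu}u_i^2\le M$ on the metric weights, verify the angle and rate conditions, and invoke Theorem~\ref{t: conv}. The paper does not write out a separate proof after the theorem statement, relying instead on the sentence ``Since the metric is stable when the solution $u(t)$ is bounded, we can perform \L ojasiewicz convergence theorem''; your write-up simply fills in the routine details of that sentence.
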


\section{Nonlocal semi-linear heat equation}\label{s: slheat}
We aim to extend the convergence result of Jabin and Liu on the infinite dimensional Lotka-Volterra equations~\cite{jabin2017non}, where they use the monotonicity of KL-divergence under the convexity assumption on the energy. Let the ecosystem, $\Omega$ be a bounded domain with smooth boundary in $\R^N$ and assume that the population of $x$-species at time $t>0$, $u(t,x):\R_+\times\Omega\to\R$, satisfies the following equations.
\begin{equation}
    \begin{aligned}
     \partial_tu(t,x) &=\Delta u(t,x) +\frac{1}{2}u(t,x)\left( a(x)-\int_\Omega b(x,y)u^2(t,y)dy\right) \label{e: slh} \\
% &:=\Delta u(t,x) + f(u)\\
u(0,x) &=u_0(x) \ge0\quad x\in\Omega\\
\frac{\partial u}{\partial \nu} &=0 \quad x\in \partial\Omega
    \end{aligned}
\end{equation}
% \begin{align}
% \partial_tu(t,x) &=\Delta u(t,x) +\frac{1}{2}u(t,x)\left( a(x)-\int_\Omega b(x,y)u^2(t,y)dy\right) \label{e: slh} \\
% &:=\Delta u(t,x) + f(u)\\
% u(0,x) &=u_0(x) \ge0\quad x\in\Omega\\
% \frac{\partial u}{\partial \nu} &=0 \quad x\in \partial\Omega
% \end{align}
In the model, coefficient $a(x)$ is the intrinsic growth rate of $x$-species, and
$b(x,y)=b(y,x)$ represents the interaction between $x$-species and $y$-species, while the diffusion
term plays certain role of mutations in the population dynamics. We assume the following,
\begin{ass}\label{a: ab}
\begin{align*}
    a\in L^\infty(\Omega), \quad b(x,y)\in L^\infty(\Omega\times\Omega). 
    % \sup_y \int |b(x,y)| dx <\infty.
\end{align*}
\end{ass}

As mentioned in~\cite{jabin2017non} and Section~\ref{s: desing}, \eqref{e: slh} is equivalent to the following with the change of variables $u^2=f$,
\begin{equation}\label{e: inf lv f}
    \partial_t f(t,x)=\Delta f -\frac{|\nabla f|^2}{2f} +f(t,x)\left( a(x)-\int_\Omega b(x,y)f(t,y)dy\right),
\end{equation}
which, without the mutation term, corresponds to the Lotka-Volterra equations. 
\eqref{e: inf lv f} is a gradient flow with respect to the Shahshahani metric $\langle g, h\rangle_f:=\int gh/fdx$ of the energy,
\begin{equation*}
    E(u) :=\frac{1}{2} \iint b(x,y) f(t,x)f(t,y)dxdy -\int a(x) f(t,x)dx +\frac{1}{2}\int \frac{|\nabla f|^2}{f}dx.
\end{equation*}
\paragraph{\textbf{Gradient structure}}
We consider Hilbert spaces $\mathcal{V}:=H^1(\Omega)$ and $\mathcal{H}:=L^2(\Omega)$ with continuous dense imbedding
\begin{equation*}
    \mathcal{V}\subset \mathcal{H}\subset \mathcal{V}',
\end{equation*}
where $\mathcal{V}'$ is the topological dual of $\mathcal{V}$, using the $\mathcal{G}$ inner product, $\langle \cdot, \cdot\rangle$. Note that $\mathcal{H}$ is dense in $\mathcal{V}'$, for any $v'\in \mathcal{V}'$, there is a sequence $\{v'_n\}\subset H$ so that for any $v\in \mathcal{V}$,
\[
%v'(v)= \lim_{n\to\infty} v'_n(v) = 
\lim_{n\to\infty} \langle v_n',v\rangle_H =v'(v).
\]
% Let's put $V=H^1(\Omega)$ and $H=L^2(\Omega)$ with the boundary condition above.
% Later we will choose a Banach space $Z\subset H$ so that we can restrict the domain of $E$ to a Banach algebra, e.g., contained in  $L^\infty$, in order to achieve analyticity.
Comparing to \eqref{e: inf lv f}, \eqref{e: slh} has a de-singularized gradient structure of the energy $H$,
\[
H(u) :=\frac{1}{8} \iint b(x,y) u^2(t,x)u^2(t,y)dxdy -\frac{1}{4}\int a(x) u^2(t,x)dx +\frac{1}{2}\int |\nabla u(t,x)|^2dx.
\]
For simplicity, we write 
\begin{equation*}
    B(u)(x):=\int b(x,y)u(y)dy.
\end{equation*}
We can check the $L^2$ gradient of $H$ corresponds to the right hand side of~\eqref{e: slh},
\begin{align*}
    DH(u)(v)&= \frac{1}{2} \int B(u^2)u(x)v(x)dx -\frac{1}{2}\int a(x) u(x)v(x)dx + \int \nabla u(x) \nabla v(x) dx\\
    &=:\langle \nabla H(u), v\rangle_{L^2}.
\end{align*}
Therefore,
\begin{equation*}
    \partial_t u= -\nabla H(u).
\end{equation*}
This gives the energy dissipation.
\begin{align*}
    % \partial_t u&= -\frac{\partial H}{\partial u}\\
    \frac{d}{dt}H(u) = -\int |\partial_t u|^2 dx \leq 0
\end{align*}
% In order to ensure analyticity of $E$, we assume the following,
% \begin{align*}
%     a\in L^\infty(\Omega), \quad \sup_y \int |b(x,y)| dx <\infty.
% \end{align*}
\mnote{the condition on $b$ allows us to perform Young's convolution inequality}
\paragraph{\textbf{Analytic energy $H$}}
\begin{lemma}
Under Assumption~\ref{a: ab}, $H$ is analytic in $\mathcal{V}=H^1(\Omega).$
\end{lemma}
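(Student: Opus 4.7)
The plan is to treat $H$ as a polynomial functional of degree four in $u$, so that analyticity on $\mathcal{V}=H^1(\Omega)$ follows from continuity of the underlying symmetric multilinear forms. I decompose $H=H^{(2)}+H^{(4)}$ into its quadratic and quartic homogeneous parts,
\[
H^{(2)}(u) = -\tfrac{1}{4}\int a\, u^2\,dx + \tfrac{1}{2}\int|\nabla u|^2\,dx, \qquad H^{(4)}(u) = \tfrac{1}{8}\iint b(x,y)\,u^2(x)u^2(y)\,dx\,dy,
\]
and verify each piece is a continuous polynomial on $\mathcal{V}$.

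For $H^{(2)}$, the polarizing symmetric bilinear form $M_2(u,v)=-\tfrac14\int a\,uv\,dx+\tfrac12\int\nabla u\cdot\nabla v\,dx$ is bounded on $\mathcal{V}$ by Cauchy--Schwarz and $a\in L^\infty(\Omega)$. For $H^{(4)}$, the polarizing symmetric $4$-linear form is obtained by averaging
\[
(u_1,u_2,u_3,u_4)\mapsto \tfrac{1}{8}\iint b(x,y)\,u_1(x)u_2(x)u_3(y)u_4(y)\,dx\,dy
\]
over permutations of its arguments; pulling out $\|b\|_{L^\infty}$ and applying Cauchy--Schwarz separately in $x$ and in $y$ yields $|M_4(u_1,u_2,u_3,u_4)|\le \tfrac{1}{8}\|b\|_{L^\infty}\prod_{i=1}^{4}\|u_i\|_{L^2(\Omega)}$, which by the continuous embedding $\mathcal{V}\hookrightarrow L^2(\Omega)$ gives continuity as a $4$-linear map on $\mathcal{V}$. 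Notice that only $L^2$ control is needed, so no hypothesis on the dimension of $\Omega$ enters beyond what is already built into $H^1(\Omega)$.

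Finally, at any base point $a\in\mathcal{V}$, expansion of $H(a+h)$ in powers of $h$ yields a polynomial in $h$ of degree four whose coefficients are built from $M_2$ and $M_4$ evaluated at combinations of $a$ and $h$. Since this expansion terminates after five terms, the summability condition $\sum_n\|M_n\|_{\mathcal{L}_n(\mathcal{V},\R)}r^n<\infty$ from the definition of analyticity is automatic for every $r>0$, so $H$ is analytic everywhere on $\mathcal{V}$. There is no serious obstacle; the only nontrivial ingredient is the Cauchy--Schwarz estimate for the quartic term, which is precisely where the $L^\infty$ assumption on $b$ in Assumption~\ref{a: ab} gets used.
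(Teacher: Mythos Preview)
Your proof is correct and follows essentially the same approach as the paper: both recognize $H$ as a polynomial functional (quadratic plus quartic) and verify analyticity by bounding the associated symmetric multilinear forms via Cauchy--Schwarz together with $a,b\in L^\infty$. You are somewhat more explicit than the paper about polarization and about why a terminating power series satisfies the analyticity definition, but the substance is the same.
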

\begin{proof}
        It is enough to show the following,
        \begin{align*}
            \int a(x) u(x) v(x)dx&\leq \Vert a\Vert_\infty \Vert u\Vert_{L^2} \Vert v\Vert_{L^2}\\
            \int |\nabla u(x)\nabla v(x)| dx&\leq \Vert u\Vert_{H^1} \Vert v\Vert_{H^1}\\
            \iint b(x,y)u(x)v(x)w(y)z(y)dxdy&\leq  \sup_y \int |b(x,y)| dx \Vert u\Vert_{L^2} \Vert v\Vert_{L^2} \Vert w\Vert_{L^2} \Vert z\Vert_{L^2}.
        \end{align*}
\end{proof}
% In fact, under those conditions we can check that the mappings from $H$, $v\mapsto a(x)v$ and $v\mapsto \int b(x,y)uv$ to be continuous, which is essentially what we need for analyticity of $E$ and $\nabla^2E$ to be semi-Fredholm.
% \[
% |\int b(x,y)uv| \leq \Vert b\Vert_\infty \Vert u\Vert_{L^2} \Vert v\Vert_{L^2}
% \]
\paragraph{\textbf{\L ojaisewicz inequality}}
\begin{ass}\label{a: bm}
\begin{equation*}
    b_m:=\inf_{x,y}b(x,y)>0
\end{equation*}
\end{ass}
Under Assumption~\ref{a: ab} and Assumption~\ref{a: bm}, Jabin and Liu showed that there exists a nonnegative weak solution $\ut\in H^1$ for $DH(\ut)=0$ using Dirichlet's principle (Energy method) and calculus of variation (Theorem 1.2 of~\cite{jabin2017non}). With this knowledge, we aim to show that the \L ojasiewicz inequality holds for such equilibrium $\ut$. We use the framework presented in Section~\ref{c: inf dim} of Chapter~\ref{ch: gf}. Firstly, we check the Hessian of $H$. 
\begin{align*}
    DH(u)(v) &= \frac{1}{2} \int B(u^2)u(x)v(x)dx -\frac{1}{2}\int a(x) u(x)v(x)dx + \int \nabla u(x) \nabla v(x) dx\\
    % D^2H(u)(v,w) &= \frac{1}{2} \int\int u^2(y)b(x,y)w(x)v(x)dxdy + \int\int u(y)w(y)b(x,y)u(x)v(x)dxdy\\ &-\frac{1}{2}\int a(x) w(x)v(x)dx + \int \nabla w(x) \nabla v(x) dx\\
    D^2H(u)(v,w) &= \frac{1}{2} \int B(u^2)w(x)v(x)dxdy + \int B(uw)u(x)v(x)dx\\ &-\frac{1}{2}\int a(x) w(x)v(x)dx + \int \nabla w(x) \nabla v(x) dx.
\end{align*}

Now for a fixed equilibrium $\ut\in H^1$, define, $A:\mathcal{V}\to\mathcal{V}'$ so for any $v\in\mathcal{V},$
%  So let's fix such $u\in L^\infty$ and define 
 \begin{align*}
     \langle Av,w\rangle :=D^2H(\ut)(v,w)
    %  : w \mapsto &\frac{1}{2}B(u^2)w+ uB(uw) -\frac{1}{2}aw -\Delta w\\
    %  &=: G(w)-\Delta w
 \end{align*}
 So in the weak sense,
 \begin{align*}
     Av&=\frac{1}{2}(B(\ut^2)-a)v+ \ut B(\ut v)-\Delta v\\
     &=: G(v)-\Delta v.
 \end{align*}
 \begin{prop}\label{p: asfn}
     $A:\mathcal{V}\to\mathcal{V}'$ is a semi-Fredholm operator.
 \end{prop}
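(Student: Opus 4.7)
The plan is to write $A$ as a compact perturbation of an isomorphism and then invoke Theorem~\ref{t: fredcom}. Decompose $A = L + K$, where $L:\mathcal{V}\to\mathcal{V}'$ is defined weakly by
\begin{equation*}
    \langle L v, w\rangle_{\mathcal{V}',\mathcal{V}} := \int_\Omega \nabla v\cdot\nabla w\,dx + \int_\Omega v w\,dx,
\end{equation*}
and $K := A - L$. The associated bilinear form is continuous and coercive on $\mathcal{V}=H^1(\Omega)$, so Lax--Milgram shows that $L$ is a topological isomorphism from $\mathcal{V}$ onto $\mathcal{V}'$; in particular $L$ is Fredholm of index zero, hence semi-Fredholm.

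It then suffices to prove that
\begin{equation*}
    K v \;=\; \tfrac12\bigl(B(\ut^2)-a\bigr) v \;+\; \ut\, B(\ut v)\; -\; v
\end{equation*}
is compact from $\mathcal{V}$ to $\mathcal{V}'$. The plan is to factor $K$ through the Rellich--Kondrachov embedding $H^1(\Omega)\hookrightarrow L^2(\Omega)$, which is compact since $\Omega$ is bounded and smooth. Because the inclusion $L^2(\Omega)\hookrightarrow \mathcal{V}'$ is continuous, it is enough to verify that each of the three pieces of $K$ is a \emph{bounded} linear map $L^2(\Omega)\to L^2(\Omega)$; composing with the compact embedding on the left will then yield compactness.

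For the multiplication term, Assumption~\ref{a: ab} gives $|B(\ut^2)(x)|\le \|b\|_\infty\|\ut\|_{L^2}^2$ and $a\in L^\infty$, so $\tfrac12(B(\ut^2)-a)\in L^\infty(\Omega)$ acts boundedly on $L^2$ by pointwise multiplication. For the nonlocal term, Cauchy--Schwarz produces $\ut v\in L^1(\Omega)$ with $\|\ut v\|_{L^1}\le \|\ut\|_{L^2}\|v\|_{L^2}$, whence $|B(\ut v)(x)|\le \|b\|_\infty\|\ut\|_{L^2}\|v\|_{L^2}$ uniformly in $x$; multiplying the resulting $L^\infty$ function by $\ut\in L^2$ gives $\|\ut B(\ut v)\|_{L^2}\le \|b\|_\infty\|\ut\|_{L^2}^2\|v\|_{L^2}$. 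The identity is trivially bounded on $L^2$. Thus $K$ factors as $\mathcal{V}\xrightarrow{\text{compact}}L^2\xrightarrow{\text{bdd}}L^2\hookrightarrow\mathcal{V}'$ and is compact. Theorem~\ref{t: fredcom} then yields the semi-Fredholm property of $A=L+K$.

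The argument is conceptually routine; the only point requiring any care is the bookkeeping for the nonlocal term, where one must exploit that $b$ is only paired with the $L^1$ function $\ut v$, so that the mere $L^\infty$ assumption on $b$ together with $\ut\in H^1\subset L^2$ suffices. No additional regularity of $\ut$ beyond membership in $\mathcal{V}$, and no use of Assumption~\ref{a: bm}, is needed for this step.
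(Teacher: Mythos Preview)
Your argument is correct and follows the same overall strategy as the paper: write $A$ as a compact perturbation of a semi-Fredholm operator and apply Theorem~\ref{t: fredcom}, with compactness of the lower-order terms obtained by factoring through the Rellich embedding. The only difference is cosmetic: the paper splits $A=(-\Delta)+G$ and verifies directly that $-\Delta:\mathcal{V}\to\mathcal{V}'$ with Neumann conditions is semi-Fredholm (one-dimensional kernel of constants, closed range via the Fredholm alternative), whereas you shift by the identity and use Lax--Milgram to get that $-\Delta+I$ is an isomorphism, absorbing the extra $-v$ into the compact part. Your variant is slightly slicker since it avoids the separate kernel/range check, at no real cost.
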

 \begin{proof}
         Firstly, $G: V\to V'$ compact since $G\in \mathcal{L}(V,H)$ and the imbedding $\text{Id}: H\to V' ; v\mapsto v$ is compact. So by Theorem
         ~\ref{t: fredcom} it is enough to show that $-\Delta:V\to V'$ is a semi-Fredholm operator.
 
 [Finite dimensional kernel]
 \begin{align*}
     -\Delta u&=0 \quad \text{in } \Omega\\
     \frac{\partial u}{\partial \nu}&=0 \quad \text{on } \partial \Omega
 \end{align*}
 if and only if $u$ is a constant. Thus $N(-\Delta)$ is one-dimensional.
 
 [Closed range] By Fredholm alternative,
 \begin{align*}
     -\Delta u&=f \quad \text{in } \Omega\\
     \frac{\partial u}{\partial \nu}&=0 \quad \text{on } \partial \Omega
 \end{align*}
 has a solution if and only if $\int_\Omega fdx =0$.
 If we choose $g\in \overline{R(-\Delta)}$, there is a sequence $\{u_n\}$ such that $-\Delta u_n\to g$ in $\mathcal{V}'$, which implies $0=\lim_{n\to\infty}\int 
 -\Delta u_ndx =\int gdx$. Therefore, the range is closed.
 \end{proof}
\paragraph{\textbf{Convergence result}}
Note that in the competitive Lotka-Volterra systems, every solution is bounded and converges (Theorem~\ref{t: lv conv}). Thanks to the smoothing effect of parabolic equations, every solution is precompact in $\mathcal{V}$. So we can prove the convergence of every solutions of the infinite dimensional Lokta-Volterra equations with mutation.
\begin{theorem}[Convergence of competitive Lokta-Volterra systems with mutation]
    Under Assumption~\ref{a: ab} and Assumption~\ref{a: bm}, any solution $u(x,t)$ of \eqref{e: slh} converges as $t\to\infty$.
\end{theorem}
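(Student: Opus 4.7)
The plan is to verify the hypotheses of the abstract convergence theorem (Theorem~\ref{t: conv}) in the setting $\mathcal{V}=H^1(\Omega)$, $\mathcal{H}=L^2(\Omega)$, with the analytic energy $H$ described above. Since $\partial_t u = -\nabla H(u)$ holds in $L^2$, both the angle condition~\eqref{e: rcon} and the rate condition~\eqref{e: irate con} are immediate: the gradient flow structure gives
\[
\langle -\nabla H(u), u'\rangle_{\mathcal{V}'\times\mathcal{V}} = \Vert u'\Vert_{L^2}^2 = \Vert \nabla H(u)\Vert_{L^2}^2,
\]
and continuity of the embedding $L^2 \hookrightarrow \mathcal{V}'$ then yields both inequalities. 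So what remains are (i) precompactness of the trajectory in $\mathcal{V}$ and (ii) the \L ojasiewicz inequality at each point of $\omega(u)$.

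For (ii), I would apply Theorem~\ref{t: vloja} directly. The analyticity of $H$ on $\mathcal{V}=H^1(\Omega)$ is already established. At an arbitrary equilibrium $\ut\in H^1$, the Hessian operator $A=D^2H(\ut):\mathcal{V}\to\mathcal{V}'$ decomposes as $G - \Delta$, where $G\in\mathcal{L}(\mathcal{V},\mathcal{H})$ is a compact perturbation (since the embedding $\mathcal{H}\hookrightarrow\mathcal{V}'$ is compact). The argument of Proposition~\ref{p: asfn} goes through verbatim for any equilibrium (not just zero), so $A$ is semi-Fredholm, and Theorem~\ref{t: vloja} delivers the \L ojasiewicz inequality near $\ut$ with some exponent $\theta_{\ut}\in(0,1/2]$.

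For (i), I must combine two ingredients. First, a uniform $L^\infty$ bound on $u$: Assumption~\ref{a: bm} (i.e.\ $b_m>0$) together with $a\in L^\infty$ gives a competitive nonlinearity that dominates for large $u$, so a comparison principle against the ODE $v' = \tfrac12 v(\Vert a\Vert_\infty - b_m|\Omega| v^2)$ yields a time-uniform $L^\infty$ bound on $u(t)$. Second, parabolic smoothing: writing \eqref{e: slh} as $\partial_t u - \Delta u = F(u)$ with $F(u)$ bounded in $L^\infty$ uniformly in $t$, standard parabolic regularity on the shifted intervals $[t,t+1]$ gives uniform bounds on $u(t)$ in $H^2(\Omega)$ (or $C^{2+\alpha}$) for $t\ge 1$. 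The compact embedding $H^2\hookrightarrow H^1$ then provides precompactness of $\{u(t):t\ge 0\}$ in $\mathcal{V}$.

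With (i) and (ii) in hand, Theorem~\ref{t: conv} gives a unique limit $\ut\in\mathcal{V}$ with $\nabla H(\ut)=0$, together with an explicit rate of convergence in $\mathcal{H}$ governed by the \L ojasiewicz exponent at $\ut$. The main obstacle is the precompactness step: well-posedness and the $L^\infty$ bound must be justified carefully in the quadratic-nonlocal setting, and one needs enough regularity to promote bounded orbits in $\mathcal{H}$ to precompact orbits in $\mathcal{V}$. Everything else is essentially a direct application of the infinite-dimensional machinery already built in Chapter~\ref{ch: gf}.
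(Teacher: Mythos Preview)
Your overall architecture matches the paper's: verify the gradient-flow angle/rate conditions, establish the \L ojasiewicz inequality via Theorem~\ref{t: vloja} and Proposition~\ref{p: asfn}, prove precompactness of the orbit in $\mathcal{V}$, and conclude via Theorem~\ref{t: conv}. The \L ojasiewicz part and the invocation of parabolic smoothing are handled the same way in the paper.

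There is, however, a genuine gap in your precompactness step. The proposed $L^\infty$ comparison with the ODE $v'=\tfrac12 v(\Vert a\Vert_\infty - b_m|\Omega|v^2)$ does not go through: at a spatial maximum $x_0$ of $u$, the nonlocal competition term satisfies only
\[
\int_\Omega b(x_0,y)\,u^2(y)\,dy \;\ge\; b_m\,\Vert u\Vert_{L^2}^2,
\]
and there is no reason for $\Vert u\Vert_{L^2}^2$ to dominate $|\Omega|\,(\max_x u)^2$; in fact the inequality goes the other way when $u\le \max u$. So the comparison supersolution fails, and you cannot extract a uniform $L^\infty$ bound this way.

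The paper instead imports the uniform $L^2$ bound from Jabin--Liu (their Theorem~1.1), which comes from the energy estimate obtained by testing the equation against $u$:
\[
\frac{d}{dt}\Vert u\Vert_{L^2}^2 \;\le\; \Vert a\Vert_\infty \Vert u\Vert_{L^2}^2 - b_m \Vert u\Vert_{L^2}^4,
\]
so $\sup_{t\ge 0}\Vert u(t)\Vert_{L^2}<\infty$. That $L^2$ bound, together with local Lipschitz continuity of the nonlinearity $H^1\to L^2$ (checked explicitly in the paper) and the smoothing action of the semilinear parabolic flow (Henry, Theorems~3.3.3, 3.3.6, 3.5.2), then yields precompactness in $H^1$. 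Replace your comparison-principle sketch with this $L^2$ energy argument and the rest of your proof goes through as written.
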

\begin{proof}

[Well-posedness]
We use Theorem 3.3.3 and Theorem 3.5.2 in~\cite{henry2006geometric}, which shows, along with the well-posedness, $u(t)\in C^1(\R_+,\mathcal{V)}$, i.e., smoothing action of semilinear parabolic equations. According to the theorem, in order to achieve parabolic regularity of the solution of $\eqref{e: slh}$, we need to check if $f: H^1 \to L^2$ in \eqref{e: slh} is locally Lipschitz.
\begin{align*}
    \Vert ub*u^2 -vb*v^2\Vert_{L^2} &\leq \Vert (u-v)b*u^2\Vert_{L^2} +\Vert vb*(u^2 -v^2)\Vert_{L^2} \\
    &\leq \sup_x |\int b(x,y)u^2(y)dy| \Vert u-v\Vert_{L^2}\\
    &+ \sup_x |\int b(x,y)(u(y)-v(y))(u(y)+v(y))dy| \Vert v\Vert_{L^2} \\
    &\leq \Vert b\Vert_\infty \Vert u \Vert^2_{L^2} \Vert u-v \Vert_{L^2} +\Vert b\Vert_\infty \Vert v \Vert_{L^2}\Vert u+v \Vert_{L^2} \Vert u-v \Vert_{L^2} 
\end{align*}
[Precompactness]
Due to the smoothing effect of the semilinear parabolic equations, it is enough to check $L^2$ bound to achieve higher order bound. We use Theorem 3.3.6 in~\cite{henry2006geometric} and $L^2$ bound achieved in Theorem 1.1 in~\cite{jabin2017non}.
\newline
[\L ojasiewicz convergence]
As previously discussed, $H$ is analytic in $\mathcal{V}$ and has semi-Fredholm Hessian operator from $\mathcal{V}$ to $\mathcal{V}'$.
We use Theorem~\ref{t: vloja} and Theorem~\ref{t: conv} to conclude.
\end{proof}

\chapter{Convergence of discrete-time Lotka-Volterra dynamics}
  In this chapter we form the time-discrete Lotka-Volterra equations inspired by DC algorithms, or convex splitting (Chapter~\ref{c: disc time cs}). We provide two different analysis, one follows the Shahshahani gradient and another follows regular Euclidean gradient constructed by change of variables. Accordingly, we prove convergence with different strategies for each method, without convexity of energy: the entropy trapping method and the \L ojasiewicz theorem, respectively.
  
  As we discussed in the continuous-time case, the singular nature of the Shahshahani metric makes convergence analysis challenging. So we prove the convergence by relating the energy with the entropy to show that the solution is trapped in the neighborhood of the equilibrium. Due to the time discrepancy caused by the discrete time steps, however, the KL-divergence that we used as the entropy, does not work in the same way. We add a quadratic term to resolve the issue. With the new entropy, we proceed with the entropy trapping method to prove convergence of every solution of competitive Lotka-Volterra systems. In fact, the time-discrete Lotka-Volterra equation with the Shahshahani metric can be seen as Euler's method for regularized Lotka-Volterra equation of the form~\eqref{e: reg lv}. This can be considered as an extension of the convergence theorem that is presented in~\cite{attouch2004regularized}, where Attouch and Teboulle assume the convexity of the energy to prove convergence. 
  
  On the other hand, adapting the observation on changing variables presented by Jabin and Liu~\cite{jabin2017non}, we, again, resolve the singularities of Shahshahani metric and apply the \L ojasiewicz convergence theorem by Absil \textit{et al}.~\cite{absil2005convergence} to the Lotka-Volterra system. We suggest a certain way to convex-split the energy, with which the implicit part of the algorithm is not too complex to invert, yet the solution stays in the positive orthant $\R^N_+$ regardless of the time step $\tau>0.$ Thus the convergence and the positivity of the solutions are independent of the time step $\tau$.
  
    \section{Application of entropy trapping method}
   \subsection{Time-discrete competitive Lotka-Volterra}
   We call a Lotka-Volterra system \textit{competitive} when the associated interaction matrix $B$ has only positive entries, in other words $B_{ij}>0$ for all $i,j\in [N]$. It is well known that every solution of a competitive Lotka-Volterra system is bounded (Chapter 15 of~\cite{hofbauer2003evolutionary}). Additionally when $B$ is $D$-symmetrizable, every solution converges. Thus it is natural to formulate the time-discrete Lotka-Volterra system with the same behavior.
   
   In this section we will formulate a discrete-time Lotka-Volterra with competitive interaction and apply entropy trapping method to analyze the behavior of its solution, without de-singularizing Shahshahani metric structure. 
   \paragraph{\textbf{Set-up}}
     We formulate a discrete-time Lotka-Volterra system inspired by Eyre's semi-implicit Euler's method~\eqref{e: disc cs}, but with the gradient with respect to Shahshahani metric. Let's take $H(f)=-E(f) = \frac{1}{2}f^TBf -a^Tf$ and let $\lambda \in \mathbb{R}$ be bigger than the Perron root of $B$, so that $B = \lambda - A$, where $A = \lambda - B$ is a positive definite matrix. We split $H(f)$ as follows,
    \begin{align*}
        H(f) &= \frac{1}{2}f^TBf - a^Tf \\
            &= \frac{1}{2}f^T\lambda f - (\frac{1}{2}f^TAf + a^Tf) \\
            &=: H_+(f) - H_-(f).
    \end{align*}
    Now we evaluate the gradient of $H_\pm$ with respect to Shahshahani metric at $f^n$, and apply semi-implicit Euler's method as follows,
    \begin{align}
        (f^{n+1} - f^n)_i &= -\tau d_i(\text{grad}_{f^n}H_+(\f)-\text{grad}_{f^n}H_-(f^n))\notag\\
        &= -\tau d_if^n_i(\nabla H_+(f^{n+1}) - \nabla H_-(f^n))_i\label{disc time dyn H} \\
            %&= -\tau f^n_i(\lambda f^{n+1} - Af^n -a)_i \\
            %&= \tau f^n_i(a+Af^n - \lambda f^{n+1})_i \\
            &= \tau d_if^n_i(a- Bf^n -\lambda(f^{n+1} - f^n))_i\label{disc time dyn}
    \end{align}
    Our goal here is to ensure the dynamics of \eqref{disc time dyn} to be similar with its continuous version as much as possible. Firstly, we need to make sure the sequence $\{f^n\}_{n\in\N}\subset \R^N_+$ with appropriate choice of a time step $\tau$. It turns out when the interaction is competitive, i.e., $B_{ij}> 0$,
    % $\forall i,j\in[N]$, $a-Bf^n<0$, 
    we will have the desired properties as follows.
    % Since the equation~\eqref{disc time dyn} can be written as
    % \begin{equation*}
    %     ( 1+ \tau\lambda d_i f^n_i) (\f-f^n)_i = \tau d_i f^n_i(a - Bf^n)_i,
    % \end{equation*}
    %  Before proving convergence results for the discrete-time equation, we need to ensure that $f^n_i > 0$. This is done by choosing an appropriate time step $\tau$.
     \begin{prop}\label{p: properties}
     Assume that $\underline{B}:=\min_{i,j}B_{ij}>0$. For any $\epsilon>0$, define $$\Omega_\epsilon:=\{f\in\R^N_+ : \Vert f\Vert_\infty \leq (\lambda^{-1}+\underline{B}^{-1})\Vert a\Vert_\infty +\epsilon\},$$ then there exists $M_\epsilon>0$, such that for $0< \tau < \frac{1}{\Vert d\Vert_\infty M_\epsilon}$ the sequence $\{f^n\}$ generated by the discrete scheme (\ref{disc time dyn}) with initial condition $f^0\in \Omega_\epsilon$ has properties as follows:
     \begin{itemize}
         \item (Feasibility) $f^n\in\Omega_\epsilon$ for all $n\in\N$,
         \item (Ratio bound)
         \begin{equation*}
             1-\tau \Vert d\Vert_\infty M_\epsilon \leq \frac{\f}{f^n}\leq 1+\tau\Vert d\Vert_\infty M_\epsilon
         \end{equation*}
         \item (Monotonicity) E is non-decreasing along  (\ref{disc time dyn}), i.e., for all $n\in \N_0$
         \begin{equation*}
             E(f^{n+1}) - E(f^n) \geq 0, 
         \end{equation*}
    and the equality holds if and only if $f^n_i = 0$ or $a_i = (Bf^n)_i$ for all $i$,
        \item ($l^2$-bound)
        \begin{equation*}
            \sum_{n=k}^\infty \sum_{i=1}^N \frac{1}{\tau d_i f^n_i}(\f-f^n)^2_i \leq \lim_{n\to\infty} E(f^n)-E(f^k).
        \end{equation*}
        % $\sum_{n=k}^\infty \sum_{i=1}^N \frac{1}{\tau d_i f^n_i}(\f-f^n)^2_i \leq \liminf_n E(f^n)-E(f^k).$
    %     \begin{equation*}
    %      \sum_{n=k}^\infty \sum_{i=1}^N \frac{1}{\tau d_i f^n_i}(\f-f^n)^2_i \leq \liminf_n E(f^n)-E(f^k).
    % \end{equation*}
     \end{itemize}
    \end{prop}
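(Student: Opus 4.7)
The plan is to exploit the one-component-at-a-time structure of \eqref{disc time dyn}: solving the scalar affine-in-$\f_i$ equation explicitly gives, for each $i\in[N]$,
\[
\f_i \;=\; f^n_i + \frac{\tau d_i f^n_i (a - B f^n)_i}{1+\tau d_i \lambda f^n_i}\,,
\]
and all four assertions flow from this single representation.

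For feasibility I would split according to the sign of $(a-Bf^n)_i$. When this quantity is non-positive, $\f_i\leq f^n_i$ so confinement is automatic. When it is positive, the competitive hypothesis $\underline B>0$ forces $f^n_i<\underline B^{-1}\Vert a\Vert_\infty$ (otherwise $(Bf^n)_i\geq \underline B f^n_i\geq \Vert a\Vert_\infty\geq a_i$); in that case, dropping the favorable term $-(Bf^n)_i$ from the numerator and using $1+\tau d_i\lambda f^n_i\geq \tau d_i\lambda f^n_i$ in the denominator cancels the $\tau d_i f^n_i$ factor and yields $\f_i\leq f^n_i+\lambda^{-1}\Vert a\Vert_\infty\leq(\underline B^{-1}+\lambda^{-1})\Vert a\Vert_\infty$, a bound that is crucially \emph{uniform in} $\tau$. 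For the ratio bound, rewriting $\f_i/f^n_i-1=\tau d_i(a-Bf^n)_i/(1+\tau d_i\lambda f^n_i)$ and estimating $|(a-Bf^n)_i|\leq \Vert a\Vert_\infty+\Vert B\Vert_\infty C_\epsilon$ on $\Omega_\epsilon$ (with $C_\epsilon:=(\lambda^{-1}+\underline B^{-1})\Vert a\Vert_\infty+\epsilon$) together with the trivial denominator bound $1$ delivers $|\f_i/f^n_i-1|\leq \tau\Vert d\Vert_\infty M_\epsilon$ for $M_\epsilon:=\Vert a\Vert_\infty+\Vert B\Vert_\infty C_\epsilon$. Imposing $\tau<1/(\Vert d\Vert_\infty M_\epsilon)$ then simultaneously enforces positivity $\f_i>0$.

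For monotonicity I would invoke the DC structure of Section~\ref{s: dca}: $H_+(f)=\tfrac{\lambda}{2}\Vert f\Vert^2$ is $\lambda$-strongly convex (so Lemma~\ref{l: conv ineq} applied to $H_+-\tfrac{\lambda}{2}\Vert\cdot\Vert^2$ gives $H_+(\f)-H_+(f^n)\leq \langle\nabla H_+(\f),\f-f^n\rangle-\tfrac{\lambda}{2}\Vert \f-f^n\Vert^2$), while $H_-(f)=\tfrac12 f^TAf+a^Tf$ is convex (so $H_-(\f)-H_-(f^n)\geq \langle\nabla H_-(f^n),\f-f^n\rangle$). Subtracting these and substituting the scheme $\nabla H_+(\f)_i-\nabla H_-(f^n)_i=-(\tau d_i f^n_i)^{-1}(\f_i-f^n_i)$ from \eqref{disc time dyn H} produces
\[
H(\f)-H(f^n)\;\leq\;-\sum_{i\in[N]}\frac{(\f_i-f^n_i)^2}{\tau d_i f^n_i}-\frac{\lambda}{2}\Vert\f-f^n\Vert^2\;\leq\;0\,,
\]
which is the assertion $E(\f)\geq E(f^n)$; equality forces $\f=f^n$, which via the iteration pins $f^n_i(a-Bf^n)_i=0$ for every $i$. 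Telescoping this inequality from $n=k$ to $n=N$ and sending $N\to\infty$ (legal because feasibility makes $E$ bounded above on $\Omega_\epsilon$ and monotonicity makes it convergent) yields the $\ell^2$ bound.

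The main technical obstacle is the feasibility step, specifically its uniformity in $\tau$. The elementary cancellation $\tau d_i f^n_i a_i/(1+\tau d_i\lambda f^n_i)\leq a_i/\lambda$ is what converts a prima facie $\tau$-dependent increment into a $\tau$-independent one, and this is precisely what dictates the confinement radius $(\lambda^{-1}+\underline B^{-1})\Vert a\Vert_\infty+\epsilon$. Without this cancellation the radius of $\Omega_\epsilon$ would have to grow with $\tau$, undermining the uniform estimates that subsequent convergence arguments rely on.
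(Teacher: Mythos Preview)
Your proposal is correct and follows essentially the same route as the paper: the same explicit one-step formula, the same $\tau$-independent cancellation $\tau d_i f^n_i a_i/(1+\tau d_i\lambda f^n_i)\leq a_i/\lambda$ for feasibility, the same convex-inequality/DC argument for monotonicity, and the same telescoping for the $\ell^2$ bound. The only cosmetic differences are that the paper splits the feasibility cases by whether $\Vert f^n\Vert_\infty$ exceeds $\underline B^{-1}\Vert a\Vert_\infty$ (rather than by the sign of $(a-Bf^n)_i$ componentwise) and defines $M_\epsilon:=\max_{f\in\Omega_\epsilon}\Vert a-Bf\Vert_\infty$ rather than the explicit upper bound you wrote.
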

    Note that the restriction on $\tau$ is to satisfy the natural constraint on Lotka-Volterra system to make the solution $\{f^n\}_{n\in\N_0}$ stay in the positive orthant $\R^N_+$. Meanwhile, the monotonicity and $l^2$-bound holds regardless of $\tau$, due to the convex splitting.
    
    We use these properties to prove convergence of bounded solutions using the entropy trapping method. The proof follows by lemmas with more detail.
    \begin{lemma}[Feasibility]\label{l: feasible}
        For any $\epsilon>0$, define 
        $$M_\epsilon:=\max_{f\in\Omega_\epsilon}\Vert a-Bf\Vert_\infty.$$ 
        
        Then $\Omega_\epsilon$ is an invariant set of \eqref{disc time dyn} when $0<\tau<\frac{1}{\Vert d\Vert_\infty M_\epsilon}$.
    \end{lemma}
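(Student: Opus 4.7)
The plan is to first solve the implicit scheme~\eqref{disc time dyn} componentwise in closed form, then verify positivity and the upper bound $\|f^{n+1}\|_\infty \le (\lambda^{-1}+\underline{B}^{-1})\|a\|_\infty +\epsilon$ separately.

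First I would rearrange \eqref{disc time dyn} by collecting the term $\lambda(f^{n+1}-f^n)_i$ on the left-hand side, which gives
\[
(1+\tau d_i \lambda f^n_i)\,(f^{n+1}-f^n)_i \;=\; \tau d_i f^n_i\,(a-Bf^n)_i,
\]
so that
\[
f^{n+1}_i \;=\; f^n_i \;+\; \frac{\tau d_i f^n_i\,(a-Bf^n)_i}{1+\tau d_i \lambda f^n_i}.
\]
Because $f^n\in\Omega_\epsilon$, we have $\|a-Bf^n\|_\infty\le M_\epsilon$ by definition, and the denominator $1+\tau d_i\lambda f^n_i$ is strictly positive for any $\tau>0$. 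So the scheme is well-defined pointwise and, since $\tau\|d\|_\infty M_\epsilon<1$, the multiplicative increment satisfies
\[
\Big|\tfrac{f^{n+1}_i}{f^n_i}-1\Big| \;\le\; \frac{\tau \|d\|_\infty M_\epsilon}{1+\tau d_i\lambda f^n_i}\;<\;1,
\]
which yields both positivity $f^{n+1}_i>0$ and the ratio bound stated in Proposition~\ref{p: properties}.

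For the upper bound I would split on the sign of $g_i:=(a-Bf^n)_i$. If $g_i\le 0$ then clearly $f^{n+1}_i\le f^n_i\le (\lambda^{-1}+\underline{B}^{-1})\|a\|_\infty+\epsilon$, so $\Omega_\epsilon$ is preserved at the $i$-th coordinate. If instead $g_i>0$, then $(Bf^n)_i<a_i\le\|a\|_\infty$, and competitivity $\underline{B}:=\min_{i,j}B_{ij}>0$ forces $\underline{B}\,f^n_i\le(Bf^n)_i$, so $f^n_i<\underline{B}^{-1}\|a\|_\infty$. I would then use the key simple estimate
\[
\frac{\tau d_i f^n_i g_i}{1+\tau d_i\lambda f^n_i} \;=\; \frac{g_i}{\lambda}\cdot\frac{\tau d_i \lambda f^n_i}{1+\tau d_i\lambda f^n_i} \;<\; \frac{g_i}{\lambda}\;\le\;\frac{\|a\|_\infty}{\lambda},
\]
and combine with the bound on $f^n_i$ to conclude
\[
f^{n+1}_i \;<\; \underline{B}^{-1}\|a\|_\infty \;+\; \lambda^{-1}\|a\|_\infty \;\le\; (\lambda^{-1}+\underline{B}^{-1})\|a\|_\infty + \epsilon.
\]
Thus in both cases the $i$-th coordinate remains inside the threshold defining $\Omega_\epsilon$, and $f^{n+1}\in\Omega_\epsilon$.

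The argument is essentially routine once the explicit form of $f^{n+1}_i$ is in hand. The only subtle point I expect is that the constraint on $\tau$ is used \emph{only} to guarantee positivity (and the ratio bound): the upper bound follows for \emph{any} $\tau>0$, because the semi-implicit $\lambda$-term in the denominator damps the growth increment by the clean factor $\tau d_i\lambda f^n_i/(1+\tau d_i\lambda f^n_i)<1$. This is the mechanism by which treating the convex part $H_+(f)=\tfrac{\lambda}{2}\|f\|^2$ implicitly yields unconditional control of the growth even when the explicit competitive term $-Bf^n$ would naively be large.
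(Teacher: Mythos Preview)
Your proof is correct and follows essentially the same approach as the paper: rewrite \eqref{disc time dyn} in the explicit form $f^{n+1}_i = f^n_i + \tau d_i f^n_i(a-Bf^n)_i/(1+\tau\lambda d_i f^n_i)$, use the smallness of $\tau$ only for positivity, and control the upper bound via the damping factor $\tau d_i\lambda f^n_i/(1+\tau d_i\lambda f^n_i)<1$ together with the competitivity bound $\underline{B}f^n_i\le (Bf^n)_i$. The only cosmetic difference is that the paper organizes the case split globally (on whether $\|f^n\|_\infty>\|a\|_\infty/\underline{B}$) whereas you split componentwise on the sign of $(a-Bf^n)_i$; your version is slightly more direct and avoids the intermediate step $\sum_j f^n_j\ge\|f^n\|_\infty$.
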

    \begin{proof}
    Essentially, we need to show that the sequence $\{f^n\}_{n\in\N}$ stays positive and bounded. 
   Let's rewrite \eqref{disc time dyn} as follows
    \begin{equation}\label{sq bd}
        (1+ \tau\lambda d_i f^n_i) (\f-f^n)_i = \tau d_i f^n_i(a - Bf^n)_i,
    \end{equation}
     \begin{equation}\label{sq pos}
        \f_i = f^n_i \Big(\frac{1+\tau\lambda d_i f^n_i + \tau d_i(a - Bf^n)_i}{1+\tau\lambda d_i f^n_i}\Big).
    \end{equation}
%   \eqref{sq bd} and \eqref{sq pos} will be used to check $\partial\Omega_\epsilon\cap \R^N_+$ and $\partial\Omega_\epsilon\setminus \R^N_+$,  respectively. 
   
    [Positivity]
     According to \eqref{sq pos}, given that $f^n>0$, $1+\tau d_i(a-Bf^n)_i>0$ will make $\f>0$. This will allow us to control the numerator when $f^n_i$ is negligible but $(a-Bf^n)_i$ is possibly negatively large. 
    Since we chose $\tau<\frac{1}{\Vert d\Vert_\infty M_\epsilon}$, for all $i$,
    \begin{align}\label{e: 1+tau}
    %   1+\tau d_i(a-Bf^n)_i&\geq 1-\tau \Vert d\Vert_\infty \Vert a-Bf^n\Vert_\infty>0,
    1+\tau d_i(a-Bf^n)_i&\geq 1-\tau \Vert d\Vert_\infty M_\epsilon>0.
    \end{align}
    
    [Boundedness]
    For any $f\in\Upsilon:=\{f\in \R^N_+:\Vert f\Vert_\infty> \frac{\Vert a\Vert_\infty}{\underline{B}} \}$ and for all $i\in [N]$
    % $\delta>0$, if $f\in \R^N_+$ and $\Vert f\Vert_\infty> \frac{\Vert a\Vert_\infty}{\underline{B}}+\delta$
    % \begin{equation}
    %     \Upsilon:=\{f\in \R^N_+:\Vert f\Vert_\infty> \frac{\Vert a\Vert_\infty}{\underline{B}} \},
    % \end{equation}
    \begin{align*}
        (a-Bf)_i &\leq a_i -\underline{B}\sum_j f^n_j \leq a_i -\underline{B}\Vert f^n\Vert_\infty<0.
    \end{align*}
    This estimate works because every component of $B$ and $f$ is positive and additionally $\sum_j f_j>\Vert f\Vert_\infty$. So from \eqref{sq bd}, we can tell that if $f^n\in\Omega_\epsilon\cap \Upsilon$, $\f\in\Omega_\epsilon$.
    
    On the other hand, for $f^n\in \Omega_\epsilon\setminus\Upsilon$ we want to show that the jump is not too big, so that $\f\in\Omega_\epsilon$. Indeed, from \eqref{sq bd} and positivity
    \begin{align*}
        0<\f_i&=f^n_i+\frac{\tau d_if^n_i}{1+\tau\lambda d_if^n_i}(a-Bf^n)_i\\
        &< f^n_i +\frac{1}{\lambda}\Vert a\Vert_\infty\\
        &\leq (\frac{1}{\underline{B}}+\frac{1}{\lambda})\Vert a\Vert_\infty.
    \end{align*}
    Therefore, $\f\in\Omega_\epsilon$ and we have the desired result.
    \end{proof}
    Unlike the continuous-time version, it is tricky to deal with the discrepancy between current and the next step. It turns out a small enough time step $\tau$ provides a restriction on the ratio between two consecutive steps.
    \begin{lemma}[Ratio bound]
    Under the same assumption from Lemma~\ref{l: feasible}, 
    $$ 0<1-\tau \Vert d\Vert_\infty M_\epsilon \leq \frac{\f}{f^n}\leq 1+\tau\Vert d\Vert_\infty M_\epsilon.$$
    \end{lemma}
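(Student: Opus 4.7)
The plan is to leverage the closed form expression \eqref{sq pos} from the proof of Lemma~\ref{l: feasible}, namely
\[
    \frac{f^{n+1}_i}{f^n_i} = \frac{1+\tau\lambda d_i f^n_i + \tau d_i(a-Bf^n)_i}{1+\tau\lambda d_i f^n_i} = 1 + \frac{\tau d_i(a-Bf^n)_i}{1+\tau\lambda d_i f^n_i},
\]
which holds coordinate-wise for all $i\in[N]$. Since feasibility (Lemma~\ref{l: feasible}) already guarantees $f^n\in\Omega_\epsilon$, the vector $a-Bf^n$ is controlled by $M_\epsilon$ in the $\infty$-norm by the very definition of $M_\epsilon$.

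Next, I would estimate the denominator and numerator separately. Because $f^n_i\ge 0$, $d_i>0$, and $\lambda>0$, the denominator satisfies $1+\tau\lambda d_i f^n_i \ge 1$, so it can simply be dropped when upper bounding the absolute value of the perturbation term. For the numerator, the bound $|(a-Bf^n)_i|\le M_\epsilon$ gives
\[
    \left|\frac{f^{n+1}_i}{f^n_i}-1\right| = \frac{\tau d_i|(a-Bf^n)_i|}{1+\tau\lambda d_i f^n_i} \le \tau\|d\|_\infty M_\epsilon,
\]
which is precisely the desired two-sided bound. The left-hand positivity $1-\tau\|d\|_\infty M_\epsilon>0$ is immediate from the hypothesis $\tau<\frac{1}{\|d\|_\infty M_\epsilon}$, matching the restriction under which Lemma~\ref{l: feasible} was proved.

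There is no real obstacle here; the lemma is essentially a bookkeeping consequence of the explicit form of the semi-implicit update together with the uniform bound $M_\epsilon$ on $\|a-Bf\|_\infty$ over the invariant set $\Omega_\epsilon$. The only point worth flagging is that the same estimate relies on the denominator being bounded below by $1$, which uses positivity of $f^n$ (from feasibility) in an essential way; without feasibility one could not even make sense of the ratio. Thus the ratio bound should be presented immediately after, and as a corollary of, Lemma~\ref{l: feasible}, and it will serve in the subsequent entropy-trapping argument to absorb the discrete-time discrepancy between $f^n$ and $f^{n+1}$.
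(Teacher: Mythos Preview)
Your proposal is correct and follows essentially the same route as the paper: starting from the explicit ratio formula (the paper cites \eqref{sq bd}, you cite the equivalent \eqref{sq pos}), you bound the numerator by $\tau\|d\|_\infty M_\epsilon$ using $f^n\in\Omega_\epsilon$, lower-bound the denominator by $1$ using positivity, and read off the strict positivity of the lower bound from the hypothesis $\tau<1/(\|d\|_\infty M_\epsilon)$.
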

    % To be more precise, we want $C_1, C_2 >0$  such that
    % $$ C_1 \leq \frac{f^n}{\f} \leq C_2$$
    \begin{proof}
    From \eqref{sq bd} we have,
    \begin{equation*}
        \Big|\frac{\f}{f^n}- 1\Big|= \Big|\frac{\tau d_i(a-Bf^n)_i}{1+\tau\lambda d_i f^n_i}\Big|\leq \tau\Vert d\Vert_\infty M_\epsilon.
    \end{equation*}
Therefore,
    \begin{align*}
        0<1-\tau \Vert d\Vert_\infty M_\epsilon \leq \frac{\f}{f^n}
        \leq 1+\tau\Vert d\Vert_\infty M_\epsilon,
    \end{align*}     
    where $M_\epsilon$ is from Lemma~\ref{l: feasible}.
    % As Lemma~\ref{l: feasible} or \eqref{e: 1+tau} suggested, we can choose $\tau>0$ so that $$1+\tau d_i(a-Bf^n)_i \geq 1-\tau \Vert d\Vert_\infty M_\epsilon>0$$ for all $i$.
    % \begin{align*}
    %     \frac{f^n}{\f} &= \frac{1+\tau\lambda d_i f^n_i}{1+\tau\lambda d_i f^n_i+\tau d_i(a-Bf^n)_i}\\
    %     &\leq \frac{1+\tau\lambda d_i f^n_i}{1-\tau \Vert d\Vert_\infty M_\epsilon+\tau\lambda d_i f^n_i}\\
    %     &\leq \frac{1}{1-\tau \Vert d\Vert_\infty M_\epsilon}.
    % \end{align*}
    % The last estimate is from the fact that when $a,b,c,d>0$, $$\frac{a}{b}\leq\frac{a+c}{b+d}\leq\frac{c}{d}.$$
    \end{proof}
    \begin{lemma}[Monotonicity]\label{eq cond} E is non-decreasing along the evolution of equation (\ref{disc time dyn}). In other words,
    \begin{equation*}
        E(f^{n+1}) - E(f^n) \geq 0 \qquad \forall n\in\N,
    \end{equation*}
    and equality holds iff for all $i$, $f^n_i = 0$ or $a_i = (Bf^n)_i.$
    \end{lemma}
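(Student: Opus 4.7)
The plan is to exploit the convex splitting $H = H_+ - H_-$ that was used to design the scheme, with $H_+(f) = \tfrac{\lambda}{2}\|f\|^2$ strongly convex and $H_- (f)= \tfrac{1}{2}f^T A f + a^T f$ convex (since $A = \lambda I - B$ is positive definite by the choice of $\lambda$ above the Perron root). The basic convex inequality, Lemma~\ref{l: conv ineq}, applied to $H_+$ with the tangent taken at $f^{n+1}$ and to $H_-$ with the tangent taken at $f^n$, then gives after subtraction the ``difference of convex'' bound
\begin{equation*}
H(f^{n+1}) - H(f^n) \;\le\; \langle \nabla H_+(f^{n+1}) - \nabla H_-(f^n),\; f^{n+1} - f^n\rangle .
\end{equation*}

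Next I would substitute the scheme in the form~\eqref{disc time dyn H}, which can be rewritten as
\begin{equation*}
\nabla H_+(f^{n+1}) - \nabla H_-(f^n) \;=\; -\frac{1}{\tau}\,(D F^n)^{-1}(f^{n+1} - f^n),
\end{equation*}
where $D = \diag(d_i)$ and $F^n = \diag(f^n_i)$. The inversion is legitimate because Lemma~\ref{l: feasible} (the feasibility step already established in Proposition~\ref{p: properties}) guarantees $f^n_i > 0$. Plugging this into the DC bound,
\begin{equation*}
H(f^{n+1}) - H(f^n) \;\le\; -\frac{1}{\tau}\sum_{i\in[N]} \frac{(f^{n+1}_i - f^n_i)^2}{d_i f^n_i} \;\le\; 0,
\end{equation*}
and since $E = -H$ this is exactly the desired monotonicity $E(f^{n+1}) \ge E(f^n)$. (As a byproduct, summing this over $n\ge k$ will yield the $\ell^2$-bound stated in Proposition~\ref{p: properties}.)

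For the equality case, note that if $E(f^{n+1}) = E(f^n)$, then the chain above collapses to equality, and the final quadratic sum being zero forces $f^{n+1}_i = f^n_i$ for every $i$. Reading this back into the scheme~\eqref{disc time dyn}, we get $\tau d_i f^n_i(a - Bf^n)_i = 0$, and since $d_i,\tau>0$ this means $f^n_i = 0$ or $(Bf^n)_i = a_i$. The converse is immediate from~\eqref{disc time dyn}. I do not expect a serious obstacle here: the only subtle point is ensuring positivity of $f^n$ so the Shahshahani-weighted inner product makes sense, which is exactly what Lemma~\ref{l: feasible} secures, so the argument is essentially a clean translation of the standard DC-stability inequality into the Shahshahani-preconditioned setting.
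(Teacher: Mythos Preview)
Your argument is correct and follows essentially the same path as the paper: apply the convex inequality (Lemma~\ref{l: conv ineq}) to $H_+$ and $H_-$, substitute the scheme, and read off a nonnegative Shahshahani-weighted quadratic form.

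Two small points are worth noting. First, the paper substitutes \eqref{disc time dyn H} in the form $(f^{n+1}-f^n)_i = -\tau d_i f^n_i\big(\nabla H_+(f^{n+1})-\nabla H_-(f^n)\big)_i$ directly into the inner product, obtaining $\tau\sum_i d_i f^n_i\big(\nabla H_+(f^{n+1})-\nabla H_-(f^n)\big)_i^2$ without ever dividing by $f^n_i$. This makes the monotonicity statement hold for all $\tau>0$ and all $f^n\ge 0$, independent of the feasibility lemma (the paper remarks explicitly that monotonicity holds ``regardless of $\tau$''). Your route through $(DF^n)^{-1}$ imports the $\tau$-restriction from Lemma~\ref{l: feasible} unnecessarily; the fix is cosmetic. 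Second, the converse is not quite ``immediate'': if $a_i=(Bf^n)_i$, equation~\eqref{disc time dyn} reduces to $(1+\tau\lambda d_i f^n_i)(f^{n+1}-f^n)_i=0$, and one needs the positivity of the prefactor to conclude $f^{n+1}_i=f^n_i$. The paper spells this step out; you should too.
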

    
    \begin{proof}
        From Lemma~\ref{l: conv ineq}
        \begin{align*}
            E(f^{n+1}) - E(f^n) \;\; &=-(H(\f)- H(f^n))\\
            &\geq \;\;-\langle \nabla H_+(f^{n+1}) - \nabla H_-(f^n), f^{n+1}-f^n\rangle \\
                &= \;\; \tau\sum_i d_i f^n_i \big(\nabla H_+(\f) - \nabla H_-(f^n)\big)^2_i\geq 0.
        \end{align*}
        Suppose the equality holds. Then the middle term of the inequality vanishes. So for all $i$, $f^n_i = 0$ or $\big(\nabla H_+(\f) - \nabla H_-(f^n)\big)_i = 0$, which implies $\f_i -f^n_i = 0$ by (\ref{disc time dyn H}). Thus, by (\ref{disc time dyn}), for all $i$, $f^n_i = 0$ or $a_i = (Bf^n)_i$.
        
        Conversely, if for all $i$, $f^n_i = 0$ or $a_i = (Bf^n)_i$, enough to show $\f_i=f^n_i$ for the latter case. For such i,
        \begin{align*}
            (f^{n+1} - f^n)_i &= \tau d_i f^n_i(a- Bf^n -\lambda(f^{n+1} - f^n))_i\\
                &= -\tau d_i f^n_i(\lambda(f^{n+1} - f^n))_i
        \end{align*}
        \begin{equation*}
            (1 + \tau \lambda d_i f^n_i)(f^{n+1} - f^n)_i = 0
        \end{equation*}
        implies $(f^{n+1} - f^n)_i = 0$. Since $\f=f^n$, the equality holds.
    \end{proof}
    \begin{lemma}[$l^2$ bound]\label{l: l2 bound}
    $\sum_{n=k}^\infty \sum_{i=1}^N \frac{1}{\tau d_i f^n_i}(\f-f^n)^2_i \leq \liminf_n E(f^n)-E(f^k).$
    \end{lemma}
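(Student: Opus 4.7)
The plan is to telescope the per-step energy gain already obtained inside the proof of Lemma~\ref{eq cond} and rewrite the resulting quadratic form in terms of the increments $\f - f^n$. First I would recall the update rule~\eqref{disc time dyn H}, which can be rearranged as
\[
\big(\nabla H_+(\f) - \nabla H_-(f^n)\big)_i = -\frac{1}{\tau d_i f^n_i}(\f-f^n)_i,
\]
valid because the feasibility from Proposition~\ref{p: properties} keeps $f^n_i>0$. Substituting this identity into the lower bound on $E(\f)-E(f^n)$ obtained in the proof of Lemma~\ref{eq cond} would give the per-step inequality
\[
E(\f)-E(f^n) \;\geq\; \sum_{i=1}^N \frac{1}{\tau d_i f^n_i}(\f-f^n)_i^2,
\]
which is nonnegative term by term.

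Next I would telescope from $n=k$ to $n=N$:
\[
E(f^{N+1}) - E(f^k) \;\geq\; \sum_{n=k}^{N}\sum_{i=1}^N \frac{1}{\tau d_i f^n_i}(\f-f^n)_i^2.
\]
Because all summands are nonnegative, the partial sums on the right are monotone in $N$, so passing to $N\to\infty$ is legitimate and yields the claimed bound with $\liminf_N E(f^{N+1})$ (equivalently $\liminf_n E(f^n)$) on the left.

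The main obstacle here is essentially nothing beyond bookkeeping: the nontrivial work has already been done in the convex-splitting inequality of Lemma~\ref{eq cond}, and the positivity $f^n_i>0$ needed to divide by $\tau d_i f^n_i$ is guaranteed by Lemma~\ref{l: feasible}. One small point worth stating explicitly is that $\liminf_n E(f^n)$ may be replaced by $\lim_n E(f^n)$, since by Lemma~\ref{eq cond} the sequence $\{E(f^n)\}$ is monotone nondecreasing, and by feasibility $\{f^n\}\subset\Omega_\epsilon$ is bounded, so the limit exists and is finite; this also shows the left-hand side of the final inequality is indeed finite, which in turn gives the summability $\sum_{n,i}\tfrac{1}{\tau d_i f^n_i}(\f-f^n)_i^2<\infty$ that will be used in subsequent convergence arguments.
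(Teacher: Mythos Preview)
Your proposal is correct and follows essentially the same route as the paper: both derive the per-step inequality $E(\f)-E(f^n)\ge \sum_i \tfrac{1}{\tau d_i f^n_i}(\f-f^n)_i^2$ from the convex-splitting estimate and~\eqref{disc time dyn H}, then telescope and pass to the limit. Your additional remarks on why $f^n_i>0$ and why $\liminf$ may be replaced by $\lim$ are helpful clarifications that the paper leaves implicit.
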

    % We can achieve $l_2$ bound using the energy functional. This result will be useful to prove convergence.
    \begin{proof}
     From (\ref{disc time dyn H}),
    \begin{align*}
        E(\f)-E(f^n) &\geq -\langle \nabla H_+(\f) -\nabla H_-(f^n), \f - f^n\rangle \\
            &= \sum_{i=1}^N  \frac{1}{\tau d_i f^n_i}(\f-f^n)^2_i.
    \end{align*}
    For $m>k$ using telescoping sum,
    \begin{align*}
        \sum_{n=k}^m \sum_{i=1}^N \frac{1}{\tau d_i f^n_i}(\f-f^n)^2_i &\leq E(f^k) -E(f^{m+1}).
    \end{align*}
    Thus
    \begin{equation*}
         \sum_{n=k}^\infty \sum_{i=1}^N \frac{1}{\tau d_i f^n_i}(\f-f^n)^2_i \leq E(f^k) - \lim_{n\to\infty} E(f^n).\qedhere
    \end{equation*}
    \end{proof}
   
    \subsection{Convergence result}
    We tailored the discrete-time equation to make $E$ increasing. In order to prove convergence, we need a suitable entropy function too. We modify the previous entropy $F$ by adding quadratic terms.
    
    \begin{definition} For fixed $\Tilde{f}$ we define
        \[ 
        F(f^n) := \sum_i w_i(\ft_i \log\frac{{\ft_i}}{f^n_i} +f^n_i - \ft_i) + \sum_i \frac{\lambda\tau}{2}(f^n_i - \ft_i)^2
        \]
    \end{definition}
    
    The modified entropy still has nice properties, convexity and having a unique minimizer. In fact the entropy is a sum of two positive convex functions because the first term can be represented as
    $$ \sum_i w_i\ft_i(\frac{f^n_i}{\ft_i} - \log\frac{f^n_i}{\ft_i}-1)$$
    with knowing that $x -\log x -1$ is a convex function having minimum value 0 at 1. So we have $F \geq 0$, moreover $F(f) = 0$ if and only if $f=\ft$.
    \begin{theorem} 
     Let $\{f^n\}$ be a sequence generated from  \eqref{disc time dyn}. For any $\epsilon>0$, there exists $M_\epsilon$ such that if $0<\tau<\frac{1}{\Vert d\Vert_\infty M_\epsilon}$ then $\lim_{n\to\infty}f^n$ exists.
    \end{theorem}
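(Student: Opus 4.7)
The plan is to mimic the entropy trapping argument of Theorem 3.1.1 in the discrete setting, with $F$ chosen so that the $O(\tau)$ discrepancy caused by the semi-implicit splitting is absorbed by the added quadratic term. Pick $\tilde f\in\omega(\{f^n\})$, set $K$, $Q(f)$, $Z(f)$ as in Theorem 3.1.1, and take $w_i=1/d_i$. Proposition 4.1.1 gives us the working regime: fix $\epsilon>0$, let $\Omega_\epsilon$ be the invariant set and $M_\epsilon,\tau$ chosen as there, so that $\{f^n\}\subset\Omega_\epsilon$, the ratios $f^{n+1}_i/f^n_i$ are uniformly bounded away from $0$ and $\infty$, $E(f^{n+1})-E(f^n)\ge 0$, and $\sum_n\sum_i(f^{n+1}_i-f^n_i)^2/(\tau d_i f^n_i)<\infty$.

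The first key step is a lower bound on the energy increment in terms of $Q$ and $Z$. Writing $v^n:=a-Bf^n$, the scheme \eqref{disc time dyn} gives $f^{n+1}_i-f^n_i=\tau d_i f^n_i v^n_i/(1+\tau\lambda d_i f^n_i)$; combining this with Lemma 4.1.1 yields
\[
E(f^{n+1})-E(f^n)\ \ge\ \sum_i\frac{(f^{n+1}_i-f^n_i)^2}{\tau d_i f^n_i}\ =\ \sum_i\frac{\tau d_i f^n_i (v^n_i)^2}{(1+\tau\lambda d_if^n_i)^2}\ \ge\ c\,\tau\,Z(f^n)\,Q(f^n),
\]
where $c>0$ is a constant depending on $\Omega_\epsilon$, $d$, $\lambda$ and $\tau$ but uniform along the trajectory.

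The second and more delicate step is the matching upper bound for the entropy increment. Expanding the log via $-\log(1+x)\le -x+x^2$ (valid thanks to the ratio bound) and expanding the quadratic correction, I compute
\[
F(f^{n+1})-F(f^n)\ \le\ \sum_i\frac{f^{n+1}_i-f^n_i}{d_i f^n_i}(f^n_i-\tilde f_i)+\lambda\tau\sum_i(f^{n+1}_i-f^n_i)(f^n_i-\tilde f_i)+R_n,
\]
where $R_n$ collects quadratic remainders that are controlled by $\sum_i(f^{n+1}_i-f^n_i)^2/f^n_i$. Substituting $(f^{n+1}_i-f^n_i)/(\tau d_i f^n_i)=(v^n-\lambda(f^{n+1}-f^n))_i$ into the first sum makes the two $\lambda\tau$ cross terms cancel exactly—this is precisely why the quadratic piece was put into $F$—leaving
\[
F(f^{n+1})-F(f^n)\ \le\ \tau\sum_i(f^n_i-\tilde f_i)(a-Bf^n)_i+R_n.
\]
Symmetry of $B$ together with $E(f^n)\le E(\tilde f)$ (from monotonicity and $\tilde f\in\omega(\{f^n\})$) reduces the first sum, as in the continuous case, to $-\sum_{i\in K}f^n_i(a-B\tilde f)_i\le MQ(f^n)$. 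The remainder $R_n$ is $O(\tau)\cdot\sum_i(f^{n+1}_i-f^n_i)^2/(\tau d_if^n_i)$, hence its tail sum from $n=N$ onward can be made arbitrarily small by the $\ell^2$ bound of Proposition 4.1.1.

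The conclusion is then bootstrapping exactly as in Theorem 3.1.1. In a small enough $F$-sublevel set we have $Z(f^n)\ge z/2$ where $z=Z(\tilde f)>0$, so combining the two displayed inequalities gives, within that neighborhood, $F(f^{n+1})-F(f^n)\le (4M/(cz))\bigl(E(f^{n+1})-E(f^n)\bigr)+R_n$. Choosing a subsequence $f^{n_k}\to\tilde f$ and picking an index $N$ for which $F(f^N)$, $E(\tilde f)-E(f^N)$ and $\sum_{n\ge N}R_n$ are all smaller than $\epsilon/4$, summation traps $F(f^n)<\epsilon$ for all $n\ge N$, so $F(f^n)\to 0$ and $f^n\to\tilde f$. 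The main obstacle is the middle step: verifying that the $\lambda\tau$ cross terms really do cancel and that the remainder $R_n$ is summable uniformly for $\tau$ in the admissible range, which is where the ratio bound and $\ell^2$ bound from Proposition 4.1.1 are crucial.
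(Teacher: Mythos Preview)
Your proposal is correct and follows essentially the same entropy-trapping strategy as the paper: the same modified entropy $F$, the same energy lower bound via $Q$ and $Z$, the same reduction of the leading entropy term to $\tau\sum_i(f^n_i-\tilde f_i)(a-Bf^n)_i\le \tau M Q(f^n)$ via symmetry and monotonicity of $E$, and the same inductive trapping to conclude. The only organizational difference is that you bound the log increment by a second-order Taylor expansion centered at $f^n$, which makes the exact cancellation of the $\lambda\tau$ cross terms (and hence the reason for the quadratic correction in $F$) explicit, whereas the paper uses the convex inequality at $f^{n+1}$ and then splits into the terms I$+$II$+$III; both routes produce the same leading term plus quadratic remainders controlled by the $\ell^2$ bound and the neighborhood condition $\max_{i\in\mathrm{supp}\,\tilde f}1/f^n_i\le L$.
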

    
    \begin{proof}
    The argument is similar to the proof of Theorem \ref{thm1} but a tad more technical. Let $\{f^n\}_{n=1}^\infty$ be a sequence generated from the difference equation \eqref{disc time dyn}. Thanks to Lemma~\ref{l: feasible}, $\{f^n\}_{n=1}^\infty$ is bounded, so there is a convergent subsequence $\{f^{n_k}\}_{k=1}^\infty$ and $\ft \in \overline{\mathbb{R}_+^N}$ such that $f^{n_k} \rightarrow \ft$ as $k \rightarrow \infty$.
    
    Let $K := \{i\in[N] \; :\; a_i - \sum_j B_{ij}\Tilde{f}_j\neq 0 \}$ and
     \begin{align*}
            Q(f) &:= \sum_{i\in K} f_i, \\
            Z(f) &:= \min_{i\in K}\{ (a_i - \sum_j B_{ij}f_j)^2\}.
    \end{align*}
    If $K$ is empty, define $Q=0$ and $Z=1$. Then, as we have seen in the proof of Lemma~\ref{l: l2 bound}
    \begin{align*}
        E(\f)-E(f^n) &\geq \langle \frac{1}{\tau d f^n}(\f-f^n) , \f -f^n \rangle \\
        &= \sum_{i\in[N]} \frac{\tau d_i f^n_i}{(1+\lambda\tau d_i f^n_i)^2}(a-Bf^n)^2_i \\
        & \geq \sum_{i\in K} \frac{\tau d_i f^n_i}{(1+\lambda\tau d_if^n_i)^2}(a-Bf^n)^2_i\\
        &\geq \frac{1}{C_0}Z(f^n)Q(f^n).
    \end{align*}
    Also note that $Z(\Tilde{f}) = z >0$, so $\{f \;:\; Z(f) > \frac{z}{2}\}$ is a neighborhood of $\ft$. Since $\Tilde{f} = \bigcap_{\epsilon>0}\{f \;:\; F(f) \leq \epsilon \}$, we can find $\epsilon^* > 0$ such that,
    % Additional to putting restriction on Z, for later use, we need control on $f^n$. So fix $L>0$. Now we can find $\epsilon^* > 0$ such that 
    \begin{align*}
        F(f) \leq \epsilon^* \;\; &\Longrightarrow \;\;Z(f) > \frac{z}{2} \text{  and} \max_{i\in \textbf{supp}\ft} \frac{1}{f^n_i} <L,
    \end{align*}
    for some $L>0.$
    
    Let's analyze the difference of the entropy with $w_i=1/d_i$. By Lemma~\ref{l: conv ineq}
    \begin{align*}
        F(\f) -F(f^n)
            &\leq \sum_i \frac{1}{d_i\f_i}(\f_i-\ft_i)(\f_i -f^n_i) + \sum_i \lambda\tau(\f_i - \ft_i)(\f_i -f^n_i) \\
            &= \sum_i \frac{1}{d_i\f_i}(\f_i-f^n_i+f^n_i-\ft_i)(\f_i -f^n_i) \\
            &\;\; + \sum_i \lambda\tau(\f_i -f^n_i+f^n_i - \ft_i)(\f_i -f^n_i) \\
            &= \sum_i (\frac{1}{d_i\f_i}+\lambda\tau)(\f_i -f^n_i)^2 +\sum_i \frac{1}{d_i\f_i}(f^n_i-\ft_i)(\f_i-f^n_i)\\ 
            &\;\; +\sum_i\lambda\tau(f^n_i -\ft_i)(\f_i -f^n)\\
            &=\sum_i (\frac{f^n_i}{\f_i}+\lambda\tau  d_i f^n_i)\frac{1}{d_i f^n_i}(\f_i -f^n_i)^2\\
            &\;\;+\sum_i d_i^{-1}(\frac{1}{\f_i}-\frac{1}{f^n_i})(f^n_i-\ft_i)(\f_i-f^n_i)\\
            &\;\; +\sum_i\frac{1+\lambda\tau d_i f^n_i}{d_i f^n_i}(f^n_i -\ft_i)(\f_i -f^n_i)\\
            &= \text{I} + \text{II} + \text{III}
    \end{align*}
    Let's analyze each term seperately.
    % \begin{equation*}
    %     \text{I} \leq C_1 \sum_i \frac{1}{d_i f^n_i}(\f_i - f^n_i)^2
    % \end{equation*}
    \begin{align*}
    \text{I} &\leq C_1 \sum_i \frac{1}{d_i f^n_i}(\f_i - f^n_i)^2.\\
        \text{II} &= \sum_i \frac{f^n_i - \f_i}{d_if^n_i\f_i}(f^n_i - \ft_i)(\f_i-f^n_i) \\
            &= -\sum_{i\in \{\ft_i = 0\}} \frac{1}{d_i\f_i}(\f_i-f^n_i)^2 - \sum_{i\in\{\ft_i\neq0\}}\frac{f^n_i}{d_i\f_i}(\frac{1}{f^n_i})^2 (f^n_i - \ft_i)(\f_i -f^n_i)^2 \\
            &\leq C_2 \sum_{i\in\{\ft_i\neq0\}} d_i^{-1}(\frac{1}{f^n_i})^2 (\f_i -f^n_i)^2.
    \end{align*}
    $C_1$ and $C_2$ are due to the bound on the domain and the ratio bound.
    To analyze III recall the equation~\eqref{sq bd}
    $$(1+ \tau\lambda d_i f^n_i)(\f_i -f^n_i) = \tau d_i f^n_i(a -Bf^n)_i$$
    \begin{align*}
        \text{III} &= \tau \sum_i (f^n_i -\ft_i)(a-Bf^n)_i\\
            &=\tau( a\cdot f^n - f^n\cdot Bf^n - a\cdot\ft + \ft\cdot Bf^n) \\
            &\leq \tau(a\cdot f^n -2a\cdot f^n +2a\cdot\ft -\ft\cdot B\ft - a\cdot\ft + \ft\cdot Bf^n) \\
            &= \tau( -a\cdot f^n +\ft\cdot B f^n)
    \end{align*}
    because $E(f^n) = a\cdot f^n - \frac{1}{2}f^n Bf^n$ is increasing and $\ft\cdot(a -B\ft) = \sum_i \ft_i(a_i -B\ft_i) =0$. Thus
    \begin{equation*}
        \text{III} \leq \tau \sum_{i\in K} f^n_i(B\ft - a)_i.
    \end{equation*}
    All together, we have
    \begin{equation}\label{disc entro diff}
        F(\f)-F(f^n) \leq (C_1+C_2\max_{i\in\textbf{supp}\ft}\frac{1}{f^n_i}) \sum_i \frac{1}{d_if^n_i}(\f_i - f^n_i)^2
        +\tau Q(f^n)\max_{i\in K}|a_i-B\ft_i|.
    \end{equation}
    Now let $M=\max_{i\in K}|a_i-B\ft_i|$ and for any $\epsilon \in (0,\epsilon^*)$, choose $m>0$ such that $\forall n_k \geq m$
    \begin{align*}
        &F(f^{n_k}) <\frac{\epsilon}{2},\\
        &\sum_{n=m}^\infty\sum_i\frac{1}{d_i f^n_i}(f^{n+1}_i-f^n_i)^2 <\frac{\epsilon}{4C(1+L)},\\
        &\tau\frac{C_0M}{z}[E(\ft)-E(f^{n_k})] <\frac{\epsilon}{8},
    \end{align*}
    where $C=\max\{C_1,C_2\}$.
    \\
    By combining (\ref{disc entro diff}) and the conditions above,
    \begin{align*}
        F(f^{n_k+1}) < \epsilon < \epsilon^*.
    \end{align*}
    Note that having predecessor in $\epsilon^*$ neighborhood of $\ft$ allows us to have an entropy difference inequality of form (\ref{disc entro diff}). We continue trapping the sequence by induction. Assume, for any fixed $l\in \mathbb{N}$, every predecessor of $f^{n_k+l}$, originated from $f^{n_k}$ is in $\epsilon^*$ neighborhood of $\ft$. Then we will have series of the entropy difference inequalities, (\ref{disc entro diff}), and the telescopic sum of those gives us
    \begin{align*}
         F(f^{n_k+l}) &\leq F(f^{n_k}) + C(1+L)\sum_{n=n_k}^\infty\sum_i\frac{1}{d_if^n_i}(f^{n+1}_i-f^n_i)^2
        +\tau M\sum_{n=n_k}^\infty Q(f^n)\\
        & < \epsilon
    \end{align*}
   Thus we show $F(f_n)<\epsilon \quad \forall n>m.$
    \end{proof}
    \begin{remark}
    The main difficulty of this proof comes from the discreteness of the time step, $\f-f^n$, which vanishes in continuous case, coarsening the logarithmic entropy difference with some redundancy. The key to balancing the error from the logarithmic term is adding extra quadratic terms.
    
    Interestingly, the modified entropy $F$ agrees with the relative entropy, $\mathcal{E}$, suggested by Attouch and Teboulle~\cite{attouch2004regularized},
    \begin{equation*}
        \mathcal{E}(x,y):=\frac{\nu}{2}\Vert x-y\Vert^2 +\mu \sum_j x_j \log(x_j/y_j)+y_j-x_j,
    \end{equation*}
    As we see in \eqref{sq bd}
     \begin{equation*}
       \frac{1}{\tau}(\f-f^n)_i = \frac{ f^n_i}{1/d_i+ \tau\lambda f^n_i}(a - Bf^n)_i.
    \end{equation*}
    The equation~\eqref{disc time dyn} can be regarded as an application of Euler's method to the regularized Lotka-Volterra equation with $\mu=1/d_i$ and $\nu=\tau\lambda$, in which case $\mathcal{E}(\ft, f)=F(f).$
    In the next section, we discuss another way to time-discretize regularized Lotka-Volterra type system with more general energies.
    \end{remark}
 
 \section{Convex splitting for de-singularized systems}
 Technically, implicit iterations provided in DCA \eqref{e: dc al} demand to solve a convex optimization problem given by $H_+$ in each steps. Therefore, in practice we need to find a nice convex splitting representation of $H=H_+-H_-$ so that $H_+$ is easy to invert. 
 In this section, we will apply the implicit discrete iterations to Lotka-Volterra equations. The first task is to to find a convex splitting representation of $H=H_+-H_-$ in a way that $\nabla H_+$ is simple enough to invert. It turns out we can not only makes the calculation simpler but also make the sequence stay positive.
 
 Let $[u]^p_i:=u_i^p$ denote the component-wise power on a vector $u\in\R^N$ and $B=B^+-B^-$ so that
\begin{align*}
    H(u):=-\frac{1}{4}E([u]^2)&=-\frac{1}{4} \sum_{i\in[N]}u_i^2 a_i + \frac{1}{8}\sum_{i,j\in[N]}B_{ij}u^2_i u^2_j\\
   &=-(\frac{1}{4} \sum_{i\in[N]}u_i^2 a_i +\frac{1}{8}\sum_{i,j\in[N]}B^-_{ij}u^2_i u^2_j) +\frac{1}{8}\sum_{i\in[N]} B^+_{ij} u^2_iu^2_j\\
   &=:-H_-(u)+H_+(u)
\end{align*}

\begin{lemma}
Let $B_+$ and $B_-$ be positive definite with nonnegative entries. Then $H_+$ and $H_-$ are convex. Therefore we have convex splitting of $H$, i.e., $H=H_+-H_-.$
\end{lemma}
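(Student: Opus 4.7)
The plan is to reduce both claims to a single observation about quartic forms of the type $\phi_M(u):=\tfrac{1}{8}\sum_{i,j\in[N]} M_{ij}u_i^2 u_j^2$ where $M$ is symmetric, positive definite, and has nonnegative entries. Once $\phi_M$ is shown to be convex for every such $M$, convexity of $H_+$ is immediate with $M=B^+$, and convexity of $H_-$ follows from writing $H_-(u)=\tfrac{1}{4}\sum_i a_i u_i^2 +\phi_{B^-}(u)$, since the quadratic piece is a convex function of $u$ (every $a_i>0$ by assumption $a\in\R^N_+$).

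The main step is a direct Hessian computation for $\phi_M$. Writing $v:=[u]^2\in\R^N$ and $D_u:=\diag(u)$, symmetry of $M$ gives
\begin{equation*}
\partial_k \phi_M(u) \;=\; \tfrac{1}{2}\,u_k\,(Mv)_k,
\end{equation*}
and differentiating once more yields
\begin{equation*}
\partial_l\partial_k \phi_M(u) \;=\; \tfrac{1}{2}\,\delta_{kl}(Mv)_k + u_k M_{kl} u_l,
\end{equation*}
so that
\begin{equation*}
\nabla^2 \phi_M(u) \;=\; \tfrac{1}{2}\diag(Mv) \,+\, D_u M D_u.
\end{equation*}
Now I would verify that both summands are positive semidefinite. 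For the first, each entry of $v=[u]^2$ is nonnegative and the entries of $M$ are nonnegative, hence $(Mv)_k\ge 0$ for every $k$, making $\diag(Mv)$ PSD. For the second, for any $x\in\R^N$ we have $x^T D_u M D_u x = (D_u x)^T M (D_u x)\ge 0$ since $M$ is positive (semi)definite.

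Consequently $\nabla^2\phi_M(u)\succeq 0$ for every $u\in\R^N$, so $\phi_M$ is convex. Applying this with $M=B^+$ gives convexity of $H_+$, and with $M=B^-$ together with convexity of the purely quadratic term in $a$ gives convexity of $H_-$. The identity $H=H_+-H_-$ is then just the rearrangement of the splitting $B=B^+-B^-$ used in the definition, which completes the convex splitting. There is no serious obstacle here beyond bookkeeping the Hessian correctly; the only mild subtlety is keeping track of the factor $u_k$ inside the gradient so that the ``diagonal'' piece of the Hessian produces $\diag(Mv)$ rather than something indefinite.
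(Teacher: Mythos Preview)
Your proof is correct and follows essentially the same approach as the paper: both compute the Hessian of the quartic form $\sum_{i,j}M_{ij}u_i^2u_j^2$, split it as $\diag(M[u]^2)$ plus $D_uMD_u$ (up to constants), and observe that nonnegativity of the entries of $M$ makes the diagonal part PSD while positive (semi)definiteness of $M$ makes the congruence part PSD. If anything, your argument is slightly cleaner in justifying PSD-ness (noting $v=[u]^2\ge 0$ rather than appealing to positivity of $u$) and in explicitly treating the quadratic term in $H_-$.
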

\begin{proof}
It is enough to check, for $A$ a positive definite matrix with nonnegative entries, that $h(u)=\frac{1}{4}\sum_{i,j\in[N]}A_{ij}u^2_i u^2_j$ is convex.
\begin{align*}
    Dh(u)(v)&=\sum_{i,j\in[N]}A_{ij}v_iu_i u^2_j\\
    D^2h(u)(v,w)&=\sum_{i,j\in[N]}A_{ij}v_iw_i u^2_j+2v_iu_iA_{ij}u_jw_j,\\
    &=\langle v, \nabla^2h(u)w\rangle.
\end{align*}
By denoting $\dg{\cdot}:\R^N\to\R^{N\times N}$ a map that sends a vector to the corresponding diagonal matrix, we can represent the Hessian of $h$, 
\[
\nabla^2h(u)=\dg{A[u]^2}+2\dg{u}A\dg{u}.
\]
Since every component of $u$ and $A$ are positive $\nabla^2h(u)$ is positive definite.
\end{proof}
The previous lemma leaves infinite possibility of DCA algorithms. We suggest the following splitting of $B$, which can further simplify the iteration.
% \begin{lemma}
%   Let $I^\epsilon$ be a $N$-square matrix with entries $I^\epsilon_{ij}=\delta_i^j+(1-\delta_i^j)\epsilon$ for any $\epsilon\in[0,1)$. Then for any $B\in\mathcal{S}^N$ there exists $\lambda>0$ such that $A:=\lambda I^\epsilon-B$ is positive definite.
% \end{lemma}
% \begin{proof}
%     Note that $I^\epsilon$ has eivenvalues, $1+\epsilon$ with multiplicity $1+\epsilon$ and $1-\epsilon$ with multiplicity $N-1$. If we define $\lambda_i(M)$ as an $i$-th largest eigenvalue of $M$, By Lidskii inequality,
%     \begin{align}
%         \lambda_N(A)&=\lambda_N(\lambda I^\epsilon-B)\\
%         &\geq \lambda_N(\lambda I^\epsilon) + \lambda_N(-B)\\
%         &=\lambda(1-\epsilon)- \lambda_1(B).
%     \end{align}
% \end{proof}
% Now let $B^+=\lambda I^\epsilon$ and $S:=\sum_i u_i^2$, then
% \begin{align}
%     \frac{1}{\lambda}\nabla H_+(u)_i&=u_i\sum_j I^\epsilon_{ij}u_j^2\\
%     &=u_i^3 + \epsilon u_i\sum_{j\not =i}u_j^2)\\
%     &=u_i^3 + \epsilon (S-u_i^2)u_i)\\
%     &=(1-\epsilon)u_i^3 +\epsilon Su_i.
% \end{align}
% So we have to solve $N+1$ many $1$-dimensional problem.
% \begin{align}
%     \tau\lambda(1-\epsilon)u_i^3+(1+\tau\lambda\epsilon S)u_i =v_i
% \end{align}
% \begin{align}
%     \lambda(1-\epsilon)u_i^3+\lambda\epsilon Su_i=v_i
% \end{align}
% \begin{align}
%     u_i^3 +(q+pS)u_i =v_i\\
%     \sum_i u_i^2 =S 
% \end{align}
\begin{lemma}\label{l: bpm}
Given a matrix $B\in \R^N\times\R^N$,  let $B^+:=\lambda I+\beta \mathds{1}\mathds{1}^T$ where $\lambda>0$ is the spectral radius of $B$, $\beta$ is a maximum nonnegative off-diagonal element, or zero, and $\mathds{1}_i=1$ for all $i\in[N]$.  Then $B^+$ and $B^-:=B^+-B$ are positive definite matrices with nonnegative entries.
\end{lemma}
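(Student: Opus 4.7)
The proof reduces to checking two properties---nonnegative entries and positive definiteness---for each of $B^+$ and $B^-$ separately, with the positive definiteness of $B^-$ being the only substantive point.

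For $B^+=\lambda I+\beta\mathds{1}\mathds{1}^T$, the entries are $\lambda+\beta$ on the diagonal and $\beta$ off the diagonal, both nonnegative since $\lambda>0$ and $\beta\ge 0$. Positive definiteness is immediate from
\[
x^T B^+ x=\lambda\|x\|^2+\beta(x^T\mathds{1})^2\ge\lambda\|x\|^2,
\]
which is strictly positive for $x\ne 0$.

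For $B^-=B^+-B$, I would verify nonnegativity of the entries componentwise. Off the diagonal, $(B^-)_{ij}=\beta-B_{ij}\ge 0$ by the choice $\beta=\max\{0,\max_{i\ne j}B_{ij}\}$. On the diagonal, $(B^-)_{ii}=\lambda+\beta-B_{ii}$; since $B$ is symmetric, the Rayleigh quotient gives $B_{ii}=e_i^T B e_i\le\lambda_{\max}(B)\le\lambda$, so $(B^-)_{ii}\ge\beta\ge 0$.

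The main obstacle is the positive definiteness of $B^-$. Expanding,
\[
x^T B^- x=\bigl(\lambda\|x\|^2-x^T B x\bigr)+\beta(x^T\mathds{1})^2,
\]
each bracket is nonnegative: the first by the definition of spectral radius for symmetric $B$, the second trivially. The delicate case is when $x$ lies in the $+\lambda$-eigenspace of $B$ (annihilating the first term) while simultaneously $x^T\mathds{1}=0$ (annihilating the second). I expect this to be resolved in one of two ways: either by reading ``$\lambda$ is the spectral radius'' as permitting a strict majorant (the convention already used in the Eyre-type splitting of Section~4.1, where $\lambda$ is taken bigger than the Perron root), making the first bracket strictly positive; or, when $B$ has nonnegative entries (as in competitive Lotka--Volterra), by invoking Perron--Frobenius: the top eigenvector can then be chosen componentwise nonnegative, so $x^T\mathds{1}>0$ automatically, and with $\beta>0$ the rank-one correction $\beta\mathds{1}\mathds{1}^T$ restores strict positivity.
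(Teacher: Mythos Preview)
Your approach is essentially identical to the paper's: the same decomposition $v^TB^-v = v^T(\lambda I-B)v + \beta(v^T\mathds{1})^2 \ge 0$ is the paper's entire argument for $B^-$, and the remaining points (nonnegative entries, positive definiteness of $B^+$) are handled the same way but more tersely there. You are in fact more careful than the paper on one point: the paper's displayed inequality establishes only positive \emph{semi}definiteness of $B^-$ and never addresses the edge case you isolate (an eigenvector for the top eigenvalue that is orthogonal to $\mathds{1}$); your two suggested fixes---reading $\lambda$ as a strict majorant of the spectral radius, or invoking Perron--Frobenius when $B$ has nonnegative entries---are sensible ways to close that gap, though the paper pursues neither.
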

\begin{proof}
    It is clear that all entries of $B^+$ and $B^-$ are nonnegative. $B^+$ is positive definite since $\mathds{1}\mathds{1}^T$ is positive semidefinite. Note that for any $v\in\R^N$,
    \begin{equation*}
        v^T B^-v=v^T(\lambda I-B)v+\beta v^T \mathds{1}\mathds{1}^T v \geq 0.\qedhere
    \end{equation*}
\end{proof}
% \begin{proof}
%     It is clear that every entries of $B_+$ and $B_-$ are nonnegative. Note that $B_+$ has eivenvalues has a simple eigenvalue $\lambda+\beta$ and $\lambda$ with multiplicity $N-1$. If we define $\lambda_i(M)$ as an $i$-th largest eigenvalue of $M$, By Lidskii inequality,
%     \begin{align}
%         \lambda_N(B_-)&=\lambda_N(B_+-B)\\
%         &\geq \lambda_N(B_+) + \lambda_N(-B)\\
%         &=\lambda- \lambda_1(B)=0.
%     \end{align}
% \end{proof}
\paragraph{\textbf{Iteration scheme}}
With the suggested convex splitting, we have the following difference equations.

[DC algorithm]
\begin{align}
     \nabla H_{+}(\un) &=\nabla H_-(u^n),\notag\\ 
     \un_i(B^+[\un]^2)_i &=u^n_i, (a+B^-[u^n]^2)_i,\label{e: plv}
\end{align}

[Semi-implicit Euler's]
\begin{align}
    u^{n+1}_i-u^n_i&=-\tau(\nabla H_+(\un)-\nabla H_-(u^n))_i,\notag\\
    &=-\frac{\tau}{2}\un_i(B^+[\un]^2_i)+\frac{\tau}{2}u^n_i(a+B_-[u^n]^2)_i.\label{e: tlv} 
\end{align}
% \begin{align}
%     u^{n+1}_i-u^n_i&=-\tau(\nabla H_+(\un)-\nabla H_-(u^n))_i\\
%     &=\frac{\tau}{2}u^n_i(a-B[u^n]^2)_i -\frac{\tau}{2}\lambda([\un]^3-[u^n]^3)_i.\label{e: mis grad lv} 
% \end{align}
Recall that the semi-implicit Euler's method is the DCA with extra quadratic potential. However, for the semi-implict Euler's method, instead of calculating the spectral radius $\lambda$ in advance, we can change $\tau$ smaller and smaller until the algorithm works empirically.
The suggested convex splitting of $H$ from Lemma~\ref{l: bpm} reduces the complexity. Let $S:=\sum_i u_i^2$, then
\begin{align*}
    2\nabla H_+(u)_i&=u_i\sum_j B^+_{ij}u_j^2\\
    &=(\lambda+\beta)u_i^3 + \beta u_i\sum_{j\not =i}u_j^2\\
    &=(\lambda+\beta) u_i^3 + \beta (S-u_i^2)u_i\\
    &=\lambda u_i^3 +\beta Su_i.
\end{align*}
So we can simplify \eqref{e: plv} and \eqref{e: tlv} to the following $(N+1)$ many $1$-dimensional problems respectively,
\begin{align*}
 \text{DC algorithm}&\left\{\begin{array}{l}
     \lambda u_i^3+\beta Su_i=v_i\\
    \sum_i u_i^2 =S,
        \end{array}\right.\\
   \text{Semi-implicit Euler} &\left\{\begin{array}{l}
             \tau\lambda u_i^3+(2+\tau\beta S)u_i =v_i \\
            \sum_i u_i^2 =S.
    \end{array}\right.
\end{align*}

\begin{remark}[Lotka-Volterra system with cooperative interaction]
If $B$ does not have positive off diagonal element, i.e., $B$ is cooperative, we can choose $\beta=0$. In this case $S$ does not play any role so the calculation will be much simpler. In other words, \eqref{e: plv} and \eqref{e: tlv}, become $N$-many 1-dimensional cubic equations. 
\end{remark}
\paragraph{\textbf{Positivity of the sequence}}
Prior to proving convergence, we would like to point out that solutions of Lotka-Volterra stay in $\R^N_+$. In general, this provides another restriction on choosing time steps. Due to those specific splitting, however, the sequence stays in $\R^N_+$ independent of the choice of the time step.
\mnote{$a$ need to be positive}
\begin{prop}
$\{u^n\}_{n\in\N}$ from \eqref{e: plv} or ~\eqref{e: tlv} with initial $u_0\in\R^N_+$ stays in $\R^N_+.$
\end{prop}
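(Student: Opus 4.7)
The plan is to prove positivity by induction on $n$, handling the two schemes \eqref{e: plv} and \eqref{e: tlv} in parallel. In both cases the argument exploits the choice of convex splitting from Lemma~\ref{l: bpm}: since $B^+$ and $B^-$ both have nonnegative entries and $a\in\R^N_+$, the ``data'' on the right-hand side of the implicit step is strictly positive whenever the previous iterate is, which then forces $\un_i$ to share that sign via the left-hand side.

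More concretely, for the DC algorithm \eqref{e: plv}, I rewrite the $i$-th equation as
\[
\un_i\cdot(B^+[\un]^2)_i \;=\; u^n_i\cdot(a+B^-[u^n]^2)_i.
\]
Assuming inductively that $u^n\in\R^N_+$, the right-hand side is strictly positive, since $a_i>0$, $B^-$ has nonnegative entries, and $u^n_i>0$. On the left, $B^+=\lambda I+\beta\mathds{1}\mathds{1}^T$ with $\lambda>0$ and $\beta\ge 0$, so $(B^+[\un]^2)_i=\lambda(\un_i)^2+\beta\sum_j(\un_j)^2\ge 0$ for every real vector $\un$. Hence $\un_i$ can neither vanish (the left-hand side would be $0$) nor be negative (the left-hand side would be $\le 0$), so $\un_i>0$, completing the inductive step.

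For the semi-implicit Euler scheme \eqref{e: tlv}, collecting the implicit terms yields
\[
\un_i\Bigl(1+\tfrac{\tau}{2}(B^+[\un]^2)_i\Bigr) \;=\; u^n_i\Bigl(1+\tfrac{\tau}{2}(a+B^-[u^n]^2)_i\Bigr).
\]
The right-hand side is strictly positive by the inductive hypothesis (and positivity of $a$, $B^-$, $\tau$), while the coefficient of $\un_i$ on the left is at least $1>0$; thus $\un_i>0$. Note that no upper bound on $\tau$ is required, which is precisely the payoff of treating the convex part $H_+$ implicitly.

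The only remaining concern is to guarantee that $\un$ actually exists, so that the sign argument above applies to a genuine iterate. Here I invoke the scalar reduction recorded just before the proposition: each scheme decouples into the scalar cubic
\[
\lambda u_i^3+\beta S u_i=v_i,\qquad\text{respectively}\qquad \tau\lambda u_i^3+(2+\tau\beta S)u_i=v_i,
\]
coupled to the self-consistency constraint $S=\sum_i u_i^2$. For each fixed $S\ge 0$ the cubic is strictly monotone in $u$ with positive leading coefficient, so it defines a unique real root $u_i=u_i(v_i;S)$, strictly decreasing in $S$ when $v_i>0$; thus $F(S):=\sum_i u_i(v_i;S)^2$ is continuous and strictly decreasing on $[0,\infty)$ with $F(0)>0$ and $F(S)\to 0$, yielding a unique fixed point. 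This bookkeeping is the only mildly technical step; I expect the positivity itself to follow immediately from the sign structure of the splitting.
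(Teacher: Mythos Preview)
Your argument is correct and follows essentially the same route as the paper: both proofs isolate the $i$-th equation, observe that the right-hand side is strictly positive when $u^n\in\R^N_+$, and then use the sign structure of $\un_i(B^+[\un]^2)_i$ (with $B^+_{ii}>0$) to force $\un_i>0$. The paper is terser---it treats \eqref{e: tlv} as a special case of \eqref{e: plv} and simply invokes well-definedness of the iteration---whereas you treat the two schemes separately and supply the extra self-consistency fixed-point argument for existence; this is a harmless elaboration of the same idea.
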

\begin{proof}
Since \eqref{e: tlv} is a special case of \eqref{e: plv}, it suffices show the proof for~\eqref{e: plv}. Let's take a look at the $i$-th component in the explicit formula of~\eqref{e: plv},
\begin{align*}
   \un_i\sum_{j\not=i}B^+_{ij}[\un_j]^2+ B^+_{ii}[\un_i]^3&=u^n_i(a+B^-[u^n]^2)_i.
\end{align*}
Note that the strong convexity of $B^+$ assures $B^+_{ii}>0$.
Since the iteration is well defined, we can consider the left hand side as a function of $\un_i$ which is strictly increasing and vanishes at $0$. On the other hand, the right hand side is nonnegative because $u^n \ge 0$. Therefore $\un\ge 0.$
\end{proof}
Therefore by Theorem~\ref{t: nlmg conv} we can conclude the following.
\begin{theorem}
    Any bounded sequence $\{u^n\}_{n\in\N_0}$ of discrete-time Lotka-Volterra equation stays in $\R^N_+$ and if bounded converges.
\end{theorem}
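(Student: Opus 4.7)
The statement to prove has two parts: invariance of the positive orthant and convergence of any bounded sequence, for both iteration schemes \eqref{e: plv} and \eqref{e: tlv}.

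For the positivity part, the proof is essentially a restatement: the preceding Proposition already establishes that any sequence from either scheme, initialized in $\R^N_+$, remains in $\R^N_+$. (For a Lotka-Volterra iteration the standing assumption is that $u^0\in\R^N_+$, so I would simply cite that Proposition and be done with this part in a single sentence.)

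For the convergence part, the plan is to invoke Theorem~\ref{t: nlmg conv} (the DCA convergence theorem). I would verify its hypotheses one by one. First, $H$ is a quartic polynomial in $u$, hence real-analytic on $\R^N$, so it satisfies the \L ojasiewicz inequality at every critical point. Second, convexity of $H_+$ and $H_-$ was established in the lemma that uses the splitting $B=B^+-B^-$ from Lemma~\ref{l: bpm}. Third, since $\{u^n\}_{n\in\N_0}$ is assumed bounded, I can fix a ball $B(0,R)\supset\{u^n\}$; on this ball the Hessian $\nabla^2 H_+$ is bounded (being a continuous matrix-valued polynomial), which yields an $L_R$-Lipschitz gradient for $H_+$ on $B(0,R)$. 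Since all iterates of both schemes lie in $B(0,R)$, the descent and rate estimates from the proof of Theorem~\ref{t: nlmg conv} go through with this local $L_R$.

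The main obstacle is the strong convexity requirement $\kappa+\mu>0$. For the semi-implicit Euler scheme \eqref{e: tlv}, this is handled cleanly: as the example following Theorem~\ref{t: nlmg conv} indicates, \eqref{e: tlv} is exactly the DCA applied to the enlarged splitting $\tilde H_\pm(u):=\frac{1}{2\tau}\|u\|^2+H_\pm(u)$, each of which is $(1/\tau)$-strongly convex, so $\kappa+\mu=2/\tau>0$ and Theorem~\ref{t: nlmg conv} applies directly. For the pure DCA~\eqref{e: plv} the Hessians $\nabla^2 H_\pm=\diag(B^\pm[u]^2)+2\diag(u)B^\pm\diag(u)$ degenerate as $u_i\to 0$, so global strong convexity fails. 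I would handle this by noting that every iterate lies in $\R^N_+$ (from the first part), and by the bounded-trajectory/compactness argument one reduces to the compact set $K:=\overline{B(0,R)}\cap\R^N_+$; on any compact subset of $K$ bounded away from the coordinate hyperplanes $\{u_i=0\}$ the Hessians are uniformly positive definite, yielding local $\kappa,\mu>0$. The remaining case—trajectories that approach a face $\{u_i=0\}$—can be handled either by noting that if some coordinate converges to $0$ the iteration for that coordinate converges trivially and one may pass to the reduced Lotka-Volterra system on the remaining coordinates, or, more cleanly, by verifying the strong descent condition and rate condition of Theorem~\ref{t: absil} directly from \eqref{e: plv} using the explicit one-dimensional reduction $\lambda u_i^3+\beta S u_i=v_i$ derived above, which gives the required angle-type estimate without appealing to strong convexity.

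Finally, once all hypotheses of Theorem~\ref{t: nlmg conv} (or Theorem~\ref{t: absil}) are verified on the relevant bounded invariant set, the conclusion is that $u^n\to\tilde u$ for some equilibrium $\tilde u\in\overline{\R^N_+}$, with the quantitative tail and energy-decay rates provided by Theorem~\ref{t: nlmg conv}. I would close by remarking that since the scheme is independent of $\tau$ for \eqref{e: plv} (the algorithm is purely implicit), the convergence is likewise independent of step-size, which is consistent with the comment in the text just before the theorem.
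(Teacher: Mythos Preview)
Your approach matches the paper's: the paper's entire proof is the single line ``Therefore by Theorem~\ref{t: nlmg conv} we can conclude the following,'' relying on the preceding Proposition for positivity and on the DCA convergence theorem for convergence. You are in fact more careful than the paper, correctly flagging that $H_\pm$ fail to be globally strongly convex or globally $L$-smooth (their Hessians vanish on the coordinate hyperplanes and grow quadratically), so that Theorem~\ref{t: nlmg conv} does not apply verbatim to~\eqref{e: plv}; the paper does not address this, and your localization to a bounded set and your handling of~\eqref{e: tlv} via the $\tfrac{1}{2\tau}\|\cdot\|^2$ augmentation are reasonable additions rather than a different route.
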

\section{Time-discretization of regularized Lotka-Volterra systems}
\paragraph{\textbf{Preconditioning and Riemannian metric}}
Let $g$ be a strongly convex function on $\R^N$ and consider the Riemannian metric induced by $g$, $\langle v,w\rangle_{g(u)}:=v^T\nabla^2 g(u) w$.
\begin{align*}
    DH(u)(v)&=\nabla H(u)^T v\\
    &=(\nabla^2g(u)^{-1}\nabla H(u))^T\nabla^2 g(u) v\\
    &=:\langle \text{grad} H(u), v\rangle_{g(u)}.
\end{align*}
So the gradient flow of $H$ with respect to the $g$-metric is as follows,
\begin{align*}
    u'(t)\;\;&=-\text{grad}_g H(u)\\
    &=-\nabla^2 g(u)^{-1}\nabla H(u).
\end{align*}
Or equivalently,
\begin{equation*}
	 \frac{d}{dt}\nabla g(u(t)) =-\nabla H(u),
\end{equation*}
in which $u$ is following the gradient of $H$ in the scope of $\nabla g$. The analogous time-discretization method is called the mirror descent or the preconditioning method.
\mnote{lens??}
\begin{equation*}
    \nabla g(\un)-\nabla g(u^n)=-\tau \nabla H(u^n).
\end{equation*}
Note that we can use this idea to Euler's semi-implicit method,
\begin{equation}
    \nabla g(\un)-\nabla g(u^n) = -\tau(\nabla\Tilde{H}_+(\un)-\nabla \Tilde{H}_-(u^n)),
\end{equation}
which is merely adding additional convexity, $g$, to the convex splitting $(\Tilde{H}_+,\Tilde{H}_-)$ of $H$,
\begin{equation*}
    H(u)=\big(\Tilde{H}_+(u)+\frac{1}{\tau}g(u)\big) -\big(\Tilde{H}_-(u)+\frac{1}{\tau}g(u)\big).
\end{equation*}

As we discussed, the Regularized Lotka-Volterra equation can be seen as a convex preconditioning. So we suggest the following iteration schemes.

\begin{equation}\label{e: reg lv eu}
    \nabla g(\un)-\nabla g(u^n)=-\gamma \nabla H(u^n),
\end{equation}

\begin{equation}\label{e: reg lv semi}
     \nabla g(\un)-\nabla g(u^n)=-\tau (\nabla H_+(\un)-\nabla H_-(u^n)),
\end{equation}
where 
$$g(u)= \frac{\mu}{2}\mu\Vert u\Vert^2+ \frac{\nu}{12}\sum_i u_i^4.$$

Note that both schemes fall into the category of DC algorithms.
The first scheme is preconditioned Euler's method. So we need to choose $\tau>0$ small enough to make $g-\tau H$ convex. On the other hand, the second scheme does not have a restriction on $\tau$.

To conclude, by Theorem~\ref{t: nlmg conv} we have the following.
\begin{theorem}
    Assume that $\tau>0$ and that $\gamma>0$ is sufficiently small so that $\frac{1}{\gamma}g-H$ is convex. Then the sequence $\{u^n\}_{n\in\N_0}$ generated from the time-discretized regularized Lotka-Volterra systems~\eqref{e: reg lv eu}, or \eqref{e: reg lv semi} converges.
\end{theorem}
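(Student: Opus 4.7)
The plan is to recognize both schemes as instances of DCA with suitable convex splittings of $H$, after which Theorem~\ref{t: nlmg conv} supplies convergence of every precompact trajectory. Recall that $g(u)=\tfrac{\mu}{2}\|u\|^2+\tfrac{\nu}{12}\sum_i u_i^4$ is $\mu$-strongly convex with Hessian $\nabla^2 g(u)=\mu I+\nu\,\diag(u_i^2)$.

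For scheme~\eqref{e: reg lv eu} the natural splitting is $H=\tfrac{1}{\gamma}g-\bigl(\tfrac{1}{\gamma}g-H\bigr)$, i.e.\ $H_+:=\tfrac{1}{\gamma}g$ and $H_-:=\tfrac{1}{\gamma}g-H$. The hypothesis on $\gamma$ is exactly the convexity of $H_-$, while $H_+$ is $\tfrac{\mu}{\gamma}$-strongly convex thanks to the quadratic term in $g$. Unwinding the DCA identity $\nabla H_+(\un)=\nabla H_-(u^n)$ gives $\nabla g(\un)-\nabla g(u^n)=-\gamma\nabla H(u^n)$, which is precisely \eqref{e: reg lv eu}. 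For scheme~\eqref{e: reg lv semi} I start from a fixed convex splitting $H=H_+-H_-$ of $H$ itself (for instance the splitting in Lemma~\ref{l: bpm} when $H$ is the quadratic Lotka-Volterra energy, or any DC decomposition in general) and stabilize it by $\widetilde H_\pm:=H_\pm+\tfrac{1}{\tau}g$. Then $H=\widetilde H_+-\widetilde H_-$ remains a convex splitting, both $\widetilde H_\pm$ inherit strong convexity from $g$, and $\nabla \widetilde H_+(\un)=\nabla \widetilde H_-(u^n)$ rearranges to $\nabla g(\un)-\nabla g(u^n)=-\tau\bigl(\nabla H_+(\un)-\nabla H_-(u^n)\bigr)$, matching \eqref{e: reg lv semi}.

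To invoke Theorem~\ref{t: nlmg conv} I check its three hypotheses on each splitting: (i) $H$ satisfies the \L ojasiewicz inequality, which is built into the setting and is automatic when $H$ is analytic (as in the polynomial Lotka-Volterra case); (ii) strong convexity of the $+$-part, supplied by the $\tfrac{\mu}{2}\|u\|^2$ piece of $g$; and (iii) $\kappa+\mu>0$, again from that same piece. The remaining ingredient, $L$-smoothness of the $+$-part, is the Lipschitz constant of $\nabla g$, which is finite on any bounded set. Once these are in place, Theorem~\ref{t: nlmg conv} yields convergence of every bounded iterate sequence together with the explicit rates~\eqref{e: h conv rate}.

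The one genuinely technical point is that $\nabla g(u)=\mu u+\tfrac{\nu}{3}[u]^3$ is not globally Lipschitz because of the cubic term, so $L$-smoothness of $H_+$ (resp.\ $\widetilde H_+$) is only local. This is harmless: the monotonicity of $H$ along the DCA iterates, which is built into the convex-splitting construction via Lemma~\ref{l: conv ineq}, confines a bounded trajectory to a precompact set $K$, on which one takes $L$ to be the Lipschitz constant of $\nabla g$ on $K$. With this local $L$, the proof of Theorem~\ref{t: nlmg conv}, which uses smoothness only along the sequence, applies verbatim. The only nontrivial content beyond this bookkeeping is the verification that the two stated schemes are indeed the DCA iterations associated to the splittings above, which I carry out by the rearrangements displayed in the previous paragraph.
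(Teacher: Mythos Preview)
Your approach is exactly the paper's: the text immediately preceding the theorem already explains that both schemes are DCA instances for the splittings $H=\tfrac{1}{\gamma}g-(\tfrac{1}{\gamma}g-H)$ and $H=(H_++\tfrac{1}{\tau}g)-(H_-+\tfrac{1}{\tau}g)$, and the paper's entire proof is the single line ``by Theorem~\ref{t: nlmg conv}.'' You have simply made the splittings and the hypothesis checks explicit, and your remark on local (rather than global) $L$-smoothness of $\nabla g$ is a detail the paper leaves implicit; note that, as in Theorem~\ref{t: nlmg conv}, boundedness of the iterate sequence is an implicit standing assumption here.
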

% [dual dc]
% Assume that $g$ is \textit{strongly convex}, i.e., there exists $\kappa>0$ such that $u\mapsto g(u)-\frac{\kappa}{2}\Vert u\Vert^2$ is convex. With $g$, we can consider preconditioning semi-implicit Euler's method, with a convex splitting $H=\Tilde{H}_+-\Tilde{H}_-,$
% \begin{equation}\label{e: g precon}
%     \nabla g(\un)-\nabla g(u^n) = -\tau(\nabla\Tilde{H}_+(\un)-\nabla \Tilde{H}_-(u^n)).
% \end{equation}
% Note that this can be regarded as a time-discrete gradient descent of $H$ with respect to the Riemannian metric induced by $g$, $\langle v,w\rangle_{g(u)}:=v^T\nabla^2 g(u) w$.
% \begin{align*}
%     DH(u)(v)&=\nabla H(u)^T v\\
%     &=(\nabla^2g(u)^{-1}\nabla H(u))^T\nabla^2 g(u) v\\
%     &=:\langle \text{grad} H(u), v\rangle_{g(u)}.
% \end{align*}
% As $\tau\to 0$
% \begin{align*}
%     \frac{d}{dt}\nabla g(u(t)) &=-\nabla H(u),\\
%     u'(t)\;\;&=-\nabla^2 g(u)^{-1}\nabla H(u)\\
%     &=-\text{grad}H(u).
% \end{align*}
\part{Concentration-dispersion dynamics}
\chapter{Gradient structure of concentration-dispersion models}
Despite its gaining in importance in nonlinear analysis, exact solutions of solitary waves are challenging to compute. In 1976, Petviashvili proposed a powerful numerical method for computing solitary
wave solutions without analysis or proof. Due to the efficacy of the algorithm, Petviashili’s method was applied to numerous nonlinear problems in modern mathematical physics (\cite{petviashvili1976equation}, \cite{pelinovsky2004convergence}, \cite{lakoba2007generalized}). 

We study a new type of nonlinear dispersive integro-differential equations, inspired by the one dimensional nonlinear wave equation with power nonlinearity. Using the gradient structure and \L ojasiewicz convergence theorem, we seek to prove global convergence of the solutions to the equilibrium which can be considered as a solitary wave.

\paragraph{\textbf{Motivation}}
% A few other problems.Kdv Sine-Gorden NLS Toda-lattice[Introduction]
% [For example petviashvili]
Consider a nonlinear scalar wave equation with power nonlinearity in one dimension:
     \begin{equation*}
        u_t-(\mathcal{L}u)_x+pu^{p-1}u_x=0
     \end{equation*}
where $u(t,x):\R_+\times\R\to\R$, $p>1$, and $\mathcal{L}$ is a linear self-adjoint nonnegative pseudo-differential operator of order $m$, in $x$ with constant coefficients in $L^2(\R)$.
    The ansatz $u(t,x)=\Phi(x-ct)$
    \[
    c\Phi + \mathcal{L}\Phi =\Phi^p
    \]
    % \[
    % \phi+(c-1)^{-1}\mathcal{L}\phi=\phi^2
    % \]
    and Fourier transform,
    \[
    (c+\widehat{\mathcal{L}}(k))\widehat{\Phi}(k) = \widehat{\Phi^p}(k),
    \]
    inspires the following fixed point formula for a solitary wave profile:
    \[
    \Phi=\int_\R K(x-y)\Phi^p(y)dy,
    \]
where the Fourier transform of $K$ satisfies the following,
\begin{equation*}
    \widehat{K}(k)=\frac{1}{c+\widehat{\mathcal{L}}(k)}
\end{equation*}
    %  The solitary wave was first observed by Russell in 1834 as the propagation of a wave that resembles a bell. Since we aim the dynamical approximation of the solitary wave profile, we investigate the solution of \eqref{e: solwav} that initiate with ``bell-shaped" wave. [We relax this assumption]
    % $u(x,t)\in X:=L^2(\R)\cap L\p(\R)\cap H^{m/2}(\R)$
\paragraph{\textbf{Petviashvili iteration}}
In order to calculate nontrivial solitary wave solutions in a space of two dimensions, Petviashvili~\cite{petviashvili1976equation} devised the fixed point algorithm with a scaling factor $M_n$ with power $\gamma>0$, when it is applied to a space of one dimension, as follows
\begin{equation}\label{e: petit}
\left\{
    \begin{aligned}
         \widehat{u}_{n+1}(k)&=M_n^\gamma\widehat{K}(k){\widehat{u^p_n}}(k)\\
    M_n(\widehat{u}_n)&=\frac{\int_\R [c+\widehat{\mathcal{L}}(k)][\widehat{u}_n(k)]^2dk}{\int_\R \widehat{u_n}(k)\widehat{u_n^p}(k)dk}.
    \end{aligned}
\right.
\end{equation}
It is known that without the scaling factor $M_n$, or when $\gamma=0$, the iteration~\eqref{e: petit} usually diverges.
When $p=2$ Petviashvili empirically found that the iteration~\eqref{e: petit} method converges when $1<\gamma<3$, and at $\gamma=2$ the fastest rate of convergence occurs. Indeed, Pelinovsky and Stepanyants in 2004~\cite{pelinovsky2004convergence} proved that, under spectral stability assumptions on the linearized operator at an unknown equilibrium (Assumption 2.1~\cite{pelinovsky2004convergence}), given $p>1$, the iteration~\eqref{e: petit} converges when $1<\gamma<\frac{p+1}{p-1}$, and at $\gamma=\frac{p}{p-1}$ the solution converges at the fastest rate.
\\
\paragraph{\textbf{Concentration-dispersion equation}}
For $L\in(0,\infty]$, let $\mathds{T}_L:=\R/L= [-\frac{L}{2},\frac{L}{2}]$ denote the torus of length $L$.
 We consider a time continuous evolution of $u(t,x):\R_+\times\mathds{T}_L\to\R$, which is analogous to Petviashvili iteration. We define \textit{concentration}-\textit{dispersion} \textit{equation} as follows,
    \begin{equation}\label{e: solwav}
    \left\{
    \begin{aligned}
        \;\;\frac{d}{dt}u(t,x)&=K\ast u^p  - c(t) u,\\
        c(t)&=\int u^p K\ast u^p dx.
    \end{aligned}
     \right.
    % u'=K\ast u^p - c(t)u,
    \end{equation}
In what follows, we assume that $K(x)>0$ for all $x\in \mathds{T}_L$ and $K(x)\in W^{1,1}\cap W^{1,p+1}(\mathds{T}_L)$.
% Overall it suffices to assume that $K(x)\in W^{1,1}\cap W^{1,p+1}(\mathds{T}_L)$, but we also state the minimum regularity assumptions on each result below, just for the information. 

Power nonlinearity creates concentration effect and convolution gives dispersion effect. The equation is designed for $u$ to chase after a scalar multiple of $K\ast u^p$.
% where $c(t)$ is a time dependent coefficient that converges as $t\to\infty$.
% % where $c(t):=\frac{\int u^pK\ast u^p dx}{\int u\p  dx}$.
% Let's consider $c(t):=\int u^pK\ast u^p dx$ so that
% \begin{align}
%     u'&=K\ast u^p  - (\int u^p K\ast u^p dx) u.
%     % \frac{1}{p+1}(\int u\p dx)' &= \int u^p K\ast u^p dx \big(1-\int u\p dx\big).
% \end{align}
    % So if $u_0(t)>0$ then $u(x,t)$ converges to the manifold $\mathcal{M}:=\{u>0 : \int u\p dx=1\}$.
Just for the comparison between~\eqref{e: petit} and \eqref{e: solwav}, the scaling factors can be related as follows,
\begin{align*}
    M_n(\widehat{u}_n)&=\frac{\int_\R [c+\widehat{\mathcal{L}}(k)][\widehat{u}_n(k)]^2dk}{\int_\R \widehat{u_n}(k)\widehat{u_n^p}(k)dk}\\
    &=M_{n-1}^{2\gamma}\frac{\int_\R \widehat{u_{n-1}^p}(k)\widehat{K}(k)\widehat{u_{n-1}^p}(k)}{\int_\R \widehat{u_n}(k)\widehat{u_n^p}(k)dk}\\
    &=M_{n-1}^{2\gamma}\frac{\int_\R u^p_{n-1} K*u^p_{n-1}dx}{\int_\R u\p_n dx}
\end{align*}

Prior to discussing the gradient structure of the concentration-dispersion equation~\eqref{e: solwav}, we study the asymptotic properties of a few important functionals.
\paragraph{\textbf{Important functionals}}
Here we aim to study a dynamical way to calculate the solution, asymptotic behavior, and stability of non-local and nonlinear differential equations.
We introduce two functionals, $E$ and $F$, which provides different gradient structure to describe concentration-dispersion dynamics~\eqref{e: solwav}. Let
\begin{equation*}
    \begin{aligned}
        E(u)&:= \frac{1}{2p}\int u^p K\ast u^p dx,\\
        F(u)&:= e^{-\frac{2p}{p+1}\int u\p dx}E(u).
    \end{aligned}
\end{equation*}
Note that $c(t)=2pE(u)$.
\paragraph{\textbf{Gradient structure on the sphere in $L\p$}}
    Let 
    $$\mathcal{M}:=\{u>0 : \int u(x)\p  dx=1\}\subset H^1(\mathds{T}_L) =:\mathcal{V}.$$
    $\mathcal{M}$ is an invariant manifold of \eqref{e: solwav}, and, moreover, $\mathcal{M}$ attracts every solution: for any $L\in(0,\infty]$,
    \[
    \frac{1}{p+1}(\int u\p dx)'= \int u^p K\ast u^p dx \big(1-\int u\p dx\big).
    \]
    We will discuss the detailed proof later in Proposition~\ref{p: conv func}.
    In this case, \eqref{e: solwav} is a gradient flow of $E$ with respect to the Riemannian structure given as follows,
\[
    \langle v, w \rangle_u := \int vwu\pp dx, \quad \forall u\in \mathcal{M}, \quad v,w\in T_u\mathcal{M},
\]
where the tangent manifold at $u\in\mathcal{M}$ is given as follows,
$$T_u\mathcal{M}:=\{v\in \mathcal{V} : \int vu^p dx=0\}.$$ 

It is interesting to see that with respect to the given metric $T_u\mathcal{M}$ is ``orthogonal" to $u$, i.e., $\langle u,v\rangle_u=0$, as if $\mathcal{M}$ is spherical. It is easy to check that \eqref{e: solwav} on $\mathcal{M}$ is equivalent to the Gram-Schmidt projection from $X$ onto the manifold, i.e.,
\begin{equation}
    u'=K*u^p-\frac{\langle K*u^p,u\rangle_u}{\langle u,u\rangle_u}u.
\end{equation}
Indeed, \eqref{e: solwav} is a projected gradient flow onto $\mathcal{M}$. If we perturb $u$ with $v\in T_u\mathcal{M}$
\begin{align}
    % E(u) &= \frac{1}{2p}\int u^pK\ast u^pdx\\
    DE(u)(v) &= \int u\pp (K\ast u^p) vdx\label{e: nabE} \\
    &=: \int \nabla E(u)vdx\notag\\
    &=\langle \nabla E(u),v\rangle_{L^2}\notag\\
    &=\int (K\ast u^p -c(t)u)vu\pp dx\notag\\
    &=\langle \text{grad}E(u), v\rangle_u\notag
\end{align}
so that
\begin{align*}
    u'&=\text{grad}E(u),\\
    \frac{d}{dt}E(u(t))&=\Vert \text{grad}E(u)\Vert^2_u.
\end{align*}
\\
\paragraph{\textbf{$L^2$-gradient structure}}
For a fixed $L\in(0,\infty]$, let $\mathcal{H}:=L^2(\mathds{T}_L)$ and  $\mathcal{V}:=H^1(\mathds{T}_L)$, so that
\begin{equation*}
    \mathcal{V}\subset\mathcal{H}\subset\mathcal{V}'.
\end{equation*}
As long as $E$ is well defined, we can extend the gradient structure on $\mathcal{M}$ to $L^2$ by the proper scaling of the energy $E$.
    % Indeed, \eqref{e: solwav} is a gradient-like system with the Lyapunov functional $F$ with respect to $L^2$ structure.
Let the scaling factor be
\begin{equation*}
    \alpha(t):=e^{\frac{2p}{p+1}\int u\p dx},
\end{equation*}
so that
\begin{equation*}
    F(u):= E(u)/\alpha(t).
\end{equation*}
Then we can check the $L^2$ gradient-like structure given by $F$,
\begin{align*}
     \alpha(t)DF(u)(v)&= DE(u)(v)-E(u)\frac{2p}{p+1}D(\int u\p dx)(v)\\
    &=\int u\pp K*u^p vdx-2pE(u)\int u^pv dx\\
    &=\int u\pp(K*u^p - 2pE(u)u)vdx.\\
    &=\int u\pp(K*u^p - c(t)u)vdx.
\end{align*}
    % \begin{equation}
    % u'=K\ast u^p  - \frac{\int u^p K\ast u^p dx}{\int u\p dx}u
    % \end{equation}
\begin{align*}
  DF(u)(v) &= e^{-\frac{2p}{p+1}\int u\p dx}\int u\pp(K*u^p - c(t)u)vdx,\\
  &=:\langle \nabla F(u), v\rangle_{L^2}.
\end{align*}
Therefore, \eqref{e: solwav} takes the form
\begin{equation*}
	u'= e^{\frac{2p}{p+1}\int u\p dx}u^{1-p}\nabla F(u).
\end{equation*}

We finish the chapter by proving some convergence properties of the functionals. It is easy to see that $u(t,\cdot)$ of~\eqref{e: solwav} stays positive when $u(0,\cdot)>0$. The concrete proof along with well-posedness can be found in Theorem~\ref{t: wpbs}.
\begin{prop}\label{p: conv func}
    Let $u(x,t)\in L\p(\mathds{T}_L)$ be a solution of \eqref{e: solwav} such that $u(x,0)>0$ for all $x\in\mathds{T}_L$, then the following functionals have convergent properties as $t\to\infty:$
    \begin{enumerate}
        \item $F(u)= E(u)/\alpha(t)$ is nondecreasing,
        \item $\Vert u\Vert_{L\p}$ converges monotonically to 1,
        \item $E(u)= \frac{1}{2p}\int u^p K\ast u^p dx$ converges to a positive constant.
    \end{enumerate}
    Additionally, if $\Vert u(0)\Vert_{L\p}\leq 1$ then $E(u)$ is nondecreasing as well.
\end{prop}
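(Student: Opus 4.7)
The strategy is to derive all three claims from two direct chain-rule computations together with the solution of a linear-in-$(1-y)$ ODE, where $y(t):=\int u\p\,dx = \Vert u\Vert_{L\p}^{p+1}$. I would use throughout that positivity of $u(t,\cdot)$ is preserved by the flow~\eqref{e: solwav}, which follows from the variation-of-constants formula applied to $u' + c(t)u = K\ast u^p \ge 0$ with $u_0 > 0$, together with positivity of $K$.

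For (1), I would differentiate $F(u(t))$ using the $L^2$-gradient formula $\nabla F(u) = e^{-\frac{2p}{p+1} y}\,u\pp(K\ast u^p - c(t)u)$ derived just above together with $u' = K\ast u^p - c(t)u$, obtaining the dissipation identity
$$
\frac{d}{dt} F(u) = e^{-\frac{2p}{p+1}y(t)}\int u\pp\bigl(K\ast u^p - c(t)u\bigr)^2 dx \ge 0.
$$
For (2), I would compute $y'(t) = (p+1)\int u^p u'\,dx = 2p(p+1)E(u)(1-y(t))$, using $c(t) = 2pE(u)$. The key quantitative input is the uniform lower bound $E(u(t)) \ge F(u_0) > 0$: indeed $F$ is nondecreasing by (1) and $\alpha(t) = e^{\frac{2p}{p+1} y(t)}\ge 1$, so $E(u) = \alpha F \ge F(u_0)$, which is strictly positive because $u_0, K > 0$. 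This forces $\operatorname{sgn}(y') = \operatorname{sgn}(1-y)$, so $y$ is monotone and cannot cross $1$; integrating $(1-y)' = -2p(p+1)E(u)(1-y)$ yields
$$
1 - y(t) = \bigl(1 - y(0)\bigr)\exp\!\Bigl(-2p(p+1)\int_0^t E(u(s))\,ds\Bigr),
$$
and the lower bound $E\ge F(u_0)$ forces $\int_0^\infty E\,ds = \infty$, so $y(t)\to 1$ monotonically, which is equivalent to $\Vert u(t)\Vert_{L\p}\to 1$ monotonically.

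For (3), since $y(t)\to 1$, $\alpha(t)\to e^{2p/(p+1)}$. The nonnegative nondecreasing function $F(u(t))$ converges provided it is bounded above; granting this, $E(u) = \alpha F$ converges, with a positive limit since $E\ge F(u_0)>0$. For the additional claim, if $y(0)\le 1$ then by (2) $y$ is nondecreasing, so $\alpha$ is nondecreasing; combined with the monotonicity of $F$ and the positivity of both factors, $E = \alpha F$ is the product of two nonnegative nondecreasing functions and is itself nondecreasing.

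The main technical subtlety is the upper bound on $F$ (equivalently on $E$) used in (3). On the compact torus ($L<\infty$) this follows routinely from Young's convolution inequality and H\"older, using $K\in L^\infty(\mathds{T}_L)$ (a consequence of $K\in W^{1,1}(\mathds{T}_L)$ in one dimension) together with the $L\p$-boundedness of $u(t,\cdot)$ (immediate from the monotonicity proved in (2)), which yields an estimate of the form $E(u)\le C\, y^{2p/(p+1)}$. On the whole line ($L=\infty$) this elementary argument must be replaced by the compactness estimates to be developed in Chapter~6, but otherwise the proof is entirely driven by the two chain-rule identities and the resulting linear ODE for $1-y$.
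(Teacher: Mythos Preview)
Your proof is correct and follows essentially the same route as the paper: the dissipation identity for $F$, the linear ODE $y'=(p+1)c(t)(1-y)$ for $y=\int u\p\,dx$, the lower bound $E\ge F(u_0)$ from $\alpha\ge 1$ and the monotonicity of $F$, and the Young--H\"older upper bound on $E$. One small correction: the paper bounds $2pE(u)\le \Vert K\Vert_{L^{(p+1)/2}}\Vert u\Vert_{L\p}^{2p}$ via Young's convolution inequality with $K\in L^{(p+1)/2}$ (available by interpolation from $K\in W^{1,1}\cap W^{1,p+1}$), and this estimate works uniformly for $L\le\infty$---so your forward reference to Chapter~6 for the whole-line case is unnecessary.
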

\begin{proof}
    $F$ is nondecreasing because the time derivative of $F$ is nonnegative, i.e.,
    \begin{align*}
        F(u(t))'e^{\frac{2p}{p+1}\int u\p dx}&=E(u)'-E(u)\frac{2p}{p+1}(\int u\p dx)'\\
        &=\int u\pp K*u^p u'dx-2pE(u)\int u^pu' dx\\
        &=\int u\pp(K*u^p - 2pE(u)u)u'dx.\\
        &=\int u\pp(K*u^p - c(t)u)^2dx \ge 0.
    \end{align*}
    Since
    \begin{equation*}
        F(u)=E(u)/\alpha(t)\le E(u)=2pc(t),
    \end{equation*}
    we have $\inf_{t\ge0}c(t)>0$. This yields the monotonic convergence of $L\p$ norm together with taking time-differentiation,
    \begin{equation}\label{e: ulpcon}
        \frac{1}{p+1}(\int u\p dx)'= c(t) \big(1-\int u\p dx\big).
    \end{equation}
    Therefore, 
    \begin{equation*}
        \lim_{t\to\infty} \Vert u(t)\Vert_{L\p}\to 1.
    \end{equation*}
     Additionally, if $\Vert u(0)\Vert_{L\p}\leq 1$, $E$ is nondecreasing as well because $\Vert u(0)\Vert_{L\p}$ is nondecreasing.
    
    Finally, we show that $E(u)$ is bounded above. From \eqref{e: ulpcon} we have
    $$\Vert u\Vert_{L\p}\leq 1+\Vert u_0\Vert_{L\p}.$$
    So we use H\"older's inequality and Young's convolution inequality to bound $E$,
     \begin{align}
        2pE(u) &= \int u^p K\ast u^p dx\notag\\
            &\leq \Vert u\Vert^p_{L\p} \Vert K*u^p\Vert_{L\p}\notag\\
            &\leq \Vert K\Vert_{L^\frac{p+1}{2}} \Vert u\Vert^{2p}_{L\p}\notag\\
            &\leq(1+\Vert u_0\Vert_{L\p})^{2p}\Vert K\Vert_{L^\frac{p+1}{2}}<\infty\label{e: E uBound}.
    \end{align}
This implies $F$ is bounded. Therefore $F>0$ converges monotonically to a positive constant, and thus $E=\alpha(t) F>0$ also converges to a positive constant.
\end{proof}
% \begin{remark}
%     Note that $2pE(u)$ is precisely a time dependent coefficient $c(t)$ in~\eqref{e: solwav}. Later we will make use of the quantitative bound that we established on \eqref{e: E uBound}.
% \end{remark}
 
\chapter{Well-posedness, compactness and nontrivial wave profiles}
In this chapter, we explore properties of the concentration-dispersion equation~\eqref{e: solwav} on the torus $\mathds{T}_L=\R/L=[-\frac{L}{2},\frac{L}{2}]$ of length $L\in (0,\infty]$. 

Recall that the equation~\eqref{e: solwav} is motivated by solitary waves and the solution $u$ evolves in a way to balance the concentration effect and the dispersion effect. We use Picard iteration to show that the solution is well-posed in $\mathcal{V}:=H^1(\mathds{T}_L)$. Simultaneously, we show that if the initial data $u_0$ is ``bell-shaped", then the solution stays bell-shaped along the evolution.

We also show that the solution is bounded above, and below away from $0$. Directly, by Arzela-Ascoli’s theorem, the solution is precompact in the space of continuous functions.
Actually, we can say much more. Recall that dispersive effect on the semi-linear heat equations provides the smoothing effect, which makes the solution smoother than it initially was, so we have compactness (Section~\ref{s: slheat} of Chapter~\ref{ch: lv cont}). 
Even if the convolution operator cannot make the solution smoother than the initial data, but we can show that the solution is a combination of the scalar multiple of the initial data and the smoother part from the dispersion effect by $K\in W^{1,1}$. So the solution is precompact in $H^1(\mathds{T}_L)$.

Note that we aim to approximate nontrivial solitary wave solutions. We show the constant equilibrium is unstable and, moreover, use the gradient structure to prove that nontrivial equilibria exist.
The convergence result, however, cannot be proven using the \L ojasiewicz framework. We will discuss the issue at the end of this chapter.

\section{Well-posedness of solutions and consistency of bell-shape}
We use the Banach fixed point theorem to show well-posedness of the concentration-dispersion equation. Later in the section, we continue the argument to prove that the solution with bell-shaped initial value stays bell-shaped.
At a first glance at \eqref{e: solwav}, it is not immediate to believe this to be true, since $u'$ is a subtraction of two bell-shaped function which, in general, might not be bell-shaped. However, using simple ODE trick we show that the bell-shape of $u$ persists

\begin{theorem}[Well-posedness]\label{t: wpbs}\mnote{W: $L\leq\infty$}
    % Suppose
    % \begin{equation*}
    % \left\{
    %     \begin{aligned}
    %         &K\in W^{1,\frac{2}{3-p}}(\mathds{T}_L) &\quad \text{if } &1<p<2\\
    %         &K\in L^2(\mathds{T}_L) &\quad \text{if } &p\ge2.
    %     \end{aligned}
    % \right.
    % \end{equation*}
    For any initial $u_0\in \mathcal{V}$, there exists unique solution $u(t,x)\in C^1(\R_+;\mathcal{V})$ of~\eqref{e: solwav}. Additionally, if $u_0(\cdot)>0$ then $u(t,\cdot)>0$ for all $t\ge0$.
\end{theorem}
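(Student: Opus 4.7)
The plan is to set up a Picard iteration in the space $X_T := C([0,T];\mathcal{V})$ equipped with $\|u\|_{X_T} := \sup_{t\in[0,T]} \|u(t)\|_{\mathcal{V}}$, and apply the Banach fixed point theorem to the Duhamel formulation of \eqref{e: solwav},
\begin{equation*}
    u(t) = e^{-C(t)} u_0 + \int_0^t e^{-(C(t)-C(s))} (K\ast u^p)(s)\,ds, \qquad C(t):=\int_0^t c(\sigma)\,d\sigma.
\end{equation*}
The main analytic input is that the right hand side $\Phi(u) := K\ast u^p - c(u) u$ is locally Lipschitz from $\mathcal{V}$ to $\mathcal{V}$. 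Using the one dimensional Sobolev embedding $H^1(\mathds{T}_L)\hookrightarrow L^\infty$, the Nemytskii map $u\mapsto u^p$ is locally Lipschitz on $\mathcal{V}$ with bound controlled by $\|u\|_\infty^{p-1}$. Young's convolution inequality, together with $K\in W^{1,1}\cap W^{1,p+1}$, then gives $\|K\ast u^p\|_{H^1}\le \|K\|_{W^{1,1}}\|u^p\|_\infty$ or $\|K\|_{W^{1,p+1}}\|u^p\|_{L^{(p+1)/p}}$, so $u\mapsto K\ast u^p$ is locally Lipschitz $\mathcal{V}\to\mathcal{V}$. The scalar coefficient $c(u)=\int u^p K\ast u^p\,dx$ is polynomial in $u$ and is similarly locally Lipschitz. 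A standard contraction argument on a small ball in $X_T$ then yields a unique local mild solution, and bootstrapping through the equation $u_t=\Phi(u)$ upgrades $u\in C^1(\R_+;\mathcal{V})$.

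For global extension, I would combine the a priori bound $\|u(t)\|_{L^{p+1}}\le 1+\|u_0\|_{L^{p+1}}$ from \eqref{e: ulpcon} with an $H^1$ estimate obtained by differentiating \eqref{e: solwav} and using Young's inequality in the form $\|\partial_x(K\ast u^p)\|_{L^2}=\|K_x\ast u^p\|_{L^2}\le \|K_x\|_{L^1}\|u\|_\infty^p$. Since in one dimension $\|u\|_\infty\le C\|u\|_{H^1}^{1/2}\|u\|_{L^{p+1}}^{1/2}+C\|u\|_{L^{p+1}}$ (Gagliardo--Nirenberg), we get a Gronwall-type inequality for $\|u(t)\|_{\mathcal{V}}$ that rules out finite-time blow-up, allowing the local solution to be continued for all $t\ge 0$.

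For positivity preservation, I rely directly on the Duhamel identity. Suppose $u_0>0$ pointwise; by continuity of $u\in C(\R_+;C(\mathds{T}_L))$ (which follows from the Sobolev embedding), the set $\{t\ge 0 : u(s,\cdot)>0\ \forall s\in[0,t]\}$ is nonempty and relatively open, with supremum $t^\ast\in(0,\infty]$. On $[0,t^\ast)$ one has $K\ast u^p\ge 0$ pointwise (since $K>0$ and $u^p\ge 0$), and passing to the limit in the Duhamel formula at $t=t^\ast$ gives, for every $x\in\mathds{T}_L$,
\begin{equation*}
    u(t^\ast,x) \ge e^{-C(t^\ast)} u_0(x) > 0,
\end{equation*}
contradicting minimality unless $t^\ast=\infty$. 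Hence $u(t,\cdot)>0$ for all $t\ge 0$.

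I expect the delicate step to be the global-in-time $H^1$ bound: the energy $E$ only controls a weighted $L^{p+1}\times L^{p+1}$ quantity, while the nonlinearity $u^p$ is energy-supercritical for large $p$. The argument above works cleanly because of the one-dimensional Sobolev embedding and the fact that $K_x\in L^1$, but the constants in the Gronwall estimate degenerate as $p$ grows, and one must verify that the coefficient $c(t)$ in the linear damping term does not conspire against the bound; fortunately $c(t)\ge 0$ only helps.
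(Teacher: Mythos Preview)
Your local existence argument and your positivity argument via the Duhamel formula are essentially the paper's: the paper sets up exactly the same integral operator
\[
I(u)(t) = e^{-\int_0^t c}\,u_0 + \int_0^t e^{-\int_s^t c}\,K\ast u^p\,ds
\]
on a ball in $C([0,T];H^1)$, checks it is a self-map and a contraction using Young's inequality for $K\ast u^p$ (splitting into $1<p<2$ and $p\ge 2$), and gets positivity for free because the iteration preserves the positive cone.

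Where you diverge from the paper is the global-in-time step. You propose to close an $H^1$ Gronwall inequality directly, interpolating $\|u\|_\infty$ between $\|u\|_{H^1}$ and $\|u\|_{L^{p+1}}$. The paper instead first proves a \emph{uniform} pointwise bound $\|u(t)\|_\infty\le M$ by a maximum-principle argument (at a spatial maximum, $u' \le \|K\|_{L^{p+1}}\|u\|_{L^{p+1}}^p - \underline{c}\,u$, using that $c(t)\ge \underline{c}>0$ from the monotonicity of $F$), and only then feeds this into the Duhamel formula to get a time-independent $H^1$ bound. The paper's route gives more: a genuinely uniform bound rather than one that merely rules out finite-time blowup, and it is what is later reused for compactness. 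Your route is viable, but note that your Gagliardo--Nirenberg inequality is misstated: the correct 1D interpolation has $\|u\|_\infty \le C\|u_x\|_{L^2}^{2/(p+3)}\|u\|_{L^{p+1}}^{(p+1)/(p+3)}$ (plus lower order), not exponent $1/2$. With the correct exponent the resulting differential inequality for $\|u_x\|_{L^2}$ is sublinear for every $p>1$, so no blowup occurs; but with your stated exponent $1/2$ the inequality would be superlinear for $p>3$ and the argument would fail.
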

\begin{proof}
To use Picard iteration, we modify \eqref{e: solwav} as follows,
        \begin{align*}
            u'+c(t)u &=K\ast u^p \\
            u(t)&=e^{-\int_0^t c(s)ds}u(0) +\int_0^t e^{-\int_s^t c(r)dr}K\ast u^p  ds.
            % I(u) &:= u(x,T) =e^{-\int_0^T c(s)ds}u(x,0) +\int_0^T e^{-\int_s^T c(r)dr}K\ast u^p  ds
        \end{align*}
    % Note that the right hand side is bell-shaped when $u(x,t)\in \mathcal{K}$, for all $t\ge 0$.
    For fixed $u_0\in\mathcal{V}$ we define $I : C\big([0,T], \overline{B_+(R)}\big) \to C\big([0,T], \overline{B_+(R)}\big)$, as follows
    \[
    I(u)(x,t) :=e^{-\int_0^t c(s)ds}u_0(x) +\int_0^t e^{-\int_s^t c(r)dr}K\ast u^p ds,
    \]
      where $B_+(R)=\{u\in\mathcal{V}: \Vert u\Vert_\mathcal{V}<R\text{ and } u(x)>0, \forall x\in \mathds{T}_L\}.$
    Claim that if $\Vert u_0(x)\Vert_{H^1} \leq \frac{R}{2}$ for some $R$ then there exists $T>0$ such that,
    \[
    I : C\big([0,T], \overline{B_+(R)}\big) \to C\big([0,T], \overline{B_+(R)}\big)
    \]
    with the induced uniform norm 
    $$\Vert u\Vert_\infty := \sup_{t\in[0,T]}\Vert u(x,t)\Vert_{H^1}.$$
    Let $u(x,t) \in C([0,T], \overline{B_+(R)})$,
    \begin{align*}
        \Vert I(u)\Vert_\infty &\leq \Vert e^{-\int_o^T c(t)dt} u_0\Vert_\infty + \int_0^T \Vert K\ast u^p \Vert_\infty dt\\
        &\leq \Vert u_0\Vert_{H^1} + T\Vert K\ast u^p \Vert_\infty.
    \end{align*}
     If $1<p<2$,  we use Young's convolution inequality to analyze $K\ast u^p$
    \begin{align*}
        \Vert K\ast u^p\Vert_{H^1} &\leq \Vert K\ast u^p\Vert_{L^2} +\Vert K_x\ast u^p\Vert_{L^2}\\
        &\leq (\Vert K\Vert_{L^q}+\Vert K_x\Vert_{L^q}) \Vert u\Vert_{L^2}^p\\
        &= \Vert K\Vert_{W^{q,1}} R^p,
    \end{align*}
    where $q$ satisfies, $\frac{p}{2}+\frac{1}{q}=1+\frac{1}{2}$, i.e. $q=2/(3-p)$.
    
    If $p\geq 2$, $\Vert K\ast u^p \Vert_{H^1}=\Vert K*u^2u^{p-2}\Vert_{L^2} +\Vert K_x*u^2u^{p-2}\Vert_{L^2}$ so we can use Morrey's embedding theorem and Young's convolution inequality to get the bound.
    \begin{align*}
        \Vert K\ast u^p \Vert_{H^1}&=\Vert K*(u^2u^{p-2})\Vert_{L^2} +p\Vert K\ast (u_xuu^{p-2})\Vert_{L^2}\\
        &\leq R^{p-2}(\Vert K\Vert_{L^2}\Vert u\Vert _2^2 +p\Vert K\Vert_{L^2}\Vert u\Vert_{L^2} \Vert u_x\Vert_{L^2})\\
        % &\leq pR^{p-2}\Vert K\Vert_{L^2} \Vert u\Vert ^2_{H^1}\\
        &\leq pC\Vert K\Vert_{L^2} R^p
    \end{align*}
    
    Therefore,
    \begin{align*}
        \Vert I(u)\Vert_\infty &\leq \frac{R}{2} + C(K,p)R^p T\\
        &\leq R
    \end{align*}
     by letting $T\leq \frac{1}{2CR\pp}$.
    
    Also we can check that $I$ is Lipschitz on $C\big([0,T], \overline{B(0,R)}\big)$, since $x\mapsto e^{-x}$ and $y\mapsto y^p$ are Lipschitz when $x\ge 0$ and when $y$ is bounded, respectively. We use Picard iteration, or Banach fixed point theorem to conclude the well-posedness for short-time. To extend this to long-time, we build time independent $H^1$ bound (Proposition~\ref{p: uhn}) later in this chapter, and use the Picard iteration again.
\end{proof}
% We can immediately conclude that $u_x$ exists for almost every $x\in\R$, because for any $t>0$, $u(x,t)$ is monotone on $[-\frac{L}{2},0]$ and $[0,\frac{L}{2}]$. So we can perform pointwise analysis on $u$ and $u_x$.

\paragraph{\textbf{Bell-shape consistency}}
In this section, in addition to prove well-posedness of the equation~\eqref{e: solwav}, we show that the solution with bell-shaped initial value stays bell-shaped.
% The goal for this section is to prove that any solution started with bell-shaped initial data stays bell-shaped, and use this to gain compactness of the trajectory. 
We consider the equation~\eqref{e: solwav} on compact domain $\mathds{T}_L$ with the same kernel $K$ for $L\in(0,\infty]$. 
We start with making an observation on the shape of the periodization of $K$ and $K*u$.
    % \begin{theorem}
    %     If $K$ be a bell-shaped function, then $K_L$ is $L$-periodic bell-shaped for big enough $L$. Furthermore if  $u$ is $L$-periodic bell-shaped then so are $K\ast u=K_L\ast u$.
    % \end{theorem}
    % The proof of the theorem comes from the following lemmas.
    \begin{definition}
         A real-valued function $u$ on $\mathds{T}_L$ is \textit{bell-shaped} if $u$ is even and monotone decreasing on $(0,L/2)$.
    \end{definition} 
    
    \begin{definition}
We say $K_L$ is the $L$-periodization of a function $K$ on $\R$ when
       \begin{align*}
           K_L(x) &= \sum_{k=-\infty}^\infty K(x-kL).
        \end{align*}
    Also the convolution of $K$ with a $L$-periodic function $u:\R\to\R$ is as follows,   
    \begin{align*}
           (K\ast u)(x) &:= \int_{-\infty}^\infty u(y)K(x-y)dy\\
           &=\int_{x_0}^{x_0+L}u(y)K_L(x-y)dy,
       \end{align*}
for any $x_0\in\R.$
\end{definition}

    \begin{lemma}
        Let $K$ be a bell-shaped function. If $K$ is convex in $[0,\infty)$ then $K_L$ is bell-shaped on $\mathds{T}_L$.
    \end{lemma}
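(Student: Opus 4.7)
Evenness of $K_L$ is immediate from that of $K$: reindexing $k\mapsto -k$ in the periodization gives $K_L(-x)=\sum_k K(-x-kL)=\sum_k K(x+kL)=K_L(x)$. It remains to show $K_L$ is monotone decreasing on $(0,L/2)$. A naive differentiation of the periodization is unpromising: for each $k\ge 1$ one finds $K'(x+kL)+K'(x-kL)=K'(x+kL)-K'(kL-x)\ge 0$ by convexity (both arguments are positive and $K'$ is nondecreasing there), so the ``wings'' of the sum contribute the wrong sign and would have to be beaten against the central term $K'(x)\le 0$. I would therefore work at the level of finite differences rather than derivatives.

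Fix $0<x_1<x_2<L/2$. Using evenness of $K$ to relabel the $k<0$ translates, the periodization regroups as
\[
K_L(x)=\sum_{k=0}^{\infty}\bigl[K(x+kL)+K((k+1)L-x)\bigr],
\]
and hence
\[
K_L(x_1)-K_L(x_2)=\sum_{k=0}^{\infty}\bigl([K(a_k)+K(d_k)]-[K(b_k)+K(c_k)]\bigr),
\]
where $a_k:=x_1+kL$, $b_k:=x_2+kL$, $c_k:=(k+1)L-x_2$, $d_k:=(k+1)L-x_1$. For every $k\ge 0$ these four numbers satisfy the ordering $0<a_k<b_k\le c_k<d_k$ (the middle inequality being exactly $x_2\le L/2$) together with the balance identity $a_k+d_k=b_k+c_k=(2k+1)L$. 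Consequently $b_k$ and $c_k$ are the same convex combination of $a_k$ and $d_k$ taken with complementary weights, that is, $b_k=(1-\mu)a_k+\mu d_k$ and $c_k=\mu a_k+(1-\mu)d_k$ with $\mu=(b_k-a_k)/(d_k-a_k)\in[0,1]$. Convexity of $K$ on $[0,\infty)$ then yields the two-point inequality $K(b_k)+K(c_k)\le K(a_k)+K(d_k)$, so each summand above is nonnegative and $K_L(x_1)\ge K_L(x_2)$.

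The only genuine technical point is to justify absolute convergence of the series so that the rearrangement into the paired form and the term-by-term differencing are legitimate. This is routine under any integrability hypothesis on $K$ that already makes the periodization $K_L$ well defined to begin with (e.g.\ $K\in L^1(\R)$, or summability of $K$ along the lattice $L\mathbb{Z}$, which is automatic once $K$ is bell-shaped, convex, and integrable on $[0,\infty)$). Granting this, the convexity pairing above is entirely elementary and completes the argument; I do not anticipate further obstacles.
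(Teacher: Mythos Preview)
Your proof is correct and follows essentially the same approach as the paper: both regroup the periodization into the paired form $K_L(x)=\sum_{k\ge 0}[K(x+kL)+K((k+1)L-x)]$ and then use convexity of $K$ on $[0,\infty)$ together with the balance $a_k+d_k=b_k+c_k$ to show each pair is monotone decreasing in $x$. Your write-up is somewhat tidier in making the convex-combination parameter explicit and in flagging the absolute-convergence caveat for the rearrangement, but the idea is identical.
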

    \begin{proof}
            Enough to show that $K_L$ is monotone non-increasing in $[0, \frac{L}{2}]$.
            \begin{align*}
                K_L(x) =&\sum_{n=-\infty}^\infty K(x-nL)\\
                &=\sum_{n=0}^\infty K(x+nL) + \sum_{n=1}^\infty K(x-nL)\\
                &=\sum_{n=0}^\infty K(x+nL) + K((n+1)L-x)
            \end{align*}
            
            Let $h>0$, $x\in[0,\frac{L}{2})$.
            
            \begin{align*}
                K_L(x+h) -K_L(x) =\sum_{n=0}^\infty&[K(x+h+nL+K((n+1)L-x-h]\\
                &\quad-[K(x+hL)+K((n+1)L-x)]
            \end{align*}
    Claim that $\forall n\in \N \cup\{0\}$, 
    $$K(x+h+nL) +K((n+1)L-x-h)-K(x+hL)+K((n+1)L-x)\leq 0.$$
    In the interval $[x+nL, (n+1)L-x]$,
    let $s=\frac{h}{L-2x}$ and use convexity.
    \begin{align*}
        K(x+h+nL)+K((n+1)L-x-h) &\leq (1-s)K(x+nL) +sK((n+1)L-x)\\ &+sK(x+nL)+(1-s)K((n+1)L-x)\\
        &\leq K(x+nL)+K((n+1)L-x)
    \end{align*}
    as claimed.
    \end{proof}
    \begin{remark}
    {Note as long as $K$ does not resemble ``stairs", i.e. $K_x$ does not vanish except at $0$,} we can generalize the previous result as follows. If $K$ is a bell-shaped function which is strictly concave inside a compact interval and convex outside, then $K_L$ is bell-shaped for big enough $L$. For example suppose that $K\approx o(x^{-m})$ for $m>1$ then
    $$K_L(x)=K(x)+\sum_{n=1}^\infty \frac{1}{(nL+x)^m}+\frac{1}{(nL-x)^m}\approx K(x)+\frac{1}{L^m}\sum_{n=1}^\infty \frac{1}{(n+x/L)^m}+\frac{1}{(n-x/L)^m},$$
    so that for big enough $L$ and near $0$, $K_L$ resembles the shape of $K$.
    \end{remark}
    
    \begin{lemma}~\label{l: bell shape}
        For such $K$, where $K_L$ is bell-shaped, if $u$ is a $L$-periodic bell-shaped function, then $K\ast u$ is $L$-periodic bell-shaped.
    \end{lemma}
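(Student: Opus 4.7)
Since $u$ and $K_L$ are both even $L$-periodic, evenness of $K\ast u$ is immediate from the substitution $y\mapsto -y$. The real content is the monotonicity on $(0,L/2)$. My plan is to reduce it to a sign analysis via a symmetric change of variables, supported by a single lemma about bell-shaped functions on the torus.

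Fix $0\le x_1<x_2\le L/2$ and set $m:=(x_1+x_2)/2\in [0,L/2)$, $d:=(x_2-x_1)/2\in(0,L/4]$. Substituting $z=m-y$ in the difference, I would obtain
\[
(K\ast u)(x_1)-(K\ast u)(x_2)=\int_{-L/2}^{L/2} u(m-z)\bigl[K_L(z-d)-K_L(z+d)\bigr]\,dz,
\]
using that $K_L(m-d-y)=K_L(z-d)$ and $K_L(m+d-y)=K_L(z+d)$. The factor $\psi(z):=K_L(z-d)-K_L(z+d)$ is odd in $z$ by evenness of $K_L$, so splitting the integral at $0$ and substituting $z\mapsto -z$ on the negative half yields
\[
(K\ast u)(x_1)-(K\ast u)(x_2)=\int_{0}^{L/2}\bigl[u(m-z)-u(m+z)\bigr]\bigl[K_L(z-d)-K_L(z+d)\bigr]\,dz.
\]

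Both bracketed factors have the same form $f(a-b)-f(a+b)$ for a bell-shaped $f$ on $\mathds{T}_L$, so I would prove one key lemma: \emph{if $f$ is bell-shaped on $\mathds{T}_L$ and $a,b\in[0,L/2]$, then $f(a-b)\ge f(a+b)$.} Writing $f(x)=\tilde f(d_T(x))$ with $\tilde f$ decreasing on $[0,L/2]$ and $d_T(x)=\min(|x|,L-|x|)$ the torus distance, this reduces to $d_T(a-b)\le d_T(a+b)$. I would split into two cases: if $a+b\le L/2$ then $d_T(a+b)=a+b\ge|a-b|=d_T(a-b)$; if $a+b>L/2$ then $d_T(a+b)=L-a-b$, and one checks $|a-b|\le L-a-b$ is equivalent to $2a\le L$ and $2b\le L$, which hold by assumption.

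Applying this lemma with $(a,b)=(m,z)$ shows $u(m-z)-u(m+z)\ge 0$ on $[0,L/2]$, and with $(a,b)=(z,d)$ shows $K_L(z-d)-K_L(z+d)\ge 0$ on $[0,L/2]$. Hence the integrand is nonnegative and $(K\ast u)(x_1)\ge(K\ast u)(x_2)$, establishing monotone decrease on $(0,L/2)$. The main obstacle is strictly the torus wrap-around in the key lemma; on $\R$ the corresponding statement is transparent, but on $\mathds{T}_L$ one must carefully verify that passing through $L/2$ does not reverse the relevant inequality, which is exactly the purpose of the two-case split above.
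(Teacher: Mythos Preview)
Your proof is correct, and it takes a genuinely different route from the paper. The paper differentiates and shows $(K\ast u)_x(x)\le 0$ on $[0,L/2]$: it writes $(K\ast u)_x(x)=-\int K_L(y)u_x(y-x)\,dy$, partitions $\mathds{T}_L$ into four subintervals of lengths $x,x,L/2-x,L/2-x$, and pairs each interval with its reflection about a suitable midpoint, using that $u_x$ is odd and $K_L$ is decreasing on $[0,L/2]$. Your argument avoids differentiation altogether: you compare $(K\ast u)(x_1)$ and $(K\ast u)(x_2)$ directly, symmetrize around the midpoint $m$, and reduce both factors of the integrand to a single clean torus-distance inequality $d_T(a-b)\le d_T(a+b)$ for $a,b\in[0,L/2]$. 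This is more elementary in that it never invokes $u_x$ (so it works for any monotone $u$ without appeal to almost-everywhere differentiability), and the two-case split on whether $a+b$ crosses $L/2$ handles the wrap-around neatly. The paper's approach, by contrast, buys a more explicit pointwise picture of where the derivative picks up its sign, at the cost of tracking four pieces of a partition. Both are sound; yours is the more transparent argument.
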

    
    \begin{proof}
            \begin{align*}
                K\ast u(x) =\int_\R K(y)u(y-x)dx
                =\int_{-\frac{L}{2}}^{\frac{L}{2}}K_L(y)u(y-x)dy.
            \end{align*}
    On the inverval of half period such that $u(x)$ is monotone, $u_x(x)$ exists for almost every $x\in[0,\frac{L}{2}]$. Note that $u_x$ is odd and $u_x(x)\leq 0$ when $x\geq 0$. Claim that $\forall x\in[0,\frac{L}{2}] \quad (K\ast u)_x(x) \leq 0$. We start with partitioning $\mathds{T}_L$ with the intervals of length $x$,$x$,$(\frac{L}{2}-x)$ and $(\frac{L}{2}-x)$.
    \begin{align*}
        (K\ast u)_x(x) &= \int_{-\frac{L}{2}}^{\frac{L}{2}} K_L(y)\frac{d}{dx}u(y-x)dy\\
        &= \int_{-\frac{L}{2}}^{\frac{L}{2}} -K_L(y)u_x(y-x)dy\\
        &=(\int_{-\frac{L}{2}}^{-\frac{L}{2}+x}+\int_{-\frac{L}{2}+x}^{\frac{L}{2}+2x}) +(\int_{-\frac{L}{2}+2x}^{x}+\int_{x}^{\frac{L}{2}}) (-K_L(y)u_x(y-x))dy\\
        &= I +II
    \end{align*}
    Now on each part, we reflect to another side.
    \begin{align*}
        I &= \int_{-\frac{L}{2}}^{-\frac{L}{2}+x}+\int_{-\frac{L}{2}+x}^{\frac{L}{2}+2x} (-K_L(y)u_x(y-x))dy\\
        &= \int_{-\frac{L}{2}}^{-\frac{L}{2}+x}(-K_L(y)u_x(y-x))-K_L(-L+2x-y)u_x(-L+x-y)dy\\
        &= \int_{-\frac{L}{2}}^{-\frac{L}{2}+x}(-K_L(y)u_x(y-x))-K_L(y')u_x(x-y)dy\\
        &= \int_{-\frac{L}{2}}^{-\frac{L}{2}+x}-(K_L(y)-(K_L(y'))(u_x(y-x))dy <0
    \end{align*}
    Same for II.
    where $\frac{y'+y}{2}=-\frac{L}{2} +x$.
    \end{proof}
    \begin{remark}
        With similar observations, we can prove the same for $L=\infty$, i.e. if $u$ and $K$ are bell-shaped on $\R$ then $K*u$ is bell-shaped.
    \end{remark}
% \subsection{Consistency of bell-shaped trajectory}
%  The solitary wave was first observed by Russell in 1834 as the propagation of a wave that resembles a bell. So we start from investigating the solution of \eqref{e: solwav} that initiate with bell-shaped wave. [Petviashvili] This will we lead us to analyze the dynamical approximation of the solitary wave profile. It turns out a solution of~\eqref{e: solwav} which is initially bell-shaped stays bell-shaped. Using this consistency of its shape, furthermore, we acquire compactness of the trajectory. 
\begin{theorem}[Bell-shape consistency]\label{t: ubs}
    % Suppose
    % \begin{equation*}
    % \left\{
    %     \begin{aligned}
    %         &K\in W^{1,\frac{2}{3-p}}(\mathds{T}_L) &\quad \text{if } &1<p<2\\
    %         &K\in L^2(\mathds{T}_L) &\quad \text{if } &p\ge2.
    %     \end{aligned}
    % \right.
    % \end{equation*}
   If $u_0\in \mathcal{V}$ is bell-shaped, $u(t)$ is bell-shaped for all $t\ge0.$
\end{theorem}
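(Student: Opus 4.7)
The plan is to revisit the Picard-iteration construction used in Theorem~\ref{t: wpbs} and carry it out on a smaller, closed subset of the function space, namely the bell-shaped functions. The Banach fixed point argument then automatically produces a bell-shaped solution, which by uniqueness must be the unique solution $u(t,\cdot)$.

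Let $\mathcal{B}\subset \overline{B_+(R)}$ denote the subset of bell-shaped functions, and consider $X_T:=C([0,T],\mathcal{B})$ with the uniform $H^1$ norm. First, I would check that $X_T$ is closed in $C([0,T],\overline{B_+(R)})$: being even and being monotone non-increasing on $(0,L/2)$ are both preserved under uniform limits (in fact under pointwise a.e.~limits), so a uniform $H^1$-limit of bell-shaped functions is bell-shaped. Since a closed subset of a complete metric space is complete, $X_T$ is complete, and the Banach fixed point theorem applies there as soon as $I$ maps $X_T$ into itself.

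The heart of the argument is therefore to verify that $I$ preserves bell-shapedness. Given $u\in X_T$, the integral formula reads
\begin{equation*}
    I(u)(t,x) = e^{-\int_0^t c(s)\,ds}\,u_0(x) + \int_0^t e^{-\int_s^t c(r)\,dr}\,K\ast u^p(s,x)\,ds.
\end{equation*}
The first summand is a positive scalar multiple of the bell-shaped function $u_0$, hence bell-shaped. For the integrand of the second summand: since $u(s,\cdot)\ge 0$ is bell-shaped and $y\mapsto y^p$ is monotone increasing on $[0,\infty)$, $u(s,\cdot)^p$ is again bell-shaped. By Lemma~\ref{l: bell shape}, $K\ast u^p(s,\cdot)$ is bell-shaped, and multiplying by the positive scalar $e^{-\int_s^t c(r)dr}$ preserves this. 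Finally, positive linear combinations (and hence Riemann/Bochner integrals in $s$) of bell-shaped functions are bell-shaped, because evenness and monotonicity on $(0,L/2)$ are both preserved under pointwise positive superposition. Hence $I(u)(t,\cdot)\in\mathcal{B}$ for every $t\in[0,T]$, so $I:X_T\to X_T$.

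With $I$ a contraction on the complete set $X_T$ (the contraction estimate is inherited verbatim from the proof of Theorem~\ref{t: wpbs}), the Banach fixed point theorem produces a fixed point $u\in X_T$; uniqueness in the larger space $C([0,T],\overline{B_+(R)})$ forces this fixed point to coincide with the solution produced in Theorem~\ref{t: wpbs}. Therefore $u(t,\cdot)$ is bell-shaped for all $t\in[0,T]$. Extension to all $t\ge0$ follows by reapplying the short-time argument on $[T,2T],[2T,3T],\dots$, using the time-independent $H^1$ bound (Proposition~\ref{p: uhn}) and the fact that $u(T,\cdot)$ is itself bell-shaped as the new initial datum. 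The only subtle point, and the one I would double-check carefully, is the closedness of $\mathcal{B}$ in the $H^1$-topology and the preservation of bell-shape under Bochner integration against $s$; both are straightforward but need to be pinned down to make the fixed-point argument airtight.
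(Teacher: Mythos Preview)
Your proposal is correct and follows essentially the same approach as the paper: run the Picard iteration from Theorem~\ref{t: wpbs} inside the closed subset of bell-shaped functions, use Lemma~\ref{l: bell shape} to see that the integral operator $I$ preserves this subset, and then extend to all $t\ge 0$ via the uniform $H^1$ bound. Your write-up is in fact more careful than the paper's, which states the same argument in three sentences and leaves implicit exactly the points (closedness of the bell-shaped cone, stability under positive superposition and Bochner integration) that you flag for verification.
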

\begin{proof}
    We continue from the proof of Theorem~\ref{t: wpbs}.
    Let 
    $$\mathcal{K}:=\{u\in H^1(\mathds{T}_L) : \text{u is bell-shaped}\}$$ 
    and suppose $u_0\in \mathcal{K}.$ Due to Lemma~\ref{l: bell shape} if $u\in\mathcal{K}$ then $I(u)\in\mathcal{K}$.
    Again, by Picard iteration, $u(t)$ stays bell-shaped for $0\le t<T$ if $u_0\in\mathcal{K}$. Furthermore by the similar argument from the proof of Theorem~\ref{t: wpbs} we can let $T\to\infty.$\qedhere
\end{proof}
\section{Compactness of the solutions}
\paragraph{\textbf{Asymptotic bound on $u$ and $u_x$}}
We investigate asymptotic pointwise bounds on the solution. It turns out the solution stays bounded not only above but also below, as long as $K$ stays away from 0. We can expect that the lower bound will diminish as $L\to\infty$.
% \begin{prop}
%  Let $u$ be a solution of \eqref{e: solwav} with a.e. continuously differentiable $K(x)$. Then there exists $M(K,u_0)>0$ such that $\max\{\Vert u(x,t)\Vert_\infty, \Vert u_x(x,t)\Vert_\infty\} <M$ for all $t\geq 0$. [Corollary] Furthermore, $\{u(x,t)\}_{t\ge 0}$ is compact in uniform norm, (in $H^1$ if $L<\infty$). 
% \end{prop}
\begin{prop}\label{p: u upperB}
    Assume that $K(x)\in W^{1,1}\cap W^{1,p+1}(\mathds{T}_L)$. Let $u\in \mathcal{V}$ be a solution of \eqref{e: solwav}. Then there exists $M(K,u_0)>0$ such that $\sup_{t\ge0}\Vert u(x,t)\Vert_\infty <M$. Additionally, there exists $m>0$, for any $x$ such that $\frac{\partial}{\partial x}u_{0}$ exists, $\lim\sup_{t\to\infty}\Vert u_x(x,t)\Vert_\infty <m$.
\end{prop}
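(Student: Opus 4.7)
The strategy is to treat equation~\eqref{e: solwav} as the linear inhomogeneous scalar ODE in $t$ with $x$ frozen,
$$u_t + c(t)\, u = (K * u^p)(t,x),$$
and apply Duhamel's formula once uniform control on $c(t)$ and on $K*u^p$ is established. From Proposition~\ref{p: conv func} the $L^{p+1}$-norm satisfies $\|u(t)\|_{L^{p+1}} \le 1 + \|u_0\|_{L^{p+1}}$, and since $F(u(t))$ is nondecreasing along the flow while $\alpha(t) \ge 1$, one gets
$$c(t) = 2p\, \alpha(t)\, F(u(t)) \ge 2p\, F(u_0) =: c_* > 0,$$
uniformly in $t$, provided $u_0 \not\equiv 0$. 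Combining with Young's convolution inequality and the assumption $K \in W^{1,p+1}(\mathds{T}_L)$ gives the uniform bounds
$$\|K*u^p\|_{L^\infty} \le \|K\|_{L^{p+1}} \|u\|_{L^{p+1}}^p \le A, \qquad \|K_x*u^p\|_{L^\infty} \le \|K_x\|_{L^{p+1}} \|u\|_{L^{p+1}}^p \le A',$$
where $A, A'$ depend only on $K$ and $u_0$.

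Variation of parameters applied pointwise in $x$ then yields
$$u(t,x) = e^{-\int_0^t c(s)ds}\, u_0(x) + \int_0^t e^{-\int_s^t c(r)dr}\, (K * u^p)(s,x)\, ds.$$
Since $u_0 \in H^1(\mathds{T}_L) \hookrightarrow L^\infty(\mathds{T}_L)$ in one space dimension, taking $\sup_x$ and using $c \ge c_*$ produces
$$\|u(t)\|_{L^\infty} \le e^{-c_* t}\|u_0\|_{L^\infty} + \frac{A}{c_*} \le \|u_0\|_{L^\infty} + \frac{A}{c_*} =: M,$$
which is the claimed uniform upper bound. For the derivative bound, I would differentiate the Duhamel formula in $x$ at any point where $(u_0)_x$ exists; this is legitimate because $u \in L^\infty$ together with $K_x \in L^1 \cap L^{p+1}$ makes $K_x*u^p$ bounded and continuous in $x$, so $K*u^p$ is pointwise $C^1$ in $x$ with derivative $K_x*u^p$. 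The resulting representation
$$u_x(t,x) = e^{-\int_0^t c(s)ds}\,(u_0)_x(x) + \int_0^t e^{-\int_s^t c(r)dr}\,(K_x * u^p)(s,x)\, ds$$
is controlled by $e^{-c_* t} |(u_0)_x(x)| + A'/c_*$, so $\limsup_{t \to \infty} |u_x(t,x)| \le A'/c_* =: m$.

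The plan faces no serious obstacle once Proposition~\ref{p: conv func} supplies the positive lower bound $c_* > 0$; the remaining steps are standard convolution estimates and Duhamel's formula. The only delicate point is justifying the pointwise differentiation in $x$ of the Duhamel representation, which reduces to verifying that $K * u^p \in C^1(\mathds{T}_L)$ for each fixed $t$—a direct consequence of $K \in W^{1,p+1}$ and the sup-norm bound on $u$ obtained in the second paragraph. In this sense the proof is a short bootstrap on the same integral equation: the $L^\infty$-bound on $u$ is used to upgrade the convolution $K_x*u^p$ to a bounded continuous function, which in turn yields the asymptotic bound on $u_x$.
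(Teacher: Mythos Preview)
Your proof is correct and rests on the same two ingredients as the paper's: the uniform lower bound $\underline c=\inf_{t\ge0}c(t)>0$ coming from the monotonicity of $F$, and the Young/H\"older estimate $\|K*u^p\|_{L^\infty}\le\|K\|_{L^{p+1}}\|u\|_{L^{p+1}}^p$ together with the $L^{p+1}$ control from Proposition~\ref{p: conv func}. The route, however, differs. The paper argues by a pointwise barrier: from $u'=K*u^p-c(t)u$ one sees $u'(t,x)<0$ whenever $u(t,x)$ exceeds the threshold $\underline c^{\,-1}\|K\|_{L^{p+1}}\|u\|_{L^{p+1}}^p$, so $u$ can never cross above the maximum of $\|u_0\|_{L^\infty}$ and this threshold; the same reasoning is repeated for $u_x$ using $K_x*u^p$. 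You instead substitute directly into the Duhamel representation and read off the bound $e^{-c_* t}\|u_0\|_{L^\infty}+A/c_*$.

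Both arguments are short and equivalent in strength; your Duhamel route gives a slightly cleaner explicit constant and is in fact the device the paper itself uses a few lines later (Proposition~\ref{p: uhn}) to obtain the uniform $H^1$ bound. The barrier argument has the minor advantage of being entirely pointwise and not requiring one to justify differentiating the integral representation in $x$, which you correctly identify as the only point needing care in your approach.
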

\mnote{W: $L\leq\infty$, need to use $L^p$ bound and concentration compactness for $L=\infty$}
\begin{proof}
        We want to find a pointwise threshold where $u$ and $u_x$ stop growing. Start from \eqref{e: solwav},
        \begin{align}
            u' &= K\ast u^p  - c(t)u \\
            &= \int K(x-y) u(y)^p dy - c(t) u\\
            &\leq (\int K(x-y)\p dy)^{1/p
            +1}(\int u\p dy)^{p/(p+1)} - \underline{c} u,
        \end{align}
        where $\underline{c}=\inf_{t\ge 0}c(t)$. Note that $\underline{c}>0$ because $u>0$ and $F(u)=\frac{1}{2p} e^{-\frac{2p}{p+1}\int u\p dx}c(t)>0$ is increasing according to the Proposition~\ref{p: conv func}.
        This means
        $$u'(x,t) <0 \quad \text{if } u(x,t) > \frac{1}{\underline{c}}\Vert K\Vert_{L\p} \Vert u\Vert^p_{L\p}.$$
         By Morrey's embedding theorem, $u\in L^\infty$. 
         Due to monotonic convergence of $\Vert u\Vert_{L\p}\to 1$ as $t\to \infty$, we can conclude that
        $$\max_{t\ge 0}\Vert u(x,t)\Vert_\infty \leq M=\Vert u_0\Vert_\infty+\frac{1}{\underline{c}}(1+ \Vert u_0\Vert^p_{L\p})\Vert K\Vert_{L\p}.$$
        % So 
        % $$\max_{t\ge 0}\Vert u(x,t)\Vert_\infty \leq \max\{\Vert u(x,0)\Vert_\infty,\frac{1}{\underline{c}}\Vert K\Vert_{L\p} \Vert u\Vert^p_{L\p}\}.$$
        
        Similarly, for any point where $u_x$ exists and is positive,
        \begin{align*}
            u_x' &= (K\ast u^p )_x - c(t)u _x\\
            &= \int K_x(x-y) u(y)^p dy - c(t) u_x\\
            &\leq \Vert K_x\Vert_{L\p} \Vert u\Vert^p_{L\p} -\underline{c}u_x,
        \end{align*}
       and a similar argument holds for any point where $u_x$ exists and is negative. 
    %   Therefore we can conclude that $\Vert u_x(x,t)\Vert_C \leq \max\{\Vert u_x(x,0)\Vert_\infty,\frac{1}{\underline{c}}\Vert K_x\Vert_{L\p} \Vert u\Vert^p_{L\p}\}$ for all $t\geq 0$.
    %   Moreover compactness of the trajectory comes from Arzela-Ascoli theorem.
\end{proof}
\begin{remark}\label{r: M bound}
    In the case when $\Vert u(0)\Vert_{L\p}\leq 1$, $c(t)$ is increasing. So we can obtain the explicit upper bound,
    \begin{equation*}
        \Vert u(t)\Vert_\infty \leq M=\max\{\Vert u(x,0)\Vert_\infty, \Vert K\Vert_{L\p}/c(0)\}.
    \end{equation*}
\end{remark}
% Recall that if $u_0$ is bell-shaped, $u(x,t)$ is bell-shaped for all time, which means $u_x$ exists almost everywhere. Thus, by Arzela-Ascoli's theorem we can directly infer the compactness of the trajectory that is initially bell-shaped.
In the compact spatial domain, we can use the upper bound to calculate the lower bound. This will be used to control the Riemannian metric. Naturally, the lower bound vanishes as $L\to\infty.$
\begin{corollary}\label{c: low u}
    For fixed $L<\infty$, let $u\in\mathcal{V}$ be a solution of \eqref{e: solwav}. Assume that $K_L(x)\in W^{1,1}\cap W^{1,p+1}(\mathds{T}_L)$ and there is $k>0$ such that $K_L(x)\ge k,$ $\forall x\in\mathds{T}_L$. Then there exist $m(K,L,u_0)>0$ such that $\min_{x}u(x,t) \ge m$.
\end{corollary}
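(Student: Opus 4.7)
The plan is to derive a differential inequality of the form $u_t \ge \kappa - \bar{c}\,u$ that holds pointwise and uniformly in time, and then read off a uniform lower bound by integrating factors.

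First, I would collect the constants. From Proposition~\ref{p: u upperB} we have $\|u(\cdot,t)\|_\infty \le M$ for all $t\ge 0$. From estimate~\eqref{e: E uBound} we have $c(t) = 2pE(u) \le 2p(1+\|u_0\|_{L^{p+1}})^{2p}\|K\|_{L^{(p+1)/2}} =: \bar c$. Equation~\eqref{e: ulpcon} together with $c(t)>0$ shows that $t\mapsto \int u^{p+1}\,dx$ is monotone and converges to $1$, so
\[
\int_{\mathds{T}_L} u^{p+1}(t,y)\,dy \;\ge\; \alpha \;:=\; \min\{1,\|u_0\|_{L^{p+1}}^{p+1}\} \;>\; 0 \qquad \forall t\ge 0.
\]
Since $u_0\in H^1(\mathds{T}_L)$ embeds into $C^0(\mathds{T}_L)$ and by hypothesis $u_0>0$, compactness gives $m_0 := \min_{x\in\mathds{T}_L} u_0(x) > 0$.

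The key step is producing a time-uniform positive lower bound on $K*u^p$. Using $u \le M$ to trade an $L^{p+1}$ estimate for an $L^p$ estimate,
\[
\int_{\mathds{T}_L} u^p(t,y)\,dy \;\ge\; \frac{1}{M}\int_{\mathds{T}_L} u^{p+1}(t,y)\,dy \;\ge\; \frac{\alpha}{M}.
\]
Combined with the hypothesis $K_L(x) \ge k >0$, this yields
\[
(K*u^p)(t,x) \;=\; \int_{\mathds{T}_L} K_L(x-y)\,u^p(t,y)\,dy \;\ge\; k \int_{\mathds{T}_L} u^p(t,y)\,dy \;\ge\; \frac{k\alpha}{M} \;=:\; \kappa > 0.
\]

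Finally, I would plug these bounds back into~\eqref{e: solwav}: at each fixed $x$,
\[
u_t(t,x) \;=\; (K*u^p)(t,x) - c(t)\,u(t,x) \;\ge\; \kappa - \bar c\, u(t,x).
\]
Setting $\phi(t):=\int_0^t c(s)\,ds \le \bar c\,t$ and multiplying by the integrating factor $e^{\phi(t)}$,
\[
u(t,x) \;\ge\; e^{-\phi(t)}u_0(x) + \kappa\, e^{-\phi(t)}\!\!\int_0^t e^{\phi(s)}\,ds \;\ge\; e^{-\bar c\,t} m_0 + \kappa\,\frac{1-e^{-\bar c\,t}}{\bar c}.
\]
The right-hand side is a convex combination of $m_0$ and $\kappa/\bar c$, hence bounded below by $m := \min\{m_0,\kappa/\bar c\} > 0$ for all $t\ge 0$, which is the desired bound.

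I do not expect a serious obstacle: all the ingredients (upper bound on $u$, upper bound on $c$, monotonicity of $\|u\|_{L^{p+1}}$, pointwise positivity $K_L\ge k$, continuity of $u_0$) are already in hand. The only mildly delicate point is noticing that the one-sided bound $u\le M$ lets us convert the natural $L^{p+1}$ control into the $L^p$ control needed to bound $K*u^p$ from below, and that this step is precisely where compactness of the domain and positivity of the kernel enter, explaining why $m$ is expected to degenerate as $L\to\infty$ or as $\inf_x K_L(x) \to 0$.
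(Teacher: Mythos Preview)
Your proposal is correct and follows essentially the same route as the paper: both arguments bound $K*u^p$ from below via $K_L\ge k$ together with $\int u^p \ge \tfrac{1}{M}\int u^{p+1}$, and then use the resulting one-sided differential inequality $u_t \ge \kappa - \bar c\,u$. The only minor difference is that the paper fixes $\epsilon>0$, waits until $T$ so that $\int u^{p+1}\ge 1-\epsilon$, and concludes $u' > 0$ whenever $u<(1-\epsilon)k/(M\bar c)$, thereby handling only $t>T$; your use of the monotonicity to set $\alpha=\min\{1,\|u_0\|_{L^{p+1}}^{p+1}\}$ and your integrating-factor computation give a bound valid for all $t\ge 0$, which is slightly cleaner.
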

\mnote{$L<\infty$, we need this bound to get Cauchy in $L^2$ plus should I distinguish $K_L$}
\begin{proof}
   From Proposition~\ref{p: u upperB}, there exists $M(u_0,K)>0$ such that $u(x,t)<M$ for all $t\geq 0$. Now for any $\epsilon>0$ we can choose large enough $T>0$ so that 
   $$1-\epsilon\le\int u\p dx \le M\int u^p dx.$$ 
   Note that  $\overline{c}:=\sup_{t\ge 0}c(t)$ is well defined since it converges. Then,
    \begin{align*}
        u'&= K\ast u^p -c(t) u\\
        &\ge k\int u^pdx -\overline{c} u\\
        &\ge \frac{(1-\epsilon)k}{M}-\overline{c} u >0,
    \end{align*}
    when $u < (1-\epsilon)k/M\overline{c}$. So the result follows.
\end{proof}
\begin{remark}If $\Vert u(0)\Vert_{L\p}\leq 1$, we can obtain a more explicit lower bound using Remark~\ref{r: M bound} and \eqref{e: E uBound}.
% i.e., $M=\max\{\Vert u(x,0)\Vert_\infty\Vert K\Vert_{L\p}/c(0)\}$,  $c(t)\le(1+\Vert u_0\Vert_{L\p})^{2p}\Vert K\Vert_{L^\frac{p+1}{2}}$.
For any $\epsilon$ there exists $T>0$ such that for all $t>T$, we have
% \[
% m=\min\{ \min_xu(x,0), (1-\epsilon)k/\max\{\Vert u(x,0)\Vert_\infty, \Vert K\Vert_{L\p}/c(0)\}(1+\epsilon)^{2p}\Vert K\Vert_{L^\frac{p+1}{2}}\} 
% \]
\begin{equation*}
    \min_xu(t,x)\ge m= \frac{(1-\epsilon)k}{M\Vert K \Vert_{L^\frac{p+1}{2}}}.
\end{equation*}
\end{remark}

\paragraph{\textbf{$L^2$-compactness of the solutions}}
Since the Rellich–Kondrachov theorem is only valid on compact domain, we need more careful analysis to achieve compactness on the real line. We will take advantage of the bell-shaped solutions to show that the solution is tight in $L^p$-spaces for any $p>1$.
\begin{prop}[uniform $H^1$-norm] \label{p: uhn}
    Let $u\in H^1(\mathds{T}_L)$ be a (maximal) solution of \eqref{e: solwav}. Then there exists time independent $C>0$ such that $\Vert u(t,\cdot)\Vert_{H^1}<C.$
\end{prop}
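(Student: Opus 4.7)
The $L^\infty$ control of $u$ is already in hand from Proposition~\ref{p: u upperB}, and Proposition~\ref{p: conv func} gives a uniform bound on $\Vert u(t)\Vert_{L^{p+1}}$ (namely $\max\{1,\Vert u_0\Vert_{L^{p+1}}\}$). Since $2p\ge p+1$ for $p>1$, interpolation between $L^{p+1}$ and $L^\infty$ yields a time-independent bound
\[
\Vert u(t)\Vert_{L^{2p}}^{2p}\le \Vert u(t)\Vert_\infty^{\,2p-p-1}\,\Vert u(t)\Vert_{L^{p+1}}^{p+1}\le C.
\]
So only a uniform bound on $\Vert u_x(t)\Vert_{L^2}$ remains to be established.

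To obtain that bound, I would differentiate \eqref{e: solwav} in $x$. Since $(K\ast u^p)_x=K_x\ast u^p$ makes sense with $K\in W^{1,1}\cap W^{1,p+1}$ and $u^p\in L^{(p+1)/p}\cap L^\infty$, the equation
\[
(u_x)_t \;=\; K_x\ast u^p \;-\; c(t)\,u_x
\]
holds in $L^2$. Pairing with $u_x$ in $L^2$ gives
\[
\frac{1}{2}\frac{d}{dt}\Vert u_x\Vert_{L^2}^2 \;=\; \int (K_x\ast u^p)\,u_x\,dx \;-\; c(t)\,\Vert u_x\Vert_{L^2}^2.
\]
By Young's convolution inequality and the $L^{2p}$ bound above,
\[
\Vert K_x\ast u^p\Vert_{L^2}\;\le\;\Vert K_x\Vert_{L^1}\Vert u\Vert_{L^{2p}}^{\,p}\;\le\; C_1,
\]
so Cauchy–Schwarz and Young's inequality ($ab\le \tfrac{\underline{c}}{2}a^2+\tfrac{1}{2\underline{c}}b^2$) yield
\[
\frac{d}{dt}\Vert u_x\Vert_{L^2}^2 \;\le\; \frac{C_1^2}{\underline{c}} \;-\; \underline{c}\,\Vert u_x\Vert_{L^2}^2,
\]
where $\underline{c}:=\inf_{t\ge 0}c(t)>0$ was already shown to be strictly positive in the proof of Proposition~\ref{p: u upperB} (via monotonicity of $F$). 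A standard Grönwall argument then gives
\[
\Vert u_x(t)\Vert_{L^2}^2 \;\le\; \max\Bigl\{\Vert u_0'\Vert_{L^2}^2,\;\tfrac{C_1^2}{\underline{c}^{\,2}}\Bigr\},
\]
uniformly in $t$, finishing the proof.

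The only mildly delicate point, and what I expect to be the main technical obstacle, is the justification that the differentiated equation may be paired with $u_x$ in $L^2$ and the resulting energy identity integrated in time when $u$ is a priori only $C^1(\R_+;\mathcal{V})$. I would handle this by working with the mild (Duhamel) formulation from the proof of Theorem~\ref{t: wpbs}: differentiating the identity $u(t)=e^{-\int_0^t c}u_0+\int_0^t e^{-\int_s^t c}K\ast u^p\,ds$ in $x$ gives $u_x$ as an absolutely convergent integral in $L^2$, which justifies the differential inequality above by a regularization-and-pass-to-the-limit argument, or equivalently by applying the variation-of-constants bound directly to $\Vert u_x(t)\Vert_{L^2}$.
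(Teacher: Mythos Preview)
Your argument is essentially correct and closely related to the paper's, but the paper takes the more direct route you mention only at the very end: it applies the variation-of-constants (Duhamel) formula
\[
u(t)=e^{-\int_0^t c}\,u_0+\int_0^t e^{-\int_s^t c}\,K\ast u^p\,ds
\]
and estimates the full $H^1$ norm in one stroke, using $\Vert K\ast u^p\Vert_{H^1}\le \Vert K\Vert_{W^{1,1}}\Vert u^p\Vert_{L^2}\le \Vert K\Vert_{W^{1,1}}\Vert u\Vert_{L^{p+1}}^{(p+1)/2}\Vert u\Vert_\infty^{(p-1)/2}$ together with $c(t)\ge\underline{c}>0$. Your differential-inequality/Gr\"onwall argument on $\Vert u_x\Vert_{L^2}^2$ is the infinitesimal version of the same bound; both use the same three ingredients (Propositions~\ref{p: conv func} and~\ref{p: u upperB}, and $\underline{c}>0$).

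Two small remarks. First, your claim that ``only $\Vert u_x\Vert_{L^2}$ remains'' is not quite right when $L=\infty$: the $L^{2p}$ bound does not control $\Vert u\Vert_{L^2(\R)}$. This is harmless, since exactly the same Gr\"onwall argument (pair the undifferentiated equation with $u$) or the Duhamel bound handles $\Vert u\Vert_{L^2}$; the paper sidesteps the issue by working with $\Vert\cdot\Vert_{H^1}$ directly. Second, your worry about justifying the pairing is unnecessary here: since $u\in C^1(\R_+;H^1)$, we have $u_x\in C^1(\R_+;L^2)$ with $(u_x)_t=(u_t)_x$, so the energy identity for $\Vert u_x\Vert_{L^2}^2$ holds without further regularization.
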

\begin{proof}
We used the fixed point formula~\eqref{e: u fp},
\begin{align}
     u(x,t)&=e^{-\int_0^tc(\tau)d\tau}u_0+\int_0^t e^{-\int_s^t c(\tau)d\tau}K*u^pds.
\end{align}
Since we have uniform $L\p$ and $L^\infty$ norm (Proposition~\ref{p: conv func}, Proposition~\ref{p: u upperB}), we have uniform $L^2$ bound on the integrand as follows,
\begin{align*}
     \Vert K*u^p\Vert_{H^1} \le \Vert K\Vert_{W^{1,1}} \Vert u^p\Vert_{L^2}
    \leq\Vert K\Vert_{W^{1,1}}\Vert u\Vert_{L\p}^{\frac{p+1}{2}} \Vert u\Vert_{L^\infty}^\frac{p-1}{2} <\infty.
\end{align*}
Therefore,
\begin{align*}
    \Vert u\Vert_{H^1}&\leq \Vert u_0\Vert_{H^1} +
    \Vert\int_0^t e^{-\int_s^t c(\tau)d\tau}K*u^pds\Vert_{H^1}\\
    &\le \Vert u_0\Vert_{H^1} + \int_0^t e^{-\underline{c}(t-s)}\Vert K*u^p\Vert_{H^1} ds\\
    &=\Vert u_0\Vert_{H^1} +\underline{c}^{-1}(1-e^{-\underline{c}t})\Vert K*u^p\Vert_{H^1}<\infty,
\end{align*}
where $\underline{c}=\inf_{t>0}c(t)>0$.
\end{proof}
When $L<\infty$, we can use the Rellich-Kondrachov theorem to directly infer that the solution is precompact in $L^2$. Indeed, we can show the solution is precompact in $H^1$. We will discuss in the next subsection with detail.
On the other hands, when $L=\infty$, we need further restriction on $u$ to derive the compactness result. The following lemma will be useful to show the tightness of the bell-shaped solution.
\begin{lemma}[Uniform $L^1$-norm]
Suppose $u_0\in H^1 \cap L^1$ then there exists $C>0$ such that
\begin{equation*}
    \Vert u(t,\cdot)\Vert_{L^1}<C.
\end{equation*}
\begin{proof}
The argument can be much simpler when $p\ge 2$. We claim that $u(t,x)$ is uniformly bounded in $L^p$ using the uniform $L^2$ and $L^\infty$ bound,
\begin{equation*}
    \int u^p dx\leq \Vert u\Vert_\infty^{p-2} \int u^2 dx<\infty.
\end{equation*}
Now we use Gr\"onwall type estimate on $L^1$ norm of $u$ as follows,
\begin{align*}
    \frac{d}{dt}\int udx=\int K*u^pdx - c(t)\int udx\le \Vert K\Vert_{L^1}\Vert u(t,x)\Vert_{L^p}^p - \underline{c} \int udx.
\end{align*}

The general proof starts from~\eqref{e: u fp},
\begin{align*}
    u(x,t)&=e^{-\int_0^tc(\tau)d\tau}u_0+\int_0^t e^{-\int_s^t c(\tau)d\tau}K*u^pds.
\end{align*}
Let $n\in\N$ be such that $p^n\ge 2$ so that we have uniform control in $L^{p^{n}}$, then
\begin{align*}
    \Vert K*u^p\Vert_{L^{p^{n-1}}}&\le \Vert K\Vert_{L^1}\Vert u^p\Vert_{L^{p^{n-1}}}\\
    &\le \Vert K\Vert_{L^1}\Vert u\Vert_{L^{p^{n}}}^{p}.
\end{align*}
Therefore,
\begin{align*}
    \Vert u\Vert_{L^{p^{n-1}}} \le \Vert e^{-\int_0^tc(\tau)d\tau}u_0\Vert_{L^{p^{n-1}}} +\int_0^t e^{-\int_s^t c(\tau)d\tau}\Vert K*u^p\Vert_{L^{p^{n-1}}} ds\\
    \le e^{-\underline{c}t} \Vert u_0\Vert_{L^{p^{n-1}}} +\underline{c}^{-1}(1-e^{-\underline{c}t})\Vert K*u^p\Vert_{L^{p^{n-1}}},
\end{align*}
where $\underline{c}:=\inf_{t\ge 0} c(t) >0$, constructed by Proposition~\ref{p: conv func}. So we have uniform control in $L^{p^{n-1}}$.
We recur the argument until we reach $n=0$ to get $L^1$ control.
\end{proof}
\end{lemma}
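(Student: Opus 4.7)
The plan is to reduce the question to a uniform $L^p$ bound on $u$ by integrating the equation in $x$. From~\eqref{e: solwav} and Young's convolution inequality,
\begin{equation*}
  \frac{d}{dt}\|u(t)\|_{L^1} = \int K*u^p\,dx - c(t)\int u\,dx \leq \|K\|_{L^1}\|u(t)\|_{L^p}^p - \underline{c}\,\|u(t)\|_{L^1},
\end{equation*}
where $\underline{c} := \inf_{t \geq 0} c(t) > 0$ by Proposition~\ref{p: conv func}. Once the $L^p$ norm is uniformly controlled in $t$, Gr\"onwall's inequality (together with $u_0 \in L^1$) delivers the desired uniform $L^1$ bound.

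For $p \geq 2$, I would obtain the uniform $L^p$ bound directly by interpolating between the uniform $L^2$ control (Proposition~\ref{p: uhn}) and the uniform $L^\infty$ control (Proposition~\ref{p: u upperB}), via $\|u\|_{L^p}^p \leq \|u\|_{L^\infty}^{p-2}\|u\|_{L^2}^2$.

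For $1 < p < 2$, I would bootstrap through the Duhamel formula
\begin{equation*}
  u(t) = e^{-\int_0^t c(s)\,ds}u_0 + \int_0^t e^{-\int_s^t c(\tau)\,d\tau}(K*u^p)(s)\,ds.
\end{equation*}
Pick $n \in \mathbb{N}$ minimal with $p^n \geq 2$, so that $\|u\|_{L^{p^n}}$ is uniformly bounded by the previous interpolation. Young's convolution inequality with $K \in L^1$ then yields $\|K*u^p\|_{L^{p^{n-1}}} \leq \|K\|_{L^1}\|u\|_{L^{p^n}}^p$, and substituting into the Duhamel formula together with $\underline{c} > 0$ produces
\begin{equation*}
  \|u(t)\|_{L^{p^{n-1}}} \leq \|u_0\|_{L^{p^{n-1}}} + \underline{c}^{-1}\|K\|_{L^1}\sup_{s \geq 0}\|u(s)\|_{L^{p^n}}^p,
\end{equation*}
which is uniform in $t$. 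The initial datum is handled because $u_0 \in H^1 \cap L^1 \hookrightarrow L^\infty \cap L^1 \hookrightarrow L^q$ for every $q \in [1,\infty]$. Iterating $n$ times walks the exponent down to $L^p$, at which point the Gr\"onwall step above closes the argument.

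The main technical point is the compatibility of the Young exponents across the bootstrap: because $K \in L^1$, the map $f \mapsto K*f$ preserves every $L^q$, so each iteration step trades $L^{p^k}$ control of $u$ for $L^{p^{k-1}}$ control of $u$, reaching $L^1$ in finitely many steps. If $K$ belonged only to some $L^r$ with $r > 1$, a fraction of integrability would be lost at each step and the recursion could stall before arriving at $L^1$; here the $W^{1,1}$ hypothesis on $K$ is exactly what makes the chain close.
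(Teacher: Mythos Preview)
Your proof is correct and follows essentially the same approach as the paper: interpolation between the uniform $L^2$ and $L^\infty$ bounds for $p\ge 2$, and for $1<p<2$ a bootstrap through the Duhamel formula using Young's inequality with $K\in L^1$ to step the exponent down from $L^{p^n}$ (where $p^n\ge 2$) toward $L^1$. The only cosmetic difference is that the paper iterates the Duhamel step all the way to $L^{p^0}=L^1$, while you stop at $L^p$ and finish with the Gr\"onwall argument; both are equivalent.
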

Using the previous lemma, we will show that the solution is precompact in $L^2$. Prior to that, we show some interesting consequences of the lemma. Directly, we can show that the solution cannot converge uniformly to $0$.
\begin{corollary}\label{c: nz}
Suppose $u_0\in H^1 \cap L^1$  then
    $\lim\inf_{t\to\infty} \Vert u(t,\cdot)\Vert_\infty >0$.
\end{corollary}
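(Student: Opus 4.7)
The plan is to derive a contradiction from the monotone convergence of $\Vert u(t,\cdot)\Vert_{L^{p+1}}$ to $1$ (established in Proposition~\ref{p: conv func}) together with the uniform $L^1$-bound from the preceding lemma, using an elementary interpolation.

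First I would record the interpolation inequality
\[
\Vert u(t,\cdot)\Vert_{L^{p+1}}^{p+1} \;=\; \int u(t,x)^{p+1}\,dx \;\leq\; \Vert u(t,\cdot)\Vert_{\infty}^{p}\,\Vert u(t,\cdot)\Vert_{L^1},
\]
which follows by pulling $p$ factors of $u$ out in supremum norm and leaving one inside the integral. Since $u_0\in H^1\cap L^1$, the preceding lemma supplies a constant $C>0$ independent of $t$ with $\Vert u(t,\cdot)\Vert_{L^1}\leq C$, so the above bound reads $\Vert u(t,\cdot)\Vert_{L^{p+1}}^{p+1}\leq C\Vert u(t,\cdot)\Vert_\infty^{p}$.

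Next I would argue by contradiction. Suppose that $\liminf_{t\to\infty}\Vert u(t,\cdot)\Vert_\infty = 0$. Then there exists a sequence $t_n\to\infty$ along which $\Vert u(t_n,\cdot)\Vert_\infty\to 0$, and plugging into the inequality yields $\Vert u(t_n,\cdot)\Vert_{L^{p+1}}\to 0$. This contradicts Proposition~\ref{p: conv func}, which asserts $\lim_{t\to\infty}\Vert u(t,\cdot)\Vert_{L^{p+1}}=1$. Hence $\liminf_{t\to\infty}\Vert u(t,\cdot)\Vert_\infty>0$.

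There is essentially no obstacle: the two ingredients (uniform $L^1$-bound and $L^{p+1}$-norm tending to $1$) are already in hand, and the interpolation is one line. The only subtlety worth checking is positivity of $u$ so that the $L^{p+1}$ integrand equals $u^{p+1}$, but this was guaranteed in Theorem~\ref{t: wpbs}.
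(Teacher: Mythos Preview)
Your proof is correct. Both your argument and the paper's are short contradiction arguments of the same flavor, but they interpolate different quantities. The paper bounds
\[
c(t)=\int u^p\,K\ast u^p\,dx \;\le\; \Vert u(t,\cdot)\Vert_\infty^{p}\,\Vert K\Vert_{L^1}\,\Vert u(t,\cdot)\Vert_{L^p}^{p},
\]
and then uses that $c(t)$ converges to a positive constant together with the uniform $L^p$ bound (obtained as an intermediate step in the proof of the $L^1$ lemma). You instead bound $\Vert u\Vert_{L^{p+1}}^{p+1}\le \Vert u\Vert_\infty^{p}\,\Vert u\Vert_{L^1}$ and invoke the $L^1$ bound itself plus $\Vert u\Vert_{L^{p+1}}\to 1$. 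Your route is slightly cleaner in that it uses exactly the conclusion of the preceding lemma rather than a byproduct of its proof, and it avoids any mention of $K$; the paper's route has the mild advantage of tying the result directly to the scaling factor $c(t)$, which is the quantity driving the dynamics.
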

\begin{proof}
        For the sake of contradiction, let's assume that $\lim\inf_{t\to\infty} \Vert u(t,\cdot)\Vert_\infty =0$.
        
        \begin{align*}
          c(t)= \int u^pK*u^p dx \leq  \Vert u(t,\cdot)\Vert_\infty^p \int K*u^p dx\leq  \Vert u(t,\cdot)\Vert_\infty^p \Vert K\Vert_{L^1}\Vert u(t,\cdot)\Vert_{L^p}^p.
        \end{align*}
        Since we achieve the uniform control on $L^p$ norm and $c(t)$ converges to a positive constant this leads to the contradiction.
\end{proof}
The next corollary will be useful to show $L^2$-compactness.
\begin{corollary}[$L^q$ tightness]\label{c: lqt}
    Let $q>1$ and $u(t,x)$ be a solution of \eqref{e: solwav} such that $u_0\in H^1\cap L^1$ is bell shaped. Then for any $\epsilon>0$ there exists $R>0$ such that 
    \begin{equation*}
        \int_{|x|>R} u^q dx <\epsilon.
    \end{equation*}
\end{corollary}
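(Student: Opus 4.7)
The plan is to exploit the bell-shape of $u(t,\cdot)$, preserved along the flow by Theorem~\ref{t: ubs}, together with the uniform $L^1$ bound from the preceding lemma. For a bell-shaped (even, monotone decreasing on $(0,\infty)$) nonnegative function, the value at the point $R$ controls the entire tail, which is the geometric content I will exploit.

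First, I would observe that for each fixed $t\ge 0$, bell-shape implies $u(t,x)\le u(t,R)$ for all $|x|\ge R$. Since $u(t,\cdot)$ is nonnegative and even, monotonicity also gives the crude pigeonhole bound
\[
2R\, u(t,R) \;\le\; \int_{-R}^{R} u(t,x)\,dx \;\le\; \Vert u(t,\cdot)\Vert_{L^1} \;\le\; C_1,
\]
where $C_1$ is the time-independent $L^1$ bound from the previous lemma. Hence $u(t,R)\le C_1/(2R)$ uniformly in $t\ge 0$.

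Next, for $q>1$, I would split one power of $u$ off from $u^q$ and estimate
\[
\int_{|x|>R} u(t,x)^q\,dx \;\le\; u(t,R)^{q-1}\int_{|x|>R} u(t,x)\,dx
\;\le\; \Big(\frac{C_1}{2R}\Big)^{q-1} C_1 \;=\; \frac{C_1^{\,q}}{(2R)^{q-1}}.
\]
Since $q-1>0$, choosing $R$ large enough makes the right-hand side less than $\epsilon$, uniformly in $t\ge 0$, which is the desired tightness.

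The argument is essentially routine once the bell-shape persistence is in hand; no real obstacle remains, since the uniform $L^1$ control is supplied by the previous lemma and the uniform $L^\infty$ control by Proposition~\ref{p: u upperB} (the latter is not even needed here, though it would give an alternative proof via interpolation between $L^1$ and $L^\infty$ on the tail). The one subtle point to note is that the argument genuinely requires $q>1$: at $q=1$ the bell-shape alone does not prevent mass from drifting to infinity, and that endpoint case is not claimed.
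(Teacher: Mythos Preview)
Your proposal is correct and follows essentially the same approach as the paper: both use bell-shape persistence together with the uniform $L^1$ bound to obtain the Chebyshev-type pointwise estimate $u(t,x)\le C/(2|x|)$, and then control the $L^q$ tail. The only cosmetic difference is that the paper integrates the pointwise bound directly as $\int_{|x|>R}(C/2|x|)^q\,dx$, whereas you split $u^q=u^{q-1}\cdot u$ and use the $L^1$ bound once more; both yield the same $R^{1-q}$ rate.
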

\begin{proof}
      Note that $u(t,x)$ is bell-shaped. Using the previous lemma and the Chebyshev's inequality, for $x>0$
        \begin{align*}
            2x u(t,x) \leq \Vert u(t,x)\Vert_{L^1}<C.
        \end{align*}
        Therefore,
    \begin{equation*}
        u(t,x)\le C/2x
    \end{equation*}
    
    \begin{equation*}
        \int_{|x|>R} u^q dx < (\frac{C}{2})^q\int_{|x|>R} 1/|x|^q dx =\Tilde{C} R^{-q+1}<\epsilon.
    \end{equation*}
\end{proof}

Now we show that any bell-shaped solutions are precompact in $L^2(\R)$.
\begin{theorem}[Compactness on $\R$]\label{t: comr}
Let $u(t,x)$ be a solution of~\eqref{e: solwav}. If $u_0\in H^1(\R)$ is bell-shaped then $\{u(t,x)\}_{t\ge0}$ is precompact in $L^2(\R)$.
\end{theorem}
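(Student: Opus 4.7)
The plan is to verify the three conditions of the Kolmogorov--Riesz--Fr\'echet compactness criterion in $L^2(\R)$ for the family $\mathcal{F}:=\{u(t,\cdot)\}_{t\ge 0}$: uniform $L^2$-boundedness, uniform equicontinuity of translations in $L^2$, and uniform tightness at infinity. Once these three are in hand, the conclusion follows immediately from the classical criterion.

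For the first two conditions, I would invoke Proposition~\ref{p: uhn}, which supplies a time-independent bound $\sup_{t\ge 0}\|u(t,\cdot)\|_{H^1(\R)}\le C$. This directly yields $\sup_{t\ge 0} \|u(t,\cdot)\|_{L^2}\le C$, handling uniform $L^2$-boundedness, and simultaneously gives equicontinuity via the standard translation inequality
\[
\|u(t,\cdot+h)-u(t,\cdot)\|_{L^2}^{2}\;\le\;h^{2}\,\|u_x(t,\cdot)\|_{L^2}^{2}\;\le\;h^{2}C^{2},
\]
so that $\|u(t,\cdot+h)-u(t,\cdot)\|_{L^2}\to 0$ as $h\to 0$ uniformly in $t$.

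For the tightness condition, Corollary~\ref{c: lqt} with $q=2$ is exactly what is needed: since $u_0$ is bell-shaped (and, as the corollary requires, taken to lie in $L^1$ as well), Theorem~\ref{t: ubs} propagates the bell-shape along the flow, and the uniform $L^1$-bound of the preceding lemma yields the pointwise decay $u(t,x)\le \|u(t,\cdot)\|_{L^1}/(2|x|)$. From this, $\int_{|x|>R}u(t,x)^2\,dx$ is dominated by a tail of $\int_{|x|>R} |x|^{-2}\,dx$, uniformly in $t$, which can be made arbitrarily small by taking $R$ large.

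The main obstacle is precisely this tightness step: a generic bell-shaped $H^1$-function on $\R$ need not be integrable, and bell-shape alone only supplies the weak pointwise estimate $u(x)\lesssim \|u\|_{L^2}/\sqrt{|x|}$, which fails to control the $L^2$-tail. Thus the argument genuinely uses the $L^1$-hypothesis entering via Corollary~\ref{c: lqt}, and with that input the Kolmogorov--Riesz--Fr\'echet theorem closes the proof, producing the desired precompactness of $\{u(t,\cdot)\}_{t\ge 0}$ in $L^2(\R)$.
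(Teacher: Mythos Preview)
Your argument is correct, and it follows a somewhat different path from the paper's. The paper extracts a candidate limit first: it uses Helly's selection theorem (exploiting the monotonicity inherent in bell-shape) to get a pointwise-convergent subsequence, then upgrades this to locally uniform convergence via Arzel\`a--Ascoli using the pointwise bounds on $u$ and $u_x$ from Proposition~\ref{p: u upperB}, and finally splits $\|u(t_k)-\tilde u\|_{L^2}^2$ into a compact piece and a tail, the latter controlled by Corollary~\ref{c: lqt}. You instead bypass the explicit limit and invoke Kolmogorov--Riesz--Fr\'echet, drawing both $L^2$-boundedness and equicontinuity of translates directly from the uniform $H^1$ bound of Proposition~\ref{p: uhn}, and using the same tightness corollary. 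Your route is a bit more economical for the bare precompactness statement; the paper's route has the side benefit of exhibiting a continuous pointwise limit $\tilde u$, which it reuses downstream in the proof of Theorem~\ref{t: npw}. You are also right to flag that Corollary~\ref{c: lqt} needs $u_0\in L^1$; the paper's own proof relies on the same corollary, so this is a shared implicit hypothesis rather than a defect unique to your argument.
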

\begin{proof}
    Since $u(t,\cdot)$ is bell-shaped for all $t\ge0$ by Theorem~\ref{t: ubs}, we can use Helly's selection theorem to have a (further) subsequence $\{u(t_k)\}_{k\in\N}$ and the pointwise limit $\ut$. Recall that we have uniform bound on $u$ and $u_x$ by Proposition~\ref{p: u upperB}. This means, by Arzela-Ascoli theorem, on any compact interval, the (subsequential) convergence is uniform and $\ut$ is continuous on $\R$.
    
    Now we claim that $\Vert u(t_k)-\ut\Vert_{L^2}\to 0$ as $k\to\infty$. For given $\epsilon>0$, we use Corollary~\ref{c: lqt} to find $R>0$ to have small $L^2$-tail bound and we choose $k\in \N$ large enough to have small $L^2$-bound, i.e.,
    \begin{equation*}
        \Vert u(t_k)-\ut\Vert_{L^2}^2 \leq  \int_{|x|\le R} (u(t_k)-\ut)^2 dx +\int_{|x|>R} (u(t_k)-\ut)^2 dx<\epsilon.\qedhere
    \end{equation*}
\end{proof}

\paragraph{\textbf{$H^1$-compactness of the solutions ($L<\infty$)}}
In the nice case, when $L<\infty$ and $u_x$ is uniformly bounded, we can use Arzela-Ascoli's theorem to directly infer the compactness of solutions.
\mnote{bell shaped: precompact in uniform norm}
\begin{corollary}\label{c: bell compact}
    Assume $L<\infty$ and let $u\in\mathcal{V}$ be a solution of \eqref{e: solwav} such that $u_x\in L^\infty$. Then $\{u(x,t)\}_{t\ge 0}$ is precompact in $C_0(\mathds{T}_L)$. 
\end{corollary}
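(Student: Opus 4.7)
The plan is a direct application of the Arzela--Ascoli theorem on the compact space $\mathds{T}_L$, for which I need to verify uniform boundedness and equicontinuity of the family $\{u(t,\cdot)\}_{t\ge 0}$.

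First, I would invoke Proposition~\ref{p: u upperB}, which already provides a time-independent upper bound $M(K,u_0)$ so that $\sup_{t\ge 0}\Vert u(t,\cdot)\Vert_\infty \le M$. Combined with the positivity of $u$ established in Theorem~\ref{t: wpbs}, this gives uniform boundedness of the family in $C_0(\mathds{T}_L)$.

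Next, for equicontinuity, I would use the hypothesis $u_x \in L^\infty$ together with (again) the spatial gradient bound from Proposition~\ref{p: u upperB}, which guarantees $\limsup_{t\to\infty}\Vert u_x(t,\cdot)\Vert_\infty \le m$. Putting these together, there exists a time-independent constant $m'$ with $\Vert u_x(t,\cdot)\Vert_\infty \le m'$ for all $t\ge 0$. Consequently, for any $x,y\in \mathds{T}_L$,
\begin{equation*}
    |u(t,x)-u(t,y)| \le m' |x-y|,
\end{equation*}
so the family is in fact uniformly Lipschitz, hence equicontinuous.

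Since $\mathds{T}_L$ is compact when $L<\infty$, Arzela--Ascoli yields that $\{u(t,\cdot)\}_{t\ge 0}$ is precompact in $C_0(\mathds{T}_L)$. No serious obstacle is expected here; the work has essentially already been done in Proposition~\ref{p: u upperB}, and the corollary is the clean packaging of that pointwise control into a compactness statement on compact spatial domains. The only mildly delicate point is combining the hypothesis $u_x\in L^\infty$ (imposed on the initial data implicitly via the solution) with the asymptotic bound from Proposition~\ref{p: u upperB} to get a single uniform-in-time Lipschitz constant, rather than merely an asymptotic one.
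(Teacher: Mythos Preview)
Your proposal is correct and matches the paper's approach exactly: the paper does not give a detailed proof of this corollary, but the sentence immediately preceding it states that ``we can use Arzela--Ascoli's theorem to directly infer the compactness of solutions,'' which is precisely what you do. Your observation about combining the hypothesis $u_x\in L^\infty$ with the asymptotic bound from Proposition~\ref{p: u upperB} to obtain a single uniform-in-time Lipschitz constant is the only point requiring care, and you have identified it correctly.
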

Note that we can relax the assumption of uniform bound on $u_x$, using usual diagonalization trick. In fact, we can choose another approach to prove compactness of the solution, even with $L^2$-initial data.
% Corollary~\ref{c: bell compact} claims that \eqref{e: solwav} produces nice bell-shaped trajectories when the initial data is bell-shaped. This is quite natural since the equation is inspired by solitary waves. Later we will see that the consistency of the shape is advantageous to prove convergence of the solution. Now one can ask what happens for the solutions with arbitrary initial condition.

Recall that in semi-linear heat equation, the Laplacian operator provides smoothing action resulting the solution trajectory to be precompact. Even though not as powerful as the Laplacian operator, convolution with a nice function is renowned for regularization. From the proof of Theorem~\ref{t: wpbs} we have
\begin{align}
    %  u'+c(t)u&= K*u^p\\
    %  e^{\int_0^tc(\tau)d\tau}u&=u_0+
    %  \int_0^t e^{\int_0^sc(\tau)d\tau}K*u^pds\\
     u(x,t)&=e^{-\int_0^tc(\tau)d\tau}u_0+\int_0^t e^{-\int_s^t c(\tau)d\tau}K*u^pds.\label{e: u fp}
\end{align}
Although we cannot claim that $u(x,t)$ is smoother than the initial data $u_0$, we can see that $u(x,t)$ has a smoother part.
Indeed, the fixed point formula~\eqref{e: u fp} suggests that $u$ consists of $u_0$ with a scaling factor and a smoother trajectory created by the convolution with $K$. It is immediate to see that the first part of~\eqref{e: u fp} converges to $0$ as $t\to\infty$, since  $c(t)$ converges to a positive constant as $t\to\infty$. So the first part of~\eqref{e: u fp} is precompact. Therefore once we prove the second part of~\eqref{e: u fp} is precompact, we prove that $\{u(t,x)\}_{t\ge0}$ is precompact.
\begin{theorem}\label{t: ucomp}
    Let $L<\infty$ and $u(t,x)$ be a solution of~\eqref{e: solwav}. If $u_0\in H^1(\mathds{T}_L)$ then $\{u(t,x)\}_{t\ge0}$ is precompact in $H^1(\mathds{T}_L)$.
    % \begin{enumerate}
    %     \item If $L<\infty$ and $u_0\in H^1(\mathds{T}_L)$ then $\{u(t,x)\}_{t\ge0}$ is precompact in $H^1(\mathds{T}_L)$.
    %     % \item If $L=\infty$, $u_0\in H^1(\R)$, bell-shaped then $\{u(t,x)\}_{t\ge0}$ is precompact in $H^1(\R)$.
    % \end{enumerate}
\end{theorem}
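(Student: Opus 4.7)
The plan is to exploit the variation-of-constants formula~\eqref{e: u fp} to decompose
\[
u(t,x) = w(t,x) + v(t,x), \qquad w(t,x) := e^{-\int_0^t c(\tau)d\tau} u_0(x), \qquad v(t,x) := \int_0^t e^{-\int_s^t c(\tau)d\tau} K\ast u^p(s,\cdot)(x)\,ds,
\]
and to prove that each family $\{w(t)\}_{t\ge 0}$ and $\{v(t)\}_{t\ge 0}$ is precompact in $\mathcal{V}=H^1(\mathds{T}_L)$. Since the sum of precompact sets is precompact, the theorem will follow.

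First I would handle the easy piece $w(t)$. Because $c(t)$ converges to a positive constant (Proposition~\ref{p: conv func}), the scalar $\alpha(t):=e^{-\int_0^t c(\tau)d\tau}$ is a continuous map from $[0,\infty]$ into $[0,1]$ with $\alpha(\infty)=0$. Thus $\{w(t)\}_{t\ge 0}=\{\alpha u_0:\alpha\in[0,1]\}$ is the continuous image in $H^1(\mathds{T}_L)$ of a compact interval, hence compact.

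The core of the argument is the precompactness of $\{v(t)\}_{t\ge 0}$, which I intend to obtain by showing it is uniformly bounded in $H^2(\mathds{T}_L)$ and then invoking the Rellich--Kondrachov theorem. The main obstacle here is that $K\in W^{1,1}$ only, so the convolution $K\ast u^p$ a priori gains only one derivative. The key observation is that the missing regularity can be transferred from $K$ to $u^p$: on the torus, integration by parts with no boundary terms yields
\[
(K\ast u^p)''(x) = \int_{\mathds{T}_L} K'(x-y)\,(u^p)'(y)\,dy,
\]
so by Young's convolution inequality
\[
\Vert (K\ast u^p)''\Vert_{L^2} \le \Vert K'\Vert_{L^1}\,\Vert (u^p)'\Vert_{L^2} \le p\,\Vert K'\Vert_{L^1}\,\Vert u\Vert_{L^\infty}^{p-1}\,\Vert u_x\Vert_{L^2}.
\]
By Proposition~\ref{p: u upperB} the sup-norm $\Vert u(t)\Vert_\infty$ is uniformly bounded in $t$, and by Proposition~\ref{p: uhn} so is $\Vert u_x(t)\Vert_{L^2}$; combined with the trivial $L^2$ bound $\Vert K\ast u^p\Vert_{L^2}\le \Vert K\Vert_{L^1}\Vert u\Vert_\infty^{p-1}\Vert u\Vert_{L^2}$, this gives a time-independent constant $C_0$ such that $\Vert K\ast u^p(s,\cdot)\Vert_{H^2}\le C_0$ for all $s\ge 0$.

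Finally I would integrate in time using the exponential factor. Setting $\underline{c}:=\inf_{t\ge 0}c(t)>0$ (Proposition~\ref{p: conv func}) and applying Minkowski's integral inequality in $H^2$,
\[
\Vert v(t)\Vert_{H^2} \le \int_0^t e^{-\underline{c}(t-s)}\,\Vert K\ast u^p(s,\cdot)\Vert_{H^2}\,ds \le \underline{c}^{-1} C_0.
\]
Hence $\{v(t)\}_{t\ge 0}$ is bounded in $H^2(\mathds{T}_L)$, and by Rellich--Kondrachov it is precompact in $H^1(\mathds{T}_L)$. Combined with the compactness of $\{w(t)\}_{t\ge 0}$, this yields precompactness of $\{u(t)\}_{t\ge 0}$ in $\mathcal{V}$.
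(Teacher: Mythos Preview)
Your proof is correct and follows essentially the same route as the paper: decompose via the variation-of-constants formula~\eqref{e: u fp}, note that the $u_0$-part is a compact one-parameter family, and bound the convolution part uniformly in $H^2$ (via $(K\ast u^p)''=K'\ast(u^p)'$ together with the uniform $L^\infty$ bound on $u$ and the uniform $H^1$ bound from Proposition~\ref{p: uhn}) to conclude by Rellich--Kondrachov. The only cosmetic difference is that the paper re-derives the uniform $H^1$ bound on $u$ inside the proof rather than citing Proposition~\ref{p: uhn}.
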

\begin{proof}
from \eqref{e: u fp} and the previous discussion, it is enough to show that the trajectory $t\mapsto\int_0^t e^{-\int_s^t 2pE(u)d\tau}K*u^pds$ is compact in $H^1(\mathds{T}_L)$.
From Proposition~\ref{p: conv func} we know that $c(t)$ stays positive and converges, i.e., there exists $\underline{c}:=\inf_{t\ge 0} c(t) >0$. Without specifying the norm, we have
\begin{align*}
    \Vert\int_0^t e^{-\int_s^t c(\tau)d\tau}K*u^pds\Vert &\le \int_0^t e^{-\underline{c}(t-s)}\Vert K*u^p(s,\cdot)\Vert ds\\
    &=\underline{c}^{-1}(1-e^{-\underline{c}t})\sup_{\tau\ge0}\Vert K*u^p(\tau,\cdot)\Vert.
\end{align*}
So once we achieve time independent control on the integrand, we can control the integral term likewise.

Firstly we use Proposition~\ref{p: conv func} and Proposition~\ref{p: u upperB} to achieve uniform $H^1$ bound,
\begin{align*}
     \Vert K*u^p\Vert_{H^1}&=\Vert K*u^p\Vert_{L^2}+\Vert K_x*u^p\Vert_{L^2}\\
    &\le \Vert K\Vert_{W^{1,1}} \Vert u^p\Vert_{L^2}\\
    &\leq \Vert K\Vert_{W^{1,1}} \Vert u\Vert_{L\p}^{\frac{p+1}{2}} \Vert u\Vert_\infty^\frac{p-1}{2} <\infty.
\end{align*}
Since $u_0\in\mathcal{V}$, by \eqref{e: u fp} we have uniform $H^1$ bound on $u$. 

Finally, we claim that $\Vert K*u^p\Vert_{H^2}<C$ for a time independent $C>0$. It is enough to check $\Vert K_x*(u\pp u_x)\Vert_{L^2}.$
\begin{align*}
    \Vert K_x*(u\pp u_x)\Vert_{L^2}&=\Vert \int K_x(x-y)u\pp(y)u_x(y)dy\Vert_{L^2}\\
    &\le  \Vert K\Vert_{W^{1,1}} \Vert \Vert u\Vert_\infty\pp \Vert u\Vert_{H^1}<\infty.
\end{align*}
So the trajectory $t\mapsto\int_0^t e^{-\int_s^t 2pE(u)d\tau}K*u^pds$ is uniformly bounded in $H^2$, and compact in $H$. Therefore by the Rellich–Kondrachov theorem we have the result.
\end{proof}

\section{Nontrivial wave profiles}
The famous LaSalle's invariance principle states that any precompact of gradient flow of $C^1$ energy $E$ approaches to the set of critical points of $E$. Since we achieve the compactness, one can expect that the subsequential limit point to be a critical point. Recall that the uniform lower bound holds only on the compact domain, so LaSalle's principle cannot be directly applied. So we will perform careful analysis to show the limit point on $\R$ satisfies the fixed point, which shows the existence of the solitary wave profile. Additionally, we show the instability of the constant solution, so that we have nontrivial limit point with the choice of the initial data.
\paragraph{\textbf{Instability of constant solutions}}
Note that $\Bar{u}$ is a constant satisfying $L\bar{u}\p=1$, $\Bar{u}$ is a fixed point of \eqref{e: solwav}. Considering that we aim to calculate nontrivial equilibria of \eqref{e: solwav}, we need to investigate the stability of $\bar{u}.$

Since \eqref{e: solwav} is linear in $K$, without loss of generality we assume that $\int K(y)dy=1.$
 In this case, 
\begin{equation*}
    2pE(\Bar{u})=\int_{\mathds{T}_L}\Bar{u}^pK*\Bar{u}^p=L\bar{u}^{2p}=\bar{u}\pp,
\end{equation*}

Recall that
\begin{align*}
  DF(u)(v) = e^{-\frac{2p}{p+1}\int u\p dx}\int u\pp(K*u^p - c(t)u)vdx.
\end{align*}
The Hessian at an equilibrium $\ut$ is the following,
\begin{align*}
  D^2F(\ut)(v,w) &= D(e^{-\frac{2p}{p+1}\int \ut\p dx})(w) \int \ut\pp\cancelto{0}{(K*\ut^p - c(t)u)}vdx\\& + e^{-\frac{2p}{p+1}\int \ut\p dx}D(\int \ut\pp(K*\ut^p - c(t)\ut)vdx)(w)\\
  &=e^{-\frac{2p}{p+1}\int \ut\p dx}\int (p-1)\ut^{p-2}w\cancelto{0}{(K*\ut^p - c(t)\ut)}vdx\\ 
  &+ e^{-\frac{2p}{p+1}\int \ut\p dx} \int \ut\pp\Big(pK*(\ut\pp w) \\& - 2p(\int \ut\pp (K*\ut^p) wdx)\ut  -2pE(\ut)w\Big)vdx\\
  &=\langle v,D^2F(\ut)w\rangle_{L^2}.
\end{align*}
\begin{align}\label{e: fl hess}
    D^2F(\Bar{u})(v)=e^{-\frac{2p}{p+1}}\Bar{u}^{2p-2}(pK*v -\frac{2p}{L}\int_{\mathds{T}_L}vdy -v).
\end{align}
Note that, the constant function $\mathds{1}$ is an eigenvector with negative eigenvalue,
\begin{equation*}
    D^2F(\bar{u})(\mathds{1})=-e^{-\frac{2p}{p+1}}\Bar{u}^{2p-2}(p+1)\mathds{1}.
\end{equation*}
In fact, other possible eigenvectors pointing outside of the invariant manifold $\mathcal{M}$ have the same negative eigenvalue. Let's take a look at the following eigenvalue problem,
\begin{align*}
    K*v -\frac{2}{L}\int_{\mathds{T}_L}vdy&=\lambda v\\
    \int_{\mathds{T}_L} (K*v -\frac{2}{L}\int_{\mathds{T}_L}vdy) dx&=\lambda\int_{\mathds{T}_L} vdx\\
   - \int_{\mathds{T}_L} vdx&=\lambda\int_{\mathds{T}_L} v dx.
\end{align*}
So we can conclude that, either
\begin{equation*}
    \lambda=-1 \text{ or } \int_{\mathds{T}_L} v dx=0.
\end{equation*}
Note that the set of mean zero functions corresponds to the tangent manifold at the constant equilibrium $\bar{u}$, i.e., $T_{\bar{u}}\mathcal{M}=\{v\in\mathcal{V}: \int_{\mathds{T}_L} v dx=0 \}$.
So for such $v\in T_{\bar{u}}\mathcal{M}$,
\begin{align}
    D^2F(\Bar{u})(v)=e^{-\frac{2p}{p+1}}\Bar{u}^{2p-2}(pK*v -v).
\end{align}
We claim that $ D^2F(\Bar{u})$ has positive eigenvalues, which implies the instability of the constant solution $\bar{u}.$
\paragraph{\textbf{Spectrum of convolution operator}}
Note that for $\lambda\in\R$, $e^{i\lambda x}$ is an eigenvector of convolution operators,
\begin{align*}
        K*e^{i\lambda x}&=\int e^{i\lambda(x-y)}K(y)dy\\
        &=e^{i\lambda x}\int e^{-i \lambda y}K(y)dy\\
        &=\hat{K}(\lambda) e^{i\lambda x}.
\end{align*}
Recall that the perturbation $\phi$ we chose has mean zero, i.e. $\int \phi dx=0$. In order to satisfy $\int_{\mathds{T}_L}e^{i\lambda y}dy=0$, or $e^{i\lambda x}\in H^1(\mathds{T}_L)$, $\lambda$ must be resonant with $L$, i.e., $\lambda=\frac{2n\pi}{L}$ for $n\in\N$.

Since $K$ is even, we can check that the corresponding eigenvalue $\hat{K}(\lambda)$ of the eigenvector $e^{i\lambda x}$ is positive,
\begin{align*}
    \hat{K}(\lambda)&=\int (\cos(\lambda y)-i\sin(\lambda y))K(y)dy\\
    &=\int \cos(\lambda y)K(y)dy>0.
\end{align*}
% To satisfy $\int_{\mathds{T}_L}\cos(\lambda y)dy=0$, $\lambda=\frac{2n\pi}{L}$ for $n\in\N$. 

Therefore, the eigenvalues of $D^2F(\Bar{u})$ on $\mathcal{M}$ is $e^{-\frac{2p}{p+1}}\Bar{u}^{2p-2}(p \hat{K}(\frac{2n\pi}{L})-1)$.
Because $\hat{K}(\frac{2n\pi}{L})\to 1$ as $L\to \infty$, for any $p>1$ there exists large $L>0$ such that $\Bar{u}$ is a saddle point.
Moreover, every eigenvalues of $D^2E(\Bar{u})$ becomes positive as $L\to \infty$, which corresponds to the fact that $0$ is a source when $L=\infty$.

\paragraph{\textbf{Existence of Nontrivial Wave Profiles}}
One of the benefits of a gradient system is that by LaSalle's invariance principle for any bounded solution $u(t)$ of gradient flow of $C^1$ energy, we have $\lim_{t\to\infty}\Vert u'(t)\Vert_\mathcal{H} =0$. So by showing the compactness of trajectory, we infer the existence of periodic and solitary wave profiles. Furthermore, we can show that there is a nontrivial wave profile by starting with $u_0=\bar{u}+\delta\cos(\frac{2\pi}{L}x)$ for a small enough $\delta>0$, we can conclude that there is a nontrivial equilibrium of~\eqref{e: solwav}.
\begin{theorem}[Existence of nontrivial wave profiles]\label{t: npw}
For large enough $L\in(0,\infty]$, there exists nontrivial equilibrium $\Tilde{u}\in\mathcal{M}$ of ~\eqref{e: solwav} such that $K\ast \Tilde{u}^p =\Tilde{c}\Tilde{u}$ for some $\Tilde{c}>0$ .
\end{theorem}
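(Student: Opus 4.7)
The plan is to run the concentration-dispersion flow starting from a small perturbation of the constant equilibrium $\bar{u}$ along an unstable direction, exploit precompactness of the orbit, and identify any $\omega$-limit point as a nontrivial equilibrium lying on $\mathcal{M}$.

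For $L$ large enough that $p\hat{K}(2\pi/L) > 1$, I would take the initial data $u_0(x) := \bar{u} + \delta\cos(2\pi x/L)$ with $0 < \delta < \bar{u}$, which is positive, bell-shaped, and in $\mathcal{V}$. As computed following~\eqref{e: fl hess}, the perturbation $v := \cos(2\pi x/L)$ satisfies $\int v\,dx = 0$ (so $v \in T_{\bar{u}}\mathcal{M}$) and $\langle D^2 F(\bar{u})v, v\rangle_{L^2} > 0$. A Taylor expansion therefore yields $F(u_0) > F(\bar{u})$, and by monotonicity of $F$ (Proposition~\ref{p: conv func}) we have $F(u(t)) \ge F(u_0) > F(\bar{u})$ for all $t \ge 0$, with $F(u(t)) \nearrow F_\infty$ for some finite $F_\infty$.

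Next I would invoke precompactness of the orbit: Theorem~\ref{t: ucomp} gives precompactness in $\mathcal{V}$ when $L < \infty$, and Theorem~\ref{t: comr} gives precompactness in $L^2$ when $L = \infty$ (using that $u_0$ is bell-shaped). For any $t_n \to \infty$, extract a subsequence with $u(t_n) \to \tilde{u}$. To show $\tilde{u}$ is an equilibrium I would pass through the time-shifted solutions $u_n(t) := u(t + t_n)$: the estimates of Propositions~\ref{p: u upperB} and~\ref{p: uhn} together with~\eqref{e: solwav} give a uniform bound on $u_n'$, hence equicontinuity in $t$, and a further extraction produces a solution $w(t)$ of~\eqref{e: solwav} on $[0,\infty)$ with $w(0) = \tilde{u}$. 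Since $F(u_n(t)) \to F_\infty$ pointwise in $t$, we have $F(w(t)) \equiv F_\infty$, and the dissipation identity
\[
\frac{d}{dt} F(w(t)) = e^{-\frac{2p}{p+1}\int w\p dx}\int w\pp\bigl(K\ast w^p - c(t)w\bigr)^2 dx \ge 0
\]
forces $K\ast w^p = c(t)w$, i.e., $w'(t) \equiv 0$ and $w(t) \equiv \tilde{u}$. Thus $K\ast \tilde{u}^p = \tilde{c}\tilde{u}$ with $\tilde{c} := \int \tilde{u}^p K\ast \tilde{u}^p dx$.

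It remains to check that $\tilde{u}$ is nontrivial and lies on $\mathcal{M}$. The attractivity of $\mathcal{M}$ expressed by~\eqref{e: ulpcon} gives $\int \tilde{u}\p dx = 1$, so $\tilde{u} \in \mathcal{M}$ and in particular $\tilde{u} \not\equiv 0$ (hence $\tilde{c} > 0$); alternatively, on $\mathds{T}_L$ one uses Corollary~\ref{c: low u} and on $\R$ one uses Corollary~\ref{c: nz}. The strict inequality $F(\tilde{u}) = F_\infty > F(\bar{u})$ rules out $\tilde{u} = \bar{u}$. The most delicate point I anticipate is justifying the limit passage in the nonlinear convolution term $K\ast u_n^p$ in the case $L = \infty$, where compactness is only in $L^2$; here one leans on the uniform $L^\infty$ bound from Proposition~\ref{p: u upperB} (so $u_n^p \to w^p$ in $L^2_{\mathrm{loc}}$) and the $L^q$-tightness of Corollary~\ref{c: lqt} to obtain convergence of $K\ast u_n^p$ in $L^2$.
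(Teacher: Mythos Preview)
Your strategy matches the paper's for $L<\infty$: perturb $\bar u$ along an unstable direction, invoke precompactness, and run LaSalle. This part is fine, and in fact the step from ``dissipation $=0$'' to ``$K\ast w^p=c\,w$'' is justified precisely because Corollary~\ref{c: low u} gives a uniform positive lower bound on $u$, so the weight $w^{p-1}$ in the dissipation integral is bounded away from zero.

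The $L=\infty$ case has two gaps. First, the initial datum $\bar u+\delta\cos(2\pi x/L)$ does not make sense on $\R$: there is no constant equilibrium in $\mathcal M$, and the cosine is not in $H^1(\R)$. You should simply start from any positive bell-shaped $u_0\in H^1\cap L^1$; nontriviality then reduces to $\tilde u\not\equiv 0$, which is exactly what Corollary~\ref{c: nz} delivers, and there is no $\bar u$ to rule out. Second, and more substantively, on $\R$ the implication ``$\tfrac{d}{dt}F(w)=0\Rightarrow K\ast w^p=c\,w$'' fails as stated: the dissipation identity only yields $w^{p-1}\bigl(K\ast w^p-c\,w\bigr)^2=0$ a.e., hence $K\ast w^p=c\,w$ merely on $\{w>0\}$. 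Since on $\R$ you have no lower bound on $w$, you cannot conclude $w'\equiv 0$ without an extra argument excluding the case that $\tilde u$ is compactly supported. The paper supplies exactly this (its Step~5): if $\tilde u\not\equiv 0$ but vanished somewhere, bell-shape and continuity would produce an interval where $\tilde u$ is small but positive while $K\ast\tilde u^p$ is bounded below, contradicting $\tilde u^p(K\ast\tilde u^p-\tilde c\,\tilde u)=0$. The point you flagged (passing to the limit in $K\ast u_n^p$) is real but secondary; the degeneracy of the weight is the essential missing ingredient. In fact the paper sidesteps the time-shift construction on $\R$ altogether and instead proves directly that $\|\partial_t(u^{p+1})\|_{L^2}\to 0$ via H\"older continuity of $t\mapsto u^q$ and a uniform-continuity averaging argument (its Steps~1--3), which is somewhat cleaner than extracting a limiting trajectory.
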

\begin{proof}
1. $L<\infty$:
Let $\bar{u}$ be the constant equilibrium such that $L\bar{u}\p=1$ and $\nabla F(\bar{u})=0$.
We continue the argument from the discussion about the instability of $\bar{u}.$
Let $L>0$ be large enough such that $p \hat{K}(\frac{2\pi}{L})-1\ge 0$ and let the perturbation be $\phi(x)=\cos(\frac{2\pi}{L}x)$ so that $\nabla^2  F(\bar{u})(\phi,\phi)>0$.
Using Taylor's theorem for small enough $\delta>0$ we have
        \begin{align*}
            F(\bar{u}+\delta\phi)=F(\bar{u})+ \frac{1}{2}\delta^2\nabla^2  F(\bar{u})(\phi,\phi)+ o(\delta^2)
            > F(\bar{u}).
        \end{align*}
        
Now we initiate the evolution starting with $u_0=\bar{u}+\delta\cos(\frac{2\pi}{L}x)$. Then by the compactness result, Theorem~\ref{t: ucomp}, we can find a limit point $\ut.$

Note that when $L<\infty$, the solution is uniformly bounded above, and below away from $0$ by Proposition~\ref{p: u upperB} and Corollary~\ref{c: low u}. So the norm defined by the metric $\langle\cdot,\cdot\rangle_u$ is equivalent to $L^2$ norm, and we can use LaSalle's invariance principle. Technically, we have
\begin{equation*}
    \Vert \nabla F(u)\Vert_{L^2}\ge C \Vert u'\Vert_{L^2},
\end{equation*}
which implies $F$ is a strict Lyapunov function for $\{u(t)\}_{t\ge0}$ (Definition 8.4.5 of~\cite{haraux2015convergence}).
We use LaSalle's invariance principle (Theorem 8.4.6 of~\cite{haraux2015convergence}) to deduce that 
\begin{equation*}
    \Vert \nabla F(\ut) \Vert_\mathcal{H} =0.
\end{equation*}
Note that since $F$ is increasing along the evolution we have
$F(\ut)>F(\bar{u})$, which implies $\ut\not=\bar{u}.$

2. $L\le\infty$: 
Here we provide more careful analysis for $L=\infty$, since $F$ might not be a strict Lyapunov function, or the inequality
\begin{equation*}
    \Vert \nabla F(u)\Vert_{L^2}\ge C\Vert u'\Vert_{L^2},
\end{equation*}
might not hold. We will proceed with the arguments as follows,
\begin{enumerate}
    \item $t\mapsto u^q(t,\cdot)$ is H\"older continuous in $L^2$ for any $q\ge{\frac{p+1}{2}}$.
    \item $t\mapsto \frac{d}{dt}(u\p)(t,\cdot)$ is uniformly continuous in $L^2$.
    \item When $u_0$ is bell-shaped, $\Vert\frac{d}{dt}(u\p)(t,\cdot)\Vert_{L^2}\to 0$ as $t\to\infty$.
    \item For any $\ut\in\omega(u)$, $\Vert \ut^p(K*\ut^p-\Tilde{c}\ut)\Vert_{L^2}=0$.
    \item $K*\ut^p=\Tilde{c}\ut$.
    \item $\ut$ is nontrivial.
\end{enumerate}

Step (1): Claim that for any $q\ge{\frac{p+1}{2}}$
\begin{equation*}
   \Vert u^q(t)-u^q(s)\Vert_{L^2}\le C|t-s|^{1/2}.
\end{equation*}
Note that by the dissipation of $F$,
\begin{equation*}
    \alpha(t) \frac{d}{dt}F(u(t))=\int_\R u\pp (K*u^p-c(t)u)^2dx=\frac{4}{(p+1)^2}\Vert \frac{d}{dt}u^{\frac{p+1}{2}}(t)\Vert_{L^2}^2
\end{equation*}
Therefore,
\begin{align*}
    \Vert u^{\frac{p+1}{2}}(t)-u^{\frac{p+1}{2}}(s)\Vert_{L^2}&\le \int_s^t \Vert \frac{d}{d\tau}u^{\frac{p+1}{2}}(\tau)\Vert_{L^2}d\tau\\
    &=|t-s|^{1/2}\int_s^t\Vert \frac{d}{dt}u^{\frac{p+1}{2}}(t)\Vert_{L^2}^2 d\tau\\
    &\le\sup_{\tau\ge0}\alpha(\tau) \frac{(p+1)^2}{4} \int_s^t \frac{d}{dt}F(u(\tau))d\tau|t-s|^{1/2}\\
    &\le C|t-s|^{1/2}.
\end{align*}
Note that since $u$ is bounded by Proposition~\ref{p: u upperB}, $u\mapsto u^\gamma$ for any $\gamma>1$ is (locally) Lipschitz, i.e.,
\begin{equation}
    |u^\gamma(t)-u^\gamma(s)|\le C|u(t)-u(s)|,
\end{equation}
where the Lipschitz constant $C>0$ may depend on $\Vert u\Vert_{L^\infty}$.
So for any $q\ge{\frac{p+1}{2}}$ we choose $\gamma=\frac{2q}{p+1}$ so that $u^q=(u^{\frac{p+1}{2}})^\gamma$. Then
\begin{equation*}
    \Vert u^q(t)-u^q(s)\Vert_{L^2}^2\le C\int_\R |u^{\frac{p+1}{2}}(t)-u^{\frac{p+1}{2}}(s)|^2 dx \le C|t-s|.
\end{equation*}

Step (2): Claim that $t\mapsto\frac{d}{dt}(u\p)(t,\cdot)$ is uniformly continuous in $L^2$. Because,
\begin{equation*}
   \frac{1}{p+1} \frac{d}{dt}(u\p)=u^pK*u^p -c(t)u\p,
\end{equation*}
and $u^p(t)$ and $u\p(t)$ are uniformly continuous in $L^2$ and bounded uniformly in $L^\infty$, it is enough to show that $t\mapsto c(t)$ is H\"older continuous.
\begin{align*}
    c(t)-c(s)&= \int (u^p(t)-u^p(s))K*u^p(t) dx+ \int (u^p(t)-u^p(s))K*u^p(s) dx\\
    &\leq 2\Vert u^p(t)-u^p(s)\Vert_{L^2} \Vert K*u^p\Vert_{L^2}\\
     &\leq 2\Vert u^p(t)-u^p(s)\Vert_{L^2} \Vert K\Vert_{L^1}\Vert u\Vert_{L\p}^{\frac{p+1}{2}} \Vert u\Vert_\infty^\frac{p-1}{2}.
\end{align*}
By Proposition~\ref{p: conv func} and the previous step, we have the claim.

Step (3): Let $u_0$ be bell-shaped and claim that $\Vert\frac{d}{dt}(u\p)(t,\cdot)\Vert_{L^2}\to 0$ as $t\to\infty$.  We use the uniformly continuity of $t\mapsto\frac{d}{dt}(u\p)(t,\cdot)$ in $L^2$. For any $\epsilon>0$ and $t>0$, we can find $\delta>0$ such that for any $h\in[0,\delta]$
% \begin{equation*}
%      \Vert\frac{d}{dt}(u\p)(t+h,\cdot)-\frac{d}{dt}(u\p)(t,\cdot)\Vert_{L^2} <\epsilon.
% \end{equation*}
\begin{equation*}
    \Vert\frac{d}{dt}(u\p)(t,\cdot)\Vert_{L^2} \le  \epsilon+ \Vert\frac{d}{dt}(u\p)(t+h,\cdot)\Vert_{L^2}.
\end{equation*}
Therefore, by averaging
\begin{align*}
     \Vert\frac{d}{dt}(u\p)(t,\cdot)\Vert_{L^2} &\le \epsilon+ \frac{1}{\delta}\int_0^\delta \Vert\frac{d}{dt}(u\p)(t+h,\cdot)\Vert_{L^2} dh\\
     &\le \epsilon+ \delta^{-1/2}(\int_t^{t+\delta} \Vert\frac{d}{dt}(u\p)(s,\cdot)\Vert_{L^2}^2 ds)^{1/2}\\
     &\le \epsilon+ \delta^{-1/2}(\int_t^{\infty} \Vert 2u^{\frac{p+1}{2}}(s,\cdot)\frac{d}{dt}(u^{\frac{p+1}{2}})(s,\cdot)\Vert_{L^2}^2 ds)^{1/2}\\
     &\le \epsilon+ C(\int_t^{\infty} \Vert \frac{d}{dt}(u^{\frac{p+1}{2}})(s,\cdot)\Vert_{L^2}^2 ds)^{1/2}\\
     &\le \epsilon+ C(F(\infty)-F(t))^{1/2}.
\end{align*}
which implies the claim.

Step (4): Claim that $\Vert \ut^p(K*\ut^p-\Tilde{c}\ut)\Vert_{L^2}=0$ where $\Tilde{c}=\int_\R \ut^pK*\ut^p dx$.
Since the $L^2$-compactness, Theorem~\ref{t: comr}, we have a increasing sequence time  $\{t_k\}_{k\in\N}\subset \R_+$ and the limit $\ut\in L^2(\R)$ such that 
\begin{equation*}
    \lim_{k\to\infty}\Vert u(t_k)-\ut\Vert_{L^2}=0.
\end{equation*}
Using uniform $L^\infty$-bound, one can show that $u\mapsto u^p, u\p$ is continuous in $L^2$, and we have the claim by continuity. Note that as we discussed in the proof of Theorem~\ref{t: comr}, $\ut$ is a also continuous subsequential limit.

Step (5): Claim that either $\ut\equiv 0$ or $\ut>0$ so that $K*\ut^p=\Tilde{c}\ut$. Recall that $\ut$ is continuous and bell-shaped. Suppose $\ut$ is not identically $0$ but not strictly positive. Since $K*\ut^p>0$, the minimum of $K*\ut^p$ on the support of $\ut$ is positive and let's call it $m$. Now we can find an open interval in the support of $\ut$ such that $m-2\Tilde{c}\epsilon>0$ and $\epsilon<\ut<2\epsilon$, so that in the interval
\begin{equation*}
    |\ut^p(K*\ut^p-\Tilde{c}\ut)|\ge \epsilon^p(m-2\Tilde{c}\epsilon)>0,
\end{equation*}
which leads to the contradiction to the acquired Step (4).

Step (6): Claim that $\ut>0$. We can conclude using the continuity of $\ut$, Corollary~\ref{c: nz} and the fact that $\ut$ is also a pointwise limit of $u(t_k)$.
\end{proof}

\paragraph{\textbf{Discussion}}
In fact, there is an issue with applying the \L ojasiewicz convergence criterion to ~\eqref{e: solwav}. 
So at present we are unable to conclude convergence of solutions as $t\to\infty.$
% We prove convergence in compact spatial domain where $L<\infty$. 
Let $H^1_+(\mathds{T}_L)$ denote the positive cone in $H^1(\mathds{T}_L)$, i.e.,
$$H^1_+(\mathds{T}_L):=\{u\in H^1(\mathds{T}_L): u(x)> 0, \;\; \forall x\in H^1(\mathds{T}_L)\}.$$
Note that $H^1(\mathds{T}_L)$ is a Banach algebra. So the Lyapunov functional,
\[
F(u)= \frac{1}{2p}e^{-\frac{2p}{p+1}\int u\p dx}\int u^pK*u^pdx,
\]
is analytic in the neighborhood of $H^1_+(\mathds{T}_L).$ 

The issue, however, arise from failure of the Hessian of $F$ 
$$D^2F(\ut): w\mapsto e^{-\frac{2p}{p+1}}\ut\pp\left({p K*(\ut\pp w)} -{4p^2E(\ut)(\int \ut^pwdx)\ut} -2pE(\ut)w\right)$$
being semi-Fredholm. 

Firstly, on $\R$ there is no chance for $D^2E(\ut)$ to have closed range since $\ut(x)\to0$ as $x\to \pm\infty$. For example, for any $u\in\mathcal{V}$,  $\big\{u\pp\mathds{1}_{\{-L,L\}}\big\}_{L>0}$ is Cauchy in $\mathcal{V}'$ but $\mathds{1}\not\in \mathcal{V}$.
So let's assume that $L<\infty$. Now since $\ut$ is bounded above and below from $0$, the multiplication with $\ut$, $v\mapsto \ut v$ is an isomorphism from $\mathcal{V}$ to itself.
By Theorem~\ref{t: fredcom} one can check that 
is semi-Fredholm from $\mathcal{V}$ to itself, but not from $\mathcal{V}$ to $\mathcal{V}'$.

\mnote{E is not well defined in H}
On the contrary, the key to expand the \L ojasiewicz inequality to infinite dimensional domain is to construct the finite dimensional copy of the energy, which can be approximated by $\Vert \nabla E(u)\Vert_\mathcal{H}$, like in Proposition~\ref{p: gam}. When $\Pi+A: \mathcal{V}\to\mathcal{V}'$ is an isomorphism, we represent $u\sim(\Pi u,\nabla E(u))$ using the isomorphism $\mathcal{N}: \mathcal{V}\to\mathcal{V}'$ such that
\begin{align*}
   \mathcal{N} u=\Pi u+\nabla E(u).
\end{align*}
The issue with \eqref{e: solwav} for applying the infinite dimensional framework in Section~\ref{c: inf dim} of Chapter~\ref{ch: gf} is $R(\Pi+A)$ is too regular to be closed in $\mathcal{V}'$. Moreover, even if we identify $\Pi+A$ as an isomorphism from $\mathcal{V}$ to itself, we will end up having $\Vert \nabla{E}(u)\Vert_\mathcal{V}$, not in control of $\mathcal{H}.$ 

% Additionally, we can check the Hessian operator at an equilibrium $\ut$ such that there exists $\Tilde{c}>0$ satisfying
% \begin{equation*}
%     K*\ut^p=\bar{c}\ut.
% \end{equation*}
% From \eqref{e: nabE}
% \begin{align*}
%     D^2E(\ut)(v,w) &=\int (p-1)vw\ut^{p-2}K\ast \ut^p + pv\ut\pp K\ast (w\ut\pp) dx\\
%     &=\int \Tilde{c}(p-1)vw\ut\pp + pv\ut\pp K\ast (w\ut\pp) dx.
% \end{align*}
% $D^2E(\ut)\in \mathcal{L}(\mathcal{V},\mathcal{V}')$ maps $v\mapsto \big(c(p-1)v+pK\ast(v\ut\pp)\big)\ut\pp$. 
% We will see later that in order to prove convergence of the gradient descent using \L ojasiewicz inequality, it is essential for $D^2E(\ut)$ to be semi-Fredholm.
% Unfortunately, on $\R$ there is no chance for $D^2E(\ut)$ to have closed range since $\ut(x)\to0$ as $x\to \pm\infty$. For example, for any $u\in\mathcal{V}$,  $\big\{u\pp\mathds{1}_{\{-L,L\}}\big\}_{L>0}$ is Cauchy in $\mathcal{V}'$ but $\mathds{1}\not\in \mathcal{V}$.
% \mnote{the necessity of $L$ finite comes from second derivative}
% Since $u>0$, $D^2E(u)\in \mathcal{L}(X,X')$ which maps $v\mapsto \big(c(p-1)v+pK\ast(vu\pp)\big)u\pp$ is semi-Fredholm.
% We will investigate the geometric structure of $\mathcal{M}$ in the later chapter.
\chapter{Regularized concentration-dispersion dynamics}
In this chapter, we propose and analyze a modification of concentration-dispersion dynamics with small $\epsilon>0$, as follows.
\paragraph{\textbf{Regularized concentration-dispersion equation}}
   \begin{equation}\label{e: rsolwav}
    \left\{
    \begin{aligned}
        \;\;\frac{d}{dt}u(t,x)&=\epsilon (u^p)_{xx} +K\ast u^p  - c(t) u\\
        c(t)&=\int u^p K\ast u^p- \epsilon[(u^p)_x]^2 dx,
    \end{aligned}
     \right.
    % u'=K\ast u^p - c(t)u,
    \end{equation}
where $K(x)>0$ for all $x\in \mathds{T}_L$ and $K(x)\in W^{1,1}\cap W^{1,p+1}(\mathds{T}_L)$.
We will use the gradient structure of the regularized concentration-dispersion equation~\eqref{e: rsolwav} to prove convergence of solutions to a nontrivial equilibrium. From now on we consider only $L<\infty.$
\paragraph{\textbf{Well-posedness}}
For local well-posendess with positive initial data, we refer Chapter 8 of \cite{lunardi2012analytic}, and set $X=C(\mathds{T}_L)$ and $D=C^2(\mathds{T}_L)$. Considering singular aspect of solutions of porous medium equations, global well-posedness is not trivial. By combining the local well-posedness, regularity (Theorem 8.1.1, Theorem 8.3.4 of~\cite{lunardi2012analytic}) and the $C^2$-compactness result in the section 2, we deduce regularity up to $C^5(\mathds{T}_L)$. Thus for any $u_0\in C^5(\mathds{T}_L)$ initial data, we can choose sufficiently small $\epsilon>0$ to achieve global well-posedness in $C^2(\mathds{T}_L)$ by pointwise maximum principle and maximal solution argument (Proposition 8.2.1 of~\cite{lunardi2012analytic}).

\section{Gradient structure}
% Presumably, the solution of~\eqref{e: rsolwav} is $C^2$ by parabolic regularity theory and any equilibrium $\ut$ of \eqref{e: rsolwav} is $C^\infty$ by elliptic regularity theory. 
Based on those regularity properties, we show the asymptotic bound of the solution $u(t,x)$. 
Prior to show that the solution stay above from $0$ uniformly, we show the solution $u(t,x)$ stays positive. In this way, \eqref{e: rsolwav} is well-defined.
\begin{prop}[Positivity]
    Let $u(t,x)\in C^2(\mathds{T}_L)$ be a solution of~\eqref{e: rsolwav}. If $u(0,x)>0$ for all $x\in \mathds{T}_L$ then $u(t,x)>0$ for all $x\in \mathds{T}_L$ and $t>0.$
\end{prop}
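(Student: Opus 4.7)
The plan is to argue by a strict minimum principle applied to the first time $\tau$ at which $u$ would touch zero. Concretely, set
\[
\tau := \sup\{T>0 : u(t,x)>0 \text{ for all } (t,x)\in[0,T]\times \mathds{T}_L\}.
\]
Strict positivity of $u_0$ together with continuity of $u$ gives $\tau>0$. I will assume for contradiction that $\tau<\infty$; then, passing to the limit, there exists $x^*\in\mathds{T}_L$ with $u(\tau,x^*)=0$ while $u(\tau,\cdot)\ge 0$.

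The first step is to rule out that $u(\tau,\cdot)$ vanishes identically. For this I will integrate the equation against the constant $1$ on $\mathds{T}_L$: the diffusion term $\epsilon(u^p)_{xx}$ integrates to zero by periodicity, so using the integrating factor $\alpha(t):=\exp(\int_0^t c(s)\,ds)$ one obtains
\[
\frac{d}{dt}\Big(\alpha(t)\int_{\mathds{T}_L}u\,dx\Big) = \alpha(t)\int_{\mathds{T}_L}K\ast u^p\,dx \ge 0.
\]
Hence $\int u(\tau,x)\,dx\ge \alpha(\tau)^{-1}\int u_0\,dx>0$, so $u(\tau,\cdot)\not\equiv 0$. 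Since $K(x)>0$ on $\mathds{T}_L$, this forces $K\ast u^p(\tau,x^*)>0$.

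The second step is the pointwise contradiction at $(\tau,x^*)$. Because $x^*$ is an interior minimum of the nonnegative $C^2$ function $u(\tau,\cdot)$ with value $0$, Taylor expansion gives $u(\tau,x)=\tfrac12 u_{xx}(\tau,x^*)(x-x^*)^2+o((x-x^*)^2)$, and raising to the power $p$ yields $u^p(\tau,x)=O((x-x^*)^{2p})$ near $x^*$ with $2p>2$, so that $(u^p)_{xx}(\tau,x^*)=0$. Substituting back into the equation,
\[
u_t(\tau,x^*)= \epsilon\cdot 0 + K\ast u^p(\tau,x^*) - c(\tau)\cdot 0 = K\ast u^p(\tau,x^*)>0.
\]
On the other hand, $u(t,x^*)>0$ for $t<\tau$ and $u(\tau,x^*)=0$, so the left difference quotient at $\tau$ is nonpositive, forcing $u_t(\tau,x^*)\le 0$. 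This contradiction shows $\tau=\infty$, i.e., $u(t,x)>0$ for all $t>0$ and $x\in\mathds{T}_L$.

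The main obstacle is the classical interpretation of $(u^p)_{xx}$ at a zero of $u$ when $1<p<2$, where $u\mapsto u^p$ is not $C^2$ through $0$ and the naive expansion $pu^{p-1}u_{xx}+p(p-1)u^{p-2}(u_x)^2$ has indeterminate form. The route I will take is to expand $u$ itself (not $u^p$) to second order at the touching point; the double vanishing of $u$ and $u_x$ together with $p>1$ then yields that $u^p$ is $C^2$ at $x^*$ with vanishing second derivative there, which is all that is needed for the sign analysis. A secondary technical point is that the regularity granted by the global well-posedness framework (solutions in $C^2(\mathds{T}_L)$ with $c(t)$ continuous in $t$) is enough to make both the integrated identity and the pointwise evaluation at $(\tau,x^*)$ rigorous.
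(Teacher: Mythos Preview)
Your argument is correct, but the paper's proof takes a somewhat different and slicker route. Instead of working at the first touching time, the paper substitutes $v(t,x):=\beta(t)u(t,x)$ with $\beta(t)=\exp\bigl(\int_0^t c(s)\,ds\bigr)$, which kills the $-c(t)u$ term and leaves $v_t=\beta^{1-p}\bigl(\epsilon(v^p)_{xx}+K\ast v^p\bigr)$. At any spatial minimum $x_0$ of $v(t_0,\cdot)$ with $v(t_0,x_0)>0$ one has $(v^p)_{xx}(t_0,x_0)\ge 0$ and $K\ast v^p>0$, so $v_t(t_0,x_0)>0$; hence $\min_x v(t,\cdot)$ cannot drop below $\min_x v(0,\cdot)>0$, and positivity of $u$ follows. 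This never lets $u$ reach $0$, so there is no need to rule out $u(\tau,\cdot)\equiv 0$ and no need to interpret $(u^p)_{xx}$ at a zero of $u$. It also yields the quantitative bound $u(t,x)\ge \beta(t)^{-1}\min_x u_0$, which your argument does not.

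Your route, by contrast, is the classical first-hitting-time contradiction. The two extra ingredients you need---the integrated identity with the integrating factor $\alpha(t)$ to exclude $u(\tau,\cdot)\equiv 0$, and the Taylor expansion showing $(u^p)_{xx}(\tau,x^*)=0$ from the double zero of $u$ at $x^*$---are both sound. The one place to be a touch more careful is in asserting that the equation holds pointwise at $(\tau,x^*)$: you have computed the spatial second derivative of $u^p$ at that point, but you are implicitly using that $u_t$ and the right-hand side match there, which comes from the classical-solution hypothesis on a time interval containing $\tau$. With that understood, your proof is complete.
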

\begin{proof}
Let $\beta(t):=\exp{(\int_0^t c(s)ds)}$ and $v(t,x):=\beta(t)u(t,x)$. We claim that for all $t\ge0$, 
$$v(t,x)\ge \min_x v(0,x).$$ 
Suppose at $t_0>0$, $x_0=\arg\min_x v(t_0,x)$, then
    \begin{align*}
            \frac{dv}{dt}(t_0,x_0)=\beta^{1-p}(t)\left(\epsilon (v^p)_{xx}+ K\ast v^p\right)\ge \beta^{1-p}(t) K\ast v^p >0.
    \end{align*}
In particular, $\{v\in C^2(\mathds{T}_L): v>0\}$ is an invariant set of \eqref{e: rsolwav}. So for all $t>0$,
\begin{equation*}
    u(t,x)=\beta^{-1}(t)v(t,x)>0.\qedhere
\end{equation*}
% We claim that if the solution $u(t,x)$ hits $0$, it bounce back to stay nonnegative. Without loss of generality, assume that at minimal time $t_0>0$ there exists $x_0\in \arg\min_x u(t_0,x)$ that hits $0$ and $u(t_0,x)\not\equiv 0$. Then, since $K*u^p>0$ and $(u^p)_{xx}(x_0)\ge 0$,
%   \begin{align*}
%             u'(t_0,x_0) &= \epsilon (u^p)_{xx}+ K\ast u^p  - c(t)u(x_0) \ge K*u^p >0.
%     \end{align*}
% So that by continuity for $u'$ with respect to time for any small enough $\delta>0$
% \begin{equation*}
%     u(t_0+\delta,x_0)= \int_{t_0}^{t_0+\delta} u'(s,x_0)ds \ge \int_{t_0}^{t_0+\delta} K* u^p ds> 0.
% \end{equation*}
\end{proof}

\paragraph{\textbf{Important functionals}}
We will show that \eqref{e: rsolwav} has a nice $L^2$-gradient structure. We use the following functionals.
\begin{align}
    E(u)&:=\frac{1}{2p}\int u^pK*u^p- \epsilon[(u^p)_x]^2dx,\\
    F(u)&:= \frac{1}{2p}e^{-\frac{2p}{p+1}\int u\p dx}\int u^pK*u^p- \epsilon[(u^p)_x]^2dx,\\
    &=:E(u)/\alpha(t)
\end{align}
The derivative of $F$ shows that \eqref{e: rsolwav} is a gradient-like system of the energy $F.$
\begin{align}
      \alpha(t)DF(u)(v)&= DE(u)(v)-E(u)\frac{2p}{p+1}D(\int u\p dx)(v)\notag\\
    &=\int u\pp v K*u^p -\epsilon(u\pp v)_x(u^p)_xdx-2pE(u)\int u^pu' dx\notag\\
    % &=\int u\pp(\epsilon(u^p)_{xx} +K*u^p - 2pE(u)u)vdx\\
    &=\int u\pp(\epsilon(u^p)_{xx} +K*u^p - c(t)u)vdx.
\end{align}
We show the convergence properties of the functionals.
\begin{prop}\label{p: conv rfunc}
    Let $u(t,x)\in L\p(\mathds{T}_L)$ be a solution of \eqref{e: solwav} such that $u_0(x)>0$ for all $x\in\mathds{T}_L$. Assume that $E(u_0)>0$, then the following functionals have convergent properties as $t\to\infty:$
    \begin{enumerate}
         \item $F(u)= E(u)/\alpha(t)$ is nondecreasing,
        \item $\Vert u\Vert_{L\p}$ converges monotonically to 1,
        \item $E(u)$ stays positive and converges to a positive constant.
    \end{enumerate}
    Additionally, if $\Vert u_0\Vert_{L\p}\leq 1$ then $E(u)$ is nondecreasing as well.
\end{prop}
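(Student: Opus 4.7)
The plan is to mirror the proof of Proposition~\ref{p: conv func} for the unregularized dynamics, exploiting the fact that the diffusion term $\epsilon(u^p)_{xx}$ is precisely what is absorbed into the modified definition of $c(t)$ after an integration by parts, so the same algebraic identities carry over.

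For (1), I would plug $v=u'$ into the $L^2$-gradient identity derived just above the statement,
\[
\alpha(t)\, DF(u)(v) \;=\; \int u\pp\bigl(\epsilon(u^p)_{xx} + K\ast u^p - c(t)u\bigr)\, v\, dx.
\]
Since $u'=\epsilon(u^p)_{xx} + K\ast u^p - c(t)u$ and $u>0$, this yields $\alpha(t)\tfrac{d}{dt}F(u) = \int u\pp (u')^2\, dx \ge 0$. For (2), combining $\tfrac{1}{p+1}\tfrac{d}{dt}\int u\p\, dx = \int u^p u'\, dx$ with an integration by parts gives
\[
\frac{1}{p+1}\frac{d}{dt}\int u\p\, dx \;=\; 2pE(u) - c(t)\int u\p\, dx \;=\; c(t)\bigl(1 - \textstyle\int u\p\, dx\bigr),
\]
the essential cancellation being $\epsilon\int u^p (u^p)_{xx}\, dx = -\epsilon\int [(u^p)_x]^2\, dx$. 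Setting $y(t) := \int u\p\, dx$, I obtain the ODE $y' = (p+1)c(t)(1-y)$, identical in form to the unregularized case.

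The main obstacle is that the positivity of $c(t)$ is no longer automatic, because $E(u)$ now contains the negative regularization term; this is exactly why the new hypothesis $E(u_0)>0$ is imposed. The argument I would run is a short bootstrap: from (1), $F(u(t)) \ge F(u_0) = E(u_0)/\alpha(0) > 0$ for all $t$. The ODE for $y$ shows that $y=1$ is a stationary point no trajectory can cross, so $y(t)\in[\min(y(0),1),\max(y(0),1)]$ is trapped in a bounded interval and therefore $\alpha(t)=e^{\tfrac{2p}{p+1}y(t)}$ is uniformly bounded above and below by positive constants. Consequently $c(t) = 2p\alpha(t)F(u(t)) \ge \underline{c} > 0$, which feeds back into the $y$-ODE to force $y(t)\to 1$ monotonically: monotonicity is immediate since $y$ cannot cross $1$, and any nonzero limit distance from $1$ would give $|y'(t)|$ uniformly bounded below, contradicting boundedness of $y$.

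For (3), the same H\"older and Young's convolution estimate used in the proof of Proposition~\ref{p: conv func} yields $E(u) \le \tfrac{1}{2p}\int u^p K\ast u^p\, dx \le C(K)\Vert u\Vert_{L\p}^{2p}$, uniformly in $t$ since $y$ is bounded. Hence $F$ is nondecreasing and bounded above, so it converges to a positive constant $\ge F(u_0)$; combined with the already-established convergence of $\alpha(t)$ as $y(t)\to 1$, the product $E(u) = \alpha(t) F(u)$ converges to a positive limit. For the addendum, if $\Vert u_0\Vert_{L\p} \le 1$ then $y(0)\le 1$ and the trapping argument forces $y(t)\le 1$ for all $t$, so $\alpha'(t)\ge 0$; together with $F'(t)\ge 0$ from (1) and positivity of both factors, $E(u) = \alpha(t) F(u)$ is nondecreasing as a product of two nondecreasing positive functions.
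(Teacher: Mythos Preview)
Your proposal is correct and follows the same route as the paper. One minor logical slip in your bootstrap for $c(t)>0$: the trapping $y(t)\in[\min(y(0),1),\max(y(0),1)]$ in the case $y(0)>1$ already presupposes $c(t)\ge 0$ (non-crossing of the stationary point $y=1$ alone yields only $y(t)>1$, not $y(t)\le y(0)$). The paper sidesteps this by observing directly that $\alpha(t)\ge 1$ for all $t$ (since $\int u^{p+1}\,dx\ge 0$), whence $c(t)=2p\,\alpha(t)F(u(t))\ge 2pF(u_0)>0$ follows immediately from step~(1); your trapping claim then holds \emph{a posteriori}, and the remainder of your argument---the ODE for $y$, the H\"older--Young upper bound on $E$, and the product argument for the addendum---is identical to the paper's.
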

\begin{proof}
    Firstly, we prove monotonic convergence of $F$. 
    $F$ is nondecreasing because the time derivative of $F$ is nonnegative, i.e.,
    \begin{align*}
        F'(u(t))e^{\frac{2p}{p+1}\int u\p dx}&=E'(u)-E(u)\frac{2p}{p+1}(\int u\p dx)'\\
        &=\int u\pp (K*u^p+\epsilon(u^p)_{xx} u'dx-2pE(u)\int u^pu' dx\\
        &=\int u\pp(\epsilon(u^p)_{xx}+K*u^p - c(t)u)^2dx \ge0.
    \end{align*}
   Since $E(u_0)>0$ implies $F(u_0)>0$ and
    \begin{equation}\label{e: fe}
        F(u)=e^{\frac{-2p}{p+1}\int u\p dx}E(u)\leq E(u)=2pc(t),
    \end{equation}
   we have $\inf_{t\ge 0}c(t)>0$. So the monotonic convergence of $L\p$ norm can be shown by taking time-differentiation,
    \begin{equation}\label{e: rulpcon}
        \frac{1}{p+1}(\int u\p dx)'= c(t) \big(1-\int u\p dx\big).
    \end{equation}
    We can conclude 
    $$\lim_{t\to\infty}\Vert u(t)\Vert_{L\p} \to 1.$$
    
Now we claim that $E(u)$ is bounded above.
    \begin{align}
        2pE(u) &\le \int u^p K\ast u^p dx\notag\\
            &\leq \Vert u\Vert^p_{L\p} \Vert K*u^p\Vert_{L\p}\notag\\
            &\leq \Vert K\Vert_{L^\frac{p+1}{2}} \Vert u\Vert^{2p}_{L\p}\notag\\
            &\leq(1+\Vert u_0\Vert_{L\p})^{2p}\Vert K\Vert_{L^\frac{p+1}{2}}<\infty.
    \end{align}
    Therefore $E(u(t))$ converges to a positive constant as $t\to\infty$ and $F$ converges monotonically.
\end{proof}

\section{Asymptotic bound on $u$ and $C^2$-compactness}
We open this section with doing the same pointwise estimate as in the original concen\-tration-dispersion dynamics.
\begin{prop}\label{p: ru upperB}
    Let $u\in \mathcal{V}$ be a solution of \eqref{e: solwav}. Then there exists $M(K,u_0)>0$ such that $\sup_{t\ge0}\Vert u(t,\cdot)\Vert_{L^\infty }<M$.
\end{prop}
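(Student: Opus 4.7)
The plan is to mimic the proof of Proposition~\ref{p: u upperB} from the unregularized case, exploiting the fact that the extra term $\epsilon(u^p)_{xx}$ has the \emph{right sign} at a spatial maximum, thanks to positivity of $u$. Concretely, for fixed $t>0$, let $x_0=x_0(t)\in\mathds{T}_L$ be a point where $u(t,\cdot)$ attains its maximum. Since $u(t,\cdot)>0$ is $C^2$, the map $x\mapsto u^p(t,x)$ also attains its maximum at $x_0$, hence $(u^p)_{xx}(t,x_0)\le 0$. Evaluating the equation \eqref{e: rsolwav} at $x_0$ therefore gives the differential inequality
\begin{equation*}
    \partial_t u(t,x_0)\;\le\; (K\ast u^p)(t,x_0)\;-\;c(t)\,u(t,x_0).
\end{equation*}

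Next I would control the two terms on the right. For the convolution, Young's convolution inequality together with Hölder yields
\begin{equation*}
    \Vert K\ast u^p\Vert_{L^\infty}\;\le\;\Vert K\Vert_{L^{p+1}}\,\Vert u\Vert_{L^{p+1}}^{p},
\end{equation*}
and Proposition~\ref{p: conv rfunc}(2) gives a uniform bound $\Vert u(t,\cdot)\Vert_{L^{p+1}}\le 1+\Vert u_0\Vert_{L^{p+1}}$. For the dissipative coefficient, Proposition~\ref{p: conv rfunc} also implies $c(t)=2pE(u(t))\ge\underline{c}>0$ for all $t\ge 0$ (using $E(u_0)>0$ and monotonicity of $F$). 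Combining these bounds, the pointwise inequality becomes a scalar ODE-type inequality for the maximum:
\begin{equation*}
    \partial_t u(t,x_0)\;\le\;C_K\;-\;\underline{c}\,u(t,x_0),\qquad C_K:=\Vert K\Vert_{L^{p+1}}(1+\Vert u_0\Vert_{L^{p+1}})^{p}.
\end{equation*}
In particular, whenever $u(t,x_0)>C_K/\underline{c}$, the right-hand side is negative, so the value at the maximum cannot exceed the threshold $M:=\max\{\Vert u_0\Vert_\infty,\,C_K/\underline{c}\}$.

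The one step requiring care is turning the pointwise inequality at a moving maximizer into an actual differential inequality for $m(t):=\Vert u(t,\cdot)\Vert_{L^\infty}$, since $x_0(t)$ need not vary smoothly. The standard remedy, which I expect to be the only real obstacle, is either an envelope/Danskin argument (using that $m$ is locally Lipschitz in $t$ and that, at any point of differentiability, $m'(t)=\partial_t u(t,x_0(t))$ for some maximizer $x_0(t)$), or a direct comparison with the ODE $M'=C_K-\underline{c}M$ via the parabolic maximum principle applied to $w(t,x):=u(t,x)-\tilde M(t)$ where $\tilde M$ solves the ODE with $\tilde M(0)\ge\Vert u_0\Vert_\infty$. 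Either route yields $m(t)\le M$ uniformly in $t$, which is the claimed bound.
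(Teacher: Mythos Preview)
Your proposal is correct and follows essentially the same argument as the paper: evaluate the equation at a spatial maximizer so that $\epsilon(u^p)_{xx}\le 0$ can be dropped, bound $K\ast u^p$ via H\"older/Young and $c(t)$ from below by $\underline{c}>0$ using Proposition~\ref{p: conv rfunc}, and conclude by a threshold comparison. If anything, your version is slightly more careful than the paper's, which simply asserts the pointwise inequality at the maximum without discussing the regularity of $t\mapsto\Vert u(t,\cdot)\Vert_{L^\infty}$.
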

\begin{proof}
       We use similar argument in the proof of Proposition~\ref{p: u upperB}. Since $(u^p)_{xx}\le0$ at the maximizer,
        \begin{align}
            u' &= \epsilon (u^p)_{xx}+ K\ast u^p  - c(t)u \\
            &\le \int K(x-y) u(y)^p dy - c(t) u\\
            &\leq (\int K(x-y)\p dy)^{1/p
            +1}(\int u\p dy)^{p/(p+1)} - \underline{c} u.
        \end{align}
        where $\underline{c}=\inf_{t\ge 0}c(t)$. Note that $\underline{c}>0$ because $u>0$ and $F(u)=\frac{1}{2p} e^{-\frac{2p}{p+1}\int u\p dx}c(t)>0$ is increasing according to the Proposition~\ref{p: conv rfunc}.
        This means
        $$u'(t,x) <0 \quad \text{if } u(t,x) > \frac{1}{\underline{c}}\Vert K\Vert_{L\p} \Vert u\Vert^p_{L\p}.$$
        Due to monotonic convergence of $\Vert u\Vert_{L\p}\to 1$ as $t\to \infty$, we can conclude that
        $$\max_{t\ge 0}\Vert u(t,x)\Vert_\infty \leq \Vert u_0\Vert_\infty+\frac{1}{\underline{c}}(1+ \Vert u_0\Vert^p_{L\p})\Vert K\Vert_{L\p}.$$
\end{proof}
In the compact spatial domain, we can use the upper bound to calculate the lower bound. This will be used to control the Riemannian metric. Naturally, the lower bound vanishes as $L\to\infty.$
\begin{corollary}\label{c: rlow u}
    For fixed $L<\infty$, let $u$ be a solution of \eqref{e: solwav} with $u_0\in C^2(\mathds{T}_L)$. Assume that there is $k>0$ such that $K(x)\ge k,$ $\forall x\in\mathds{T}_L$. Then there exist $m(K,L,u_0)>0$ such that $\min_{x}u(x,t) \ge m$.
\end{corollary}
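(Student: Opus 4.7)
The plan is to mimic the argument of Corollary~\ref{c: low u}, with one key additional observation: the regularizing term $\epsilon(u^p)_{xx}$ is harmless for a pointwise lower bound because at a spatial minimizer it has the right sign. Specifically, if $x_0(t)\in\mathds{T}_L$ is a point where $u(t,\cdot)$ attains its minimum, then $u_x(t,x_0)=0$ and $u_{xx}(t,x_0)\ge 0$, so
\[
\epsilon (u^p)_{xx}(t,x_0) \;=\; \epsilon p\,u^{p-1}u_{xx}(t,x_0) \,+\, \epsilon p(p-1)\,u^{p-2}u_x(t,x_0)^2 \;\ge\; 0.
\]
Thus the diffusion cannot cause $\min_x u$ to decrease beyond what the competition between $K\ast u^p$ and $c(t)u$ already produces.

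Next, I would collect the ingredients produced earlier in the chapter. By Proposition~\ref{p: ru upperB} there exists $M=M(K,u_0)>0$ such that $u(t,x)\le M$ for all $t,x$, and by Proposition~\ref{p: conv rfunc} the scalar $c(t)$ converges to a positive limit, hence $\overline{c}:=\sup_{t\ge 0}c(t)<\infty$; moreover $\|u(t,\cdot)\|_{L^{p+1}}\to 1$. Fix any $\epsilon\in(0,1)$ and choose $T>0$ so that for all $t\ge T$,
\[
\int_{\mathds{T}_L} u^{p+1}(t,x)\,dx\;\ge\;1-\epsilon,\qquad\text{hence}\qquad\int_{\mathds{T}_L} u^p(t,x)\,dx \;\ge\; \frac{1-\epsilon}{M}.
\]

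With these in hand I argue as in the unregularized case. Using the positivity of $\epsilon(u^p)_{xx}$ at the minimizer and $K\ge k$,
\[
\partial_t u(t,x_0(t)) \;\ge\; k\int_{\mathds{T}_L} u^p\,dy \,-\, \overline{c}\,u(t,x_0(t)) \;\ge\; \frac{(1-\epsilon)k}{M} \,-\, \overline{c}\,u(t,x_0(t)),
\]
so the right-hand side is strictly positive whenever $u(t,x_0(t))<(1-\epsilon)k/(M\overline{c})=:m_\ast$. Translating this into a differential inequality for $m(t):=\min_x u(t,\cdot)$ (which is Lipschitz in $t$ by the $C^2$-regularity of $u$, and whose Dini derivative is controlled by $\partial_t u$ at any arg-min by Hamilton's/Danskin's trick) gives $m'(t)\ge (1-\epsilon)k/M-\overline{c}\,m(t)$ for $t\ge T$, so $\liminf_{t\to\infty}m(t)\ge m_\ast$, and in fact $m(t)\ge m_\ast$ for all $t\ge T$ once $m(T)\ge m_\ast$ (otherwise apply Gr\"onwall on $[T,\infty)$ to see the trajectory strictly increases until it passes $m_\ast$).

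Finally, to upgrade this asymptotic lower bound to a uniform one on $[0,\infty)$, I would handle the transient window $[0,T]$ separately: since $u_0\in C^2(\mathds{T}_L)$ is strictly positive and $u\in C^1([0,T];C^2(\mathds{T}_L))$, continuity gives $\min_{[0,T]\times\mathds{T}_L}u>0$. Taking $m:=\min\{m_\ast,\;\inf_{0\le t\le T}\min_x u(t,x)\}>0$ produces the constant required in the statement. The main (mild) technical step is the Hamilton's-trick justification for differentiating $\min_x u(t,\cdot)$; apart from this, the regularized case reduces cleanly to Corollary~\ref{c: low u}.
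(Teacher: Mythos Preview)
Your argument is essentially identical to the paper's: drop the diffusive term at a spatial minimizer using $(u^p)_{xx}\ge 0$ there, bound $K*u^p$ below by $k\int u^p\ge (1-\delta)k/M$ via the $L^{p+1}$ convergence and the uniform upper bound $M$, and compare against $\overline{c}\,u$. The paper's proof is terser and leaves the Hamilton's-trick step and the transient window $[0,T]$ implicit, so your added detail on those points is a genuine improvement in rigor rather than a different approach.
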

\begin{proof}
   From Proposition~\ref{p: ru upperB}, there exists $M(u_0,K)>0$ such that $u(x,t)<M$ for all $t\geq 0$. Now for any $\delta>0$ we can choose large enough $T>0$ so that 
   $$1-\delta\le\int u\p dx \le M\int u^p dx.$$
   Note that  $\overline{c}:=\sup_{t\ge 0}c(t)$ is well-defined since it converges. Then at the minimizer,
    \begin{align*}
        u'&=\epsilon(u^p)_{xx}+ K\ast u^p -c(t) u\\
        &\ge k\int u^pdx -\overline{c} u\\
        &\ge \frac{(1-\delta)k}{M}-\overline{c} u >0,
    \end{align*}
    when $u < (1-\delta)k/M\overline{c}$. So the result follows.
\end{proof}

\paragraph{\textbf{$C^2$-compactness}}
Recall that the original concentration dispersion equations~\eqref{e: solwav}, provide precompact solution even  without smoothing effect. 
% Even if there is no known compactness theory for quasilinear parabolic equations, or porous medium equations,
For the regularized equations,
we can derive the following: for any regular enough initial data, we can make $\epsilon$ small enough to make the trajectory precompact in $C^2$. Recall that we need $C^2$-compactness to achieve global well-posedness (cf., \cite{lunardi2012analytic}). We assume $L<\infty$ and analyze asymptotic pointwise bound of $u_x$, $u_{xx}$ and $u_{xxx}$. 
\mnote{the soln need to stay $C^4$}
\begin{theorem}\label{t: rcdcom}
    Let $L<\infty$. For any $u_0\in C^5(\mathds{T}_L)$ with $u_0(t,\cdot)>0$ and  $\Vert u_0\Vert_{L\p}\le 1$, there exists $\epsilon_0>0$ such that for any $\epsilon\in(0,\epsilon_0]$ the solution $\{u(t)\}_{t\ge 0}$ of \eqref{e: rsolwav} with $u(0)=u_0$ has uniformly bounded $C^3$-norm. In particular, $\{u(t)\}_{t\ge 0}$ precompact in $C^2(\mathds{T}_L).$
\end{theorem}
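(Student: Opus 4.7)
The plan is to treat \eqref{e: rsolwav} as a quasilinear parabolic equation on $\mathds{T}_L$, taking full advantage of the already-established two-sided pointwise bounds $0<m\le u(t,x)\le M$ (Proposition \ref{p: ru upperB} and Corollary \ref{c: rlow u}) together with $c(t)\ge\underline c>0$ from Proposition \ref{p: conv rfunc}. Because $u^{p-1}\in[m^{p-1},M^{p-1}]$, the diffusion can be written in non-divergence form
\[
\epsilon(u^p)_{xx}=\epsilon p\,u^{p-1}u_{xx}+\epsilon p(p-1)u^{p-2}u_x^2,
\]
which is uniformly strictly parabolic with smooth coefficients. Successive differentiation in $x$ produces evolution equations for $u_x$, $u_{xx}$, $u_{xxx}$ whose top-order part inherits this favorable structure, with right-hand sides that are polynomials in lower-order derivatives of $u$ plus convolution forcings $K_x\!*u^p$, $K_{xx}\!*u^p$, $K_{xxx}\!*u^p$, which after one integration by parts only ever depend on $K$ through $\|K\|_{W^{1,1}}$ and the already-controlled pointwise norms of $u$ and its derivatives.

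\textbf{Step 1 (bound on $u_x$).} Differentiating \eqref{e: rsolwav} in $x$ gives
\[
\partial_t u_x=\epsilon(u^p)_{xxx}+K_x*u^p-c(t)u_x.
\]
Expanding $(u^p)_{xxx}=p(p-1)(p-2)u^{p-3}u_x^3+3p(p-1)u^{p-2}u_xu_{xx}+p\,u^{p-1}u_{xxx}$ and evaluating at a spatial maximum $x^\star(t)$ of $u_x$, where $u_{xx}(t,x^\star)=0$ and $u_{xxx}(t,x^\star)\le 0$, the top derivative contributes favorably while the cubic term $\epsilon p(p-1)(p-2)u^{p-3}u_x^3$ may have either sign. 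Combining with the uniform bound $\|K_x*u^p\|_\infty\le \|K_x\|_{L^1}M^p$ and the damping $c(t)\ge\underline c$, I obtain for $M_1(t):=\max_x u_x(t,\cdot)$ a differential inequality of the form
\[
\frac{d}{dt}M_1(t)\le \epsilon\,A\,M_1(t)^3+B-\underline c\,M_1(t),
\]
with $A,B$ depending only on $m$, $M$, $p$ and $\|K\|_{W^{1,1}}$. An analogous lower estimate holds at a spatial minimum. Picking $\epsilon_0$ small enough that $\epsilon_0 A\,r^2<\underline c$ at $r=2\max(B/\underline c,\|u_{0,x}\|_\infty)$ traps $M_1(t)$ below this threshold for all $t\ge 0$.

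\textbf{Step 2 (bounds on $u_{xx}$ and $u_{xxx}$).} I iterate the same pointwise-extremum argument on the equations for $u_{xx}$ and $u_{xxx}$. Each further differentiation preserves the principal structure $\epsilon p\,u^{p-1}\partial_x^{k+2}u$, which is favorably signed at a spatial extremum of the quantity being bounded; it also generates polynomial terms in $(u,u_x,\dots,u_{xxx})$ all of which are either already controlled by Step 1 or by the induction hypothesis, together with a uniformly bounded convolution forcing. The $C^5$ regularity of $u_0$ is used here so that the third differentiation is classical and the initial data $\|u_{0,xx}\|_\infty$, $\|u_{0,xxx}\|_\infty$ provide finite seeds. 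Shrinking $\epsilon_0$ once more at each stage closes three analogous ODIs and produces uniform bounds on $\|u_{xx}(t,\cdot)\|_\infty$ and $\|u_{xxx}(t,\cdot)\|_\infty$.

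\textbf{Step 3 (precompactness).} The uniform bound $\sup_{t\ge 0}\|u(t,\cdot)\|_{C^3(\mathds{T}_L)}\le R$ together with the compactness of $\mathds{T}_L$ and Arzela--Ascoli yields precompactness of $\{u(t)\}_{t\ge 0}$ in $C^2(\mathds{T}_L)$.

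The main obstacle is the quasilinear, porous-medium-type character of the diffusion $\epsilon(u^p)_{xx}$: after differentiating, the cubic (and, at the next order, quintic) polynomial terms in derivatives of $u$ do not have a sign controllable purely by the maximum principle when $p>2$, so the argument really requires smallness of $\epsilon$ to make these super-linear self-feedback terms subordinate to the linear damping $c(t)\cdot$. The uniform-in-time damping rate $\underline c$ provided by Proposition \ref{p: conv rfunc} is what makes the threshold $\epsilon_0$ depend only on $u_0$ and the data $(p,K,L)$, rather than drifting as $t\to\infty$.
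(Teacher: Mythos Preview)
Your proposal is correct and follows essentially the same route as the paper: differentiate \eqref{e: rsolwav} in $x$, evaluate at spatial extrema to kill the favorable top-order diffusion term, derive ODE-type differential inequalities of the form $\partial_t M_k\le \epsilon A_kM_k^{q_k}+B_k-\underline c\,M_k$ for $k=1,2,3$, and close them by taking $\epsilon_0$ small enough so the super-linear self-feedback is dominated by the linear damping. The paper records the same cubic/quadratic/linear hierarchy you describe and the same use of Proposition~\ref{p: ru upperB} and Corollary~\ref{c: rlow u} to control the negative powers of $u$; the only cosmetic difference is that the paper writes the damping floor explicitly as $c(0)=\alpha-\epsilon\beta$ (using $\|u_0\|_{L^{p+1}}\le 1$ to get $c(t)$ increasing), which makes the $\epsilon$-dependence of $\underline c$ transparent.
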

\begin{proof}
    We will firstly calculate pointwise time evolution of $u_x$ and $u_{xx}$ and later argue how to deduce the conclusion. 
        
    The derivatives of each term on~\eqref{e: rsolwav} are as follows,
    \begin{align*}
    \partial_x (u^p)&=pu\pp u_x\\
    \partial^2_x (u^p)&=p(p-1)u^{p-2}u_x^2+ pu\pp u_{xx}\\
    \partial^3_x (u^p)&=p(p-1)(p-2)u^{p-3}u_x^3+ 3p(p-1)u^{p-2}u_x u_{xx}+ pu\pp u_{xxx}\\
    \partial^4_x (u^p)&=p(p-1)(p-2)(p-3)u^{p-4}u_x^4+ 6p(p-1)(p-2)u^{p-3}u_x^2u_{xx}\\
    &\quad +3p(p-1)u^{p-2} u_{xx}^2 +4p(p-1)u^{p-2}u_x u_{xxx} + pu\pp u_{xxxx}.\\
     \partial^5_x (u^p)&=p(p-1)(p-2)(p-3)(p-4)u^{p-5}u_x^5+ 10p(p-1)(p-2)u^{p-4}u_x^3u_{xx}\\
     &\quad +15p(p-1)(p-2)u^{p-3}u_xu_{xx}^2 +10p(p-1)(p-2)u^{p-3}u_x^2u_{xxx}\\&\quad +10p(p-1)u^{p-2}u_{xx}u_{xxx}+5p(p-1)u^{p-2}u_{x}u_{xxxx}+pu^{p-1}u_{xxxxx}.
    \end{align*}
    \begin{align*}
    |\partial_x(K*u^p)|&=|K_x*u^p|\le \Vert K\Vert_{W^1}\Vert u^p\Vert_{L^\infty}\\
    |\partial^2_x(K*u^p)|&=|K_x*(u\pp u_x)|\le \Vert K\Vert_{W^1}\Vert u\pp\Vert_{L^\infty}\Vert u_x\Vert_{L^\infty}.
    \end{align*}
    Since $\Vert u_0\Vert_{L\p}\le 1$, $c(t)$ is increasing, so
    \begin{equation*}
        c(t)=\int u^p K\ast u^p- \epsilon[(u^p)_x]^2 dx \ge c(0).
    \end{equation*}
    Let $c(0)=\alpha-\epsilon\beta$ where,
    \begin{equation*}
     \alpha=\int u_0^p K\ast u_0^pdx, \qquad \beta= \int [(u_0^p)_x]^2 dx .
    \end{equation*}

We use Proposition~\ref{p: ru upperB} and Proposition \ref{c: rlow u} to control the powers of $u$.

1. $u_x$ control: Suppose for a fixed $t>0$, there is a point $x_0\in \mathds{T}_L$, $u_x(t,x_0)$ is positive and reached maximum so that $u_{xx}=(u_x)_x=0$ and $u_{xxx}=(u_x)_{xx}\le0$. Then from~\eqref{e: rsolwav} there exist constants $A,B\in\R$ such that
\begin{equation*}
    \partial_t u_x\le \epsilon A u_x^3+B -(\alpha-\epsilon\beta)u_x.
\end{equation*}
Note that as $\epsilon$ gets smaller, pointwise invariant interval for $u_x$ gets larger.
Thus we can find small enough $\epsilon_0>0$ so that $\limsup_t u_x(t,\cdot)$ is bounded. We can use the similar argument when $u_x$ is negative and reached minimum. Therefore, for $\epsilon\in(0,\epsilon_0]$ we can say $u_x(t)\le \max\{\Vert u_x(0,\cdot)\Vert_{L^\infty}, 1+B/\alpha\}$.

2. $u_{xx}$ control: Suppose $u_{xx}$ is positive and reached maximum so that $u_{xxx}=(u_{xx})_x=0$ and $u_{xxxx}=(u_{xx})_{xx}\le0$. Then from~\eqref{e: rsolwav} there exist constants, possibly depend on $\Vert u_x\Vert_{L^\infty}$, $A',B',D\in\R$ such that
\begin{equation*}
   \partial_t u_{xx}\le \epsilon A' u_{xx}^2 -(\alpha-\epsilon B')u_{xx} +D.
\end{equation*}
Since we already achieved $\epsilon$ and $t$ independent control on $\Vert u_x\Vert_{L^\infty}$ we continue the same argument. As $\epsilon$ gets smaller, pointwise invariant interval for $u_{xx}$ gets larger.
Thus we can find, possibly smaller, $\epsilon_0>0$ so that $\limsup_t u_{xx}(t,\cdot)$ is bounded. We can use the similar argument when $u_{xx}$ is negative and reached minimum. In particular, since the domain is compact, we have uniform $H^2$-norm and by Rellich–Kondrachov theorem.

3. $u_{xxx}$ control: Suppose $u_{xxx}$ is positive and reached maximum so that $u_{xxxx}=(u_{xxx})_x=0$ and $u_{xxxxx}=(u_{xxx})_{xx}\le0$. Then from~\eqref{e: rsolwav} there exist constants, depend on $\Vert u_x\Vert_{L^\infty}$ and $\Vert u_{xx}\Vert_{L^\infty}$ , $A'',B''\in\R$ such that 
\begin{equation*}
   \partial_t u_{xxx}\le A''-(\alpha-\epsilon B'')u_{xxx}.
\end{equation*}
We continue with the similar argument as before to make conclusion.
\end{proof}

\section{Convergence analysis}
Let $L<\infty$ and $\mathcal{V}=H^1(\mathds{T}_L)$, $\mathcal{H}=L^2(\mathds{T}_L)$ so that
\begin{equation*}
    \mathcal{V}\subset\mathcal{H}\subset\mathcal{V}'.
\end{equation*}
We follow the framework presented in Section~\ref{c: inf dim} of Chapter~\ref{ch: gf}.
\paragraph{\textbf{Analyticity}}
We prove convergence in compact spatial domain where $L<\infty$. Let $H^1_+(\mathds{T}_L)$ denote the positive cone in $H^1(\mathds{T}_L)$, i.e.,
$$H^1_+(\mathds{T}_L):=\{u\in H^1(\mathds{T}_L): u(x)> 0, \;\; \forall x\in H^1(\mathds{T}_L)\}.$$
Note that $H^1(\mathds{T}_L)$ is a Banach algebra. So the Lyapunov functional,
\begin{align*}
    F(u)&= \frac{1}{2p}e^{-\frac{2p}{p+1}\int u\p dx}\int u^pK*u^p-\epsilon[(u^p)_x]^2 dx,\\
    &=\frac{1}{2p}e^{-\frac{2p}{p+1}\int u\p dx}\int u^pK*u^p-\epsilon p^2u^{2p-2}u_x^2 dx.
\end{align*}
is analytic in the neighborhood of $H^1_+(\mathds{T}_L).$ In order to show that $F$ satisfies the \L ojaisewicz inequality, let's analyze the derivatives of $F.$
\paragraph{\textbf{Hessian of F}}
Recall that
\begin{align*}
  DF(u)(v) = e^{-\frac{2p}{p+1}\int u\p dx}\int u\pp\Big[\epsilon (u^p)_{xx}+ K*u^p - c(t)u\Big]vdx.
\end{align*}
The Hessian at an equilibrium $\ut$ is the following,
\begin{align*}
  D^2F(\ut)(v,w) &= D(e^{-\frac{2p}{p+1}\int \ut\p dx})(w) \int \ut\pp\cancelto{0}{\Big[\epsilon(\ut^p)_{xx}+K*\ut^p - c(t)u\Big]}vdx\\& + e^{-\frac{2p}{p+1}\int \ut\p dx}D(\int \ut\pp\Big[\epsilon(\ut^p)_{xx}+K*\ut^p - c(t)\ut\Big]vdx)(w)\\
  &=e^{-\frac{2p}{p+1}\int \ut\p dx}\int (p-1)\ut^{p-2}w\cancelto{0}{\Big[\epsilon(\ut^p)_{xx}+K*\ut^p - c(t)\ut\Big]}vdx\\ 
  &\quad+ e^{-\frac{2p}{p+1}\int \ut\p dx} \int \ut\pp\bigg[\epsilon p(\ut\pp w)_{xx}+pK*(\ut\pp w) \\
  &\quad - 2p\Big\{\int \ut\pp (\epsilon(\ut^p)_{xx}+K*\ut^p) wdx\Big\}\ut  -2pE(\ut)w\bigg]vdx\\
%   &=\alpha^{-1} \int \ut\pp\bigg[\epsilon p(\ut\pp w)_{xx}+pK*(\ut\pp w) - 2p\overline{c}(\int \ut^p wdx)\ut  -\overline{c}w\bigg]vdx\\
  &=\langle v,D^2F(\ut)w\rangle_{L^2}.
\end{align*}
Note that $\Tilde{c}=\lim_{t\to\infty}c(t)=2pE(\ut)$ so that,
\begin{equation*}
    \epsilon(\ut^p)_{xx}+K*\ut^p=\Tilde{c}\ut.
\end{equation*}
We can identify $D^2F(\ut):\mathcal{V}\longrightarrow\mathcal{V}'$ as follows,
\begin{equation*}
    D^2F(\ut): w\mapsto e^{-\frac{2p}{p+1}}\ut\pp\left(\epsilon p(\ut\pp w)_{xx}+{p K*(\ut\pp w)} -{2p\Tilde{c}(\int \ut^pwdx)\ut} -\Tilde{c}w\right).
\end{equation*}

Since $\ut\in C^\infty$ is bounded above, and below away from $0$, the multiplication with $\ut$, $v\mapsto \ut v$ is an isomorphism from $\mathcal{V}$ to $\mathcal{V}$, and from $\mathcal{V}'$ to $\mathcal{V}'$. As we discussed in Proposition~\ref{p: asfn}, $\Delta:\mathcal{V}\to\mathcal{V}'$ with periodic boundary condition is semi-Fredholm. Therefore,
\begin{equation*}
    w\mapsto \ut\pp(\ut\pp w)_{xx}
\end{equation*}
is semi-Fredholm from $\mathcal{V}$ to $\mathcal{V}'$.
So by Theorem~\ref{t: fredcom} $D^2F(\ut)\in\mathcal{L}(\mathcal{V};\mathcal{V}')$ is semi-Fredholm.

\paragraph{\textbf{Convergence result}}
We use Theorem~\ref{t: vloja} and Theorem~\ref{t: conv} to deduce the convergence result as follows.
\begin{theorem}[Convergence result]
Let $u(t)\in \mathcal{V}$ a solution of \eqref{e: solwav}. If $\{u(t)\}_{t\ge0}$ is precompact in $\mathcal{V}$, e.g., as in Theorem~\ref{t: rcdcom}, then there exists $\ut\in\mathcal{E}:=\{u\in \mathcal{V}: \nabla F(u) = 0\}$ such that
    $$\lim_{t\rightarrow\infty}\Vert u(t) - \ut\Vert_\mathcal{V} = 0$$
    Moreover, let $\theta$ be any \L ojasiewicz exponent of $F$ at $\ut$. Then we have
    \begin{equation*}
        \Vert u(t) -\ut\Vert_\mathcal{H} = 
        \left\{\begin{array}{ll}
             O(e^{-\delta t}) & \text{if}\;\; \theta=\frac{1}{2}, \;\;\;\;\text{for some}\;\; \delta>0 \\
             O(t^{-\theta/(1-2\theta)})& \text{if}\;\; 0<\theta<\frac{1}{2} 
        \end{array}\right.
    \end{equation*}
\end{theorem}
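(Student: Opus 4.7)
The plan is to verify the hypotheses of the abstract convergence theorem (Theorem~\ref{t: conv}) for the energy $-F$ (since $F$ is non-decreasing along the flow by Proposition~\ref{p: conv rfunc}, we want a Lyapunov function that is non-increasing, so the angle and rate conditions will be stated for $\nabla F$ with the sign adjusted accordingly). The three ingredients to verify are: (i) the \L ojasiewicz inequality at every point of $\omega(u)$; (ii) precompactness in $\mathcal{V}$, which is already part of the hypothesis; and (iii) the angle condition relating $u'$ and $\nabla F(u)$, plus the rate condition for the quantitative rate.

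First I would localize $\omega(u)$ inside the positive cone $H^1_+(\mathds{T}_L)$. Precompactness in $\mathcal{V}\hookrightarrow C(\mathds{T}_L)$, together with the uniform lower bound $u(t,x)\ge m>0$ from Corollary~\ref{c: rlow u} and the upper bound from Proposition~\ref{p: ru upperB}, shows that every $\ut\in\omega(u)$ satisfies $m\le\ut(x)\le M$ on $\mathds{T}_L$. Combined with the energy dissipation $\int_0^\infty\alpha(t)^{-1}\int u^{p-1}u_t^2\,dx\,dt<\infty$ from Proposition~\ref{p: conv rfunc}, one obtains $\nabla F(\ut)=0$, so $\ut$ is an equilibrium. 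Since $F$ is analytic in a neighborhood of $H^1_+(\mathds{T}_L)$ and, as shown in the paragraph preceding the theorem, the Hessian $D^2F(\ut):\mathcal{V}\to\mathcal{V}'$ is semi-Fredholm at any such $\ut$, Theorem~\ref{t: vloja} yields constants $c,r,\theta$ such that $\|\nabla F(u)\|_{\mathcal{V}'}\ge c|F(u)-F(\ut)|^{1-\theta}$ for $\|u-\ut\|_\mathcal{V}<r$. A finite covering of the compact $\omega$-limit set (using that $\omega(u)$ is compact and connected in $\mathcal{V}$) produces uniform constants, as in Step~1 of the proof of Theorem~\ref{t: conv}.

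Next I would verify the angle and rate conditions. From the gradient-like identity derived in the gradient-structure section, $u_t = \alpha(t)u^{1-p}\nabla F(u)$ in the $L^2$ sense, i.e.\ $\nabla F(u) = \alpha(t)^{-1}u^{p-1}u_t$. Because $m\le u(t,x)\le M$ uniformly along the trajectory and $\alpha(t)$ is bounded between positive constants (since $\int u^{p+1}\,dx$ is uniformly bounded), one obtains
\begin{equation*}
\langle \nabla F(u),u_t\rangle_{L^2}=\alpha(t)^{-1}\!\int u^{p-1}u_t^2\,dx\ge \sigma\,\|\nabla F(u)\|_{L^2}\,\|u_t\|_{L^2},
\end{equation*}
\begin{equation*}
\|u_t\|_{L^2}\ge \gamma\,\|\nabla F(u)\|_{L^2},
\end{equation*}
for explicit constants $\sigma,\gamma>0$ depending on $m,M,\alpha$. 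Since $\mathcal{H}=L^2$ and $\mathcal{V}'\supset\mathcal{H}$ continuously, the $L^2$ bound on $\nabla F(u)$ dominates the $\mathcal{V}'$ norm, so both the angle condition~\eqref{e: rcon} and the rate condition~\eqref{e: irate con} hold in the framework of Theorem~\ref{t: conv}.

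With these inputs, Theorem~\ref{t: conv} applies directly: the trajectory has finite length in $\mathcal{H}=L^2$, so it is Cauchy in $L^2$, and by precompactness in $\mathcal{V}$ the limit $\ut$ is attained in $\mathcal{V}$-norm; the rate condition forces $\nabla F(\ut)=0$, and the decay rate of $F(\ut)-F(u(t))$ governed by the \L ojasiewicz exponent $\theta$ translates into the stated dichotomous rate for $\|u(t)-\ut\|_\mathcal{H}$. The main subtlety I anticipate is making the uniform lower bound $u\ge m>0$ survive passage to the limit (needed both for $H^1_+$-analyticity at $\ut$ and for the angle-condition constants); this is handled by combining Corollary~\ref{c: rlow u} with the $C^2$-compactness of Theorem~\ref{t: rcdcom}, which ensures the pointwise lower bound is preserved under the uniform convergence along any subsequence.
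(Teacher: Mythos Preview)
Your proposal is correct and follows essentially the same route as the paper: use the uniform two-sided bounds on $u$ from Proposition~\ref{p: ru upperB} and Corollary~\ref{c: rlow u} together with the boundedness of $\alpha(t)$ to verify the angle and rate conditions for $\nabla F$, then invoke the analyticity of $F$ on $H^1_+$, the semi-Fredholm Hessian, and Theorems~\ref{t: vloja} and~\ref{t: conv}. Your write-up is in fact somewhat more explicit than the paper's on two points---why $\omega(u)$ consists of critical points in the positive cone, and why the $L^2$ estimate on $\nabla F(u)$ suffices for the $\mathcal{V}'$-norm in the angle condition---but the strategy is identical.
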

\begin{proof}
       We use the uniform bound on $u$ (Proposition~\ref{p: ru upperB} and Corollary~\ref{c: rlow u}) to check the angle condition 
\begin{align*}
   \langle \nabla{F(u)}, u'\rangle &=\alpha(t)^{-1}\int u\pp\Big[\epsilon (u^p)_{xx}+ K*u^p - c(t)u\Big]^2dx\\
    &\ge \sigma\Vert \nabla F(u)\Vert_\mathcal{H}\Vert u'\Vert_\mathcal{H},
\end{align*}
and the rate condition
\begin{equation*}
    \Vert u'\Vert_\mathcal{H}\ge \gamma \Vert \nabla F(u)\Vert_\mathcal{H}.
\end{equation*}

Suppose $\ut\in \omega(u)$ and let  $\bar{F}=\lim_{t\to\infty} F(u(t))=F(\ut)$ and $\theta$ be the \L ojasiewicz exponent at $\ut$. Since the argument is the same with the proof of Theorem~\ref{t: conv}, we conclude the proof by showing the following calculation,
\begin{align*}
   - \frac{d}{dt}(\bar{F}-F(u(t)))^\theta&=(\bar{F}-F(u(t)))^{1-\theta}\langle \nabla{F(u)}, u'\rangle\\
   &\geq \sigma(\bar{F}-F(u(t)))^{1-\theta}\Vert \nabla F(u)\Vert_\mathcal{H}\Vert u'\Vert_\mathcal{H}\\
   &\geq \sigma C\Vert u'\Vert_\mathcal{H}.\qedhere
\end{align*}
\end{proof}

\paragraph{\textbf{Instability of constant solutions}}
Note that $\Bar{u}$ is a constant satisfying $L\bar{u}\p=1$, $\Bar{u}$ is a fixed point of \eqref{e: rsolwav} as well as \eqref{e: solwav}. So as we did in Theorem~\ref{t: npw}, we can find a perturbation $\phi$ at $\bar{u}$ such that 
\begin{equation*}
    F(\bar{u}+\phi)>F(\bar{u}).
\end{equation*}
Indeed, at $\bar{u}$
\begin{align}
    D^2F(\Bar{u})(v)=e^{-\frac{2p}{p+1}}\Bar{u}^{2p-2}(\epsilon pv_{xx}+pK*v -\frac{2p}{L}\int_{\mathds{T}_L}vdy -v).
\end{align}
Note that, the constant function $\mathds{1}$ is an eigenvector with negative eigenvalue,
\begin{equation*}
    D^2F(\bar{u})(\mathds{1})=-e^{-\frac{2p}{p+1}}\Bar{u}^{2p-2}(p+1)\mathds{1}.
\end{equation*}
It is easy to see that the invariant manifold $\mathcal{M}$ attracts the solutions.

For the perturbations in the tangent manifold at $\bar{u}$, i.e., $T_{\bar{u}} \mathcal{M}=\{\phi\in\mathcal{V}:\int_{\mathds{T}_L} \phi dx=0\}$,
\begin{align}
    D^2F(\Bar{u})(\phi)=e^{-\frac{2p}{p+1}}\Bar{u}^{2p-2}(\epsilon p\phi_{xx}+pK*\phi -\phi).
\end{align}
Let $\phi_n(x)=\cos(\lambda_n x)$ where $\lambda_n=\frac{2n\pi}{L}$ then
\begin{equation*}
     D^2F(\Bar{u})(\phi_n)=e^{-\frac{2p}{p+1}}\Bar{u}^{2p-2}(-\epsilon p \lambda_n^2+ p\widehat{K}(\lambda_n)-1)\phi_n.
\end{equation*}
So for small enough $\epsilon$, $D^2F(\bar{u})$ have positive eigenvalues, which shows the instability of the constant solution $\bar{u}.$

In conclusion, we have the following convergence result to a nontrivial equilibrium.
\begin{theorem}
    For large enough $L>0$ there exist small enough $\epsilon>0$ and $\delta>0$ such that the solution $u(t)$ of \eqref{e: rsolwav} with the initial $u_0=\bar{u}+\delta\cos(\frac{2\pi}{L}x)$, with $L\p$-normalization if needed, converges to a nontrivial equilibrium.
\end{theorem}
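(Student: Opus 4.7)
The plan is to combine three ingredients already established in the excerpt: the instability of the constant equilibrium $\bar u$ (encoded in the Hessian computation of $D^2F(\bar u)$), the monotonicity of the Lyapunov functional $F$ along the flow, and the convergence theorem for the regularized equation applied to a precompact trajectory furnished by Theorem~\ref{t: rcdcom}. The ordering of quantifier choices is $L$, then $\epsilon$, then $\delta$.

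First, I will fix $L<\infty$ large enough so that $p\widehat{K}(2\pi/L)>1$; this is possible since $\widehat{K}(k)\to\widehat{K}(0)=\int K\,dx>0$ as $k\to 0$ and we have normalized $\int K=1$. Once such an $L$ is chosen, I will pick $\epsilon_0>0$ small enough that simultaneously (a) the Hessian eigenvalue along $\phi(x):=\cos(2\pi x/L)$, namely
\[
\lambda(\epsilon):=e^{-\tfrac{2p}{p+1}}\bar u^{2p-2}\bigl(-\epsilon p(2\pi/L)^2+p\widehat{K}(2\pi/L)-1\bigr),
\]
remains strictly positive, and (b) the precompactness hypothesis of Theorem~\ref{t: rcdcom} holds for solutions starting near $\bar u$ in $C^5(\mathds{T}_L)$. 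Any $\epsilon\in(0,\epsilon_0]$ then gives an unstable direction $\phi\in T_{\bar u}\mathcal{M}$ at $\bar u$ together with a $C^2$-precompact trajectory.

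Next, for such fixed $L,\epsilon$, a second-order Taylor expansion of $F$ at $\bar u$ yields, since $\nabla F(\bar u)=0$ and $D^2F(\bar u)(\phi,\phi)=\lambda(\epsilon)\,\|\phi\|_{L^2}^2>0$,
\[
F(\bar u+\delta\phi)=F(\bar u)+\tfrac{1}{2}\delta^2 D^2F(\bar u)(\phi,\phi)+o(\delta^2)>F(\bar u)
\]
for all sufficiently small $\delta>0$. I will choose $\delta>0$ small enough that in addition $u_0:=\bar u+\delta\phi$ is strictly positive, lies in $C^5(\mathds{T}_L)$ (which is automatic since $\phi$ is smooth), and satisfies $\|u_0\|_{L^{p+1}}\le 1$, rescaling by a constant factor close to $1$ if necessary so that the monotonicity hypotheses of Proposition~\ref{p: conv rfunc} and the pointwise upper/lower bounds of Proposition~\ref{p: ru upperB} and Corollary~\ref{c: rlow u} all apply; the strict inequality $F(u_0)>F(\bar u)$ is preserved under a sufficiently small rescaling by continuity of $F$.

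Finally, Theorem~\ref{t: rcdcom} gives precompactness of $\{u(t)\}_{t\ge 0}$ in $\mathcal{V}=H^1(\mathds{T}_L)$, so the convergence theorem of the preceding section produces $\ut\in\mathcal{E}$ with $u(t)\to\ut$ in $\mathcal{V}$. Since $F$ is nondecreasing along the flow and continuous on $\mathcal{V}$,
\[
F(\ut)=\lim_{t\to\infty}F(u(t))\ge F(u_0)>F(\bar u),
\]
which forces $\ut\neq\bar u$; in particular $\ut$ is a nontrivial equilibrium of~\eqref{e: rsolwav}. The main obstacle I anticipate is the careful bookkeeping of the three smallness/largeness parameters, in particular verifying that the compactness bound $\epsilon_0$ coming from Theorem~\ref{t: rcdcom} is compatible with the positivity of $\lambda(\epsilon)$ and with the normalization $\|u_0\|_{L^{p+1}}\le 1$; all other steps are direct applications of results already in place.
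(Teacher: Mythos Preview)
Your proposal is correct and follows essentially the same route as the paper: the paper presents this theorem as an immediate consequence of combining the convergence theorem with the instability computation $D^2F(\bar u)(\phi,\phi)>0$, using the Taylor expansion $F(u_0)>F(\bar u)$ and the monotonicity of $F$ to rule out $\ut=\bar u$, exactly as in the proof of Theorem~\ref{t: npw} for the unregularized case. Your careful tracking of the order of quantifiers $L\to\epsilon\to\delta$ and the compatibility of the smallness conditions is more explicit than the paper, but the argument is the same.
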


% \bigskip
% \hrule
% \bigskip
% \begin{center} PREVIOUS DRAFT BEGINS HERE (may be hidden by macro) \end{center}
% \hrule
% \bigskip

\HIDE{  % hide big block of text: END HIDE marks end
\chapter{Analysis on the invariant manifold $\mathcal{M}$}  
Another form of the concentration-dispersion equation is as follows,
\begin{equation}\label{e: cde man}
    u'=K*u^p-\frac{\langle K*u^p,u\rangle_u}{\langle u,u\rangle_u}u,
\end{equation}
which is designed to conserve the $(p+1)$-th moment of $u$. So with the appropriate scaling, \eqref{e: cde man} is equivalent to \eqref{e: solwav} on $\mathcal{M}:=\{u>0 : \int u(x)\p  dx=1\}\subset H^1_0(\mathds{T}_L) =\mathcal{V}$. Recall that \eqref{e: cde man} can be considered as a projected gradient descent onto $\mathcal{M}$ with respect to the Riemannian structure,
\[
    \langle v, w \rangle_u := \int vwu\pp dx, \quad \forall u\in \mathcal{M}, \quad v,w\in T_u\mathcal{M},
\]

In this chapter, we focus the dynamics of \eqref{e: solwav} on the manifold $\mathcal{M}$. We purposedfully provide another approach to prove convergence, since these ideas can be applied to similar Gram-Schmidt type projected gradient flow equations, such as the birth-death equations considered by~\cite{}. Once a projected gradient flow is given, One can try either finding a Lyapunov function in the larger space, as the previous chapter suggested, or flatten the manifold to use \L ojasiewicz convergence theorem.
\mnote{bd citation}
\section{Geometric structure}
Let $\mathcal{M}:=\{u>0 : \int u\p dx=1\} \subset \mathcal{V}= H^1(\mathds{T}_L)$ with the following Riemannian metric
\[
    \langle v, w \rangle_u := \int vwu\pp dx, \quad u\in \mathcal{M}, \quad v,w\in T_u\mathcal{M}.
\]

\paragraph{\textbf{Flattening}}
Let $\rho$ be an equilibrium of \eqref{e: solwav}. We aim to represent $\mathcal{M}$ as a local graph of $\text{span}\{\rho\}\times T_\rho \mathcal{M}$ in $\mathcal{V}$.

In other words, for all $u\in \mathcal{M}$ near $\rho$ we want to find $\lambda\in\R$ and $\eta\in T_\rho \mathcal{M}$ such that $u=\lambda \rho + \eta$. Note that $\langle \rho, \rho\rangle_\rho=1$ and $\langle \eta, \rho\rangle_\rho=0$, which implies $\lambda=\langle u, \rho\rangle_\rho$. So if we lift up $\langle \cdot,\cdot\rangle_\rho$ as an inner product in $\mathcal{V}$ then $\text{span}\{\rho\}\times T_\rho \mathcal{M}$ gives ``local orthogonal decomposition" of $\mathcal{V}$. 

Let $\Phi(\lambda,\eta):=\int (\lambda\rho+\eta)\p dx-1$ so that $\mathcal{M}$ can be represented as a zero level set of $\Phi$. At $\rho=1\rho+0$, or $(\lambda,\eta)=(1,0)$,
\[
    \Phi_\lambda(1,0)=(p+1)\int  (\lambda\rho+\eta)^p\rho dx\Big|_{(1,0)}=p+1.
\]
Therefore, by the implicit function theorem, $\lambda$ can be represented as a function of $\eta$, which provides local chart around $\rho$ as $u=\lambda(\eta)\rho+\eta:=\phi(\eta)$. Moreover, since $\Phi$ is analytic, we have an analytic chart $\phi$.

\mnote{for $p=2$ the chart is global on the manifold}

\begin{lemma}
$\phi:V\to \mathcal{V}$ is well defined and analytic.
\end{lemma}

\begin{align*}
    \phi &: V \longrightarrow \mathcal{M} \subset \mathcal{V}, \quad \text{diffeomorphism}\\
    D\phi &: V \longrightarrow \mathcal{L}(V;X), \quad \text{Frechet derivative}.
\end{align*}

So we study $V:=T_\rho\mathcal{M}:=\{\eta\in \mathcal{V}: \int \eta\rho^pdx=0\}$, with $L^2$ inner product, 
$$V\subset \mathcal{H}\subset V'.$$
\mnote{maybe we need different '$D$' for the flattened space}
We use $\Phi(\lambda,\eta)$ to analyze $D\phi(0)$
\paragraph{\textbf{Derivative of $\phi$}}
For any $\eta\in V$, $\Phi(\lambda(\epsilon\eta), \epsilon\eta)=0$ or all $\epsilon\in (-\epsilon_0,\epsilon_0)$ for some $\epsilon_0>0$. If we take derivative with respect to $\epsilon$ around zero,
\begin{align*}
   0&=\frac{d}{d\epsilon}\Phi\big(\lambda(\epsilon\eta),\epsilon\eta\big)\Big|_{\epsilon=0}\\
   &= (p+1)\int \rho^p\big(D\lambda(0)(\eta)\rho +\eta\big) dx\\
   &=(p+1)D\lambda(0)(\eta),
\end{align*}
we can see that $D\lambda(0)(\eta)=0$ for all $\eta\in V$. This implies that $D\phi(0)(\eta)=\eta$.

\section{The proof of convergence}

\paragraph{\textbf{Derivatives}}
Let $\eta, \zeta, \xi\in V$.
\begin{align*}
    D(E\circ\phi)(\eta)(\zeta)&=\int \nabla E\big(\phi(\eta)\big)D\phi(
    \eta)(\zeta)dx\\
    &=\int \phi(\eta)\pp(K\ast \phi(\eta)^p  -c(t)\phi(\eta))D\phi(\eta)(\zeta)dx\\
    % &=\int u\pp(K\ast u^p  -c(t)u)D\phi(\eta)(\zeta)dx\\
    &=\langle D\phi(\eta)^*\phi(\eta)\pp\text{grad}E(\phi(\eta)), \zeta\rangle,
\end{align*}
where $D\phi(\eta)^*\in \mathcal{L}(X';V')$ is the $L^2$ adjoint of $D\phi(\eta)\in \mathcal{L}(V;X)$.
\begin{align*}
    D^2(E\circ \phi)(\eta)(\zeta,\xi) &=\int\nabla^2 E(\phi(\eta))\big(D\phi(
\eta)(\zeta),D\phi(
\eta)(\xi)\big) + \nabla E(\phi(\eta))D^2\phi(\eta)(\zeta,\xi) dx.
\end{align*}
If $\phi(\eta)\in \mathcal{M}$ an equilibrium point, i.e. $\gd E(\phi(\eta))=0$, then $\langle \nabla E(\phi(\eta)),D^2\phi(\eta)(\zeta,\xi)\rangle_2=0$, since $D^2\phi(\eta)(\zeta,\xi)\in T_{\phi(\eta)} \mathcal{M}$. Thus we define the ``flattened Hessian" of $E$ as follows,
\[
A:=D^2(E\circ \phi)(0) = D\phi(0)^\ast \nabla^2 E(\phi(0))D\phi(0).
\]
Since  $D\phi(\eta)$ is a homeomorphism and $\nabla^2 E(\phi(\eta))$ is semi-Fredholm, $A$ is semi-Fredholm.
% [Flat space \L ojasiewicz precedes]
% In our case $ D^2E(u)(v,w)$, the second variation in $L^2$ sense,  is a semi-Fredholm operator if and only if $L<\infty$. A difficulty arises here since the gradient flow in our system \eqref{e: solwav} is in infinite dimensional Riemannian manifold and we need to find an analytic map which flattens the manifold into some Hilbert subspace $V$ in a way that the Hessian operator is semi-Fredholm in V. We suggest a diffeomorphism $\phi_\rho: V(\approx T_\rho \mathcal{M}) \longrightarrow \mathcal{M} \subset \mathcal{V}$ such that $\phi_\rho^{-1}(u)=u-(\int u\rho^2dx)\rho$, which flattens the manifold with respect to an equilibrium $\rho$.
Since $E\circ \phi : V\rightarrow \R$ is analytic and $D^2(E\circ \phi)$ is semi-Fredholm in $V$ we prove the \L ojaisiewicz inequality.
\begin{theorem}
    Let $\rho$ be an equilibrium of \eqref{e: solwav}. Then there exist a neighborhood $\mathcal{N_\rho}$of $\rho$ and constants $c_\rho, \theta_\rho$ such that 
    \[
    c_\rho|E(u)-E(\rho)|^{1-\theta_\rho}\leq \Vert \text{grad} E(u)\Vert_u, \qquad \forall u\in\mathcal{N}_\rho.
    \]
\end{theorem}
\begin{proof}
        Firstly, we prove \L ojasiewicz inequaltiy in the flat space $V$. $E\circ\phi:V\to\R$ is analytic and $A$ is a semi-Fredholm operator by Theorem~\ref{t: vloja} it satisfies \L ojasiewciz inequality,
        \[
        c_\rho|E\circ\phi(\eta)-E\circ\phi(0)|^{1-\theta_\rho}\leq \Vert D(E\circ\phi)(\eta)\Vert_{V'}, \qquad \forall \eta\in\mathcal{N}_0,
        \]
        So let $\phi(\eta)=u$, $\mathcal{N}_\rho=\phi[\mathcal{N}_0]$ and it is enough to show $\Vert D(E\circ\phi)(\eta)\Vert_{V'}\le \Vert \text{grad} E(u)\Vert_u$
        
        \begin{align*}
            \Vert D(E\circ\phi)(\eta)\Vert_{V'} &= \sup_{\Vert \zeta \Vert_{V}=1}\big\langle D\phi(\eta)^*\phi(\eta)\pp\text{grad}E\big(\phi(\eta)\big), \zeta\big\rangle_{L^2}\\
            &\le \Vert D\phi(\eta)^*\phi(\eta)\pp\text{grad}E\big(\phi(\eta)\big)\Vert_{L^2}\\
           &\le C\Vert \phi(\eta)^\frac{p-1}{2}\text{grad}E\big(\phi(\eta)\big)\Vert_{L^2}\\
           &=\Vert \text{grad} E(u)\Vert_u.
        \end{align*}
    
    We use the fact that there exists a constant $C>0$ such that $\forall \eta \in \mathcal{N}_0$, $\Vert D\phi(\eta)^*\Vert_{\mathcal{L}(\mathcal{V}';V')}<C$ and $\Vert\phi(\eta)\Vert_\infty<C$ since $\phi\in\mathcal{V}$.
    % since $\phi$ is analytic at $0$, which implies analyticity of $D\phi$, and $\phi(\eta)\in H^1$. 
\end{proof}
Combined with the compactness result, we conclude the following.
    \begin{theorem}[Convergence result]\label{t: conv M}  For any $u(t)\in\mathcal{M}$ a solution of \eqref{e: solwav} with bell-shaped initial $u(0)\in\mathcal{M}$. There exists $\rho\in\mathcal{E}:=\{u\in \mathcal{M}: \text{grad}E(u) = 0\}$ such that
    $$\lim_{t\rightarrow\infty}\Vert u(t) - \rho\Vert = 0$$
    Moreover, let $\theta$ be any \L ojasiewicz exponent of $E$ at point $\rho$. Then we have
    \begin{equation*}
        \Vert u(t) - \rho\Vert = 
        \left\{\begin{array}{ll}
             O(e^{-\delta t}) & \text{if}\;\; \theta=\frac{1}{2}, \;\;\;\;\text{for some}\;\; \delta>0 \\
             O(t^{-\theta/(1-2\theta)})& \text{if}\;\; 0<\theta<\frac{1}{2}.
        \end{array}\right.
    \end{equation*}
    \end{theorem}
% We need $\Vert\text{grad}E(u)\Vert_u \geq \Vert\nabla(E\circ\phi)(\phi^{-1}(u))\Vert$
% \begin{itemize}
%     \item $\Vert\text{grad}E(u)\Vert_u \geq C\Vert D(E\circ\phi)(\phi^{-1}(u))\Vert$, 
%     \pause
%     \item $\Vert D(E\circ\phi)(\phi^{-1}(u))\Vert \geq C| (E\circ \phi)(\phi^{-1}(u))|^{1-\theta}$,
%      \item $\int_0^\infty \Vert u'\Vert_u ds \leq CE^\theta(t)\Big\vert_0^\infty<\infty$ implies finite length of trajectory w.r.t the metric, and also in the sense of $L^2$.
% \end{itemize}
\section{Instability and geodesic}
\subsection{Instability of constant solutions}
Our goal is to approximate nontrivial equilibria of \eqref{e: solwav}. We need to check the stability of the nonzero constant solution so that we can strategically start with the good initial data. 

We claim that, for given $p>1$, the nonzero constant equilibrium $\Bar{u}$ is a saddle point when $L>0$ is large enough. Since \eqref{e: solwav} is linear in $K$, without loss of generality, assume that $\int_\R K(x)dx=1.$

Let $\phi(x)\in\mathcal{V}$ be such that $\int\phi(x) dx=0$, and we perturb $u\in\mathcal{M}$ in a following way: $u_\epsilon\p:=u\p +\epsilon\phi$. Then
\begin{equation*}
    \frac{d}{d\epsilon}u_\epsilon=\frac{1}{p+1}u_\epsilon^{-p}\phi.
\end{equation*}
Abusing notation, we calculate the variations of $E$ on $\mathcal{M}$:
\begin{align*}
    DE(u)(\phi)&=\frac{1}{p+1}\int u^{-1}\phi K*u_\epsilon^p dx\\
    D^2E(u)(\phi,\psi)&=\frac{1}{(1+p)^2}\int -u^{-2-p}K*u^p \phi\psi +p u^{-1}\phi K*(u^{-1}\psi) dx.
\end{align*}
So
$$(1+p)^2D^2E(u): \phi \mapsto -u^{-2-p}(K*u^p) \phi +p u^{-1} K*(u^{-1}\phi).$$
For fixed $0<L<\infty$ assume that $\Bar{u}$ is a nonzero constant fixed point of \eqref{e: solwav}. Then 
\begin{equation}\label{e: e hess}
    D^2E(\Bar{u}): \phi \mapsto \frac{1}{\Bar{u}^{2}(1+p)^2}(p K*\phi-\phi).
\end{equation}
Note that \eqref{e: e hess} corresponds to \eqref{e: fl hess} with $\int_{\mathds{T}_L}vdy=0$ up to scaling.
We claim that $D^2E(\Bar{u})$ has positive eigenvalues.
\paragraph{\textbf{Spectrum of convolution operator}}
Note that for $\lambda\in\R$, $e^{i\lambda x}$ is an eigenvector of convolution operators,
\begin{align*}
        K*e^{i\lambda x}&=\int e^{i\lambda(x-y)}K(y)dy\\
        &=e^{i\lambda x}\int e^{-i \lambda y}K(y)dy\\
        &=\hat{K}(\lambda) e^{i\lambda x}.
\end{align*}
Recall that the perturbation $\phi$ we chose has mean zero, i.e. $\int \phi dx=0$. In order to satisfy $\int_{\mathds{T}_L}e^{i\lambda y}dy=0$, $\lambda$ must resonant with $L$, i.e., $\lambda=\frac{2n\pi}{L}$ for $n\in\N$.

Since $K$ is even, we can check that the corresponding eigenvalue $\hat{K}(\lambda)$ of the eigenvector $e^{i\lambda x}$ is positive,
\begin{align*}
    \hat{K}(\lambda)&=\int (\cos(\lambda y)-i\sin(\lambda y))K(y)dy\\
    &=\int \cos(\lambda y)K(y)dy>0.
\end{align*}
% To satisfy $\int_{\mathds{T}_L}\cos(\lambda y)dy=0$, $\lambda=\frac{2n\pi}{L}$ for $n\in\N$. 

Therefore, the eigenvalues of $D^2E(\Bar{u})$ on $\mathcal{M}$ is $\Bar{u}^2(p \hat{K}(\frac{2n\pi}{L})-1)$.
Because $\hat{K}(\frac{2n\pi}{L})\to 1$ as $L\to \infty$, for any $p>1$ there exists large $L>0$ such that $\Bar{u}$ is a saddle point.
Moreover, every eigenvalues of $D^2E(\Bar{u})$ becomes positive as $L\to \infty$, which corresponds to the fact that $0$ is a source when $L=\infty$.

\subsection{Geodesic on $\mathcal{M}$}
Let $\phi(x,t)$ be a perturbation such that $\forall t \in[0,1]$, $\int\phi(x,t) dx=0$ and $\phi(x,0)=\phi(x,1)=0$, $u_\epsilon\p(x,t):=u\p(x,t) +\epsilon\phi(x,t)$ then
\[
u'_\epsilon= \frac{u^p u'+\frac{\epsilon}{\p}\phi'}{u_\epsilon^p}
\]

\begin{align*}
   0=\frac{d}{d\epsilon} \int_0^1\int (u'_\epsilon)^2 u_\epsilon^{p-1} dx dt\Big|_{\epsilon=0} &=\frac{d}{d\epsilon}\int_0^1 \int \frac{(u^pu'+\frac{\epsilon}{\p}\phi')^2}{u_\epsilon^{2p}} u_\epsilon^{p-1}dxdt\Big|_{\epsilon=0}\\
   &=\int_0^1\int \frac{d}{d\epsilon} \frac{(u^pu'+\frac{\epsilon}{p+1}\phi')^2}{u\p +\epsilon\phi}\Big|_{\epsilon=0}dxdt\\
   &=\int_0^1\int -\phi \frac{(u^{p}u')^2}{u^{2p+2}}+\phi'\frac{2u^pu'}{(p+1)u\p }dxdt\\
   &=-\int_0^1\int \phi(\frac{2}{p+1}\frac{u''u}{u^2 }+\frac{p-1}{p+1}\frac{(u')^2}{u^2})dxdt
\end{align*}
Thus there exists a constant function $\lambda(t)$ such that
\begin{equation*}
    -2\frac{u''u}{u^2 }-(p-1)\frac{(u')^2}{u^2 }=(p+1)\lambda(t).
\end{equation*}

\paragraph{\textbf{Lagrange multiplier approach}}
We take different approach to deduce the geodesic equation. Consider the following minimization problem:
$$\min_{u(x,t)}\int_0^1\int_\R (u')^2u^{p-1}dxdt \text{,  subject to }\int_\R u\p  dx=1 \text{ for all } t\in[0,1].$$
Let $\lambda(t)$ be a Lagrange multiplier, we have the following
\[
\min_{u(x,t)}\int_0^1\int_\R (u')^2u^{p-1} -\lambda(t)u\p  dxdt.
\]
By the variational principle, we perturb $u$ with respect to $v$ such that $v(x,0)=v(x,1)=0$,
\begin{align*}
    0 &=\int_0^1\int_\R 2u'v'u^{p-1} + (p-1)(u')^2u^{p-2}v -\lambda(p+1)u^pv dxdt\\
    &=\int_0^1 \int_\R -2(u'u^{p-1})'v +(p-2)(u')^2u^{p-2}v -\lambda(p+1)u^pv dxdt
\end{align*}
Thus we have
\[
-2(u''u^{p-1}+(p-1)(u')^2u^{p-2}) +(p-1)(u')^2u^{p-2}-\lambda(p+1)u^p=0.
\]
% or
% \[
% -\frac{2}{p}(u^p)'' +u'(u^{p-1})' - \lambda(\p)u^p=0
% \]
Dividing by $u^p$ on the both side gives,
\[
-2\frac{u''u}{u^2 }-(p-1)\frac{(u')^2}{u^2 }=(p+1)\lambda(t).
\]
\paragraph{\textbf{Constant speed of geodesic}}
Note that from $\int u\p  dx=1$, we can derive the condition for the first and second time derivative, $\int u'u^p dx=0$ and $\int u''u^p+p(u')^2u^{p-1}dx=0$. Therfore,
\begin{align*}
    (p+1)\lambda(t)&=\int (p+1)\lambda(t) u\p  dx\\
    &=\int (-2u''u-(p-1)(u')^2)u^{p-1} dx\\
    &=\int (p+1) (u')^2 u^{p-1} dx.
\end{align*}
So we can conclude that 
\[
\lambda(t)=\int (u')^2 u^{p-1} dx.
\]

\begin{align*}
    \frac{d}{dt}\lambda(t) &= \int 2u'u''u^{p-1}+((p-1)(u')^2u^{p-2}dx\\
    &= \int u'u^{p-2}(2u''u+(p-1)(u')^2)dx\\
    &=-(p+1)\lambda\int u'u^pdx =0
\end{align*}

\paragraph{\textbf{$\Gamma$-convergence}}
Let $Y:=L^{p+1}(\R)\cap C_0(\R)$,
\begin{equation*}
    E_L(u):=\iint u^p(x)\mathds{1}_{\mathds{T}_L}(x)K_L(x-y)\mathds{1}_{\mathds{T}_L}(y)u^p(y)dxdy.
\end{equation*}

$K_L*u\to K*u$ on $\mathds{T}_L$ as $L\to \infty$ in some sense?

\paragraph{\textbf{Omega limit is a singleton?}}

\begin{align*}
    \int (u^p-v^p)K*(u^p-v^p)&=\int u^pK*u^pdx+\int v^p K*v^pdx -2\int u^pK*v^pdx\\
    &= 2C(\int u\p-u^pvdx)=C\int(v^p-u^p)(v-u)dx.
\end{align*}

\begin{equation*}
    \int u^pvdx=\int v^pudx
\end{equation*}

\paragraph{\textbf{Projection w.r.t. Riemannian metric or moving frame(?)}}
\begin{align*}
    u'&=K\ast u^p  - (\int u^p K\ast u^p dx) u\\
    &=K_p(u)- \langle K_p(u), u\rangle_u u\\
    &=K_p(u)- \frac{\langle K_p(u), u\rangle_u}{\langle u,u\rangle_u} u + \frac{\langle K_p(u), u\rangle_u(1-\langle u, u\rangle_u)}{\langle u,u\rangle_u}u\\
    &=\Big(K_p(u)- \frac{\langle K_p(u), u\rangle_u}{\langle u,u\rangle_u} u\Big) + \frac{(\langle u, u\rangle_u)'}{\langle u,u\rangle_u}u
\end{align*}
\chapter{Convergence of Fokker-Planck equation with Birth-Death process}
\section{Birth-Death dynamics}
\begin{equation*}
    \rho'=-\rho \log\frac{\rho}{\pi}+\int\log\frac{\rho}{\pi}\rho dx\rho.
\end{equation*}
It is known that the dynamics is gradient flow of KL-divergence in the space of probability.
\begin{equation*}
    \klr:= \kl{\rho}{\pi}
\end{equation*}

[Extending domain]
Consider
\begin{equation*}
    E(\rho)=e^{-\int \rho dx}(\klr+1).
\end{equation*}

\begin{align*}
    DE(\rho)(\eta)&=e^{-\int\rho dx}\int( \log\frac{\rho}{\pi}-\int\log\frac{\rho}{\pi}\rho dx)\eta dx\\
    &=:\langle \gd E(\rho), \eta\rangle_\rho,
\end{align*}
with respect to Shahshahani-like metric,
\[
\langle\eta,\zeta\rangle_\rho:=e^{-\int\rho dx}\int \eta \zeta \rho^{-1}dx.
\]
Therefore, we can see that Birth-Death dynamics is a gradient system.

[De-singularizing]
Let $u^2=\rho$, and $p^2=\pi=e^{-V}$
\begin{equation*}
    u'=-u\log\frac{u}{p}+\int \log\frac{u}{p}u^2dxu.
\end{equation*}

Consider
\[
H(u):=\frac{1}{4}E(u^2)=e^{-\int u^2 dx}(\frac{1}{2}\int \log\frac{u}{p}u^2dx+\frac{1}{4}).
\]
Then,
\begin{align*}
    DH(u)(v)&=e^{-\int u^2 dx}\int (u\log\frac{u}{p}-\int \log\frac{u}{p}u^2dxu)vdx\\
    &=\langle \gd H(u),v\rangle_u,
\end{align*}
where the gradient is defined with respect to $L^2$ like metric,
\begin{equation*}
    \langle v, w\rangle_u=e^{\int u^2dx}\int vwdx.
\end{equation*}
Moreover, the hessian at $p$
\begin{align*}
    D^2H(p)(v,w)= e^{-\int p^2dx}(\int vwdx+\int pvdx\int pwdx),
\end{align*}
$v\mapsto v+\langle p,v\rangle p$ is a semi-Fredholm operator, as a combination of the identity and compact operator. So we can apply \L ojasiewicz convergence theorem with convergence rate.

\section{Fokker-Planck equation with Birth-Death process}
\begin{equation*}
    \rho'=\nabla\cdot(\nabla\rho+\rho \nabla V) -\rho \log\frac{\rho}{\pi}+\int\log\frac{\rho}{\pi}\rho dx\rho
\end{equation*}
We will show that the equation is a gradient flow of $E$ with respect to $H^1$ structure. Let $\zeta = -\nabla\cdot(\rho \nabla v)+\rho v$ and $\xi = -\nabla\cdot(\rho \nabla w)+\rho w$,

\begin{align*}
    \langle \zeta, \xi \rangle_\rho =e^{-\int\rho dx}\int \nabla v\cdot \nabla w +v w d\rho.
\end{align*}

\begin{align*}
    DE(\rho)(\zeta)&=e^{-\int\rho dx}\int( \log\frac{\rho}{\pi}-\int\log\frac{\rho}{\pi}\rho dx)\zeta dx\\
    &=e^{-\int\rho dx}\int( \log\frac{\rho}{\pi}-\int\log\frac{\rho}{\pi}\rho dx)( -\nabla\cdot(\rho \nabla v)+\rho v)dx\\
    &=e^{-\int\rho dx}\int \nabla \frac{\delta E}{\delta\rho}\cdot \nabla v+ \frac{\delta E}{\delta\rho}v d\rho\\
    &=:\langle \gd E(\rho), \zeta\rangle_\rho\\
    &=e^{-\int\rho dx}\int(- \nabla\cdot(\nabla\rho +\rho\nabla V) + \rho\log\frac{\rho}{\pi}-\rho\int\log\frac{\rho}{\pi}\rho dx) vdx
\end{align*}
Therefore,
\begin{align*}
    \frac{dE(\rho)}{dt}=-e^{-\int\rho dt}\Vert \frac{\delta E}{\delta\rho}\Vert_{H^1(\rho)}^2\leq -e^{-\int\rho dt}\Vert \frac{\delta E}{\delta\rho}\Vert_{L^2(\rho)}^2
\end{align*}
Moreover, when $\rho=\pi$
\begin{align*}
    D^2E(\pi)(\zeta,\xi)=e^{-1}(\int\pi^{-1}\zeta\xi dx+\int\zeta dx\int\xi dx),
\end{align*}
$\zeta\mapsto \zeta/\pi+\int(\zeta/\pi) \pi dx$ is $L^2$ isomorphism if $1/\pi\in L^2$.
In this case, $E$ satisfies \L ojasiewicz inequality at $\pi$ in the sense of $L^2$, with Poincare's inequality we can check the Log-Sobolev inequality in the space of probability
\begin{align*}
    E(u)-1/e&\leq c\Vert \frac{\delta E}{\delta\rho}\Vert_{L^2(\rho)}^2\\
    e^{-\int \rho dx}(\klr+1)-1/e&\leq c\Vert \frac{\delta E}{\delta\rho}\Vert_{L^2(\rho)}^2\leq C \Vert \nabla\frac{\delta E}{\delta\rho}\Vert_{L^2(\rho)}^2
\end{align*}
} %END HIDE

\nocite{*}  
\bibliographystyle{siam} %plain}
\bibliography{Thesis}

\end{document}